\documentclass[11pt]{article}

\usepackage[utf8]{inputenc}
\usepackage{amsmath}
\usepackage{amssymb,amsfonts,amsthm}
\usepackage{mathrsfs}
\usepackage{mathtools}
\usepackage{algorithm}
\usepackage{algorithmic}
\usepackage{xcolor}
\usepackage{tikz}
\usepackage{fullpage}
\usepackage{dsfont}
\usepackage{thmtools}
\usepackage{thm-restate}
\usepackage{enumitem}
\usepackage{dsfont}
\usepackage{graphicx}
\usepackage{sidecap}

\usepackage{accents}

\usepackage{hyperref}
\usepackage{xcolor}
\hypersetup{
  colorlinks   = true, 
  urlcolor     = blue, 
  linkcolor    = blue, 
  citecolor   = blue 
}
\usepackage[capitalize, nameinlink]{cleveref}

\usepackage{tikz}
\usetikzlibrary{decorations.pathreplacing}
\usetikzlibrary{arrows,positioning}
%
\DeclareRobustCommand\shape{
 \lower5pt\hbox{
 \hskip-7pt
  \tikzset{circ/.style={circle, draw, fill=black, scale=.15}}
  \begin{tikzpicture}[semithick,scale=.3]
  \node (l1) at (0,.5) [circ]{};
  \node (l3) at (0.5,0.3) [circ]{};
  \draw[-] (l1) to node [auto] {} (l3);
    \end{tikzpicture}
  \hskip-8pt}
}

\newcommand{\bern}{\mathsf{Bern}}
\newcommand{\unif}{\mathsf{Unif}}

\newcommand{\ind}{\mathds{1}}

\newtheorem{theorem}{Theorem}[section]

\newtheorem{lemma}{Lemma}[section]

\newtheorem{corollary}[lemma]{Corollary}
\newtheorem{observation}[lemma]{Observation}
\newtheorem{claim}[lemma]{Claim}
\newtheorem{fact}[lemma]{Fact}
\newtheorem{proposition}[theorem]{Proposition}
\theoremstyle{definition}
\newtheorem{remark}{Remark}[]
\newtheorem{definition}{Definition}

\newcommand{\R}{\mathsf{R}}
\newcommand{\unitD}{\Delta}
\newcommand{\SC}{\mathsf{SC}}
\newcommand{\Star}{\mathsf{Star}}
\newcommand{\RL}{\mathsf{R}_\mathsf{L}}
\newcommand{\RU}{\mathsf{R}_\mathsf{U}}
\newcommand{\FL}{\mathsf{F}_\mathsf{L}}
\newcommand{\FU}{\mathsf{F}_\mathsf{U}}
\newcommand{\signedweight}{\mathsf{SW}}
\newcommand{\TP}{\mathsf{TP}}
\newcommand{\NLP}{\mathsf{NLP}}
\newcommand{\bfA}{\mathbf{A}}
\newcommand{\one}{\mathbf{1}}
\newcommand{\trace}{\mathsf{tr}}

\newcommand{\graphs}{\mathcal{G}}
\newcommand{\graphsnoleaves}{\mathcal{NLG}}

\newcommand{\RGGsphere}{{\mathsf{RGG}(n,\dsphere, p)}}
\newcommand{\RGGspherehalf}{{\mathsf{RGG}(n,\dsphere, \frac{1}{2})}}
\newcommand{\RGGgausshalf}{{\mathsf{RGG}(n,\mathcal{N}(0, \frac{1}{d}I_d), \frac{1}{2})}}
\newcommand{\RGGgauss}{{\mathsf{RGG}(n,\mathcal{N}(0, \frac{1}{d}I_d), p)}}
\newcommand{\standardgauss}{\mathcal{N}(0,1)}
\newcommand{\standardgaussd}{\mathcal{N}(0,\frac{1}{d}I_d)}

\newcommand{\CC}{\mathsf{CC}}

\newcommand{\var}{\mathsf{Var}}
\newcommand{\cov}{\mathsf{Cov}}
\DeclareMathOperator*{\E}{{\rm I}\kern-0.18em{\rm E}}

\newcommand{\prob}{\Pr}
\newcommand{\expect}{\E}
\renewcommand{\Pr}{\,{\rm I}\kern-0.18em{\rm P}}

\newcommand{\bfG}{\mathbf{G}}
\newcommand{\bfH}{\mathbf{H}}

\newcommand{\bfU}{\mathbf{U}}


\newcommand{\RS}{\mathcal{RSC}}
\newcommand{\RGG}{\mathsf{RGG}}

\newcommand{\RSC}{\mathcal{RSC}}

\newcommand{\cF}{\mathcal{F}}

\newcommand{\OEI}{\mathsf{OEI}}

\newcommand{\SOEI}{\mathsf{SOEI}}

\newcommand{\CS}{\mathsf{CS}}
\newcommand{\N}{\mathcal{N}}
\newcommand{\SW}{\mathsf{SW}}

\newcommand{\ER}{{Erd\H{o}s-R\'enyi}}
\newcommand{\ERspace}{{Erd\H{o}s-R\'enyi }}


\newcommand{\polylog}{\mathsf{polylog}}
\newcommand{\Deg}{\mathsf{deg}}
\newcommand{\dsphere}{\mathbb{S}^{d-1}}

\newcommand{\onetorus}{\mathbb{S}^{1}}

\newcommand{\ergraph}{\mathsf{G}(n,p)}
\newcommand{\ergraphhalf}{\mathsf{G}(n,1/2)}

\newcommand{\quadand}{\quad\text{and}\quad}

\newcommand{\PCol}{\mathsf{PCol}}
\newcommand{\bfX}{\mathbf{X}}
\newcommand{\Label}{\mathsf{L}}

\newcommand{\indicator}{\mathds{1}}

\renewcommand{\eth}{\overline{\partial}}

\usepackage[titles]{tocloft}
\setlength{\cftbeforesecskip}{6pt}

\author{Kiril Bangachev\thanks{Dept. of EECS, MIT. \texttt{kirilb@mit.edu}. Supported by a Siebel Scholarship.} \quad Guy Bresler\thanks{Dept. of EECS, MIT. \texttt{guy@mit.edu}. Supported by NSF Career Award CCF-1940205.}}

\title{On The Fourier Coefficients of\\ High-Dimensional Random Geometric Graphs}

\begin{document}

\maketitle

\pagenumbering{gobble}

\begin{abstract} The random geometric graph $\RGGsphere$ is formed by sampling $n$ i.i.d. vectors $\{V_i\}_{i = 1}^n$ uniformly on $\dsphere$ and placing an edge between pairs of vertices $i$ and $j$ for which $\langle V_i,V_j\rangle \ge \tau^p_d,$ where $\tau^p_d$ is such that the expected density is $p.$ We study the low-degree Fourier coefficients of the distribution $\RGGsphere$ and its Gaussian analogue. 

Our main conceptual contribution is a novel two-step strategy for bounding Fourier coefficients which we believe is more widely applicable to studying latent space distributions.
First, we \emph{localize the dependence among edges} to few \emph{fragile edges}. Second, we partition the space of latent vector configurations $(\dsphere)^{\otimes n}$ based on the set of {fragile edges} and on each subset of configurations, we define a \emph{noise operator acting independently} on edges not incident (in an appropriate sense) to fragile edges. 

We apply the resulting bounds to: 1) Settle the low-degree polynomial complexity of distinguishing spherical and Gaussian random geometric graphs from \ERspace both in the case of observing a complete set of edges and in the non-adaptively chosen mask $\mathcal{M}$ model recently introduced by \cite{mardia2023finegrainedquery}; 2) Exhibit a statistical-computational gap for distinguishing $\RGG$ and the planted coloring model \cite{kothari2023planted} in a regime when 
$\RGG$ is distinguishable from \ER;
3) Reprove known bounds on the second eigenvalue of random geometric graphs. 
\end{abstract}

\newpage
\setcounter{tocdepth}{2}
\tableofcontents

\clearpage
\pagenumbering{arabic}

\section{Introduction}

Random graphs with a latent high-dimensional geometric structure are increasingly relevant in an era of massive networks over complex computer, social, or biological populations.
Such graphs provide a fruitful, even if idealized, model in which to study algorithmic and statistical questions. For these reasons, in the last 15 years random geometric graphs have seen a surge of attention in the combinatorics, statistics, and computer science communities. 
Tasks addressed in the literature include: 1) \emph{Detecting} the presence of a latent geometric structure \cite{Devroye11,Bubeck14RGG,brennan2019phase,Mikulincer20,Liu21PhaseTransition,Brennan22AnisotropicRGG,Liu2022STOC,Liu2021APV,bangachev2023random,bangachev2023detection}, 2) \emph{Estimating} the {dimension} of the latent geometry
\cite{Bubeck14RGG,friedrich2023dimension,bangachev2023detection}, 3) \emph{Embedding} the graph in a geometric space and \emph{clustering} \cite{Ma2020UniversalLS,OConner20,li2023spectral}, 4) \emph{Matching} unlabelled noisy copies of the same geometric graph \cite{liu2024random,wang22matching}. In a different direction of study, high-dimensional random geometric graphs exhibit an intricate and useful combinatorial structure. Most notably, in \cite{Liu22Expander}, the authors show that in certain regimes spherical random geometric graphs are efficient \emph{2-dimensional expanders}, objects for which no other simple randomized constructions are known as of now.

Two  of the most common models, studied since the early works \cite{Devroye11,Bubeck14RGG}, are spherical and Gaussian (hard thresholds) random geometric graphs. 

\begin{definition}[Spherical and Gaussian Random Geometric Graphs]
\label{def:spherical}
The spherical random geometric graph distribution $\RGGsphere$ on $n$ vertices $[n] = \{1,2,\ldots, n\}$ of dimension $d$ with expected density $p$ is defined as follows. First, $n$ independent vectors $V_1, V_2, \ldots, V_n$ are drawn iid from the uniform distribution on the sphere $\dsphere.$ Then, an edge between $i$ and $j$ is formed if and only if $\langle V_i ,V_j\rangle \ge \tau^p_d.$ Here, $\tau^p_d$ is chosen so that $p = \prob[\langle V_i ,V_j\rangle \ge \tau^p_d].$ 

Similarly, in the Gaussian case $\RGGgauss,$ one samples $Z_1, Z_2,\ldots, Z_n\sim\standardgaussd$ and forms an edge $(ji)$ whenever $\langle Z_i,Z_j\rangle\ge \rho^p_d$ with $\rho^p_d$ chosen such that the expected density is $p.$  
\end{definition}

The main goal of the current paper is to analyse the low-degree Fourier coefficients of the probability mass functions of those two distributions. The Fourier coefficients of an $n$-vertex random graph distribution  $\mathsf{R}$ are parametrized by edge-subgraphs $H.$ The $p$-biased Fourier coefficient corresponding to $H$ is defined by 
\begin{equation}
\label{eq:definefourier}
    \Phi_\mathsf{R}^p(H):=(p(1-p))^{-|E(H)|/2}\times \expect_{\bfG\sim \mathsf{R}}\Big[\prod_{(ji)\in E(H)}(\bfG_{ji}- p)\Big]= 
    (p(1-p))^{-|E(H)|/2}\times \expect_{\bfG\sim \mathsf{R}}\Big[\signedweight^p_{H}(\bfG)\Big].
\end{equation}
Here $\signedweight^p_{H}(\bfG)$ is the signed weight of $H$ defined by the above equation, and thus the Fourier coefficients are (signed) expectations of subgraphs.

Low-degree Fourier coefficients of distributions (and, more generally, Boolean functions) are at the core of many milestone results in theoretical computer science and combinatorics such as  constructing succinct nearly $k$-wise independent distributions \cite{alon1992kwise}, learning various classes of Boolean functions \cite{bshouty1996monotonespectrum,Eskenazis_21}, the Margulis-Russo formula on sharp-thresholds \cite{margulis1974,Russo1981} and many more
(see \cite{ODonellBoolean}). More recently, low-degree Fourier coefficients have become central to the design of efficient algorithms for problems in high-dimensional statistics, as well as providing evidence for computational hardness, 
via the
\emph{low-degree polynomial} framework \cite{hopkins2017bayesian,hopkins18}.


Unfortunately, estimating Fourier coefficients is a highly non-trivial task for complex distributions with dependencies among variables. 
In this work, we introduce a conceptually novel approach (described shortly in \cref{sec:mainideas}) for bounding the Fourier coefficients of distributions with random latent structure and use it for $\RGGsphere$ and $\RGGgauss.$ This unlocks the powerful methods mentioned above which leads to several applications, described next.

\paragraph{1. Testing.} Testing against \ERspace
is one of the most natural and well-studied questions on high-dimensional random geometric graphs, starting with \cite{Devroye11}. 
Testing is a prerequisite for more sophisticated tasks: if one cannot even distinguish a graph from pure noise, one can hardly hope to do any other meaningful inference about its structure.

In the spherical case, one observes a graph $\bfG$ and the goal is to test between the two hypotheses 
    $$H_0: \bfG\sim \ergraph \quadand H_1:\bfG\sim\RGGsphere\,.$$ 
The state-of-the-art results for $p \leq 1/2$ and $p = \Theta(1/n)$ are as follows. By counting signed triangles, one succeeds with high probability whenever $d\le (np)^3(\log 1/p)^C$ for some constant $C$ \cite{Bubeck14RGG,Liu2022STOC}. Counting signed triangles is conjectured to be \textit{information theoretically optimal}, i.e., for $d \gg (np)^3(\log 1/p)^{C}$ it is believed to be impossible to test between the two graph distributions~\cite{brennan2019phase,Liu2022STOC,bangachev2023random}. 
The best bounds on when $\RGGsphere$ and $\ergraph$ are indistinguishable, due to \cite{Liu2022STOC}, are: 1) $d\ge n^3p^2(\log 1/p)^C$ for all $p \leq 1/2$;
2)~$d \ge (np)^3(\log 1/p)^C = \polylog(n)$ for $p = \Theta(1/n).$ In particular, 
the threshold in dimension $d$ at which testing becomes possible is only known (up to lower order terms)
when $p =\tilde{\Theta}(1)$ or $ p = \Theta(1/n).$ 

We make progress in the intermediate regime $1/n\ll p\ll 1/2$ by showing that the signed triangle statistic is \emph{computationally optimal with respect to low-degree polynomial tests} at all densities, even in a stronger non-adaptive edge query model recently introduced by \cite{mardia2023finegrainedquery}. Surprisingly, we show that this is not the case for Gaussian random geometric graphs. For small $p,$ low-degree tests other than the signed-triangle statistics are much more powerful: when $p =\Theta(1/n),$ one can distinguish $\RGGgauss$ and $\ergraph$ for dimensions as large as $\sqrt{n}(\log n)^{C'},$ in sharp contrast to the $d= \polylog(n)$ threshold in the spherical case \cite{Liu2022STOC}.

\begin{figure}[!htb]
    \centering
    \includegraphics[width = .9\linewidth]{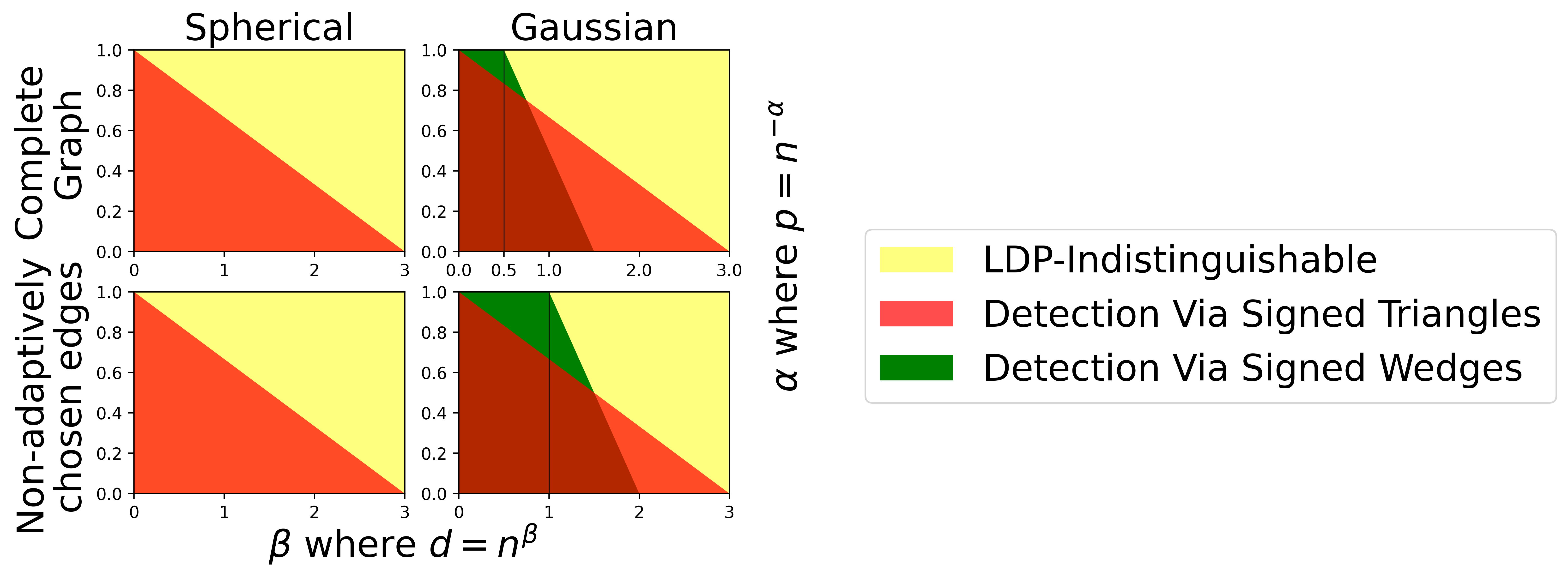}
    \caption{Detecting $d$-dimensional geometry via low-degree polynomials. In the model of non-adaptively queried edges $\mathcal{M},$  $n := \sqrt{|E(\mathcal{M})|}.$ A wedge is a path on 3 vertices.}
    \label{fig:gaussandsphere}
\end{figure}

We additionally prove low-degree indistinguishability between $\RGGspherehalf$ and a planted coloring model \cite{kothari2023planted} in a regime when both are distinguishable from $\ergraphhalf$ via simple low-degree tests. 
The two models can be easily distinguished from one another by determining the largest clique, a computationally \emph{inefficient} test, which shows a computation-information gap for this testing problem.
To the best of our knowledge, this is the first negative result on testing between $\RGGspherehalf$ and a non-geometric distribution when 
$d\ll n^3.$


\paragraph{2. Spectral properties.} The second eigenvalue $\lambda_2$ of $\bfG\sim\RGG$ is captured by low-degree polynomials via the trace method (see \cref{section:spectral}). $\lambda_2$ naturally plays an important role in the expansion properties of $\RGGsphere$
\cite{Liu22Expander}. The top eigenvalues 
are also used in embedding and clustering random geometric graphs via the top eigenvectors \cite{li2023spectral}. These works have characterized the behavior of $\lambda_2$: when $d\ll np$, $\lambda_2 = \tilde{\Theta}(np/\sqrt{d})$ and when $d\gg np,$ the behaviour is similar to \ERspace and $\lambda_2 = \tilde{\Theta}(\sqrt{np}).$\footnote{To be fully precise, \cite{Liu22Expander} considers the normalized adjacency matrix and \cite{li2023spectral} considers a Gaussian rather than spherical random geometric graph} We reprove this bound in the case $p= 1/2$ using our estimates on the Fourier coefficients.
While our approach yields the same quantitative bounds, its methodology is rather different and much more combinatorial.


\subsection{Organization of Paper}
\label{sec:organization}
Our main contribution is a new methodology for deriving strong bounds on
Fourier coefficients which we use to argue about the random geometric graph distributions. In \cref{sec:challenges} we describe the challenges in bounding low-degree Fourier coefficients followed by the main ideas used to overcome them in \cref{sec:mainideas}. Our main theorem followed by applications to testing and the second eigenvalue are stated in \cref{sec:results}. In \cref{sec:mainclaim} we give the full proof of our main theorem.
The different applications follow by variations of what are by now well-known techniques. We present the main ideas in \cref{section:sphericaltoER,section:gaussiantoER,section:RGGtoPC,section:spectral} and delay simple calculations to appendices. For testing, we use the $\chi^2$ low-degree advantage formula (when testing against the planted coloring model, we need a more subtle version of it from \cite{kothari2023planted}). For the second eigenvalue, we use the trace method.

\subsection{Challenges in Bounding Low-Degree Fourier Coefficients}
\label{sec:challenges}


The importance of Fourier coefficients of graph distributions has motivated a series of previous works on probabilistic latent space (hyper)graphs. Existing methods for computing Fourier coefficients, however, seem not fully adequate towards our goal.

\paragraph{Approach 0: Direct Integration.} The most naive approach to estimating Fourier coefficients is a direct integration (summation) over the latent space. Recalling \cref{eq:definefourier}, one can compute the Fourier coefficient of $H$ by integrating $(p(1-p))^{-|E(H)|/2}\times \prod_{(ji)\in E(H)}(\indicator[\langle Z_i,Z_j\rangle \ge \rho^p_d]- p)$ against  $\standardgaussd^{\otimes V(H)}.$ Such a calculation, however, seems out of reach due to the complex dependencies between different terms in the product. As latent vectors $Z_i, Z_j$ vary smoothly, so does the distance between $Z_i$ and $Z_j$ and consequently also the probabilities of various events (such as $Z_k$ being a common neighbour). As concrete evidence of the difficulty of this approach, 
even in the simplest case of triangles for $\RGGsphere,$ the authors of \cite{Bubeck14RGG} spend 5 pages of calculations.  
For a similar random geometric graph model over the torus with $L_p$ geometry, the calculation for triangles is still open when $1\ll p< \infty$ \cite{bangachev2023detection}.

\paragraph{Approach 1: Vertex Conditioning.} 
Many Fourier computations are for problems defined by planting small dense communities in an ambient \ERspace graph \cite{hopkins2017bayesian,rush2022,dhawan2023detection,kothari2023planted,mardia2023finegrainedquery}. In such works, one can use the following simple \emph{vertex conditioning} strategy (exploiting the ambient \ERspace structure) to overcome the technical difficulty of a direct summation/integration.  
As a prototypical example, discussed in \cite{hopkins18}, consider the planted $k$-clique distribution where each vertex $i\in [n]$ independently receives a label $x_i,$ where $\prob[x_i = 1] = k/n, \prob[x_i = 0]= 1-k/n$.
Conditioned on the labels, each edge $G_{ji}$ appears with probability $1$ if $x_i = x_j = 1$ and independently with probability $1/2$ otherwise. Now, consider the Fourier coefficient $\E [\prod_{(ji)\in E(H)}(2G_{ji} - 1)]$ indexed by a graph $H$ without isolated vertices.
Again, there are complex correlations between different edges. However,
unless all vertices of $H$ have label $1,$ there 
is a random (probability $1/2$) edge and this zeros out the Fourier coefficient $\E[\prod_{(ji)\in E(H)}(2G_{ji} - 1)]$. By conditioning on all vertex labels being 1, one shows that the Fourier coefficient equals $(k/n)^{|V(H)|}.$ An approach based on \emph{vertex conditioning} seems to not be applicable to hard threshold random geometric graphs: In $\RGGsphere$, conditioned on the latent vectors there is no randomness left in $\bfG\sim\RGGsphere.$ Hence, one cannot exploit cancellations due to left-over randomness in edges once labels are known, which is crucial in models with ambient \ER.

\paragraph{Approach 2: Lifting From a Single Dimension.}
The work \cite{bangachev2023detection} computes the low-degree Fourier coefficients of (hard threshold) random geometric graphs over the $d$-dimensional torus $\mathbb{R}^d/\mathbb{Z}^d$ with the $L_\infty$ metric. The insight in \cite{bangachev2023detection} is that an $L_\infty$ random geometric graph is the $\text{AND}$ of $d$ 1-dimensional random geometric graphs over $\onetorus.$ They combine the contributions of different coordinates via an analytical approach mimicking the cluster-expansion formula from statistical physics. As explained in \cite{bangachev2023detection}, in our case of $\RGGgauss$ the edges are closer to $\mathsf{MAJORITY}$ over the coordinates. Unfortunately, extending the techniques for the simpler $\mathsf{AND}$ combination to the present setting seems to be technically challenging (in particular, because the Fourier expansion of $\mathsf{AND}$ is much simpler than that of $\mathsf{MAJORITY}$).

\subsection{Main Ideas}
\label{sec:mainideas}
We focus on the density $1/2$ case $\RGGgausshalf$ for concreteness as it captures most of the main ideas. The argument for other densities is similar, but requires some modification (most notably, it additionally exploits a novel energy-entropy trade-off of the $\RGG$ distribution; see \cref{rmk:energyentropytradeoff}).
Modifying to the sphere can be done via the observation that when $Z\sim\standardgaussd,$ $V = Z/\|Z\|_2\sim \unif(\dsphere),$ the variables $V, \|Z\|_2$ are independent and $\|Z\|_2$ concentrates strongly around 1.
Recall that the threshold value is
$\rho^{1/2}_d = 0.$ Our goal is to estimate $\expect_{\bfG\sim\RGGgausshalf}[\SW^{1/2}_{H}(\bfG)]$, where we assume that $|V(H)| = \polylog(d)$ as this is most relevant to our applications. 

\paragraph{Motivation: A Noise-Operator View.} The following \emph{noise-operator} interpretation (appearing, for example, in \cite{brennan2021statisticalquery}) 
of the calculation for planted clique will turn out to be useful. Consider first the standard noise operator $T_\gamma$ for functions over $\{\pm 1\}^n$ \cite{ODonellBoolean}. It acts on functions $f:\{\pm 1\}^n\to\mathbb{R}$ by $T_\gamma f(x) = \E_{y\sim N_\gamma(x)}[f(y)],$ where $N_\gamma(x)$ is the distribution in which each coordinate $y_i$ independently equals $x_i$ with probability $\gamma$ and otherwise with probability $1-\gamma$ it is re-randomized.
This noise operator contracts Fourier coefficients as $\widehat{T_\gamma f}(S) = \gamma^{|S|}\widehat{f}(S).$ 


\begin{observation}[Noise Operator View of Planted Clique]
\label{obs:plantedclqiuenoiseoperator}
The planted clique distribution can itself be viewed as arising from application of a different noise operator. 
Given a function on graphs $f:\{\pm 1\}^{\binom{[n]}{2}}\to \mathbb{R}$, 
let $T_\gamma f(x) = \E_{y\sim N_\gamma(x)}[f(y)]$, where now $N_\gamma(x)$ is the distribution obtained by including each vertex in $A$ with probability $\gamma$ and then rerandomizing all edges in $x$ except those with both endpoints in $A$. This operator again contracts Fourier coefficients, $\widehat{T_\gamma f}(H) = \gamma^{|V(H)|}\widehat{f}(H).$  
If we start with the point mass distribution $\delta_{K_n}$ on the complete graph, then the planted clique probability mass function is obtained by applying $T_{k/n}$.
\end{observation}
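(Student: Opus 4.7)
The plan is to verify the two claims of the observation separately: (i) the Fourier contraction $\widehat{T_\gamma f}(H) = \gamma^{|V(H)|}\widehat{f}(H)$, and (ii) the identity that $T_{k/n}\delta_{K_n}$ coincides with the planted clique pmf. Throughout I work in the $\pm 1$ encoding of graphs, write $\chi_H(x) = \prod_{(ji)\in E(H)} x_{ji}$ for the parity character, and interpret $V(H)$ as the set of vertices incident to an edge of $H$ (the natural convention, since $\widehat{f}(H)$ depends only on $E(H)$).

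For (i), I would unfold the Fourier coefficient and swap the order of expectation:
\[
\widehat{T_\gamma f}(H) \;=\; \E_{x\sim\mathrm{Unif}}\!\bigl[T_\gamma f(x)\,\chi_H(x)\bigr] \;=\; \E_{A}\,\E_{x,\,y}\!\bigl[f(y)\,\chi_H(x)\bigr],
\]
where $A\subseteq[n]$ keeps each vertex independently with probability $\gamma$, $x$ is uniform on $\{\pm 1\}^{\binom{[n]}{2}}$, and $y$ is obtained from $(x,A)$ by the noising (so $y_{ji}=x_{ji}$ on edges inside $\binom{A}{2}$, and $y_{ji}$ is an independent uniform bit on edges outside). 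Conditioning on $A$, the product $\chi_H(x)$ factors into an ``inside'' part over edges of $H$ in $\binom{A}{2}$ and an ``outside'' part over edges of $H$ outside $\binom{A}{2}$. The outside part is independent of $y$ and has mean zero as a product of independent Rademachers, so the conditional expectation vanishes unless every edge of $H$ lies inside $\binom{A}{2}$, i.e.\ $V(H)\subseteq A$. On that event the inside part equals $\chi_H(y)$ because $x$ and $y$ agree on those edges, and the remaining expectation is $\E_y[f(y)\chi_H(y)] = \widehat{f}(H)$ (using that $y$ is marginally uniform). Taking expectation over $A$ contributes $\Pr[V(H)\subseteq A] = \gamma^{|V(H)|}$, yielding the claimed identity.

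For (ii), I would directly compute $T_\gamma\delta_{K_n}(y) = \Pr_{z\sim N_\gamma(y)}[z = K_n]$ by conditioning on $A$. Under $N_\gamma(y)$ and a fixed $A$, the output $z$ equals $K_n$ iff $y$ already has every edge present inside $\binom{A}{2}$ (so $z_{ji}=y_{ji}=+1$ there) and the independent uniform bits on edges outside $\binom{A}{2}$ all happen to be $+1$. Summing over $A$ with $\gamma = k/n$ produces exactly the same expansion one gets by writing the planted clique pmf directly: sample $A$ with each vertex kept with probability $k/n$, place $K_A$ on $A$, and draw the remaining edges as independent $\mathrm{Bernoulli}(1/2)$; matching the two sums term by term closes the argument.

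Both parts reduce to routine conditional-expectation bookkeeping, so I do not expect a substantial technical obstacle. The main piece of care is the inside/outside decomposition of $\chi_H(x)$ together with the correct interpretation of $V(H)$; this mean-zero cancellation of the outside edges is precisely the structural feature the observation is highlighting, and it is also what the paper will ultimately argue is absent for hard-threshold random geometric graphs, where conditioning on the latent vectors leaves no residual randomness in the edges.
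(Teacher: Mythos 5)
Your argument is correct. The paper states this purely as a motivating observation and supplies no proof, so your computation is the missing justification: the conditioning on $A$, the inside/outside split of $\chi_H$, the mean-zero cancellation of the outside factors (using that the rerandomized $y$-bits are independent of the outside $x$-bits), and the symmetry of the transition kernel used implicitly in part (ii) are all exactly what is needed, and the bookkeeping you sketch goes through without obstruction.
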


Our goal will be to derive such a noise operator perspective for $\RGG$ as well and use it to bound Fourier coefficients. We formally give such a view in \cref{obs:RGGnoiseoperator}, but due to its more complicated nature we gradually build towards it.

\paragraph{1. Strategy: Localizing Edges That Create Dependencies.}
We solve the challenges outlined in the previous subsection with the following high-level idea. 
We will \emph{localize the dependence among edges} to a small set of edges $\mathcal{F}= \cF(Z)$ (depending on the latent vectors). The other edges, in $\cF^c$, will be close to uniformly random. Edges in $\cF$ will in general depend also on edges in $\cF^c$, and we write $\partial \cF$ as the set of edges upon which those in $\cF$ depend.
Letting $\bfU\sim \unif(\{0,1\})^{\otimes {E(H)}}$, 
$$
\bfG = (\bfG_\cF, \bfG_{\cF^c}) \approx (\bfG_\cF, \bfU_{\cF^c}) =  (\bfG_\cF, \bfU_{\partial \cF \cap \cF^c}, \bfU_{\cF^c\setminus \partial \cF })\,.
$$
Note that by definition, edges in $\cF^c\setminus \partial \cF$ are independent of all other edges. Hence, conditioning on $\mathcal{F},$ we can re-randomize $\cF^c\backslash \partial \cF$ (i.e., apply the noise operator $T_0$ on $\cF\backslash \partial \cF$). With this idea, we solve both difficulties arising when one attempts the first two approaches outlined before: randomness ensures cancellations and independence makes calculations easy! 

\paragraph{2. Key Idea: An Edge-Independent Basis of Latent Vectors.} Localizing dependence to a small set of edges $\cF$ means that most of the edges are independent. This may seem impossible at first. When we add even a small amount of noise to $Z_i,$ this will likely affect all inner products 
$\{\langle Z_i,Z_j\rangle\}_{j \in V(H)}$ and, hence, 
all edges incident to $i,$
in a complicated, correlated, fashion.

To overcome this issue, we define a convenient basis for the latent vectors. Namely,
for each edge $(ji)\in E(H),$ we construct a random variable $Z_{ji}$ (depending on latent vectors) such that the collection of random variables $\{Z_{ji}\}_{ji\in E(H)}$ is independent and $Z_{ji}$ nearly determines the edge $ji.$ 
We exploit the fact that independent Gaussian vectors in high dimension are nearly orthonormal. This suggests that the Gram-Schmidt operation on the latent vectors will produce an orthonormal basis close to the original vectors and, hence, projections on the Gram-Schmidt basis will approximate the inner products. 
Applying Gram-Schmidt to the $k = |V(H)|$ Gaussian vectors $Z_1,\dots, Z_k$ corresponding to vertices of $H,$  we obtain the Bartlett decomposition:
\begin{equation}\begin{split}\label{eq:Bartlettintro}
    Z_1 &= (Z_{11},0,0,\dots,0) \\
    Z_2 &= (Z_{21}, Z_{22},0,\dots,0)\\
    Z_3 &= (Z_{31}, Z_{32},Z_{33},\dots,0)\\
    &\;\;\vdots\\
    Z_k &= (Z_{k1},  \dots, Z_{k,k-1},Z_{kk},0,\dots,0)\,.
    \end{split}
\end{equation}
Here $Z_{ji} \sim \N(0,\frac 1d)$ for each $i<j$ and 
$dZ_{jj}^2\sim \chi^2(d-j+1), Z_{jj}\ge 0.$ The collection $(Z_{ji})_{i\le j}$ are jointly independent. These properties can be easily derived from the isotropic nature of $\standardgaussd.$ With respect to this decomposition,
\begin{equation}
\label{eq:innerproductequationintro}
\begin{split}
& \langle Z_i, Z_j\rangle = Z_{ji}Z_{ii} + \sum_{\ell< i}Z_{i\ell}Z_{j\ell} = 
Z_{ji} + \Big(
\sum_{\ell< i}Z_{i\ell}Z_{j\ell} + Z_{ji}(Z_{ii} -1)
\Big).
\end{split}
\end{equation}

Now, each term of the form $Z_{i\ell}Z_{j\ell},$ as well as 
$Z_{ji}(Z_{ii} -1),$
is typically on the order of $\tilde{O}(1/d),$ so the entire right-hand side expression above is on the order of 
$\tilde{O}(|V(H)|/d) = \tilde{O}(1/d).$
In contrast, $Z_{ji}\sim\mathcal{N}(0, 1/d),$ so it is typically on the order of $\tilde{O}(1/\sqrt{d})$. 
Therefore,  the random variable $Z_{ji}$ nearly determines whether $ji$ is an edge ($\indicator[\langle Z_i,Z_j\rangle\ge 0]\approx \indicator[Z_{ji}\ge 0]$). This is very promising as the variables $\{Z_{ji}\}$ are also independent, so we can define the noise operator by rerandomizing (a subset of) the variables $Z_{ji}$ independently and, thus, affecting edges $(ji)$ independently.

\paragraph{3. Construction: Fragile Edges Localize Dependencies.} So far, we constructed independent variables $\{Z_{ji}\}_{(ji)\in E(H)}$ which nearly determine the edges. The key word here is nearly -- it may well be the case that $Z_{ji} = \tilde{O}(1/d)$,
in which case (and with high probability, only in this case) 
$Z_{i\ell}Z_{j\ell}$ or $Z_{ji}(Z_{ii}-1)$ could be comparable to or even larger than $Z_{ji}.$ In that case,\linebreak $G_{ji}=\indicator[Z_{ji} + \big(
\sum_{\ell< i}Z_{i\ell}Z_{j\ell} + Z_{ji}(Z_{ii} -1)
\big)\ge 0]$ depends on edges of the form $(i\ell), (j\ell)$ via the variables 
$Z_{i\ell},Z_{j\ell}.$ As edges $(ji)$ for which
$Z_{ji}=\tilde{O}(1/d)$ are the only ones that can depend on other edges they \emph{localize dependence}.  We call edges $(ji)$ for which $Z_{ji}=\tilde{O}(1/d)$ \emph{fragile} and they form the fragile set $\cF.$ 
The rest of the edges are independent, as demonstrated by a noise operator rerandomizing all $Z_{uv}$ for non-fragile $(uv)$ (in a way that $Z_{uv}$ continues to be large enough so that $(uv)$ is not fragile). One can also view $\cF^c$ as an ambient \ERspace for the $\RGG$ distribution.

Recall that $Z_{ji}$ is distributed as 
$\mathcal{N}(0,1/d)$ and, hence, is smaller than $\tilde{O}(1/d)$ only with probability $\tilde{\Theta}(1/\sqrt{d}).$ As variables $Z_{ji}$ are independent, edges are fragile independently. Thus, the probability of observing many fragile edges is very low.

\paragraph{4. Analysis: Combinatorics of Edge Incidences.}
Our construction so far is of a noise operator which acts independently on all non-fragile edges $\cF^c.$ Hence, even if we condition on all fragile edges, there is some randomness left (unless 
all edges are fragile, but this happens with very low probability) and, so, we have solved the issue of destroying all randomness by conditioning outlined in \cref{sec:challenges}. However, it is still difficult to integrate $\prod_{(ji)\in E(H)}(2G_{ji}-1)$ even conditioned on the set of fragile edges. The reason is that if $(ji)$ is fragile, but $(j\ell)$ is not \emph{for some $\ell<i$},
applying the noise operator on ${(j\ell)}$ via $Z_{j\ell}$ may also affect  $\indicator[Z_{ji} + \big(
\sum_{\ell< i}Z_{i\ell}Z_{j\ell} + Z_{ji}(Z_{ii} -1)\big)\ge 0].$

Our approach to this issue is simple -- we define the noise operator only \emph{over edges not incident to fragile edges in their lexicographically larger vertex} (which we formalize in \cref{def:strongcovering} as $\partial \cF$). If there is even a single such edge, the noise operator re-randomizes the edge and zeroes out the Fourier coefficient (as in planted clique).  

This leads us to analyzing the \emph{combinatorics of edge incidences of subgraphs of $H.$} A crucial step in this analysis is the realization that we have the freedom to choose an optimal ordering (with respect to the graph $H$) for the Gram-Schmidt process so that incidences with lexicographically larger fragile edges (i.e. $|\partial \cF|$) are minimized. Optimizing over orderings leads us to a combinatorial quantity associated to the graph $H$  which we call the \textit{ordered edge independence number} $\OEI(H).$ Altogether, our bound on Fourier coefficients becomes $|\expect[\prod_{(ji)\in E(H)}(2G_{ji} - 1)]| \le  (\log ^c d/\sqrt{d})^{\OEI(H)}.$ Our last step is to understand the growth of $\OEI(H).$ We derive several bounds, simplest and most easily interpretable of which is 
$\OEI(H)\ge \lceil (|V(H)| - 1)/2\rceil.$ Perhaps more interesting is $\OEI(H)\ge \delta(H)+ 1,$ where $\delta(H):=\max_{S\subseteq V(H)}|S| - |\{j \in V(H)\; : \; \exists i \in S \text{ s.t. }(ji)\in E(H)\}|.$ 

\medskip

\subsection{Results}
\label{sec:results}
We now formally describe our results, beginning with the exact bounds on Fourier coefficients we obtain. Throughout, we will make the following assumption:
\begin{equation}
\label{eq:assumption}
    \tag{A}
    \text{There exist some absolute constants }\gamma, \epsilon>0 \text{ such that }1/2\ge p \ge n^{-1+\epsilon}, d\ge n^{\gamma}.
\end{equation}
Admittedly, some non-trivial cases are not covered by this assumption. Specifically,
$p  = n^{-1+o(1)}$ and $d = \polylog(n).$
Nevertheless, we note that in the case $p = \Theta(1/n), d = \polylog(n)$ the testing problem between $\RGGsphere$ and $\ergraph$ is fully resolved by \cite{Liu2022STOC} and, thus, \eqref{eq:assumption} captures most of the open regimes at least for the question of testing against \ER.

Throughout, we will refer to low-degree polynomial hardness and subgraph counting algorithms. As these are by now standard in the literature, we defer the definitions to \cref{section:preliminaries}.

\subsubsection{Main Result: The Fourier Coefficients of Gaussian and Spherical RGG}
Fourier coefficients of $\RGG$ factorize over connected components, so we only state our bounds for $H$ connected. We first define the ordered edge independence mentioned in \cref{sec:mainideas}. 

Given an ordering $\pi$ of the vertices (think of $\pi$ as the Gram-Schmidt ordering), we denote an edge between $u$ and $v$ as $(uv)$ if $u>v$ and $(vu)$ otherwise. We formalize $\partial F$ as follows.

\begin{definition}[Covering Property]
\label{def:covering}
An edge $(uv)\in E(H)\backslash F$ is \emph{covered} by $F$ if there exists an edge in $F$ with endpoint $u.$ Denote with $\partial^\pi_HF$ the set of all edges covered by $F.$
\end{definition}

See \cref{fig:examplediagram} for an illustration.
Going back to \cref{eq:innerproductequationintro}, we interpret as follows. If $(uv)$ is covered by fragile edges $F,$ there exists some fragile edge $(uw).$ Hence, $G_{uw}$ might depend on $G_{uv}$ via $Z_{uv}.$ According to \cref{eq:innerproductequationintro}, we can make this definition stronger by requiring $w> v$ additionally. This, however, will not be the case when discussing spherical $\RGG,$ hence we stick to \cref{def:covering}.

\begin{definition}[Ordered Edge Independence Number] 
\label{def:OEI}
For a connected graph $H$ on $k$ vertices and a bijective labelling $\pi$ of the vertices
with the numbers $\{1,2,\ldots, k\},$ 
we say that
a subset of edges $F\subseteq E(H)$ \emph{strongly covers} $H$ if $F \cup \partial^\pi_H F =  E(H)$. 
We
define the \emph{ordered edge independence number of $H$ with respect to $\pi$} and denote by $\OEI_\pi(H)$ as the size of the smallest strongly covering $F.$
Let $\OEI(H) = \max_\pi \OEI_\pi(H).$
\end{definition}

One should think of $E(H)\backslash (F \cup \partial^\pi_H F)$ as the set of edges which the noise operator rerandomizes.

\begin{SCfigure}[10][!htb]
{\caption{\footnotesize Consider the cycle $H = C_5$ with given labelling $\pi$ and $F = \{(53), (42)\}.$ \\
\cref{def:covering}: 
Edge $(51)$ is covered by $(53)$ while $(21),(43)$ are covered by $(42).$ Hence, $\partial^\pi_HF = \{(51),(21), (43)\}.$ Casework shows that $|F| = \OEI_\pi(H) = \OEI(H) = 2.$
\\
\cref{def:strongcovering}: With respect to the strong covering property, $\eth^\pi_HF = \emptyset.$ For example, $(51)$ is not strongly covered because $F^\pi_{\ge 1, \ni 5} = \{(53)\},$ but $3$ is not a neighbour of $1$ in $H.$
However, if we add the edge $(43)$ to $F,$ i.e. $F' = \{(53),(43), (42)\},$ it is the case that both $(51)$ and $(21)$ are strongly covered. Indeed, 
$(F')^\pi_{\ge 1, \ni 5} = (F')^\pi_{\ge 1, \ni 2} = \{(53),(43),(42)\}$ and $1$ is a neighbour to both $2$ and $5.$ Casework shows that $|F'| = \SOEI_\pi(H) = \SOEI(H) = 3.$
}\label{fig:examplediagram}}
{\includegraphics[width=0.22\linewidth]{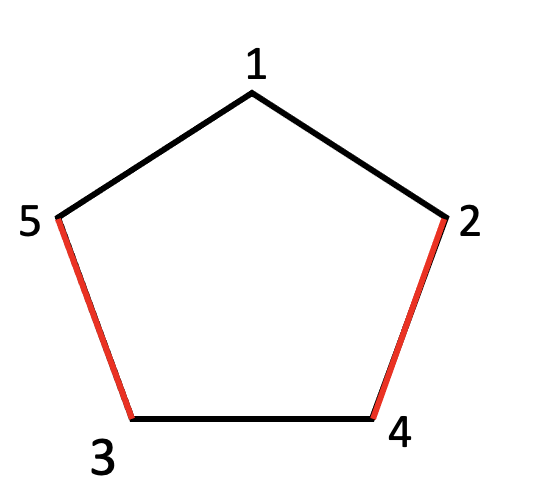}}
\end{SCfigure}

\begin{theorem}
\label{thm:oeiboundonfourier} Suppose that \eqref{eq:assumption} holds and $H$ is connected. Then, there exists some absolute constant $C$ depending only on $\epsilon,\gamma$ in \eqref{eq:assumption} such that for $\bfG\sim \RGGgauss$ and $\bfG\sim\RGGsphere,$
\begin{equation}
    \Big|\E\Big[\prod_{(ji)\in E(H)}(G_{ji} - p)\Big]\Big|\le 
(8p)^{|E(H)|}\times \Big(\frac{C\times |V(H)|\times|E(H)|\times(\log d)^{3/2}}{\sqrt{d}}\Big)^{\OEI(H)}.
\end{equation}
\end{theorem}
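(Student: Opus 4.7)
The plan is to focus first on $\RGGgauss$ and reduce the spherical case to it at the end via $V_i = Z_i/\|Z_i\|_2$ with $Z_i\sim\standardgaussd$: since direction and norm are independent and $\|Z_i\|_2 = 1\pm\tilde O(1/\sqrt d)$, any slack is absorbed into the constant $C$. Fix an ordering $\pi$ of $V(H)$ achieving $\OEI(H)=\OEI_\pi(H)$ and, after relabelling by $\pi$, apply the Bartlett decomposition \eqref{eq:Bartlettintro} so that \eqref{eq:innerproductequationintro} reads
\[
\langle Z_i, Z_j\rangle = Z_{ji} + E_{ji},\qquad E_{ji}:=(Z_{ii}-1)Z_{ji}+\sum_{\ell<i}Z_{i\ell}Z_{j\ell},
\]
with the collection $\{Z_{ji}\}_{i\le j}$ jointly independent. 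A Hanson--Wright and Gaussian-tail union bound furnishes a high-probability event $\mathcal{E}$ on which $|E_{ji}|\le \eta_0 := C_1|V(H)|\log d/d$ holds simultaneously for every $(ji)\in E(H)$; contributions from $\mathcal{E}^c$ are absorbed via the trivial $|\prod_e(G_e-p)|\le 1$ together with $\Pr[\mathcal{E}^c]\le d^{-\omega(1)}$.

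Next I define the fragile set $\cF:=\{(ji)\in E(H):|Z_{ji}|\le \eta\}$ with a threshold $\eta:= C_2|V(H)|\log d/d \gg \eta_0$. On $\mathcal{E}$, every non-fragile edge satisfies $\sign(\langle Z_i, Z_j\rangle)=\sign(Z_{ji})$, so its outcome is governed by its own independent Gaussian; the events $\{(ji)\in\cF\}$ are therefore mutually independent, each of probability $\tilde O(1/\sqrt d)$ by Gaussian anti-concentration. I then partition by the fragile set,
\[
\E\Big[\prod_{e\in E(H)}(G_e-p)\Big] = \sum_{F\subseteq E(H)} \E\Big[\indicator[\cF=F]\prod_{e\in E(H)}(G_e-p)\Big] + \text{negligible from } \mathcal{E}^c,
\]
and aim to show that only $F$ which strongly cover $H$ in the sense of \cref{def:OEI} can contribute, forcing $|F|\ge \OEI(H)$.

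The crux of the proof is a sign-flip cancellation playing the role of the noise operator in \cref{obs:plantedclqiuenoiseoperator}. Fix $F$ that is \emph{not} strongly covering and pick any $e^* = (uv)\in E(H)\setminus(F\cup \partial^\pi_H F)$; by \cref{def:covering} the larger endpoint $u$ is incident to no edge of $F$. Consider the involution $Z_{uv}\mapsto -Z_{uv}$, leaving all other $Z_{\cdot\cdot}$ fixed. Then: (i) $\cF$ is unchanged because $|Z_{uv}|$ is preserved and no other fragility indicator is touched; (ii) since $Z_{uv}$ enters $\langle Z_a, Z_b\rangle$ only when $u\in\{a,b\}$, only edges incident to $u$ can be affected, but every such edge other than $e^*$ is non-fragile and therefore, on $\mathcal{E}$, has its sign frozen by its own $Z$-coordinate; (iii) $G_{e^*}$ flips sign, since on $\mathcal{E}\cap\{|Z_{uv}|>\eta\}$ the bound $|E_{uv}|\le \eta_0 \ll \eta < |Z_{uv}|$ guarantees $\sign(\langle Z_u, Z_v\rangle)=\sign(Z_{uv})$. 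Pairing $Z_{uv}$ with $-Z_{uv}$ in the inner expectation thus cancels the $F$-contribution exactly.

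For each strongly covering $F$ (hence $|F|\ge \OEI(H)$), I bound the inner expectation by $\Pr[\cF\supseteq F]$ times a uniform bound on the edge product. The first factor is $(\eta\sqrt d)^{|F|} = (\tilde O(|V(H)|\log d/\sqrt d))^{|F|}$ by independence of fragility, while expanding $\prod_e(G_e-p)$ into monomials and using the marginal bound $\Pr[G_e=1]\le 2p$ together with a standard subgraph-count estimate $\Pr[\text{subgraph }S\subseteq \bfG]\le (Cp)^{|S|}$ yields the $(8p)^{|E(H)|}$ prefactor. Summing over the $\binom{|E(H)|}{|F|}$ choices of $F$ at each size $|F|\ge \OEI(H)$ and absorbing log factors into the base produces the claimed inequality. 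The main obstacle will be making the sign-flip cancellation airtight: one must coordinate the Bartlett ordering and the fragility threshold so that the involution $Z_{uv}\mapsto -Z_{uv}$ simultaneously (i) preserves $\cF$, (ii) flips only $G_{e^*}$, and (iii) leaves every other edge's sign intact. This asymmetry in the larger endpoint $u$ is exactly what forces the directed covering notion of \cref{def:covering} and the maximization over $\pi$ in \cref{def:OEI}. A secondary subtlety is the uniform concentration of the $\binom{|V(H)|}{2}$ dependent bilinear forms $E_{ji}$, handled by Hanson--Wright plus a union bound.
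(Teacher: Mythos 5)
Your proof diverges from the paper's at the point that matters most, and the divergence creates a real gap for general $p$.

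First, your fragility set is centered at the wrong place. You define $\cF = \{(ji) : |Z_{ji}| \le \eta\}$, i.e.\ fragility near zero. But the edge indicator is $\indicator[Z_{ji} + E_{ji} \ge \rho^p_d]$, and $\rho^p_d = \Theta(\sqrt{\log(1/p)/d})$ is vastly larger than your $\eta = \tilde O(1/d)$ whenever $p$ is bounded away from $1/2$. Thus a coordinate with $Z_{uv}$ near $\rho^p_d$ (say $Z_{uv} = \rho^p_d + \tilde O(1/d)$) is non-fragile under your definition, yet the $\tilde O(1/d)$-sized bias $E_{uv}$ can flip the edge $\indicator[Z_{uv} + E_{uv} \ge \rho^p_d]$. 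So it is false that every non-fragile edge has its outcome frozen by its own $Z$-coordinate, which your steps (ii) and (iii) rely on. The paper's \cref{def:fragileedges} correctly places the fragile window $[\rho^p_d - \Delta, \rho^p_d + \Delta]$ around the threshold $\rho^p_d$, not around $0$.

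Second, and more fundamentally, the sign-flip cancellation $Z_{uv} \mapsto -Z_{uv}$ yields \emph{exact} vanishing only when $p = 1/2$. Even after correcting the fragility window to be centered at $\rho^p_d$, the map $Z_{uv}\mapsto -Z_{uv}$ does not preserve the complement of the fragile interval when $\rho^p_d \neq 0$, and the paired indicators $\indicator[Z_{uv} \ge \rho^p_d - E_{uv}]$, $\indicator[-Z_{uv} \ge \rho^p_d - E_{uv}]$ do not sum to $1$. Equivalently, $\E[\indicator[Z_{uv}\ge\rho^p_d] - p \mid Z_{uv}\ \text{non-fragile}]$ is small but \emph{not zero} for $p \neq 1/2$. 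Your plan ``show only strongly covering $F$ contribute'' therefore fails outside $p = 1/2$. The paper handles general $p$ differently in Step~4: it bounds the conditional expectation of each re-randomized edge by $\Delta p \sqrt{d\log d} = \tilde O(p/\sqrt d)$ (\cref{eq:marginalofnonfragile}), and then uses the combinatorial inequality $|E(H)| - |\partial^\pi_H F| \ge \OEI_\pi(H)$ to count the total number of small factors coming from both the fragility probability of $F$ and the small conditional means of the re-randomized edges (the energy-entropy trade-off of \cref{rmk:energyentropytradeoff}). The paper in fact discusses precisely the obstruction to extending your sign-flip idea to $p\neq 1/2$ in \cref{rmk:whynototherdensities}, which is why the stronger $\SOEI$ bound in \cref{eq:SOEI} (the paper's version of your cancellation) is only proved for $p = 1/2$. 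What you have sketched, modulo fixing the fragility center, is essentially a variant of the $p=1/2$ argument of \cref{sec:FourierBySOEI}, not a proof of \cref{thm:oeiboundonfourier}.
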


For our applications, we only need $|V(H)|,|E(H)| \le (\log n)^{1.1},$ in which case the bound is 
$p^{|E(H)|}(\polylog(n)/\sqrt{d})^{\OEI(H)}.$
Of course, to apply \cref{thm:oeiboundonfourier}, we need explicit bounds on $\OEI(H).$

\begin{proposition}[Bounds on Ordered Edge Independence Number] For a connected graph $H,$
\label{prop:boundsonOEI}
    \begin{enumerate}
    \item $\OEI(H)\ge \lceil(|V(H)| - 1)/2\rceil.$ 
    \item $\OEI(H)\ge \delta(H)+ 1,$ where $\delta(H):=\max_{S\subseteq V(H)}|S| - |\{j \in V(H)\; : \; \exists i \in S \text{ s.t. }(ji)\in E(H)\}|.$
\end{enumerate}
\end{proposition}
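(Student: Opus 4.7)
The key uniform observation underlying both bounds is the following. Fix an ordering $\pi$ and suppose $F \subseteq E(H)$ strongly covers $H$. Write $W(F) := \{w : w \text{ is an endpoint of some edge in } F\}$, so trivially $|W(F)| \leq 2|F|$. Then every vertex $u$ that is the larger $\pi$-endpoint of some edge of $H$ satisfies $u \in W(F)$: indeed, if $(uv) \in E(H)$ with $u > v$, the strong covering property gives either $(uv) \in F$ (so $u \in W(F)$) or $(uv) \in \partial^\pi_H F$, which by \cref{def:covering} means $u$ is already an endpoint of some edge in $F$, again giving $u \in W(F)$.

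For Part 1, I would fix any spanning tree of the connected graph $H$, root it at an arbitrary vertex, and let $\pi$ be any ordering in which every non-root appears after its tree parent (e.g.\ BFS or DFS). Each of the $|V(H)| - 1$ non-root vertices then has its tree parent as a smaller $H$-neighbor, so the observation above forces $|W(F)| \geq |V(H)| - 1$ and therefore $|F| \geq \lceil(|V(H)|-1)/2\rceil$; taking max over $\pi$ gives the claim.

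For Part 2, let $S^*$ achieve $\delta(H) = |S^*| - |N(S^*)|$. If $\delta(H) = 0$ the claim reduces to $\OEI(H) \geq 1$, which is trivial as $H$ has at least one edge. Otherwise $\delta(H) \geq 1$ forces $S^* \neq \emptyset$ and (by connectivity of $H$, since $S^* = V(H)$ yields $\delta = 0$) $S^* \neq V(H)$, so $T := N(S^*) \setminus S^* \neq \emptyset$. I choose $\pi$ to place $A := S^* \setminus N(S^*)$ at the top positions. Two structural facts make $A$ the right choice: (i) $A$ is independent in $H$, since any edge $u_1 u_2$ within $A$ would place $u_1 \in N(S^*)$ (via $u_2 \in S^*$), contradicting $u_1 \in A \subseteq V(H) \setminus N(S^*)$; and (ii) $N_H(u) \subseteq T$ for every $u \in A$, because $u \in S^*$ forces $N_H(u) \subseteq N(S^*)$ while $u \notin N(S^*)$ forces $N_H(u) \cap S^* = \emptyset$. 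By (ii), every $H$-neighbor of every $u \in A$ sits strictly below $u$ in $\pi$, and connectivity with $|V(H)|\ge 2$ gives at least one such neighbor; the starting observation then forces $A \subseteq W(F)$, so every $u \in A$ is an endpoint of some $F$-edge. By (i), distinct vertices of $A$ cannot be endpoints of the same $F$-edge, so
\[
|F| \;\geq\; |A| \;=\; |S^*| - |S^* \cap N(S^*)| \;=\; |S^*| - (|N(S^*)| - |T|) \;=\; \delta(H) + |T| \;\geq\; \delta(H) + 1.
\]

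The main obstacle is that the naïve inequality $|W(F)| \leq 2|F|$ loses a factor of two and, while good enough for Part 1, falls short of $\delta(H)+1$ for Part 2. Overcoming this loss requires selecting a \emph{vertex set that is simultaneously forced into $W(F)$ and independent in $H$}, so that distinct forced vertices consume distinct $F$-edges; the fortunate combinatorial fact is that $S^* \setminus N(S^*)$, for any $\delta$-maximizing $S^*$, is automatically such a set when placed at the top of $\pi$.
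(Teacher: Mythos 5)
Your proof is correct and follows essentially the same strategy as the paper's: for Part~1 a spanning-tree ordering in which parents precede children, combined with the doubling bound on endpoints of $F$; for Part~2 passing to the independent set $S^*\setminus N(S^*)$ (the paper isolates this as \cref{prop:independentmaximizerofdelta}) and placing it at the top of the ordering so that its vertices are forced to be endpoints of $F$-edges. Your Part~2 is, if anything, a slight improvement in exposition over the paper's: the paper constructs an interleaved ordering $S_1',u_1,S_2',u_2,\ldots,S'_\ell,u_\ell$ and asserts that $F$ must contain the specific edge $(vu_k)$, which isn't literally forced ($F$ could instead contain $(vu_j)$ for some $j>k$, $u_j\in N_H(v)$); what's actually forced is that $F$ contains \emph{some} edge with $v$ as its larger endpoint, and the count then goes through because distinct vertices of the independent set cannot share an $F$-edge. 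You state exactly this, using the simpler ordering (just $A$ on top, rest arbitrary), and your bookkeeping $|A|=\delta(H)+|T|$ with $T\neq\emptyset$ coincides with the paper's $|S'|=\delta(H)+|N_H(S')|\ge\delta(H)+1$ since $N_H(A)=T$. So: same approach, somewhat more carefully executed.
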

The quantity $\delta(H)$ is useful when discussing the non-adaptive edge query model in which one observes any $m$ edges of a random geometric graph. The reason is that a graph on $m$ edges can have at most $\exp(O(|V(H)|^{1.1}))\times m^{(|V(H)|+\delta(H))/2}$ subgraphs isomorphic to $H$, as shown in \cite{Alon1981OnTN}.

\begin{remark}[Towards ``High-Degree'' Hardness] One can show that $\OEI(H)\le |V(H)|.$ Thus, the decay in $\sqrt{d}$ is exponential in $|V(H)|$ rather than $|E(H)|.$ A rather trivial calculation shows that this limits low-degree hardness to polynomials of degree $o(\log^2 n)$ and one cannot hope to prove hardness against polynomials of degree, say $n^{0.001}.$ One potential approach towards hardness against polynomials of higher degree (perhaps even $\binom{n}{2}$ and, hence, information-theoretic hardness) would be conditioning on a subset of latent space configurations, for example as in \cite{dhawan2023detection}.
\end{remark}

\medskip

While the resulting bounds on Fourier coefficients are strong enough for all of our low-degree hardness results, one may still wonder if they are optimal. It turns out that the likely answer is no: In the special case of density $1/2,$ the symmetry of the Gaussian distribution around $0 = \rho^{1/2}_d = \tau^{1/2}_d$ allows us to slightly improve the argument outlined in \cref{sec:mainideas} and define a noise operator that acts also on certain (but not all) edges adjacent to fragile edges (in $\partial_H^{\pi}F$). We describe this next. 
    

\begin{definition}[Strong Covering]
\label{def:strongcovering}
Consider an edge $(uv) \in E(H)\backslash F$ and denote by $F^\pi_{\ge v}$ the subset of $F$ formed by edges with both endpoints at least as large as $v.$ $F^{(\pi)}_{\ge v, \ni u}$ is the connected component of $F^{(\pi)}_{\ge v}$ containing $u.$
We say that $(uv)$ is strongly covered by $F$ if $v$ has a neighbour other than $u$ in $V(F^{(\pi)}_{\ge v, \ni u})$ with respect to $H.$
We denote by $\eth_H^{\,\pi} F$ the set of edges strongly covered by $F.$
\end{definition}

See \cref{fig:examplediagram} for an illustration. The analogue of \cref{def:OEI} is:

\begin{definition}[Strong Ordered Independence Number]
    We define $\SOEI_\pi(H),$ the \emph{strong independence number} of $H$ with respect to $\pi,$ as the minimal cardinality of a set $F$ such that $F\cup \eth_H^{\,\pi} F = E(H).$ $\SOEI(H) = \max_\pi\SOEI_\pi(H).$ 
\end{definition}

The analogue of \cref{thm:oeiboundonfourier} is the following (we state it only for $\RGGspherehalf$ as the Gaussian and spherical models coincide in the 1/2-density case).

\begin{proposition}
    \label{eq:SOEI} Suppose that \eqref{eq:assumption} holds and $H$ is connected. Then, there exists some absolute constant $C$ depending only on $\epsilon,\gamma$ in \eqref{eq:assumption} such that for $\bfG\sim \RGGspherehalf,$
    $$\Big|\E\Big[\prod_{(ji)\in E(H)}(G_{ji} - 1/2)\Big]\Big|\le 
(1/2)^{|E(H)|}\times \Big(\frac{C\times |V(H)|\times|E(H)|\times(\log d)^{3/2})}{\sqrt{d}}\Big)^{\SOEI(H)}.$$
\end{proposition}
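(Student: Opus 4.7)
The plan is to refine the proof of \cref{thm:oeiboundonfourier} by exploiting the extra $Z\mapsto -Z$ symmetry available at density $1/2$, which lets one replace the boundary $\partial^\pi_H F$ by the smaller $\eth^{\,\pi}_H F$. As noted in \cref{sec:mainideas}, at density $1/2$ the spherical and Gaussian models coincide (normalization does not affect the sign of $\langle Z_i,Z_j\rangle$), so it suffices to work with $\RGGgausshalf$.

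First I would apply the Bartlett decomposition \eqref{eq:Bartlettintro} with respect to an ordering $\pi^*$ that achieves $\SOEI_{\pi^*}(H)=\SOEI(H)$, fix a threshold $\theta=\tilde\Theta(1/d)$, and declare $(uv)\in E(H)$ \emph{fragile} iff $|Z_{uv}|\le \theta$, writing $F$ for the resulting (random) set of fragile edges. The threshold is chosen, as in the proof of \cref{thm:oeiboundonfourier}, so that on a high-probability good event the cross-term perturbation in \eqref{eq:innerproductequationintro} is strictly smaller than $|Z_{uv}Z_{vv}|$ whenever $|Z_{uv}|>\theta$. Conditionally on $F$ the coordinates $\{Z_{uv}\}$ remain jointly independent and each $Z_{uv}$ is symmetrically distributed (in $[-\theta,\theta]$ if fragile, in its complement otherwise).

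The heart of the argument is a \emph{component-flip} cancellation. Fix $F$ with $E(H)\setminus(F\cup \eth^{\,\pi^*}_H F)\neq\emptyset$, pick any $(uv)$ in that set, and let $S':=\{x\in V(F^{\pi^*}_{\ge v,\ni u}):x>v\}$ (we exclude $v$ itself to avoid flipping the positive variable $Z_{vv}$). Consider the involution $\Psi$ that sends $Z_{xv}\mapsto -Z_{xv}$ for every $x\in S'$ and fixes every other Bartlett coordinate. By symmetry of the conditional law, $\Psi$ is measure preserving given $F$ and preserves fragility. I would track its effect on $\langle Z_{x'},Z_k\rangle$ in three cases: $(\alpha)$ for $x',k\in S'$, the cross term $Z_{x'v}Z_{kv}$ is invariant, so $G_{x'k}$ is unchanged; $(\beta)$ for $x'\in S'$ and $k\ge v$ with $k\notin V(F^{\pi^*}_{\ge v,\ni u})$, the connected-component definition forces $(x'k)\notin F$ and the perturbation $-2Z_{x'v}Z_{kv}=\tilde O(1/d)$ is absorbed into the non-fragile main term, again leaving $G_{x'k}$ unchanged; $(\gamma)$ for $x'\in S'$ and $k=v$, the change $-2Z_{x'v}Z_{vv}$ flips the sign of $\langle Z_{x'},Z_v\rangle$, because $(x'v)\notin F$ (otherwise $v\in V(F^{\pi^*}_{\ge v,\ni u})$, giving $x'\in N_H(v)\cap S\setminus\{u\}$ and contradicting the hypothesis that $(uv)$ is not strongly covered). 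By the same hypothesis the only $x'\in S'$ with $(x'v)\in E(H)$ is $x'=u$, so $\Psi$ flips exactly one factor of $\prod_{(ji)\in E(H)}(G_{ji}-1/2)$ and therefore $\E\big[\prod_{(ji)\in E(H)}(G_{ji}-1/2)\mid F\big]=0$.

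This reduces the sum to those $F$ that strongly cover $H$, for which $|F|\ge \SOEI_{\pi^*}(H)=\SOEI(H)$. Combining the trivial bound $(1/2)^{|E(H)|}$ with the independent-fragility tail
\[
\Pr(|F|\ge \SOEI(H))\;\lesssim\;\binom{|E(H)|}{\SOEI(H)}\Big(C(\log d)^{3/2}/\sqrt d\Big)^{\SOEI(H)},
\]
and absorbing the binomial into the $|V(H)|\cdot|E(H)|$ factor yields the stated bound. I expect cases $(\beta)$ and $(\gamma)$ to be the main technical obstacles: both require a quantitative good-event decomposition of the Bartlett coordinates (atypically large cross-terms being swept into a negligible additive error), mirroring the analysis in \cref{sec:mainclaim}. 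The essential new input compared to \cref{thm:oeiboundonfourier} is the combinatorial fact that the connected-component structure restricts the set of potentially-fragile cross-perturbed pairs to precisely the edges excluded by \cref{def:strongcovering}.
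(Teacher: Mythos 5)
Your argument is essentially the same as the paper's proof in \cref{sec:FourierBySOEI}: both exploit the $Z\mapsto -Z$ symmetry of the conditional Bartlett coordinates to produce a measure-preserving transformation (preserving $\RSC$ and the fragile set) that negates exactly one factor $(G_{uv}-1/2)$ whenever $F\cup\eth^{\,\pi}_H F\neq E(H)$, forcing the conditional signed expectation to vanish. Your three cases $(\alpha)$, $(\beta)$, $(\gamma)$ track precisely the same incidences as the paper's four cases (yours $(\alpha)$ $\leftrightarrow$ paper's Case 4, $(\beta)$ $\leftrightarrow$ Cases 2--3, $(\gamma)$ $\leftrightarrow$ Case 1 plus the flip on the chosen edge). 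The one genuine stylistic difference is that you phrase the cancellation as a \emph{single} involution $\Psi$ applied to one offending edge, whereas the paper describes a noise operator that independently flips each $(ji)\in E(H)\setminus(F\cup\eth^{\,\pi}_H F)$ with probability $1/2$; these are equivalent, and your version is arguably cleaner. (Your explicit exclusion of $v$ from $S'$ is vacuous: if $(uv)\notin F\cup\eth^{\,\pi^*}_H F$ then $v\notin V(F^{\pi^*}_{\ge v,\ni u})$, since otherwise some fragile $(bv)$ with $b>v$ in the component would make $b\neq u$ an $H$-neighbour of $v$ inside the component, contradicting $(uv)\notin\eth^{\,\pi^*}_H F$.)

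One quantitative slip in the closing tail bound: you write the per-edge fragility probability as $C(\log d)^{3/2}/\sqrt d$, but the fragile window has half-width $\Delta=\Theta(|V(H)||E(H)|(\log d)/d)$ (it must dominate the up-to-$|V(H)|$ cross terms, each of size $\le \RU^2=\Theta(|E(H)|(\log d)/d)$), so the correct probability is $\Pr[Z_{ji}\in[\FL,\FU]]\lesssim |V(H)||E(H)|(\log d)^{3/2}/\sqrt d$, exactly as in \eqref{eq:probabilityoffragile}. Combined with $\binom{|E(H)|}{\SOEI(H)}\le |E(H)|^{\SOEI(H)}$, your route gives a base of $C\,|V(H)||E(H)|^2(\log d)^{3/2}/\sqrt d$, one extra factor of $|E(H)|$ compared with the stated $C\,|V(H)||E(H)|(\log d)^{3/2}/\sqrt d$. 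This is harmless in every application (where $|E(H)|=\polylog(n)$ is absorbed into the ambient $\polylog$ factors), and the paper's own accounting of the sum over $F$ is no tighter, but the discrepancy with the stated constant should be flagged rather than waved away as ``absorbing the binomial.''
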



If an edge $(uv)$ is strongly covered, then the connected component 
$F^{(\pi)}_{\ge v, \ni u}$ contains a neighbour of $u.$ Hence, there exists a fragile edge with endpoint $u$ and $u$ is also covered according to \cref{def:strongcovering}. Thus, $\SOEI(H)\ge \OEI(H),$ so \cref{eq:SOEI} is at least as strong as \cref{thm:oeiboundonfourier}. It turns out that the inequality is strict for many sparse graphs. For example, one can check that when $H = C_k$ 
is a cycle, $\OEI(C_k) = \lceil (k-1)/2\rceil,$ but $\SOEI(H) = k - 2.$ The latter follows from the following bound:
\begin{proposition}[Strong Edge Independence Number of Sparse Graphs]
\label{prop:sparseSOEI}
Suppose that $H$ is connected. Then, 
$\SOEI(H)\ge 2|V(H)| - |E(H)| - 2.$
\end{proposition}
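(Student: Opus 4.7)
}
The plan is to bound $s := |E(H)\setminus F|$ by twice the cycle rank $r := m - k + 1$ of $H$, where $k := |V(H)|$ and $m := |E(H)|$; this is equivalent to the claimed $|F| \ge 2k - m - 2$. The argument will work for any ordering $\pi$ and any strongly covering $F$, so it yields $\SOEI_\pi(H)\ge 2k-m-2$ for every $\pi$ and hence the same bound on $\SOEI(H) = \max_\pi\SOEI_\pi(H)$.

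The main object is the contracted multigraph $H/F$: its vertices are the $c$ connected components $K_1,\dots,K_c$ of $G_F := (V(H), F)$, and its $s$ edges come from $E(H)\setminus F$, with some becoming self-loops (endpoints in the same $K_i$) and others multi-edges between distinct components. Since $H$ is connected, so is $H/F$. The structural claim at the heart of the proof is that \emph{every non-self-loop edge of $H/F$ has at least one parallel companion}. To verify it, take $(uv)\in E(H)\setminus F$ with $u > v$, $u\in K_i$, $v\in K_j$, and $i\neq j$. Strong covering supplies a vertex $w\neq u$ in $V(F^\pi_{\ge v,\ni u})$ that is a neighbor of $v$ in $H$. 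Because $F^\pi_{\ge v}$ is a subgraph of $G_F$, the whole component $F^\pi_{\ge v,\ni u}$ lies inside $K_i$, so $w\in K_i$. The edge $(vw)$ cannot belong to $F$, since otherwise $v$ and $w$ would share a component of $G_F$, contradicting $K_i\neq K_j$. So $(vw)$ is a second non-fragile edge parallel to $(uv)$ in $H/F$, distinct from it because $w\neq u$.

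The parallel-edge property, combined with the fact that the underlying simple graph of $H/F$ is connected on $c$ vertices (and so has at least $c - 1$ edges), yields $s \ge 2(c - 1)$, i.e., $c \le s/2 + 1$. Separately, cycle rank is additive under contraction: the cycle rank $|F| - k + c$ of $G_F$ plus the cycle rank $s - c + 1$ of $H/F$ equals $m - k + 1 = r$. Nonnegativity of the former gives $s - c + 1 \le r$, i.e., $s \le r + c - 1$. Substituting the bound on $c$ produces $s \le r + s/2$, hence $s \le 2r$, and $|F| = m - s \ge 2k - m - 2$.

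The one step requiring care in the full write-up is the parallel-edge claim: namely, verifying that $V(F^\pi_{\ge v,\ni u})\subseteq K_i$ (immediate from $F^\pi_{\ge v}\subseteq F$), that $(vw)$ is forced outside of $F$, and that $(uv)$ and $(vw)$ are genuinely distinct as edges of $H$. The degenerate case $c = 1$ (when $G_F$ is already connected) does not benefit from the parallel-edge inequality, but is covered by the cycle-rank inequality alone since $s \le r \le 2r$, so the bound holds uniformly.
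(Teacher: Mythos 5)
Your proof is correct and rests on the same key observation as the paper's own argument: the strong-covering property forces every edge of $E(H)\setminus F$ crossing between two distinct components of $G_F$ to have a distinct parallel companion (since the component $F^\pi_{\ge v,\ni u}$ sits inside $u$'s component of $G_F$, and the guaranteed neighbour $w\neq u$ of $v$ gives a second cross-edge $(vw)\notin F$). The paper reaches the same conclusion by contradiction and direct counting --- assuming $|F|<2|V(H)|-|E(H)|-2$ gives $t>|E(H)|+2-|V(H)|$ components, and then $|E(H)|\ge 2(t-1)+(|V(H)|-t)>|E(H)|$ --- whereas you package the two inequalities (nonnegative cycle rank of $G_F$, doubled cross-edges) via cycle-rank additivity for the contracted multigraph $H/F$; this is an equivalent and arguably tidier presentation, and you also make explicit the small verifications ($(vw)\notin F$, $(vw)\neq(uv)$) that the paper leaves implicit.
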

It turns out that the $\OEI(H)$ bound is too weak for our results on the second eigenvalue of $\RGGspherehalf$ and we need $\SOEI(H)$ and \cref{prop:sparseSOEI}.
We leave open the problem of proving
\cref{eq:SOEI} for all densities $p$.

\subsubsection{Application I: Testing Between Spherical RGG and \ER}
\label{sec:introSphericaltoER}
In the case of spherical random geometric graphs, we not only confirm that the signed triangle statistic is optimal among low-degree polynomial tests, but also show that this is the case even in the non-adaptive edge-query model recently introduced by \cite{mardia2023finegrainedquery}. For a mask $\mathcal{M}\in \{0,1\}^{N\times N}$ and (adjacency) matrix  $A\in \mathbb{R}^{N\times N},$ denote by $A\odot\mathcal{M}$ the $N\times N$ array in which 
$(A\odot \mathcal{M})_{ji}= A_{ji}$ whenever $\mathcal{M}_{ji} = 1$ and $(A\odot \mathcal{M})_{ji}= ?$ whenever $\mathcal{M}_{ji} = 0.$ 

Testing between graph distributions with masks corresponds to a non-adaptive edge query model. Instead of viewing a full graph, one can choose to observe a smaller more structured set of edges in order to obtain a more data-efficient algorithm. This idea was introduced recently in \cite{mardia2023finegrainedquery},
focusing on the planted clique problem. We obtain the following result for $\RGG$.

\begin{theorem}
\label{thm:nonadaptivespherical}
Consider some $M$ where $n\coloneqq\sqrt{M},d,p$ satisfy the assumptions in \eqref{eq:assumption}. Let $\mathcal{M}$ be any graph on $M$ edges without isolated vertices. Denote by $N$ the number of vertices in $\mathcal{M}$.
If $d \ge (M^{1/2}p)^{3+c}$ for any constant $c>0,$ no degree $(\log n)^{1.1}$ polynomial can distinguish with probability $\Omega(1)$ the distributions
$\mathbf{G}_0\odot \mathcal{M}$ and
$\mathbf{G}_1\odot \mathcal{M}$ for 
$\mathbf{G}_0\sim \mathsf{G}(N,p)$ and
$\mathbf{G}_1\sim \mathsf{RGG}(N,\dsphere, p).$
\end{theorem}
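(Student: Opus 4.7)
The plan is to bound the $\chi^2$ low-degree advantage using Alon's subgraph-counting inequality and the Fourier bound of \cref{thm:oeiboundonfourier}. Under $H_0$ the masked graph consists of $M=|E(\mathcal{M})|$ i.i.d.\ $\mathrm{Bernoulli}(p)$ entries, so the $p$-biased characters $\chi_S$, $S\subseteq E(\mathcal{M})$, form an orthonormal $L_2(H_0)$ basis and
\[
\|L^{\le D}\|^2-1 \;=\; \sum_{\emptyset\ne S\subseteq E(\mathcal{M}),\,|S|\le D}\Phi^p_{\mathbf{G}_1}(S)^2,\qquad D=(\log n)^{1.1}.
\]
Because $\RGG$ is vertex-exchangeable, $\Phi^p_{\mathbf{G}_1}(S)$ depends only on the graph-isomorphism class of $S$ and factorizes over its connected components; the elementary inequality $\sum_H\prod_c f(H_c)\le \exp(\sum_{H'}f(H'))-1$ then reduces the task to showing $\Sigma\coloneqq\sum_{H'\text{ connected}\subseteq E(\mathcal{M}),\,2\le|E(H')|\le D}\Phi^p(H')^2=o(1)$ (single edges contribute zero since $\E[G_{uv}]=p$).

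Now group connected subgraphs of $\mathcal{M}$ by isomorphism class $H$ with $k=|V(H)|$, $e=|E(H)|$. Alon's theorem \cite{Alon1981OnTN} gives $N_H(\mathcal{M})\le\exp(O(k^{1.1}))\cdot M^{(k+\delta(H))/2}$, while \cref{thm:oeiboundonfourier} together with $(1-p)^{-e/2}\le 2^{e/2}$ yields $\Phi^p(H)^2\le (Cp)^e\cdot(\polylog(n)/d)^{\OEI(H)}$. Thus the class $H$ contributes to $\Sigma$ at most
\[
\exp(O(k^{1.1}))\cdot(Cp)^e\cdot M^{(k+\delta(H))/2}\cdot\bigl(\polylog(n)/d\bigr)^{\OEI(H)}.
\]
The dominant term in $\Sigma$ is the triangle $(k,e,\delta,\OEI)=(3,3,0,1)$: its contribution is at most $M^{3/2}p^3\polylog(n)/d=(np)^3\polylog(n)/d\le(np)^{-c}\polylog(n)=o(1)$ under $d\ge(np)^{3+c}$ and $np\ge n^\epsilon$, where $n=\sqrt M$. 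This matches the claimed threshold.

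The main obstacle is to verify that every other connected $H$ contributes no more than the triangle. Applying the bound $\OEI(H)\ge\delta(H)+1$ from \cref{prop:boundsonOEI} to a star $S_r$ yields a class contribution of $(Mp\polylog/d)^r$, which is \emph{not} $o(1)$ in the triangle-critical regime. The resolution exploits the spherical geometry: $\Phi^p(H)=0$ whenever $H$ has a vertex of degree one. Indeed, if $v$ is a leaf with unique neighbor $u$, then conditioning on all latent vectors other than $V_v$, the indicator $G_{uv}$ has conditional mean exactly $p$ by the rotation-invariance of $\unif(\dsphere)$, so the factor $G_{uv}-p$ vanishes in expectation and the whole product $\prod_{(ij)\in E(H)}(G_{ij}-p)$ has zero expectation. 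Thus all trees, stars, and pendant-containing graphs contribute nothing to $\Sigma$. The remaining sum is over connected $H$ with minimum degree at least $2$ (so $e\ge k$), and for these one uses the combined bound $\OEI(H)\ge\max(\delta(H)+1,\lceil(k-1)/2\rceil)$ from \cref{prop:boundsonOEI}. Since $k\le D+1=O((\log n)^{1.1})$ and the number of isomorphism classes on $k$ vertices is at most $2^{k^2}$, the sum over classes is absorbed by the $d^{-\OEI(H)}$ decay, yielding $\Sigma=o(1)$ and the claimed low-degree hardness.
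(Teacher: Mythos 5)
Your proposal follows the same overall strategy as the paper (reduce to a $\chi^2$-type sum via \cref{claim:ldphardagainstER}, use Alon's subgraph-counting bound \cite{Alon1981OnTN} together with \cref{thm:oeiboundonfourier}, kill leafy graphs via \cref{prop:leavesinspherical}), but there is a genuine gap in the final absorption step: you claim that after restricting to leafless connected $H$ you may use ``$e\ge k$'' together with $\OEI(H)\ge\max(\delta(H)+1,\lceil(k-1)/2\rceil)$ to absorb the factor $M^{(k+\delta)/2}=n^{k+\delta}$ from Alon's bound, but this is not enough. The paper's proof needs the \emph{stronger} combinatorial fact (its \cref{prop:smalldeltaifdegreeatleast2}) that for a graph $H$ with minimum degree $\ge 2$ one has $|E(H)|\ge|V(H)|+\delta(H)$, so that $p^e\le p^{k+\delta}$ and the troublesome $n^\delta$ is paired with a compensating $p^\delta$.

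Without that step the argument breaks down when $\delta(H)$ is large and $p$ is small. Concretely, take $H=K_{a,2}$ (leafless, connected) with $k=a+2$, $e=2a$, $\delta(H)=a-2$, and $\OEI(H)\ge a-1$; using only $p^e\le p^k=p^{a+2}$ the class contribution is bounded by
\[
\exp(O(k^{1.1}))\cdot p^{a+2}\,n^{2a}\,d^{-(a-1)},
\]
and plugging in $p=n^{-1+\epsilon}$, $d=(np)^{3+c}=n^{\epsilon(3+c)}$ gives exponent $a\bigl(1-\epsilon(2+c)\bigr)-2+\epsilon(5+c)$ on $n$, which is positive and \emph{increasing} in $a$ once $\epsilon<1/(2+c)$. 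Since under \eqref{eq:assumption} one can have $d\ll n$, the $d^{-\OEI}$ decay alone cannot kill $n^\delta$. Using instead $p^e\le p^{2a}$ (valid precisely because $e\ge k+\delta$) the same computation yields exponent $-\epsilon\bigl(a(1+c)-3-c\bigr)<0$ for $a\ge3$, recovering the decay. So the statement ``the sum over classes is absorbed by the $d^{-\OEI(H)}$ decay'' is false as written; you must additionally invoke and apply $|E(H)|\ge|V(H)|+\delta(H)$ for leafless graphs. (Your claim that the triangle is the dominant term is exactly what this missing step is needed to establish, not something that can be asserted in advance.)

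A smaller remark: your ``combined bound'' $\OEI(H)\ge\max(\delta(H)+1,\lceil(k-1)/2\rceil)$ is fine and in fact implies the paper's weighted-average bound $\OEI(H)\ge(k+\delta)/3$, so once $p^e\le p^{k+\delta}$ is in place your route closes correctly; the exponential-formula reduction to connected $H$ is also a valid (if slightly different) way to handle disconnected subgraphs compared to the paper's direct use of $N(H,\mathcal M)$.
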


We prove \cref{thm:nonadaptivespherical} in \cref{section:sphericaltoER} via a standard $\chi^2$ argument using the bounds on Fourier coefficients in \cref{thm:oeiboundonfourier}.
In the case $\mathcal{M}= K_n,$ we match the conjectured information-theoretic threshold. \cref{thm:nonadaptivespherical} is tight in light of the signed triangle statistic \cite{Liu2022STOC}.

\begin{corollary}
\label{cor:classicspherical}
Consider some $n,d,p$ satisfying assumptions in \eqref{eq:assumption}.
If $d \ge (np)^{3+c}$ for any positive constant $c,$ no degree $(\log n)^{1.1}$ polynomial can distinguish with probability $\Omega(1),$ the distributions 
$\mathsf{G}(n,p)$ and
$\RGGsphere.$
\end{corollary}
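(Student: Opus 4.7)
The plan is to derive Corollary~\ref{cor:classicspherical} as the special case of Theorem~\ref{thm:nonadaptivespherical} with mask $\mathcal{M} = K_n$, the complete graph on $n$ vertices. Three simple observations suffice. First, observing $\bfG \odot K_n$ reveals every entry of $\bfG$, so the masked distinguishing task coincides with the unmasked testing problem between $\mathsf{G}(n,p)$ and $\RGGsphere$. Second, the number of vertices $N$ of $\mathcal{M} = K_n$ equals $n$, and the degree bound $(\log n)^{1.1}$ agrees on both sides, so the quantifiers line up. Third, with $M = \binom{n}{2}$ one has $\sqrt{M} = \Theta(n)$, so the theorem's hypothesis $d \ge (M^{1/2}p)^{3+c}$ is implied by the corollary's hypothesis $d \ge (np)^{3+c}$ after absorbing the constant factor $\sqrt{1/2}$ into a slightly smaller positive constant $c' > 0$.

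There is essentially no obstacle, since all of the analytic work has been carried out inside Theorem~\ref{thm:nonadaptivespherical}. For completeness, the strategy in that theorem (detailed in Section~\ref{section:sphericaltoER}) is the standard $\chi^2$ low-degree advantage computation: expanding the degree-$D$ projection of the likelihood ratio in the $p$-biased Fourier basis,
\begin{equation*}
    \| L^{\le D} \|_2^2 - 1 \;=\; \sum_{\substack{H \subseteq K_n \\ 1 \le |E(H)| \le D}} \Phi^p_{\RGGsphere}(H)^2,
\end{equation*}
grouping by isomorphism type with at most $n^{|V(H)|}$ labeled copies of each type inside $K_n$, and bounding each coefficient by
\begin{equation*}
    |\Phi^p_{\RGGsphere}(H)|^2 \;\le\; O(p)^{|E(H)|} \Bigl(\tfrac{\polylog(n)}{d}\Bigr)^{\OEI(H)}
\end{equation*}
as follows from Theorem~\ref{thm:oeiboundonfourier}. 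Combined with $\OEI(H) \ge \lceil (|V(H)|-1)/2 \rceil$ from Proposition~\ref{prop:boundsonOEI}, the resulting sum is a geometric-type series whose dominant term is the triangle contribution $\Theta(n^3 p^3/d)$; all other isomorphism types are strictly dominated once $d \ge (np)^{3+c}$, because the $\OEI$ lower bound of roughly $|V(H)|/2$ damps the $n^{|V(H)|}$ enumeration factor faster than the $p^{|E(H)|}$ factor grows.

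Consequently, under $d \ge (np)^{3+c}$ and $D \le (\log n)^{1.1}$, the series sums to $o(1)$, yielding low-degree indistinguishability. The only technical content is the Fourier bound of Theorem~\ref{thm:oeiboundonfourier}, which we assume. The remaining step — subgraph enumeration bookkeeping combined with the $\OEI$ lower bound — is routine and is the place to be careful about absorbing the $(\log d)^{3/2}$ and $|V(H)|\cdot|E(H)|$ polynomial-in-$\polylog(n)$ factors in the Fourier bound, but these are harmless since $|V(H)|,|E(H)| \le (\log n)^{1.1}$.
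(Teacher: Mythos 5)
Your proposal is correct and is exactly the paper's own route: Corollary~\ref{cor:classicspherical} is presented as an immediate specialization of Theorem~\ref{thm:nonadaptivespherical} with $\mathcal{M} = K_n$, which is precisely what you do. One small remark: since $M^{1/2} = \sqrt{\binom{n}{2}} \le n$, the implication $d \ge (np)^{3+c} \Rightarrow d \ge (M^{1/2}p)^{3+c}$ already holds with the same $c$, so no shrinking of the constant is needed; and the tiny mismatch between the theorem's degree budget $(\log\sqrt{M})^{1.1}$ and the corollary's $(\log n)^{1.1}$ is a lower-order $O((\log n)^{0.1})$ shift that is absorbed by the $D = O((\log n)^{1.1})$ slack in Proposition~\ref{prop:whenchisquaredissmall}, so the quantifiers do line up as you claim.
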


\subsubsection{Application II: Testing Between Gaussian RGG and \ER}
\label{sec:introGaussiantoER}
We begin with a brief comparison of the Gaussian and spherical models.

\begin{remark}[Gaussian vs Spherical Random Geometric Graphs] 
\label{rmk:GaussvsSphere}
The Gaussian and spherical models coincide in the case $p = 1/2.$ More generally, they are intimately related due to the facts
\begin{align}
    & \text{If }Z\sim \mathcal{N}(0,\tfrac{1}{d}I_d),\text{ then }V:= Z/\|Z\|_2\sim\unif(\dsphere), \text{ and}\tag{I}
\label{eq:gaussiantospherical}\\[2mm]
    & \text{If }Z\sim \mathcal{N}(0,\tfrac{1}{d}I_d), \text{ then }\|Z\|_2 \approx 1\text{ with high probability}.
    \tag{II}
\label{eq:gaussiantosphericalnormfail}
\end{align}
This correspondence has been used to argue about either model -- in some arguments more helpful is independence of Gaussian coordinates \cite{Devroye11,Bubeck14RGG} while in others orthonormality of the Gegenbauer basis over the sphere \cite{li2023spectral}. We exploit this correspondence in both directions. 

We also show that the two models are qualitatively different in the sparse regime (see \cref{fig:gaussandsphere}). The cause of this difference is the perhaps benign looking fact that \cref{eq:gaussiantosphericalnormfail} is only an approximate statement. 
This creates dependence between edges in the Gaussian case for edges which are independent in the spherical case: For example, under $\bfG\sim\RGGsphere,$ the edges $\bfG_{21}$ and $\bfG_{31}$ are independent. In contrast, under $\bfH\sim\RGGgauss,$ $\bfH_{21},\bfH_{31}$ are positively correlated as both are monotone in $\|Z_1\|_2.$ 
The dependence turns out to be quite strong for small values of $p$ to the point where (signed) wedges are better than signed triangles 
for testing against \ER. 
\end{remark}

\begin{theorem}
\label{thm:unmaskedgaussian}
Consider the task of testing between $\RGGgauss$ and $\ergraph$ under \eqref{eq:assumption}. 
\begin{enumerate}
\item When $d\ge \max\big\{(n^3p^3)^{1+c},(n^{3/2}p)^{1+c}\big\}$ for any constant $c>0,$ no degree $(\log n)^{1.1}$ algorithm can distinguish the two graph models with probability $\Omega(1).$ 
\item When $d\le n^{3/2}p(\log n)^{-5}$ and $p\le 0.49,$ the signed wedge count succeeds w.h.p.
\item When $d\le n^{3}p^3(\log n)^{-5},$ the signed triangle count succeeds w.h.p.
\end{enumerate}
\end{theorem}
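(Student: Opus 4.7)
The plan is to use three different arguments for the three claims. Part (1) is a negative (low-degree hardness) result that I would establish via the standard $\chi^2$ divergence formula for low-degree polynomial tests, leveraging \cref{thm:oeiboundonfourier} and the lower bounds in \cref{prop:boundsonOEI}. Parts (2) and (3) are positive testing results for which I would carry out first- and second-moment analyses of the signed wedge and signed triangle counts under both hypotheses; the novelty in (2) is that the signal comes from the norm $\|Z_v\|$ of the latent Gaussian vector, a feature absent in the spherical model.

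\paragraph{Part (1): low-degree indistinguishability.} Set $D = (\log n)^{1.1}$. Since $\{\chi_H\}$ is orthonormal in $L^2(\ergraph)$ and $\Phi^p_{\RGGgauss}(H)$ factorises over the connected components of $H$ (by independence of the latent vectors at different components), the exponential-formula trick yields
\[
\log\bigl(1 + \chi^2_{\le D}(\RGGgauss \,\|\, \ergraph)\bigr) \;\le\; \sum_{\substack{H\text{ connected}\\|E(H)|\le D}} \bigl(\Phi^p_{\RGGgauss}(H)\bigr)^2.
\]
I would group this sum by $k=|V(H)|$ and $e=|E(H)|$, bound the number of labelled connected graphs with these parameters by $n^k\binom{\binom{k}{2}}{e}$, and apply \cref{thm:oeiboundonfourier} with both lower bounds from \cref{prop:boundsonOEI}. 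The two dominant terms occur at $k=3$: for the wedge $P_3$ one has $\delta(P_3)=1$, so $\OEI(P_3)\ge 2$ and the contribution is $\Ot(n^3p^2/d^2)$, which is $o(1)$ exactly when $d\gg n^{3/2}p\cdot(\log n)^{O(1)}$; for the triangle the contribution is $\Ot(n^3p^3/d)$ and yields the second threshold. The tail $k\ge 4$ I would control by noting that under $d\ge\max\{n^3p^3, n^{3/2}p\}^{1+c}$ one has $n^2p^2/d = o(1)$ uniformly in the regime \eqref{eq:assumption} (verified in the two sub-cases $p\lessgtr n^{-3/4}$), which forces successive same-parity path contributions to decay geometrically; the sharper bound $\OEI(H)\ge \delta(H)+1$ handles denser $H$ such as stars, which satisfy $\OEI(K_{1,k-1})\ge k-1$.

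\paragraph{Parts (2) and (3): moment methods for signed wedges and triangles.} For (3), let $T = \sum_{\{u,v,w\}}\prod_{ij}(G_{ij}-p)$. One has $\E_{\ergraph}[T] = 0$ and $\var_{\ergraph}[T] = \Theta(n^3p^3(1-p)^3)$, while by the coupling \eqref{eq:gaussiantospherical}--\eqref{eq:gaussiantosphericalnormfail}, $\E_{\RGGgauss}[T]$ matches $(1\pm o(1))$ times its spherical counterpart $\Tht(n^3p^3/\sqrt d)$ from \cite{Bubeck14RGG,Liu2022STOC}, giving the success regime $d\ll n^3p^3/(\log n)^{O(1)}$ by Chebyshev. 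For (2), define $W = \sum_{v}\sum_{u\ne w}(G_{uv}-p)(G_{vw}-p)$. Conditioning on $Z_v$ makes the two incident edges conditionally independent Bernoulli$(q(\|Z_v\|))$ with $q(r) := \overline\Phi(\rho^p_d\sqrt d/r)$, whence
\[
\E_{\RGGgauss}\!\bigl[(G_{uv}-p)(G_{vw}-p)\bigr]\;=\;\E\bigl[(q(\|Z_v\|)-p)^2\bigr].
\]
A Taylor expansion at $r=1$ using $q'(1) = \phi(\rho^p_d\sqrt d)\,\rho^p_d\sqrt d$, the Mills-ratio estimate $\phi(\rho^p_d\sqrt d) \asymp p\cdot \rho^p_d\sqrt d$, and $\var(\|Z_v\|)\asymp 1/d$ together yield $\E_{\RGGgauss}[W] = \Tht(n^3p^2/d)$. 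I would then verify that $\var_{\RGGgauss}[W]$ is still $\Tht(n^3p^2)$ by decomposing $\E[W^2]$ by wedge-overlap pattern (identical, shared leg, shared centre, disjoint): the disjoint term matches $(\E W)^2$ and cancels, while shared-leg and shared-centre patterns contribute $\Ot(n^4p^3/d)$ and $\Ot(n^5p^4/d^2)$ respectively, both negligible compared to the $\Theta(n^3p^2)$ identical-wedge term when $d\le n^{3/2}p/(\log n)^{O(1)}$. Chebyshev then gives the test. The constraint $p\le 0.49$ enters because $\rho^p_d=0$ at $p=1/2$, making $q'(1)=0$ and killing the wedge signal.

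\paragraph{Main obstacles.} I anticipate two main difficulties. First, for (1), verifying the tail decay in the crossover regime $p\asymp n^{-3/4}$ where neither threshold is much larger than the other requires carefully combining the two $\OEI$ lower bounds in \cref{prop:boundsonOEI}, as the $\lceil(k-1)/2\rceil$ bound alone is not tight for paths $P_k$ with small $k$. Second, for (2), the same-centre overlap of wedges introduces a fourth-moment term $\E[(q(\|Z_v\|)-p)^4]$ that must be kept a factor $n^{-1}$ below $(\E W)^2$; this requires Mills-type tail bounds on the Gaussian quantile $\rho^p_d$ together with sub-Gaussian concentration of $\|Z_v\|$ around $1$ at scale $1/\sqrt d$.
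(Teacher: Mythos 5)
Your approach to Parts (2) and (3) matches the paper's: the paper also conditions on the latent vector (equivalently its norm) to reduce to an inhomogeneous spherical model, obtains $\E_{\RGGgauss}[\SW_\wedge]\asymp p^2/d$ up to polylogs (\cref{lem:wedgelowerbound}), handles the triangle via a direct Gaussian argument analogous to your coupling heuristic (\cref{thm:signedtrianglegauss}), and does the same overlap-pattern variance calculation in \cref{appendix:varianceunmaskedwedge,appendix:varianceunmaskedtriangle}. Those parts are fine.

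Part (1) has a genuine gap: you cannot control the $\chi^2$ sum using \cref{thm:oeiboundonfourier} together with \cref{prop:boundsonOEI} alone. The problem is tree components that are neither single edges nor stars. Take $H=P_4$, the path on four vertices. One checks $\OEI(P_4)=2$ (for any ordering, the two pendant edges must lie in $F$, and they then cover the middle edge; the lower bound $\lceil(|V|-1)/2\rceil=2$ matches, and $\delta(P_4)=0$ gives nothing better). Then \cref{thm:oeiboundonfourier} gives $\Phi^p(P_4)\lesssim p^{3/2}(\log)^{O(1)}/d$, and the contribution to the $\chi^2$ sum is $n^4 p^3/d^2$ up to polylogs. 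Under $d=\max\{(n^3p^3)^{1+c},(n^{3/2}p)^{1+c}\}$ with, say, $p=n^{-0.9}$ ($\epsilon=0.1$) and $c$ small, one computes $n^4p^3/d^2 = n^{\Theta(1)}\to\infty$; your ``geometric decay'' argument controls the ratio $P_{k+2}/P_k$ but not this base case, so the sum diverges. The paper avoids this by treating tree components separately via \cref{prop:graphswithleaves} (proved through \cref{lemma:treecounts}), which gives the much stronger exponent $|E(T)|=|V(T)|-1$ on $1/\sqrt d$ for a tree $T$ — roughly twice what $\OEI$ gives for a path. With that bound, $\Phi^p(P_4)\lesssim p^{3/2}/d^{3/2}$ and the contribution $n^4p^3/d^3$ is $o(1)$. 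To repair your proof, you would need to incorporate the leaf-decomposition bound \cref{prop:graphswithleaves} for the tree components (or equivalently \cref{lemma:treecounts}) and combine it with the $\OEI$ bound on the leafless core, rather than applying the $\OEI$ bound to the whole connected component.
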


In the non-adaptive query complexity model, the difference turns out to be even more dramatic. One can exploit the fact that wedges are highly informative by querying a star-graph, as star graphs maximize the number of wedges for a fixed number of edges.

\begin{theorem}
\label{thm:nonadaptiveGaussian}
Consider some $M$ where $n\coloneqq\sqrt{M},d,p$ satisfy the assumptions in \eqref{eq:assumption}. Let $\mathcal{M}$ be any graph on $M$ edges with no isolated vertices. Let $N$ be the number of vertices in $\mathcal{M}$.
Consider testing between 
$\bfG_0\odot\mathcal{M}$ and $\bfG_1\odot\mathcal{M}$ for
$\bfG_0\sim\mathsf{G}(N,p),
\bfG_1\sim\RGG(N,\standardgaussd, p).$
    \begin{enumerate}
        \item When $d\ge \max\big\{(M^{3/2}p^3)^{1+c},(Mp)^{1+c}\big\} = \max\big\{(n^{3}p^3)^{1+c},(n^2p)^{1+c}\big\}
        $ for any constant $c>0,$ no degree $(\log n)^{1.1}$ algorithm can distinguish the two graph models with probability $\Omega(1).$ 
        \item When $d\le Mp(\log M)^{-5} = \tilde{\Theta}(n^2p)$ and $p\le 0.49,$ the signed wedge count succeeds w.h.p. One possible mask $\mathcal{M}$ is a union of $A = \lceil(\log M)^{17}\rceil$ disjoint stars with $\lfloor M/A\rfloor$ edges each.
        \item When $d\le (M^{3/2}p^3)(\log M)^{-5} = \tilde{\Theta}(n^3p^3),$ the signed triangle count succeeds w.h.p. A possible mask is $\mathcal{M} = K_n.$
    \end{enumerate}
\end{theorem}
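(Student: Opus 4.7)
The plan is to prove the three parts separately. Part~1 is an impossibility statement that I would derive from the low-degree $\chi^2$ framework combined with Theorem~\ref{thm:oeiboundonfourier} and Proposition~\ref{prop:boundsonOEI}; Parts~2 and~3 are algorithmic and follow from first/second-moment computations of signed subgraph counts.

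For Part~1, since $\bfG_0\odot\mathcal{M}$ is a product of $\bern(p)$'s on $E(\mathcal{M})$, Parseval gives
\[
\chi^2_{\le D}(\bfG_1\odot\mathcal{M}\,\|\,\bfG_0\odot\mathcal{M})\;=\;\sum_{\substack{H\subseteq\mathcal{M}\\ 0<|E(H)|\le D}}\Phi^p_{\RGG}(H)^2.
\]
Fourier coefficients factor over connected components, so it suffices to control each connected isomorphism class. For connected $H$ with $k=|V(H)|$, $m=|E(H)|$, Theorem~\ref{thm:oeiboundonfourier} and the estimate $(p(1-p))^{-m}(8p)^{2m}\le(128p)^m$ for $p\le 1/2$ give $\Phi^p_{\RGG}(H)^2\le(128p)^m\polylog(n)^{O(k)}d^{-\OEI(H)}$, and the Alon-type bound recalled after Proposition~\ref{prop:boundsonOEI} says $\mathcal{M}$ contains at most $\exp(O(k^{1.1}))\,M^{(k+\delta(H))/2}$ copies of $H$. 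Using the splitting $2\OEI(H)\ge(\delta(H)+1)+\lceil(k-1)/2\rceil$ afforded by both bounds of Proposition~\ref{prop:boundsonOEI}, the aggregate contribution of an isomorphism class is at most
\[
\polylog(n)^{O(k)}\,\bigl(M^{(k+\delta)/2}\,p^m\,d^{-(\delta+1)/2}\bigr)\,d^{-\lceil(k-1)/2\rceil/2}.
\]
The two minimum-vertex cases saturate the hypothesis exactly: the wedge $(k,m,\delta)=(3,2,1)$ matches the $(Mp)^{1+c}$ threshold (since $\OEI(P_3)\ge 2$ at its worst labelling) and the triangle $(3,3,0)$ matches the $(M^{3/2}p^3)^{1+c}$ threshold. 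All other iso classes are subdominant because the extra factor $d^{-\lceil(k-1)/2\rceil/2}$, combined with $d\ge n^\gamma$, renders the sum over $(k,m,\delta)$ a convergent geometric series bounded by $o(1)$.

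For Part~2, with the proposed mask of $A=\lceil(\log M)^{17}\rceil$ disjoint stars of $s=\lfloor M/A\rfloor$ edges each, I would analyze the signed wedge statistic $T=\sum_{(ji),(jk)\in E(\mathcal{M}),\,i\ne k}(\bfG_{ji}-p)(\bfG_{jk}-p)$. The key calculation is that the Gaussian wedge Fourier coefficient $\phi_w=\mathbb{E}_{\bfG_1}[(\bfG_{ji}-p)(\bfG_{jk}-p)]$ is strictly positive of order $\Omega(p^2/d)$ when $p\le 0.49$; this is the non-spherical effect of Remark~\ref{rmk:GaussvsSphere}, derivable by conditioning on $Z_j$, Taylor-expanding the conditional edge probability $\Pr(\langle Z_i,Z_j\rangle\ge\rho^p_d\mid Z_j)$ around $\|Z_j\|_2=1$, and using $\var(\|Z_j\|_2)=\Theta(1/d)$ from \eqref{eq:gaussiantosphericalnormfail}. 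Since the mask contains $A\binom{s}{2}\asymp M^2/A$ wedges, $\mathbb{E}_{\bfG_1}[T]\asymp M^2p^2/(Ad)$ while $\mathbb{E}_{\bfG_0}[T]=0$; the self-variance under either hypothesis is at most $(\text{num wedges})\cdot(p(1-p))^2=O(M^2p^2/A)$, and cross-variance contributions from overlapping wedges are controlled by Theorem~\ref{thm:oeiboundonfourier} applied to the finitely many four-edge graphs that arise. Chebyshev then distinguishes whenever $(\mathbb{E}[T])^2\gg\var[T]$, i.e.\ $d\ll Mp/\sqrt{A}=Mp/\polylog(M)$, which is implied by the hypothesis. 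Part~3 reduces with $\mathcal{M}=K_n$ to the classical unmasked signed-triangle analysis of \cite{Bubeck14RGG,Liu2022STOC} for spherical $\RGG$, transferred to $\RGGgauss$ via \eqref{eq:gaussiantospherical} and the concentration $\|Z_i\|_2=1\pm o(1)$.

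The main obstacle is Part~1: one must verify that the splitting of $2\OEI(H)$ produces a uniformly convergent sum over iso classes of size $k\le D+1\le(\log n)^{1.1}$ after multiplying by the $2^{\binom{k}{2}}$ possible iso classes and the $\polylog(n)^{O(k)}$ prefactor from Theorem~\ref{thm:oeiboundonfourier}. The delicate regime is near-regular subgraphs where $\delta(H)$ is small (so the $\delta(H)+1$ bound is weak) but $k$ is large; there, only the second bound $\OEI(H)\ge\lceil(k-1)/2\rceil$ of Proposition~\ref{prop:boundsonOEI} absorbs the combinatorial explosion, and one must check that it does so under the hypothesis $d\ge n^\gamma$ and the cap $k\le(\log n)^{1.1}$.
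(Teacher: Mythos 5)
Parts~2 and~3 of your outline are on the right track and essentially match the paper's approach (Lemma~\ref{lem:wedgelowerbound} for the wedge coefficient, the star mask, and reduction to the standard unmasked triangle analysis). The problem is in Part~1, which has a genuine gap that stems from treating the Gaussian and spherical cases identically.

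For $\RGGsphere$, subgraphs with leaves have vanishing Fourier coefficient (\cref{prop:leavesinspherical}), so one may restrict to leaf-free graphs, for which $|E(H)|\ge |V(H)|+\delta(H)$ (\cref{prop:smalldeltaifdegreeatleast2}) and the $\OEI$ bounds of \cref{prop:boundsonOEI} suffice. For $\RGGgauss$ this fails: trees have nonzero coefficients (this is precisely the point of \cref{lem:wedgelowerbound} and \cref{rmk:GaussvsSphere}), and the decay given by \cref{thm:oeiboundonfourier} together with \cref{prop:boundsonOEI} is far too weak for them. Concretely, take $H=P_7$ (the path on $7$ vertices, $6$ edges). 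One has $\delta(P_7)=1$, so Alon's bound allows $N(P_7,\mathcal M)$ as large as $M^{(7+1)/2}=M^4$, while \cref{prop:boundsonOEI} only gives $\OEI(P_7)\ge\max\{\lceil 6/2\rceil,\delta+1\}=3$. Your contribution bound is then $\polylog\cdot M^4 p^6 d^{-3}$. Taking $p=n^{-1/2}$ and $d=(Mp)^{1+c}=n^{(3/2)(1+c)}$, this is $\Theta(n^{(1-9c)/2})$, which is \emph{not} $o(1)$ for any $c<1/9$; yet the theorem is stated for every constant $c>0$. (The problem already surfaces at the wedge: your ``splitting'' $2\OEI(H)\ge(\delta+1)+\lceil(k-1)/2\rceil$ gives only $d^{-3/2}$ decay for $P_3$, leading to $M^2p^2d^{-3/2}\le(Mp)^{1/2-3c/2}$, which is $o(1)$ only when $c>1/3$; the loss is an artifact of averaging the two lower bounds on $\OEI$ instead of using the tight value $\OEI(P_3)=2$.)

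The missing ingredient is \cref{prop:graphswithleaves}, which upgrades the exponent from $\OEI(H)$ to $|E(\TP(H))|+\OEI(\NLP(H))$ via the leaf decomposition (\cref{def:leafdecomposition}). For a pure tree $T$, this gives decay $d^{-|E(T)|}=d^{-(|V(T)|-1)}$ rather than $d^{-\OEI(T)}$, and combining with Alon's bound and \cref{claim:deltaoftrees} ($\delta(T)\le|V(T)|-2$, hence $|V(T)|+\delta(T)\le 2|E(T)|$), the per-tree factor becomes at most $(Mp\,\polylog/d)^{|E(T)|}$, which is subpolynomially small whenever $d\ge(Mp)^{1+c}$. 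The paper's proof in \cref{sec:proofmaskedgaussian} splits each $H$ into its tree part and its leafless part, bounds the former with the tree-tailored exponent and the latter with $\OEI(\NLP(H))\ge(|V(\NLP(H))|+\delta(\NLP(H)))/3$, and glues them with the subadditivity of $|V|+\delta$ under near-disjoint unions. Without this leaf decomposition step, the sum over isomorphism classes cannot be controlled uniformly in $c>0$.
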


The main message of this subsection is that even though the Gaussian and spherical models are closely related and each useful for reasoning about the other, they are also fundamentally different. 

The proofs are similar to the ones in \cref{sec:introSphericaltoER} and are provided in 
\cref{section:gaussiantoER,appendix:gaussiantoERcalc}. We need to take extra care of graphs with leaves (as their Fourier coefficients are non-zero, unlike in the spherical case), which is done in \cref{sec:fourierwithleaveslowerGaussian}.

\begin{remark}
  The work \cite{Brennan21DeFinetti} studies the convergence of masked Wishart matrices to GOE (that is $\bfA_{ji} = \langle Z_i, Z_j\rangle$ instead of $\bfG_{ji} = \indicator[\langle Z_i, Z_j\rangle\ge \rho^p_d].$) Of course, for $p=o(1)$ the $\RGG$ testing problem becomes very different from the Wishart versus GOE problem \cite{brennan2019phase,Liu2022STOC}. 
\end{remark}

\subsubsection{Application III: Testing Between Spherical RGG and Planted Coloring}
\label{sec:introsphericaltoPCol}
In the regime $d \leq (np)^{3-c}$, $\RGG$ is very different from \ER. But is it, perhaps, closely approximated by some other simple model?  
%
%
We show that, with respect to low-degree polynomial tests, $\RGG(n,\dsphere,1/2)$ is indistinguishable from a slight variation of the planted coloring distribution in \cite{kothari2023planted}. We focus on the density 1/2 case, but our arguments can be easily extended (we only use \cref{thm:oeiboundonfourier}, not \cref{eq:SOEI}).

\begin{definition} $\PCol(n,q)$ is the following distribution over $n$ vertex graphs. First, each node $i\in [n]$ independently receives a uniform label $x_i \in [q].$ Then, if $x_i = x_j,$ nodes $i$ and $j$ are adjacent with probability $1.$ If $x_i\neq x_j,$ nodes $i$ and $j$ are adjacent with probability $\frac{1}{2} - \frac{1}{2(q-1)}.$ 
\end{definition}

In comparison, \cite{kothari2023planted} have $i$ and $j$ adjacent with probability $1/2$ when $x_i\neq x_j$. Choosing a value of $q$ so that the signed triangle counts of $\RGGspherehalf$ and $\PCol(n,q)$ (nearly) match, we prove the following fact in \cref{section:RGGtoPC,appendix:rggtopccalc}.

\begin{theorem}
\label{thm:rggtopcthm}
Suppose that $d  \ge n^{8/3+c}$ for any constant $c>0.$ Then, there exists some\linebreak 
$q \in \big[{d^{1/4}}/(\log d),{d^{1/4}}(\log d)\big]$ such that no $(\log n)^{1.1}$-degree  polynomial test distinguishes $\PCol(n,q)$ and $\RGG(n,\dsphere, 1/2)$ with probability $\Omega(1)$. 
\end{theorem}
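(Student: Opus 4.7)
The plan is to apply the refined low-degree advantage formula from \cite{kothari2023planted} together with the Fourier coefficient bounds of \cref{thm:oeiboundonfourier}. The key idea is to choose $q$ so that the triangle Fourier coefficients of $\PCol(n,q)$ and $\RGGspherehalf$ agree exactly, cancelling the dominant distinguishing signal; then control the residual via an advantage formula which avoids the naive multiplicative blowup from configurations involving many triangle components.

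First I would fix $q$ by matching triangle Fourier coefficients. A direct expansion over color classes gives $\Phi^{1/2}_{\PCol(n,q)}(\triangle) = (q-1)^{-2}(1+O(1/q)) = \Theta(q^{-2})$, while the classical signed triangle count for $\RGGspherehalf$ is $\Theta(1/\sqrt{d})$. Equating these forces $q = \Theta(d^{1/4})$, and continuity plus monotonicity in $q$ pin down a specific $q^\star \in [d^{1/4}/\log d,\, d^{1/4}\log d]$ achieving $\Phi^{1/2}_{\PCol(n,q^\star)}(\triangle) = \Phi^{1/2}_{\RGG}(\triangle)$.

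Next, observe that both distributions have $\chi^2$-divergence from $\ergraphhalf$ that is polynomially large (the triangle direction alone contributes $\Theta(n^3/d)\gg 1$), so the naive bound $\mathrm{Adv}^2 \le \sum_H (\Phi_{\PCol}(H) - \Phi_{\RGG}(H))^2$ would blow up on disjoint-union configurations such as ``many triangles plus one differing component.'' The refined formula from \cite{kothari2023planted} exploits multiplicativity of Fourier coefficients over connected components, together with the exact triangle matching, to effectively reduce the advantage bound to a sum over connected $H$ only. For each connected $H$ with $k$ vertices and $m$ edges, a direct enumeration over color matchings gives $|\Phi_{\PCol}(H)| = O((\log d)^{O(k)} q^{-(k-1)})$, with the leading term coming from the ``all vertices same color'' configuration; \cref{thm:oeiboundonfourier} combined with $\OEI(H) \ge \lceil(k-1)/2\rceil$ (\cref{prop:boundsonOEI}) yields $|\Phi_{\RGG}(H)| \le (C k m (\log d)^{3/2}/\sqrt{d})^{\lceil(k-1)/2\rceil}$. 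With $q^\star \asymp d^{1/4}$, both are of order $d^{-(k-1)/4}$ up to polylog factors. Crucially, any $H$ with a leaf vertex has $\Phi = 0$ in both distributions: marginalizing over the leaf's label (resp.\ latent vector) produces a centered edge with conditional expectation zero. Hence the sum is over connected $H$ of minimum degree at least $2$.

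Summing over such $H$, the threshold is determined by the $4$-cycle. For $H = C_4$, $|\Phi_{\PCol}(C_4) - \Phi_{\RGG}(C_4)|^2 = \Theta(1/d^{3/2})$, dominated by the PCol contribution $\Theta(1/q^3) = \Theta(1/d^{3/4})$ (since $|\Phi_{\RGG}(C_4)| \lesssim 1/d$ is lower order), and there are $\Theta(n^4)$ embeddings of $C_4$ in $K_n$, giving total contribution $\Theta(n^4/d^{3/2})$. This is $o(1)$ precisely when $d \ge n^{8/3+c}$, matching the stated threshold. For connected $H$ on $k \ge 5$ vertices of minimum degree at least $2$, the resulting contribution is bounded by $n^k/d^{(k-1)/2}$ up to polylog factors, which is smaller than $n^4/d^{3/2}$ under the hypothesis.

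The main obstacle I anticipate is correctly invoking the refined advantage framework of \cite{kothari2023planted}: the vanilla $L^2(\ergraphhalf)$-bound fails here (in contrast to applications I and II), and verifying that the multiplicative blowup from many-triangle configurations is indeed avoided requires careful use of the conditional formulation together with the exact triangle cancellation. A secondary technical burden is the general PCol Fourier coefficient computation for connected $H$, where one expands $\E_x[\prod_e(q\,\mathbf{1}[x_i = x_j] - 1)]$ into a sum over edge subsets $T \subseteq E(H)$ weighted by $q^{|T|+cc(T)-|V(H)|}$ with alternating signs, then extracts the leading term $q^{m-k+1}$ (arising from $T = E(H)$) and verifies that the resulting bound matches the stated order $q^{-(k-1)}$.
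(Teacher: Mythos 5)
Your overall strategy tracks the paper's: choose $q$ to match the triangle signal, then invoke the Kothari--Vempala--Wein--style refined low-degree bound for planted coloring, and sum the resulting advantage contributions using the Fourier bounds from \cref{thm:oeiboundonfourier,prop:boundsonOEI}. The $C_4$ heuristic you give for the $n^{8/3}$ threshold is a nice piece of intuition that the paper leaves implicit. However, there is one step in your argument that is genuinely incorrect as stated.

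You write that ``any $H$ with a leaf vertex has $\Phi = 0$ in both distributions,'' and conclude that the sum can be restricted to connected $H$ of minimum degree at least $2$. This conflates two different biased Fourier bases. It is true that the \emph{$1/2$-biased} Fourier coefficients of graphs with leaves vanish for both $\RGGspherehalf$ (\cref{prop:leavesinspherical}) and $\PCol(n,q)$ (\cref{prop:pccolors}, item 1). But the refined advantage formula for $\PCol(n,q)$ (\cref{prop:chisquaredtocoloring}) is necessarily phrased in the \emph{$\psi_q$-biased} basis with $\psi_q = \tfrac12 - \tfrac{1}{2(q-1)} \neq \tfrac12$, because $\PCol(n,q)$ is a union of the planted coloring mask with $\mathsf{G}(n,\psi_q)$, and only the $\psi_q$-biased monomials are orthonormal under that ambient product distribution. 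In the $\psi_q$-biased basis, a leaf edge contributes $\E[G_{uv}-\psi_q] = \tfrac{1}{2(q-1)} \neq 0$, so graphs with leaves have nonzero coefficients and nonzero weights $w_H$ (for any $H$ that is not a forest). Concretely, a triangle plus a pendant edge has $\widehat w_H \neq 0$ in general. The paper does \emph{not} discard leafy graphs; instead it expands the $\psi_q$-biased difference $\Phi^{\psi_q}_{\RGG} - \Phi^{\psi_q}_{\PCol}$ into a weighted sum of $1/2$-biased differences over edge subsets (\cref{prop:differenceinshifteedfourier}), uses the leaf decomposition \cref{def:leafdecomposition} to track how many $q^{-1}$ factors the tree part absorbs, and then pushes the resulting bound through the $\widehat w_H$ recurrence by induction. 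Your sketch omits this entirely, and without it the ``sum over minimum-degree-$\geq 2$ graphs'' step has no justification.

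Two smaller points. First, $q$ must be a positive integer, so you cannot choose it by continuity to make the triangle coefficients match exactly; the paper rounds $q = \lceil q_1\rceil$ and then carries the resulting $O(1/q^3)$ discrepancy through the rest of the argument (\cref{eq:choiceofq}). This discrepancy is precisely what makes $\widehat w_\triangle$ nonzero. Second, the refined bound does not literally ``reduce the advantage to a sum over connected $H$ only''; the sum remains over all $H$ with at most $D$ edges, but $\widehat w_H$ factorizes over connected components, which is what tames the many-triangle blowup you correctly identify as the reason the naive $L^2(\ergraphhalf)$ bound fails.
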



\begin{remark}
The condition $q = \tilde{\Theta}(d^{1/4})$
establishes a statistical-computational gap when $d\le n^{4-\kappa}$ for any constant $\kappa>0.$
An instance of $\PCol(n,q)$ has a clique of size $n/q = \tilde{\Omega}(nd^{-1/4})$ with probability 1. However, $\RGGspherehalf$ does not contain a clique of size more than 
$3\log_2 n$ with high probability under \eqref{eq:assumption} by \cite{Devroye11}. Perhaps surprisingly, our result holds in the exact same regime as the results of \cite{kothari2023planted} for refuting $q$-colarability. Namely,  $q = \tilde{\Theta}(d^{1/4}),d\ge n^{8/3}$ is equivalent to $q= \tilde{\Omega}(n^{2/3}).$ Our contribution here is not the analysis, but the realisation that $\RGG$ is indistinguishable from $\PCol.$ We prove hardness for detecting $q$-colarability against the natural $\PCol$ model and do not need to construct a more sophisticated ``quiet distribution'' as in \cite{kothari2023planted}.
\end{remark}

\begin{remark}
\cite{chetelat2019middle} studies a similar question for Wishart matrices in the regime $d = o(n^3)$ when Wishart and GOE are distinguishable. The authors obtain a sequence of phase transitions for the Wishart density. The approximating densities are defined in terms of an inverse Fourier transform and are not easily interpretable, in contrast to the simple $\PCol$ distribution. 
\end{remark}

\subsubsection{Application IV: The Second Eigenvalue of Spherical RGG}
\begin{theorem}
\label{thm:secondeigenvalue}
Suppose that $\bfG\sim \RGG(n,\dsphere, 1/2)$ (equivalently $\bfG\sim\RGGgausshalf$).  
\begin{itemize}
    \item If $d \le n (\log n)^8,$ then with high probability
    $|\lambda_2(\bfG)|\le n(\log n)^{10}/\sqrt{d}.$
    \item If $d\ge n (\log n)^8,$ then with high probability
    $|\lambda_2(\bfG)|\le(\log n)^{10}\sqrt{n}.$
\end{itemize}
\end{theorem}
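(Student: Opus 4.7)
The plan is to center: let $\widetilde{\bfG}\coloneqq\bfG - \E[\bfG]$, where $\E[\bfG] = \tfrac12(J - I)$ has one eigenvalue $(n-1)/2$ along $\mathbf{1}/\sqrt{n}$ and $n-1$ eigenvalues equal to $-1/2$. By Weyl's inequality, $|\lambda_2(\bfG)|\le \|\widetilde{\bfG}\|_{\mathrm{op}}+1$, so it suffices to bound $\|\widetilde{\bfG}\|_{\mathrm{op}}$. I would apply the trace method at an even moment $\ell = 2k$:
\[
\|\widetilde{\bfG}\|_{\mathrm{op}}^{\ell} \;\le\; \trace\big(\widetilde{\bfG}^{\ell}\big) \;=\; \sum_{W}\prod_{t=0}^{\ell-1}\widetilde{\bfG}_{W_t W_{t+1}},
\]
where $W = (W_0,\ldots,W_\ell = W_0)$ ranges over closed length-$\ell$ walks in $[n]$. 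Choosing $k = \lceil C\log n\rceil$ and applying Markov's inequality reduces the task to estimating $\E[\trace(\widetilde{\bfG}^{\ell})]$.

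\textbf{Fourier estimate per walk.} Since $\widetilde{\bfG}_e\in\{\pm 1/2\}$, the identity $\widetilde{\bfG}_e^2 = 1/4$ gives
\[
\prod_{t=0}^{\ell-1}\widetilde{\bfG}_{W_tW_{t+1}} \;=\; 4^{-(\ell - e_o(W))/2}\prod_{e\in E_{\mathrm{odd}}(W)}\widetilde{\bfG}_e,
\]
where $E_{\mathrm{odd}}(W)$ collects edges traversed an odd number of times and $e_o(W) := |E_{\mathrm{odd}}(W)|$. Decomposing $H_{\mathrm{odd}}(W) := (V(W),E_{\mathrm{odd}}(W))$ into connected components $C_1,\ldots,C_m$, Proposition~\ref{eq:SOEI} applied per component, together with Proposition~\ref{prop:sparseSOEI}, yields
\[
\Big|\E\prod_{e\in E_{\mathrm{odd}}(W)}\widetilde{\bfG}_e\Big| \;\le\; 2^{-e_o(W)}\prod_{j=1}^m\Big(\frac{\polylog(d)}{\sqrt d}\Big)^{\max(0,\,2|V(C_j)| - |E(C_j)| - 2)}.
\]
So each walk $W$ contributes at most $4^{-k}$ times a product of $d^{-1/2}$ factors governed by the \emph{sparsity surpluses} of the components of $H_{\mathrm{odd}}(W)$.

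\textbf{Walk enumeration and two regimes.} I would then count walks by type: a F\"uredi--Koml\'os canonical encoding shows that the number of length-$\ell$ closed walks in $[n]$ of a given isomorphism type having $v$ distinct vertices is at most $n^v \cdot \exp(O(\ell\log\ell))$, and the combinatorial factor is $n^{o(1)}$ when $\ell = O(\log n)$. The contributions then split into two families. Wigner-type walks have $e_o(W) = 0$, trace a tree on $k+1$ vertices with each edge used exactly twice, and contribute $\approx n^{k+1}\cdot 4^{-k}$, yielding $\|\widetilde{\bfG}\|_{\mathrm{op}}\lesssim \sqrt n\,\polylog(n)$. Walks with $e_o(W) > 0$ on a \emph{sparse} odd-subgraph trade, per extra odd edge, a factor of $n$ (an additional vertex) for a factor of $\sqrt d$ (Fourier decay); optimising gives $\|\widetilde{\bfG}\|_{\mathrm{op}}\lesssim n\,\polylog(n)/\sqrt d$. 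Taking the minimum of the two bounds reproduces both cases of the theorem, with the crossover at $d \asymp n(\log n)^8$.

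\textbf{Main obstacle.} The delicate step is handling walks whose odd subgraph $H_{\mathrm{odd}}(W)$ contains a dense component $C$ with $2|V(C)| - |E(C)| - 2 \le 0$, for which Proposition~\ref{prop:sparseSOEI} gives no $1/\sqrt d$ decay at all. These must be controlled either by an entropic argument showing such configurations are combinatorially scarce among closed walks of length $\ell$, or by invoking the pointwise bound $|\widetilde{\bfG}_e| = 1/2$ at the price of a reduction in available vertex-count entropy. Matching the walk combinatorics to the component-wise Fourier decay across \emph{all} possible component profiles --- especially near the threshold $d\asymp n$, where the Wigner and geometric contributions are comparable --- is the main technical hurdle.
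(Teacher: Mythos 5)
Your proposal follows the same route as the paper: center the adjacency matrix, apply the trace method at even power $D=\Theta((\log n)^{1.1})$, reduce each walk term to its odd-multiplicity edge subgraph $H_{\mathrm{odd}}(W)$ via $(G_e-\tfrac12)^2 = \tfrac14$, apply the $\SOEI$-based Fourier bound (Propositions~\ref{eq:SOEI} and~\ref{prop:sparseSOEI}) per connected component, and count walks by isomorphism type. The cosmetic differences (centering by $\E[\bfG]$ and invoking Weyl rather than subtracting $\tfrac12\one\one^T$; a F\"uredi--Koml\'os encoding rather than the paper's direct vertex/edge count of the contracted multigraph) do not change the argument, and the paper additionally inserts the $\OEI$ lower bound $\lceil(|V|-1)/2\rceil$ into its $\mu(\cdot)$, which is not strictly needed for this theorem.

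The ``main obstacle'' you flag is, however, not an obstacle, and your own second guess (``reduction in available vertex-count entropy'') already contains the resolution. Write the walk-count bound per component: $f_{D,n}(H)\lesssim D^{O(D)}\,n^{D/2+1}\prod_j n^{(2|V(C_j)|-|E(C_j)|-2)/2}$. When a component $C_j$ is dense, i.e.\ $2|V(C_j)|-|E(C_j)|-2\le 0$, its per-component factor is already $\le 1$: a closed walk that must traverse $|E(C_j)|\ge 2|V(C_j)|-2$ odd edges inside $C_j$ has so little leftover length that the number of distinct vertices is strictly constrained. For such components no $1/\sqrt d$ decay is needed at all; the counting alone absorbs them. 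The $n$-versus-$\sqrt d$ trade is only invoked on sparse components, precisely where Proposition~\ref{prop:sparseSOEI} supplies the exponent $2|V(C_j)|-|E(C_j)|-2$ matching the counting surplus, so $\max(0,\,2|V(C)|-|E(C)|-2)$ per component is all that is required and your ``all possible component profiles'' worry dissolves once the count is split component-wise. One small point that your sketch handles correctly and the paper's displayed identity (the sum over $\graphsnoleaves_D$) silently omits is the $H_{\mathrm{odd}}(W)=\emptyset$ Wigner term; as you note, its contribution is $\lesssim 4^D n^{D/2+1}$ before the suppressed $4^{-D/2}$ factor, hence $\lesssim \sqrt n\,\polylog(n)$ after the $D$-th root, and is dominated by both claimed bounds.
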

Here, we need the strong bounds in \cref{prop:sparseSOEI} for sparse graphs. As these bounds provably do not hold for $\OEI,$ more work is needed to extend to $p\neq 1/2.$ The proof is in \cref{section:spectral}.

\section{Proving the Bounds on Fourier Coefficients}
\label{sec:mainclaim}
Here, we prove our bounds on Fourier coefficients of random geometric graphs by formalizing the argument in \cref{sec:mainclaim}. Specifically, in \cref{sec:FourierByOEI}, we prove \cref{thm:oeiboundonfourier}. In \cref{sec:FourierBySOEI}, we modify the argument slightly to prove the stronger \cref{eq:SOEI} in the density $1/2$ case.  In \cref{sec:OEIandSOEIbounds} we prove the bounds on the edge independence numbers stated in \cref{prop:boundsonOEI,prop:sparseSOEI}.

\subsection{The Main Argument in \texorpdfstring{\cref{thm:oeiboundonfourier}}{OEI bound}}
\label{sec:FourierByOEI}
Fix a connected graph $H$ on $k= |V(H)|$ vertices and $m=|E(H)|$ edges such that $Ckm(\log d)^{3/2} \le \sqrt{d}.$ Let $\pi$ be any bijective labeling of its vertices by $[k].$ We will identify vertices by their labelling in $\pi$ and optimize over $\pi$ at the end. We prove \cref{thm:oeiboundonfourier} in the Gaussian setting and state the necessary modifications for the spherical setting at the end.

\paragraph{Step 1: High-Probability Bound on $
\sum_{\ell< i}Z_{i\ell}Z_{j\ell} + Z_{ji}(Z_{ii} -1)
$.} Recall \cref{eq:innerproductequationintro}. 
As discussed, dependence between edges is due to the term 
\begin{equation}
\label{eq:definitionofgaussianQ}
\begin{split}
Q^{\mathcal{N}(0,\frac{1}{d}I_d)}_{ji}(\{Z_{j\ell}\}_{1\le \ell \le j}, \{Z_{i\ell}\}_{1\le \ell\le i}) \coloneqq
\sum_{\ell< i}Z_{i\ell}Z_{j\ell} + Z_{ji}(Z_{ii} -1).
\end{split}
\end{equation}
We bound the size of this term, along the way introducing notation that will be used later.
Let $$[\RL, \RU] = 
    \Big[- C\sqrt{\frac{m\log d}{{d}}}, C\sqrt{\frac{m\log d}{{d}}}\Big]$$ be the ``reasonable interval" for each summand in \cref{eq:definitionofgaussianQ}: By Gaussian and $\chi^2$-concentration (\cref{fact:GaussianFatcs,prop:chisquaredconcnetrations}) for any desired constant $C'$, there exists some absolute constant $C$ such that under \eqref{eq:assumption} (which implies $\log 1/p= O(\log d)$ and $ \log n =O(\log d)$) we have
    $\prob[Z_{i\ell}\in [\RL, \RU]]\geq 1-e^{-C'm\log d}$ and the same for $(Z_{ii} -1)$.
Denote by 
$$
\RSC:=\Big\{Z_{uv}\in [\RL, \RU]\text{ and } 
    (Z_{uu} - 1)\in  [\RL, \RU]\; \text{ for all }
    1\le u \le v \le k\Big\}
$$ 
    the ``reasonable set of configurations''. By the union bound its complement has probability
\begin{equation}
\label{eq:reasonablevent}
\begin{split}
    &
    \prob[\RSC^c]
    \le 
    k^2e^{-C'm\log d}
    \le p^md^{-m}.
\end{split}
\end{equation}

As $Q^{\mathcal{N}(0,\frac{1}{d}I_d)}_{ji}$ is a sum of at most $k$ terms of order $(C\sqrt{{m\log d}/{{d}}})^2$ under the high probability event $\RSC$, we conclude that with probability at least $1 - p^md^{-m},$ 
\begin{equation}
\label{eq:boundonQij}
    \Big|
    Q^{\mathcal{N}(0,\frac{1}{d}I_d)}_{ji}(\{Z_{j\ell}\}_{1\le \ell \le j}, \{Z_{i\ell}\}_{1\le \ell\le i})
    \Big|\le
    C^2{\frac{km\log d}{{d}}} = \unitD\quad \text{for all }
    (ji)\in E(H),
\end{equation}
where we defined $\unitD\coloneqq C^2{\frac{km\log d}{{d}}}.$
We condition on $\RS.$ Since $|\prod_{(ji)\in E(H)}(G_{ji} - p)|<1$ a.s.,
\begin{equation}
\begin{split}
    \Big|\E\Big[\prod_{(ji)\in E(H)}(G_{ji} - p)\Big]\Big|
    &\le 
    \Big|\E\Big[\prod_{(ji)\in E(H)}(G_{ji} - p)\Big| \RS\Big]\Big|  + 
    \Pr[\RS^c]\\
    & \le 
    \Big|\E\Big[\prod_{(ji)\in E(H)}(G_{ji} - p)\Big| \RS\Big]\Big|  + 
     p^md^{-m}.
\end{split}
\end{equation}
As $p^{m}d^{-m} = p^{|E(H)|}d^{-|E(H)|}\le 
p^{|E(H)|}\sqrt{d}^{-\OEI(H)}
$, it remains only to bound the first term.

\paragraph{Step 2: Fragile Edges.} Observe that under the high probability event in \cref{eq:boundonQij}, as long as 
$Z_{ji}\not \in \big[\rho^p_d - \unitD, \rho^p_d + \unitD\big],$ 
it is the case that 
$$
\indicator[\langle Z_i,Z_j\rangle \ge \rho^p_d] = 
\indicator[Z_{ji} + {Q}^{\mathcal{N}(0,I_d)}_{ji}\ge \rho^p_d] = 
\indicator[Z_{ji}\ge \rho^p_d]
$$
and variables $Z_{ji}$ are independent (even conditioned on $\RS$). Thus, all edges besides the ones for which $Z_{ji}$ is close to $\rho^p_d$ are independent. We localize dependence to the following fragile edges.

\begin{definition}[Fragile Interval and Fragile Edges]
\label{def:fragileedges}
Denote $\FL = \rho^p_d - \unitD$ and $\FU = \rho^p_d + \unitD.$ The \emph{fragile interval} is $[\FL, \FU]$ and an edge $ji$ is called \emph{fragile} if $Z_{ji}\in [\FL, \FU].$  
\end{definition}

Note that each edge is fragile independently as $\{Z_{uv}\}_{1\le u \le v \le k}$ are independent. Let $\mathcal{F}$ be the set of fragile edges.
Now, $Z_{ji}\sim \mathcal{N}(0,1/d)$ and $\unitD = C^2{\frac{km\log d}{{d}}} = o(1/\sqrt{d}) = o(\rho^p_d/\log d)$ imply 
\begin{equation}
\label{eq:probabilityoffragile}
    \prob\Big[Z_{ji}\in [\FL,\FU]\Big]\le 
     \Delta p\sqrt{C''d\log d}
\end{equation}
for some absolute constant $C''$, because $[\FL,\FU]$ has length $\Delta$ and the Gaussian density around $\rho^p_d$ is $\tilde{O}(p\sqrt{d})$ as $p = \prob[\langle Z_1,Z_2\rangle\ge \rho^p_d] = \prob[Z_{11}Z_{21}\ge \rho^p_d]\approx \prob[Z_{21}\ge \rho^p_d]$ and $Z_{21}\sim\mathcal{N}(0,1/d).$ This is formalized via 
\cref{cor:gaussianinintervals}.
Conditioning on the fragile set yields
\begin{equation}
\label{eq:fragiletowerproperty}
\begin{split}
    & \E\Big[\prod_{(ji)\in E(H)}(G_{ji} - p)\Big| \RS\Big]\\
    & = 
    \sum_{F\subseteq E(H)}
    \E\Big[\prod_{(ji)\in E(H)}(G_{ji} - p)\Big| \RS, \mathcal{F} = F\Big]\times \Pr[\mathcal{F} = F|\RS]\\
    & \le 
     2\sum_{F\subseteq E(H)}
    \Big|\E\Big[\prod_{(ji)\in E(H)}(G_{ji} - p)\Big| \RS, \mathcal{F} = F\Big]\Big|\times 
    \big(\Delta p\sqrt{C''d\log d}\big)^{|F|}.
\end{split}
\end{equation}
We used the fact that $\prob[\RS]\ge 1/2$ so 
$\Pr[\mathcal{F} = F|\RS]\le 2\Pr[\mathcal{F} = F].$ This last conditioning is useful, because our noise operator depends on the set of fragile edges.

\paragraph{Step 3: The noise operator.} Conditioned on the reasonable event $\RSC$ and on the set of fragile edges $\mathcal{F} = F,$ we define the following noise operator. It rerandomizes all variables $Z_{uv}$ such that the value of $Z_{uv}$ does not appear in the expression ${Q}^{\mathcal{N}(0,I_d)}_{ji}$ for any fragile $(ji)\in F.$ In particular, as ${Q}^{\mathcal{N}(0,I_d)}_{ji}$ is only a function of $\{Z_{j\ell}\}_{1\le \ell \le j}, \{Z_{i\ell}\}_{1\le \ell\le i},$ we rerandomize all $(uv)\in E(H)\backslash (F\cup \partial^\pi_H(F)).$
For $(uv)\in E(H)\backslash (F\cup \partial^\pi_H(F)),$ conditioned on $\RS$ and $\mathcal{F} = F,$ the variable $Z_{uv}$ uniquely determines $G_{uv}$ (as $(uv)$ is not fragile). Furthermore,  $Z_{(uv)}$ is independent of $(G_{ji})_{(ji)\in E(H)\backslash \{(uv)\}}$ (as there is no fragile edge of the form $(ji)$ for which $(uv)\in \{{j\ell}\}_{1\le \ell \le j}\cup \{{i\ell}\}_{1\le \ell\le i}$
). For clarity and uniformity with \cref{obs:plantedclqiuenoiseoperator}, we spell this out separately.

\begin{observation}[Noise Operator View on $\RGG$]
\label{obs:RGGnoiseoperator}
The noise operator $T_p^{\pi}$ on the distribution $\RGGgauss$ is parametrized by an ordering of the vertices $\pi$ and marginal edge probability $p.$ To sample from $\RGG,$ one first samples a fragile set $\cF$ by including each edge independently with probability $\prob[Z_{ji}\in [\FL,\FU]]$ (recall that edges are fragile independently). $\cF$ together with $\pi$ determines $\partial_H^\pi \cF.$ Then, one samples $H_{\cF\cup\partial_H^\pi \cF}$ from the marginal distribution on edges $\cF\cup\partial_H^\pi \cF$ from the distribution $\RGGgauss$ conditioned on $\cF$ being the fragile set with respect to $\pi.$ Conditioned on $\cF,H_{\cF\cup\partial_H^\pi \cF},$ $T_p^{\pi}$ acts independently on edges with the following noise rates:
\begin{align*}
    T_p^{\pi}(G)_{uv} = 
    \begin{cases}
    H_{uv}\; \text{ for }(uv)\in \cF\cup\partial_H^\pi \cF \text{ (case of no noise)},\\
    \bern(p)\; \text{ otherwise (case of full noise).}
    \end{cases}
\end{align*}
That is, there is no noise on the edges in $\cF\cup\partial\cF$ and the rest of the edges are fully rerandomized. Hence, $T_p^{\pi}(\delta_{K_n})$ is a sample from $\RGGgauss.$

One key difference with \cref{obs:plantedclqiuenoiseoperator} is that the distribution of the edge set $H_{\cF\cup\partial^\pi \cF}$ is much more complicated than the distribution on the planted clique $A.$ The latter is simply a clique, while $H_{\cF\cup\partial_H^\pi \cF}$ is a subgraph of a random geometric graph which is further conditioned on its fragile set.

Nevertheless, just as in \cref{obs:plantedclqiuenoiseoperator}, the independent rerandomization over the rest of the edges $(\cF\cup\partial_H^\pi \cF)^c$ yields an (exponentially fast) decay of Fourier coefficients, which we discuss next.
\end{observation}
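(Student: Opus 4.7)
The plan is to verify the noise-operator description by unpacking the Bartlett decomposition \eqref{eq:Bartlettintro}. Since the variables $\{Z_{ji}\}_{i\le j}$ are jointly independent and the event ``$(ji)\in\cF$'' depends only on whether $Z_{ji}\in[\FL,\FU]$, the fragile set $\cF$ has a product Bernoulli distribution over $E(H)$ with parameters $\Pr[Z_{ji}\in[\FL,\FU]]$, which justifies step 1 of the procedure. Step 2 is tautological: by declaration, $H_{\cF\cup\partial^\pi_H\cF}$ is drawn from its marginal under $\RGGgauss$ conditioned on $\cF$, so there is nothing to check beyond well-definedness.

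The substantive content lies in step 3, which requires showing that conditional on the data produced in steps 1--2, the edges $(uv)\in E(H)\setminus(\cF\cup\partial^\pi_H\cF)$ can be treated as independent Bernoullis. The key algebraic observation is that $Z_{uv}$ (with $u>v$) enters $Q^{\standardgaussd}_{u'v'}$ from \eqref{eq:definitionofgaussianQ} if and only if $u$ is an endpoint of $(u'v')$. Consequently, for a non-fragile non-covered edge $(uv)$, by \cref{def:covering} no fragile edge shares the endpoint $u$, so $Z_{uv}$ does not enter $Q^{\standardgaussd}_{u'v'}$ for any $(u'v')\in\cF$. Moreover, on the reasonable event $\RSC$ the bound \eqref{eq:boundonQij} makes every non-fragile edge value equal to $\indicator[Z_{u'v'}\ge \rho^p_d]$, so $Z_{uv}$ is also functionally independent of the covered edge values sampled in step 2. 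Hence the $Z_{uv}$ for $(uv)\notin \cF\cup\partial^\pi_H\cF$ remain jointly independent and Gaussian (restricted to the complement of $[\FL,\FU]$) conditional on steps 1--2, and their induced edge values are conditionally independent Bernoullis.

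The main obstacle is that ``full noise $\bern(p)$'' is only approximately correct: the true conditional law of $\indicator[Z_{uv}\ge\rho^p_d]$ given $Z_{uv}\notin[\FL,\FU]$ is Bernoulli with parameter $\Pr[Z_{uv}\ge\FU]/(1-\Pr[Z_{uv}\in[\FL,\FU]])= p+O(\unitD p\sqrt{d\log d})$ rather than exactly $p$, and off $\RSC$ the identification $G_{uv}=\indicator[Z_{uv}\ge\rho^p_d]$ can fail. Both discrepancies are controlled downstream: the off-$\RSC$ contribution is bounded by $\Pr[\RSC^c]\le p^m d^{-m}$ via \eqref{eq:reasonablevent}, and the $O(\unitD p\sqrt{d\log d})$ distortion in the Bernoulli parameter is absorbed into the constant $C$ of \cref{thm:oeiboundonfourier}. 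Thus the observation is best read as an exact description of $\RGGgauss$ on the high-probability event $\RSC$ together with a quantitatively controlled correction off of it, which is precisely the structure needed for the Fourier coefficient computation that follows.
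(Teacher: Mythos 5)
Your argument mirrors the paper's own reasoning: both rely on the Bartlett decomposition \eqref{eq:Bartlettintro} to conclude that the fragile set has a product-Bernoulli law, and both use the covering relation of \cref{def:covering} to isolate exactly those $Z_{uv}$ that can appear in a fragile $Q^{\standardgaussd}_{ji}$, so that on $\RSC$ the non-fragile, non-covered edges are functions of their own $Z_{uv}$ alone and hence conditionally independent of $H_{\cF\cup\partial_H^\pi\cF}$. You usefully make explicit what the Observation leaves implicit: read literally it is only an approximation, since the conditional law of a rerandomized edge is $\bern\bigl(p+O(\unitD p\sqrt{d\log d})\bigr)$ rather than $\bern(p)$, and the independence requires $\RSC$ --- precisely the two errors that Step~1 and \eqref{eq:marginalofnonfragile} of \cref{sec:FourierByOEI} track. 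One correction to your final sentence: the $O(\unitD p\sqrt{d\log d})$ distortion is not ``absorbed into the constant $C$'' of \cref{thm:oeiboundonfourier}; it \emph{is} the per-edge decay factor $\tilde{O}(p\cdot km/\sqrt{d})$ whose $\OEI(H)$-th power produces the bound, so it is the entire source of the decay rather than a small correction to it.
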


We separate the conditional signed expectation into the portion rerandomized by the noise operator, i.e. $E(H)\backslash(F\cup \partial^\pi_H(F)),$ and a portion that is not rerandomized, i.e. $F\cup \partial^\pi_H(F)$:

\begin{align}
%
    & \Big|\E\Big[\prod_{(ji)\in E(H)}(G_{ji} - p)\Big| \RS, \mathcal{F} = F\Big]\Big| \notag\\
    & = 
    \Big|
    \prod_{(ji)\in E(H)\backslash (F\cup \partial^\pi_H(F))}\E\Big[(G_{ji} - p)\Big| \RS, \mathcal{F} = F\Big]\times 
    \E\Big[\prod_{(ji)\in F\cup \partial^\pi_H(F)}(G_{ji} - p)\Big| \RS, \mathcal{F} = F\Big]\Big|\notag\\
    & \le
    \prod_{(ji)\in E(H)\backslash (F\cup \partial^\pi_H(F))}\Big| 
    \expect\Big[\indicator[Z_{ji}\ge \rho^p_{d}] - p\Big|Z_{ji}\in [\RL,\RU]\backslash [\FL,\FU]\Big]
    \Big|\notag\\
    &\quad\qquad\qquad\qquad\qquad\qquad\qquad \times 
    \E\Big[\prod_{(ji)\in F\cup \partial^\pi_H(F)}|G_{ji} - p|\;\Big| \RS, \mathcal{F} = F\Big].\label{eq:signedconditionedonF}
\end{align}
Next, we bound the factors for $E(H)\backslash (F\cup \partial^\pi_H(F))$ and then the factors for $F\cup \partial^\pi_H(F)$. 

\paragraph{Step 4: Factors $E(H)\backslash (F\cup \partial^\pi_H)$ Rerandomized By The Noise Operator.} A simple calculation with 1-dimensional Gaussian variables using \cref{eq:reasonablevent,eq:probabilityoffragile} 
and the fact that $\prob[Z_{ji}\ge \rho^p_d]= p+ O(p(\log d)/\sqrt{d})$ (see \cref{cor:gaussianinintervals}) gives
\begin{equation}
\label{eq:marginalofnonfragile}
    \Big| 
    \expect\Big[\indicator[Z_{ji}\ge \rho^p_{d}] - p\Big|Z_{ji}\in [\RL,\RU]\backslash [\FL,\FU]\Big]
    \Big|\le \Delta p\sqrt{C'''d\log d}.
\end{equation}

\paragraph{Step 5: Factors $F\cup\partial^\pi_HF$.}
Convexity of $|\cdot|,$ the fact $G_{ji}\le 1$ a.s., and triangle inequality give
\begin{align}
    & \E\Big[\prod_{(ji)\in F\cup \partial^\pi_H(F)}\big|G_{ji} - p\big|\;\Big| \RS, \mathcal{F} = F\Big]\notag\\
    & \le 
\sum_{B\subseteq F\cup \partial^\pi_H(F)}
    p^{|F\cup \partial^\pi_H(F)| - |B|}
    \E\Big[\prod_{(ji)\in B}G_{ji} \; \Big|\; \RS, \mathcal{F} = F\Big]\notag\\
    &\le 
    \sum_{B\subseteq F\cup \partial^\pi_H(F)}
    p^{|F| + |\partial^\pi_H(F)| - |B|}
    \E\Big[\prod_{(ji)\in B\backslash F}G_{ji}\; 
    \Big|\; \RS, \mathcal{F} = F\Big].\label{eq:contributionofadjacenttofragile}
\end{align}
For each $(ji)\in B\backslash F,$ the edge $G_{ji}$ is determined by $Z_{ji},$ conditioned on $\RS, \mathcal{F} = F.$ Thus, 
$$
\E\Big[\prod_{(ji)\in B\backslash F}G_{ji}\; \Big|\; \RS, \mathcal{F} = F\Big] = 
\expect\big[\indicator[Z_{ji}\ge \rho^p_d]| \RS, \mathcal{F} = F\big]^{|B| - |F|}.
$$
By \cref{eq:marginalofnonfragile}, the last expression is at most $ (2p)^{|B| - |F|}.$ Plugging this into \cref{eq:contributionofadjacenttofragile},
\begin{align}
&    \E\Big[\prod_{(ji)\in F\cup \partial^\pi_H(F)}\big|G_{ji} - p\big|\;\Big| \RS, \mathcal{F} = F\Big] \notag \\
    & \qquad\qquad\le 
    \sum_{B\subseteq F\cup \partial^\pi_H(F)}
    p^{|F| + |\partial^\pi_H(F)| - |B|}\times (2p)^{|B| - |F|}\le 4^{|E(H)|}(2p)^{|\partial^\pi_HF|}.\label{eq:contribAdjFrag2}
\end{align}

\paragraph{Step 6: Putting It All Together.}
Plugging \cref{eq:marginalofnonfragile} and \cref{eq:contribAdjFrag2} into \cref{eq:signedconditionedonF}, the conditional Fourier coefficients are bounded as
\begin{equation*}
    \begin{split}
        \Big|\E\Big[\prod_{(ji)\in E(H)}(G_{ji} - p)\Big| \RS, \mathcal{F} = F\Big]\Big|\le 
        \big(\Delta p\sqrt{C'''d\log d}\big)^{|E(H)|- |F| - |\partial^\pi_H F|}
        \times 4^{|E(H)|}\times (2p)^{|\partial^\pi_HF|}.
    \end{split}
\end{equation*}
We combine this with \cref{eq:fragiletowerproperty} to obtain
\begin{align*}
    &\Big|\E\Big[\prod_{(ji)\in E(H)}(G_{ji} - p)\Big| \RS\Big]\Big|\\
    & \le 
    2\sum_{F\subseteq E(H)}
    \big(\Delta p\sqrt{C''d\log d}\big)^{|E(H)|- |F| - |\partial^\pi_H F|}
        \times 4^{|E(H)|}\times (2p)^{|\partial^\pi_HF|}
    \times 
    \big(\Delta p\sqrt{C'''d\log d}\big)^{|F|}\\
    & \le \max_{F\subseteq E(H)}
    2(8p)^{|E(H)|}
    \big(\Delta\sqrt{Cd\log d}\big)^{|E(H)|- |\partial^\pi_HF|}
    \le
    \max_{F\subseteq E(H)}
    2(8p)^{|E(H)|}
    \Big(\frac{Ckm(\log d)^{3/2}}{\sqrt{d}}\Big)^{|E(H)|- |\partial^\pi_HF|}.
\end{align*}

All that is left to show is $|E(H)|- |\partial^\pi_HF|\ge \OEI_\pi(H).$ This follows immediately because for any $F\subseteq E(H),$ the set $E(H)\backslash \partial^\pi_HF$ satisfies the covering properties \cref{def:covering} (note that\linebreak  $F\subseteq E(H)\backslash \partial^\pi_HF$). Finally, we can choose $\pi$ as the maximizer of $\OEI_\pi(H)$ and conclude \cref{thm:oeiboundonfourier} in the Gaussian case.

\begin{remark}[Energy-Entropy Trade-off] 
\label{rmk:energyentropytradeoff}
$|E(H)|- |\partial^\pi_HF|\ge \OEI_\pi(H)$ highlights the following energy-entropy trade-off phenomenon in $\RGG.$ Rewrite it as $|F| +|E(H)\backslash (F\cup \partial^\pi_HF)|\ge \OEI_\pi(H).$ The term $|E(H)\backslash (F\cup \partial^\pi_HF)|$ corresponds to entropy in the distribution as it is the size of the subset of edges which the noise operator rerandomizes (and are independent with all edges in $E(H)$). The term $|F|$ measures energy as $\cF= F$ is the subset of edges with non-trivial interactions (dependence) with other edges in $|E(H)|.$ 
The inequality shows that energy and entropy cannot both be small, and either one being large results in small Fourier coefficient: entropy due to randomness and energy due to low probabilities. 
\end{remark}

\paragraph{The Spherical Case.} The analysis of the spherical case is nearly the same. We generate $V_1, V_2, \ldots, V_k\sim_{iid}\unif(\dsphere)$ as $V_i = Z_i/\|Z_i\|_2,$ where $Z_1, Z_2, \ldots, Z_k\sim_{iid}\mathcal{N}(0,\frac{1}{d}I_d).$ Then, we apply the Gram-Schmidt process on $Z_1, Z_2, \ldots, Z_k.$ With respect to the Bartlett decomposition,
\begin{equation*}
 \langle V_i,V_j\rangle = 
\frac{1}{\|Z_i\|_2\times \|Z_j\|_2}\langle Z_i, Z_j\rangle
 = 
Z_{ji} + {Q}^{\dsphere}_{(ji)}(\{Z_{j\ell}\}_{1\le \ell \le j}, \{Z_{i\ell}\}_{1\le \ell\le i})\,,
\end{equation*}
where
\begin{align*}
   &{Q}^{\dsphere}_{(ji)}(\{Z_{j\ell}\}_{1\le \ell \le j}, \{Z_{i\ell}\}_{1\le \ell\le i})
   \\&\qquad
   \coloneqq\Big(
\sum_{\ell< i}Z_{i\ell}Z_{j\ell} + Z_{ji}(Z_{ii} -1)
\Big)\frac{1}{\|Z_i\|_2\times \|Z_j\|_2} + 
Z_{ji}\times \Big(\frac{1}{\|Z_i\|_2\times \|Z_j\|_2} - 1\Big). 
\end{align*}
The function ${Q}^{\dsphere}_{(ji)}$ depends on the exact same set of variables as $Q^{\mathcal{N}(0,\frac{1}{d}I_d)}_{ji}$ and also takes value in $[-C'''pmk\log d/\sqrt{d},C'''pmk\log d/\sqrt{d}]$ under $\RS.$ The rest of the analysis is identical.

\subsection{Bounds on the Ordered Edge Independence Numbers.}
\label{sec:OEIandSOEIbounds}
\begin{proof}[Proof of first part of \cref{prop:boundsonOEI}]
We first bound $\OEI_\pi(H)$ by another quantity. Denote by $\mathcal{SN}_\pi(H)$ the set of vertices $j$ of $H$ for which there exists some $i<j$ such that 
$(ji)\in E(H),$ i.e. with a smaller neighbour. Then, 
$\OEI_\mathcal{A}(H)\ge 
\lceil |\mathcal{SN}_\pi(H)|/2\rceil.
$ To prove this, suppose, for the sake of contradiction, that there exists a set of $r < |\mathcal{SN}_\pi(H)|/2$
edges $(j_1, i_1), (j_2, i_2), \ldots, (j_r, i_r)$ that satisfy the covering properties from \cref{def:covering}. Since $r<|\mathcal{SN}_\pi(H)|/2,$
$
\Big|\{j_1, i_1, j_2, i_2, \ldots, j_r, i_r\}\Big| <
|\mathcal{SN}_\pi(H)|.
$
Thus, there exists some vertex $j^*\in \mathcal{SN}_\pi(H)$ such that $j^*\not \in \{j_1, i_1, j_2, i_2, \ldots, j_r, i_r\}.$ Let $i^*<j^*$ be a vertex such that $(j^*i^*)$ is an edge. Such an $i^*$ exists by the definition of $\mathcal{SN}_\pi(H).$ But then, clearly, 
$
(j^*i^*)\not \in 
\partial^\pi_HF.
$
This contradicts the fact that the
edges $(j_1, i_1), (j_2, i_2), \ldots, (j_r, i_r)$ satisfy the covering properties.

Now, we need to show that $\max_\pi |\mathcal{SN}_\pi(H)|\ge |V(H)|-1$ for connected $H.$ Let $T$ be a rooted spanning tree of $H.$ Define $\pi$ to be any labelling of $H,$ such that for all $i,$ all vertices on level $i+1$ have a larger label than the vertices on level $i.$ Clearly, the root is the only vertex without a neighbour with a smaller label. \end{proof}

\begin{proof}[Proof of second part of \cref{prop:boundsonOEI}]
Recall that $\delta(H):=\max_{S\subseteq V(H)}\{|S|- |N_H(S)|\},$ where $N_H(S):=\{j \in V(H)\; : \; \exists i \in S\text{ s.t. }(ji)\in E(H)\}$
The inequality $\OEI(H)\ge \delta(H)+1$ is clear if $\delta(H) = 0,$ so we assume throughout that
$\delta(H)\ge 1.$ We need the following fact which we prove for completeness.
\begin{proposition}[\cite{Alon1981OnTN}]
\label{prop:independentmaximizerofdelta}
There exists an independent set $S'\in \arg\max_S\{|S| - |N_H(S)|\}.$
\end{proposition}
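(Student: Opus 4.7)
The plan is to start from an arbitrary maximizer $S^* \in \arg\max_S\{|S| - |N_H(S)|\}$ and extract from it a canonical independent subset $S' \subseteq S^*$ that still achieves the optimum. The natural candidate is the set of vertices of $S^*$ with no neighbor inside $S^*$, namely $S' := S^* \setminus N_H(S^*)$. This set is independent essentially by construction, since any two adjacent vertices of $S'$ would each witness the other's membership in $N_H(S^*)$, contradicting $S' \cap N_H(S^*) = \emptyset$.

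The proof then reduces to two bookkeeping ingredients. The size accounting is immediate: $|S'| = |S^*| - |S^* \cap N_H(S^*)|$. The more substantive point is the containment $N_H(S') \subseteq N_H(S^*) \setminus S^*$. Indeed, $N_H(S') \subseteq N_H(S^*)$ follows from $S' \subseteq S^*$; and if some $z \in S^*$ lay in $N_H(S')$, then $z$ would be adjacent to some $s \in S'$, forcing $s \in N_H(S^*)$ and contradicting the definition of $S'$. This yields $|N_H(S')| \leq |N_H(S^*)| - |S^* \cap N_H(S^*)|$.

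Subtracting the two bounds, the term $|S^* \cap N_H(S^*)|$ cancels exactly and I obtain
\[ |S'| - |N_H(S')| \;\geq\; |S^*| - |N_H(S^*)|. \]
Since $S^*$ was assumed to be a maximizer, equality must hold, so $S'$ is an independent set attaining the maximum value $\delta(H)$. I do not expect any genuine obstacle here: the whole argument rests on the exact cancellation above, which in turn reflects the elementary but crucial fact that the vertices by which $S'$ shrinks relative to $S^*$ are precisely the vertices of $S^*$ that are \emph{forbidden} from appearing in $N_H(S')$. Thus both terms shift by the same amount and the inequality defining $\delta(H)$ is preserved under the passage from $S^*$ to its independent core $S'$.
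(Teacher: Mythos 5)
Your proof is correct and follows exactly the same construction as the paper: take an arbitrary maximizer and replace it by $S' = S^* \setminus N_H(S^*)$, which is independent and does no worse. The paper states this in one line; you simply supply the cancellation accounting that the paper leaves implicit.
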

\begin{proof} Take any $S''\in \arg\max_S\{|S| - |N_H(S)|\}.$ Then, $S' = S''\backslash N(S'')$ is an independent set and satisfies $|S'| - |N_H(S')|\ge |S''| - |N_H(S'')|.$
\end{proof}

Let $S'\in \arg\max_S{|S| - |N_H(S)|}$ be independent and $N_H(S') = \{u_1, u_2, \ldots, u_\ell\}.$ Define the sets
\begin{enumerate}
    \item $S'_1 = N_H(u_1)\cap S'$ and
    \item $S'_i = \big(N_H(u_i)\backslash (N_H(u_1)\cup N_H(u_2)\cup\cdots\cup N_H(u_{i-1}))\big)\cap S'.$
\end{enumerate}

Take any ordering $\pi$ such that the following vertices appear in the following decreasing order:
$$
S_1', u_1, S_2', u_2, \ldots, S_\ell', u_\ell, V(H)\backslash (S'\cup N_H(S')),
$$
i.e. $\pi(x)> \pi(u_1)$ for all $x\in S_1',$
$\pi(u_1)>\pi(y)$ for all $y \in S_2',$
and so on. Let $F$ be any set satisfying the covering properties in \cref{def:covering} with respect to this ordering. Observe that for each $k\in [\ell],v\in S'_k,$ the vertex $u_k$ is the only neighbour of $v$ among $S_1'\cup S_2'\cdots\cup S'_k\cup \{u_1, u_2, \ldots,u_k\}.$ Thus, the set $F$ must include the edge $(v u_k)$ as otherwise 
$(vu_k)\not \in \partial^\pi_HF.$
As this holds for each  $k\in [\ell],v\in S'_k,$ there are at least $\sum_{k = 1}^\ell |S'_k| = |S'| = \delta(H) + |N(S')|\ge \delta(H)  + 1$ edges in $F.$
\end{proof}

\begin{proof}[Proof of \cref{prop:sparseSOEI}]
Suppose that there exists a set $F$ satisfying the strong covering property for which $|F|<2|V(H)|- |E(H)| - 2.$ In particular, the graph defined by vertices $V(H)$ and edges $F$ has $t > |V(H)|- (2|V(H)|- |E(H)| - 2) = |E(H)| +2- |V(H)|$ connected components. Let $C_1, C_2, \ldots, C_t$ be these connected components. Since $H$ is connected, there exist at least $t-1$ pairs of  different connected components $C_i, C_j$ with an edge between them. However, if $(uv)$ is such an edge between different connected components, where $u\in C_i, v\in C_j$ and $u>v,$
$v$ must have another neighbour in $C_i$ as $F$ satisfies the strong covering property in \cref{def:strongcovering}. Thus, whenever there is an edge between $C_i$ and $C_j,$ there are at least two such edges. Hence, the total number of edges in $H$ is at least 
$$2(t-1)+\sum_{i= 1}^t(|V(C_i)| - 1) = 
|V(H)| - t + 2t - 2 = 
|V(H)| + t - 2> |E(H)|,
$$  
which is a contradiction. \end{proof}

\begin{remark} This proof holds for any ordering $\pi.$ We expect that choosing an optimal $\pi$
will yield an improved bound.
\end{remark}

\subsection{Improving The Bound in The Half-Density Case in \texorpdfstring{\cref{eq:SOEI}}{SOEI bound}}
\label{sec:FourierBySOEI}
The density $1/2$ case is special as there is a measure preserving map between $[\RL, \FL]$ and $[\FU, \RU]$ which also preserves norms -- namely, $\Xi(Z),$ where
$\Xi(Z):= -Z.$ In particular, the analogue of \cref{eq:marginalofnonfragile} is 
$$ 
    \expect\Big[\indicator[Z_{ji}\ge \rho^{1/2}_{d}] - 1/2\Big|Z_{ji}\in [\RL,\RU]\backslash [\FL,\FU]\Big]
     = 0.
    $$
Thus, unless $(F\cup \partial^\pi_HF) = \emptyset,$ the expression in \cref{eq:signedconditionedonF} is equal to 0. This immediately yields an improved bound on the Fourier coefficient of the form $2\times p^{|E(H)|}\times (C'''pkm(\log d)^{3/2}/\sqrt{d})^{\OEI(H)}.$

\paragraph{A more powerful noise operator.}
The map $\Xi(Z),$ however, allows us to do more. One can apply a noise operator on certain edges adjacent to fragile edges. The reason is that 
$$
Q^{\mathcal{N}(0,\frac{1}{d}I_d)}_{ji}(\{Z_{j\ell}\}_{1\le \ell \le j}, \{Z_{i\ell}\}_{1\le \ell\le i}) = 
Q^{\mathcal{N}(0,\frac{1}{d}I_d)}_{ji}(\{\Xi(Z_{j\ell})\}_{1\le \ell \le j, i \neq j}, \{\Xi(Z_{i\ell})\}_{1\le \ell< i}, Z_{ii}, Z_{ji})
$$
and similarly for the spherical analogue $Q^{\dsphere}_{ji}.$

Namely, condition on $\mathcal{F} = F.$ Take any $(ji) \in E(H)\backslash F$ such that, furthermore, $j$ is the unique neighbour of $i$ according to $H$ in 
$V(F^\pi_{\ge i, \ni j})$ (equivalently, 
$(ji) \in E(H) \backslash (F\cup \eth^{\,\pi}_H F)$
by \cref{def:strongcovering}). This means that the operation 
$(Z_{ai})_{a\in V(F^\pi_{\ge i, \ni j})}\longrightarrow 
(\Xi(Z_{ai}))_{a\in V(F^\pi_{\ge i, \ni j})}
$ changes $G_{ji}$ to $1- G_{ji}$ but leaves all other edges $G_{uv}$ unchanged. Indeed, consider the cases for $(uv)$:
\begin{enumerate}
    \item $(uv) = (ja)$ for some $a\in V(F^\pi_{\ge i, \ni j}).$ However, $(ji)$ is the unique edge with this property.
    \item $(uv)$ is not fragile and not of the form $(ja)$ for some $a\in V(F^\pi_{\ge i, \ni j}).$ Then, $Z_{uv}$ remains unchanged and so does $G_{uv}$ as $uv$ is not fragile.
    \item $(uv)$ is fragile and at least one of $u$ and $v$ is not in $V(F^\pi_{\ge i, \ni j}).$ Then, either $\min(u,v)\le i,$ in which case $G_{uv}$ can only depend on 
    $Z_{ai}$ via its norm, $Z_{ai}^2$ (this follows from the definitions of ${Q}^{\standardgaussd}_{uv}$ and ${Q}^{\dsphere}_{uv}$). However, 
    $Z_{ai}^2 = \Xi(Z_{ai})^2.$
    \item $(uv)$ is fragile and 
    $u,v\in V(F^\pi_{\ge i, \ni j}),$ in which case 
    $G_{uv}$ depends on terms of the form $Z_{ai}$ via the product $Z_{ui}Z_{vi},$ but $Z_{ui}Z_{vi} = 
    \Xi(Z_{ui})\Xi(Z_{vi})$ (again, this follows from the definitions of ${Q}^{\standardgaussd}_{uv}$ and ${Q}^{\dsphere}_{uv}$).
\end{enumerate}

With this property in mind, we define the following noise operator: for each edge $(ji) \in E(H) \backslash (F\cup \eth^{\,\pi}_H F),$
one independently applies with probability $1/2$ the operation\linebreak 
$(Z_{ai})_{a\in V(F^\pi_{\ge i, \ni j})}\longrightarrow 
(\Xi(Z_{ai}))_{a\in V(F^\pi_{\ge i, \ni j})}.
$ 
As long as there is at least one such edge, 
the corresponding expected signed weight $\SW_H^{1/2}$ becomes zero.

By \cref{def:strongcovering}, this is always the case if $|F|< \SOEI_\pi(H).$ An analogous argument to the one in \cref{sec:FourierByOEI} allows us to bound 
$$
\Big|\expect\Big[\prod_{ji\in E(H)}(G_{ji} - 1/2)\Big]\Big|\le
2\times p^{|E(H)|}\times (C'''pkm(\log d)^{3/2}/\sqrt{d})^{\SOEI_\pi(H)}.
$$
Optimizing over the labelling $\pi$ yields \cref{eq:SOEI}.

\begin{remark}[Why not other densities?]
\label{rmk:whynototherdensities}
In principle, we could have carried out the same argument for other densities by fixing some measurable bijection $\Xi_p : [\RL,\FL]\longrightarrow[\FU,\RU], \Xi_p: [\RL,\FL]\longrightarrow[\FL,\RL].$ Then, we apply it independently to variables $(Z_{ai})_{a\in V(F^\pi_{\ge i, \ni j})}$ for edges $(ji)\in E(H)\backslash(F\cup \eth^{\,\pi}_HF)$ with some probability $\tilde{p}$ so that marginal distributions remain Gaussian. The difficulty with this approach is that for essentially any other density besides $1/2,$
the equality $Z_{ui}Z_{vi} = 
    \Xi_p(Z_{ui})\Xi_p(Z_{vi})$ will not hold. 
    Thus, the values of ${Q}^{\standardgaussd}$ 
    (resp, ${Q}^{\dsphere}$) will change and so
    it is not clear how fragile edges are affected by the respective noise operator.
\end{remark}

\section{Preliminaries}
\label{section:preliminaries}
\subsection{Graph Notation}
\label{section:graphnotation}
Our graph notation is mostly standard. We denote by $V(H),E(H)$ the vertex and edge sets of a graph. For $i \in V(H),$ we define $N_H(i):=\{j\in V(H)\;:\; (ji)\in E(H)\}$ and for $S\subseteq V(H),$ $N_H(S):=\cup_{i\in S}N_H(i).$ 
$\delta(H):=\max_{S\subseteq V(H)}|S| - |\{j \in V(H)\; : \; \exists i \in S \text{ s.t. }(ji)\in E(H)\}|.$
Denote by $\CC(H)$ the set of connected components of $H.$ We denote by $K_n$ the complete graph on $n$ vertices, and by $\Star_n$ the star graph which has one central vertex $v$ and $n$ leaves $i_1,i_2,\ldots,i_n$ adjacent to $v.$

We will frequently define graphs by the edges that induce them. That is, for $A\subseteq E(K_n),$ we also denote by $A$ the graph on vertex set $\{i\in V(K_n)\; :\;\exists j\in V(K_n)\text{ s.t. }(ji)\in A\}$ and edge set $A.$ Identifying graphs by edges that span them is convenient as edges are the variables of polynomials that we consider.

We define by $\graphs_D$ the set of graphs (up to isomorphism) on at least 1 edge and at  most $D$ edges and by $\graphsnoleaves_D\subseteq \graphs_D$ the subset of those graphs with no leaves.

\subsection{Low-Degree Polynomials}
\label{section:ldp}
Our results in \cref{sec:introSphericaltoER,sec:introGaussiantoER,sec:introsphericaltoPCol} are based on the low-degree polynomial framework introduced in \cite{hopkins2017bayesian,hopkins18}. One way to motivate it is the following. When testing between graph distributions $H_0$ and $H_1$ (say, $H_0 =\ergraph, H_1 = \RGGsphere$), one observes a single graph $\bfG$ and needs to output 0 or 1. The graph $\bfG$ is simply a bit sequence in $\{0,1\}^{E(K_n)}.$ Hence, the output is a function $f:\{0,1\}^{E(K_n)}\longrightarrow\{0,1\}.$ All Boolean functions are polynomials \cite{ODonellBoolean}. Therefore, one simply needs to compute a polynomial in the edges. Importantly, one can write polynomials over $\{0,1\}^m$ in their Fourier expansion. In the $p$-biased case over graphs, one represents $f:\{0,1\}^{E(K_n)}\longrightarrow\mathbb{R}$ as
\begin{equation}
\label{eq:pbasedFourierexpansion}
    f(G)= \sum_{H\; : \; H\subseteq E(K_n)}
    \widehat{f}(H)\times (\sqrt{p(1-p)})^{-|E(H)|}
    \prod_{(ji)\in E(H)}(G_{ji}-p).
\end{equation}
Here, $\widehat{f}(H)$ is just a constant (the Fourier coefficient corresponding to $H$) and \linebreak 
$\Big\{\sqrt{p(1-p)}^{-|E(H)|}
    \prod_{(ji)\in E(H)}(G_{ji}-p)\Big\}_{H\subseteq E(K_n)}$ is a basis of polynomials. Conveniently, as can be seen from \cref{eq:definefourier}, this basis is composed of signed-subgraphs $\SW^p_H.$ What makes it useful is the following fact \cite{ODonellBoolean}: 
\begin{equation}
\label{eq:orthonormalityofstandardbasis}
\begin{split}
    &\text{The polynomials }\Big\{\sqrt{p(1-p)}^{-|E(H)|}
    \prod_{(ji)\in E(H)}(G_{ji}-p)\Big\}_{H\subseteq E(K_n)}\\
    &
    \text{are orthonomral with respect to }
    \bfG\sim\ergraph.
    \end{split}
\end{equation}

When computationally restricted to poly-time algorithms, a tester needs to apply a poly-time computable polynomial $f.$ What are classes of poly-time computable polynomials? One such class is of sufficiently low-degree polynomials (where degree refers to the largest number of edges in a monomial corresponding to some $H$ for which $\widehat{f}(H)$ is non-zero). Since those are usually not $\{0,1\}$-valued, one needs to threshold after computing the polynomial, which leads to the following definition, motivated by Chebyshev's inequality.

\begin{definition}[Success of a Low-Degree Polynomial, e.g. \cite{hopkins18}] 
\label{def:successofLDP}
We say that a polynomial $f:\{0,1\}^{E(K_n)}\longrightarrow\{0,1\}$ distinguishes  $H_0$ and $H_1$ with high probability if 
$$
\Big|
\expect_{\bfG\sim H_0}[f(\bfG)] - 
\expect_{\bfG\sim H_1}[f(\bfG)]
\Big| = 
\omega\Big(
\sqrt{
\var_{\bfG\sim H_0}[f(\bfG)] + 
\var_{\bfG\sim H_1}[f(\bfG)]}
\Big).
$$
If $f$ is poly-time computable, this leads to the poly-time algorithm which compares $f(\bfG)$ to\linebreak 
$\big(\expect_{\bfG\sim H_0}[f(\bfG)] +
\expect_{\bfG\sim H_1}[f(\bfG)]\big)/2.$
\end{definition}
Very commonly, one takes $f$ to be a signed subgraph count \cite{Bubeck14RGG}. That is, for some small graph $A$ (e.g. triangle or wedge), one computes 
\begin{equation}
\label{eq:signedcounts}
\SC^p_A(\bfG):=\sum_{H\subseteq K_n\; : H\sim A}\SW^p_H(\bfG),
\end{equation}
where $H\sim A$ denotes graph isomorphism. I.e., one computes the total number of signed weights.

Importantly, the framework of \cite{hopkins18} allows one to refute the existence of low-degree polynomials which distinguish with high probability $H_0$ and $H_1.$  Namely, the condition in \cref{def:successofLDP} fails for all low-degree polynomials. Of course, one needs to quantify  ``low-degree''. Typically, this means degree $O(\log n).$ While not all $O(\log n)$-degree polynomials are necessarily poly-time computable, the class of $O(\log n)$-degree polynomials captures a broad class of algorithms including subgraph counting algorithms \cite{hopkins18}, spectral algorithms \cite{bandeira2019computational}, SQ algorithms (subject to certain conditions) \cite{brennan2021statisticalquery}, approximate message passing algorithms (with constant number of rounds) \cite{montanari2022equivalence}, and are in general conjectured to capture all poly-time algorithms for statistical tasks in sufficiently noisy high-dimensional regimes \cite{hopkins18}. 

\begin{definition}[Low-Degree Polynomial Hardness]
\label{def:ldphardness}
We say that no low-degree polynomial distinguishes $H_0$ and $H_1$ with probability $\Omega(1)$ if there exists some $D= \omega(\log n)$ such that 
$$
\Big|
\expect_{\bfG\sim H_0}[f(\bfG)] - 
\expect_{\bfG\sim H_1}[f(\bfG)]
\Big| = 
o\Big(
\sqrt{
\var_{\bfG\sim H_0}[f(\bfG)] + 
\var_{\bfG\sim H_1}[f(\bfG)]}
\Big)
$$
holds for all polynomials of degree at most $D.$ In particular, this holds (see \cite{kothari2023planted}, for example), if $\expect_{\bfG\sim H_1}[f(\bfG)]/\expect_{\bfG\sim H_0}[f(\bfG)^2] - 1 = o_n(1)$ for all polynomials $f$ of degree at most $D.$   
\end{definition}

Using the orthonormality in \cref{eq:orthonormalityofstandardbasis}, this condition simplifies significantly when\linebreak
$H_0 = \mathsf{G}(N,p)\odot\mathcal{M}$ for some mask $\mathcal{M}.$\footnote{This fact holds and usually stated for general binary distributions when $H_0$ is a product distribution \cite{hopkins18}, but we only state the result in the case of interest.} Recall the notation in \cref{eq:definefourier}.

\begin{claim}[\cite{hopkins18}]
\label{claim:ldphardagainstER}
Suppose that $H_0 = \mathsf{G}(N,p)\odot\mathcal{M}, H_1= \R\odot \mathcal{M}$ and $D = \omega(\log n).$ If
$$
\sum_{H\subseteq E(\mathcal{M})\;: \; 1\le |E(H)|\le D}
\Big(\Phi^p_\R(H)\Big)^2= o_n(1),
$$
no low-degree polynomial can distinguish $H_0$ and $H_1$ with probability $\Omega(1).$
\end{claim}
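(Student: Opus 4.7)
The plan is to recast the low-degree distinguishing condition in terms of the $L^2(H_0)$ projection of the likelihood ratio $L \coloneqq dH_1/dH_0$, using the fact that $H_0$ is a product Bernoulli distribution and therefore has an explicit orthonormal Fourier basis.

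First, I would recall from \cref{eq:orthonormalityofstandardbasis} that the family
$$\chi_H(G) \coloneqq (p(1-p))^{-|E(H)|/2}\prod_{(ji)\in E(H)}(G_{ji}-p), \qquad H\subseteq E(\mathcal{M}),$$
is orthonormal in $L^2(H_0)$, and that every function of the observed edges admits a Fourier expansion in this basis (cf.\ \cref{eq:pbasedFourierexpansion}). In particular, the likelihood ratio $L$ has Fourier coefficients
$$\widehat L(H) = \langle L, \chi_H\rangle_{H_0} = \E_{\bfG\sim H_0}[L(\bfG)\,\chi_H(\bfG)] = \E_{\bfG\sim H_1}[\chi_H(\bfG)] = \Phi^p_{\R}(H),$$
using $\R\odot \mathcal M$ for $H_1$ and the definition \cref{eq:definefourier}. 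The degree-$0$ coefficient is $\widehat L(\emptyset)=1$.

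Next, I would let $L^{\le D}$ denote the projection of $L$ onto polynomials of degree at most $D$. Parseval gives
$$\|L^{\le D}\|_{L^2(H_0)}^2 - 1 \;=\; \sum_{H\subseteq E(\mathcal M):\,1\le |E(H)|\le D}\Phi^p_{\R}(H)^2 \;=\; o_n(1)$$
by hypothesis. For any polynomial $f$ of degree at most $D$, write $\tilde f = f - \E_{H_0}[f]$, which is itself degree at most $D$ and orthogonal to the constants. Then
$$\E_{H_1}[f] - \E_{H_0}[f] = \langle \tilde f, L\rangle_{H_0} = \langle \tilde f, L^{\le D}-1\rangle_{H_0},$$
because $\tilde f$ has degree $\le D$ and no constant term. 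Cauchy--Schwarz yields
$$\bigl|\E_{H_1}[f]-\E_{H_0}[f]\bigr| \;\le\; \|\tilde f\|_{L^2(H_0)}\cdot \bigl\| L^{\le D}-1\bigr\|_{L^2(H_0)} \;=\; \sqrt{\var_{H_0}[f]}\cdot \sqrt{\sum_{1\le |E(H)|\le D}\Phi^p_{\R}(H)^2}.$$
The second factor is $o_n(1)$ by assumption, so the difference of means is $o_n(1)\cdot \sqrt{\var_{H_0}[f]}$, which is in particular $o_n(\sqrt{\var_{H_0}[f] + \var_{H_1}[f]})$. By \cref{def:successofLDP}, this rules out any degree-$D$ distinguisher with probability $\Omega(1)$.

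There is no substantive obstacle here; the identity $\widehat L(H)=\Phi^p_{\R}(H)$ together with orthonormality of the product basis does all the work. The only subtle point worth stating explicitly is the reduction to the centered function $\tilde f$, which is what converts the low-degree projection bound into a bound on the mean gap in units of the standard deviation under $H_0$; the assumption $D=\omega(\log n)$ is only used later when plugging in the Fourier estimates from \cref{thm:oeiboundonfourier,eq:SOEI}.
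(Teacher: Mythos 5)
The paper does not actually prove this claim---it is stated with a citation to \cite{hopkins18} and a footnote noting it is the standard low-degree $\chi^2$ bound for a product null---so there is no in-paper argument to compare against. Your proof is correct and is precisely that standard argument: $\widehat L(H)=\Phi^p_{\R}(H)$ via $\E_{H_0}[L\,\chi_H]=\E_{H_1}[\chi_H]$ together with \cref{eq:definefourier}, orthonormality of the $p$-biased characters under $H_0$ (\cref{eq:orthonormalityofstandardbasis}), the reduction $\E_{H_1}[f]-\E_{H_0}[f]=\langle \tilde f,\,L^{\le D}-1\rangle_{H_0}$ using that the centered $\tilde f$ lies in the span of $\{\chi_H : 1\le|E(H)|\le D,\ H\subseteq E(\mathcal M)\}$ and that $\E_{H_0}[L-1]=0$, and Cauchy--Schwarz to get $|\E_{H_1}[f]-\E_{H_0}[f]|\le\sqrt{\var_{H_0}[f]}\cdot\|L^{\le D}-1\|_{L^2(H_0)}$, which matches \cref{def:ldphardness} since $\sqrt{\var_{H_0}[f]}\le\sqrt{\var_{H_0}[f]+\var_{H_1}[f]}$ and the second factor is $o_n(1)$ by hypothesis.
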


For our results on the planted coloring, we need a more sophisticated version of \cref{claim:ldphardagainstER} due to \cite{kothari2023planted} when $H_0= \PCol.$ We phrase and prove a variant of it in \cref{prop:chisquaredtocoloring}. When applying \cref{claim:ldphardagainstER} and variations of it, we will frequently reduce calculations to the following inequality. It is similar to \cite[Proposition 4.9] {kothari2023planted}.
\begin{proposition}
\label{prop:whenchisquaredissmall}
For any absolute constant $\theta>0$ and $D= O((\log n)^{1.1}),$
$$
\sum_{H \in \graphs_D} n^{-\theta|V(H)|}=o_n(1).
$$
\end{proposition}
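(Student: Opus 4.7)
The plan is to organize the sum by $k \coloneqq |V(H)|$ and $\ell \coloneqq |E(H)|$. Since $\graphs_D$ identifies graphs by their edges (per \cref{section:graphnotation}), each $H$ has no isolated vertices, so $2 \leq k \leq 2\ell \leq 2D$. The number of isomorphism classes with $k$ vertices and $\ell$ edges is at most the number of corresponding labelled graphs, namely $\binom{\binom{k}{2}}{\ell}$. I would then split at the threshold $k_0 \coloneqq \lceil 3\sqrt{D}\,\rceil$ and use two different upper bounds on the total iso-class count on $k$ vertices with at most $D$ edges: for $2 \leq k \leq k_0$, the crude bound $2^{\binom{k}{2}}$ (summing $\binom{\binom{k}{2}}{\ell}$ over all $\ell$); for $k > k_0$, the tighter bound $(D+1)\binom{\binom{k}{2}}{D} \leq D\bigl(ek^2/(2D)\bigr)^D$, valid because once $k > k_0$ we have $D \leq \binom{k}{2}/2$, so $\binom{\binom{k}{2}}{\ell}$ is increasing on $\ell \in [1, D]$.

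On the small range, the relevant exponent $\tfrac{k^2}{2}\log 2 - \theta k \log n$ is convex in $k$ with minimizer at $k = \theta \log_2 n$, which greatly exceeds $k_0 = O((\log n)^{0.55})$. So the summand $2^{\binom{k}{2}} n^{-\theta k}$ is decreasing on $[2, k_0]$, maximized at $k=2$ with value $2n^{-2\theta}$, and the total contribution is $O(\sqrt{D}\,n^{-2\theta}) = o(1)$. On the large range, the log-summand $D\log\bigl(ek^2/(2D)\bigr) - \theta k \log n$ is concave in $k$ with maximizer $k^\star = 2D/(\theta\log n) = O((\log n)^{0.1})$, which sits well below $k_0 = \Theta((\log n)^{0.55})$. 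Hence it is decreasing on $[k_0, 2D]$, and at the left endpoint $k = k_0$ equals $D\cdot \log O(1) - 3\theta \sqrt{D}\log n = O((\log n)^{1.1}) - \Theta((\log n)^{1.55}) \to -\infty$. Every summand is thus $\exp(-\Omega((\log n)^{1.55}))$, and summing over $O(D)$ values of $k$ keeps this piece superpolynomially small.

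The main subtlety is locating the split $k_0 \asymp \sqrt{D}$. For $k$ below it, $\binom{\binom{k}{2}}{\ell}$ can already be close to its maximum for some $\ell < D$, forcing me to sum over all $\ell$ and absorb a factor of $2^{\binom{k}{2}}$; the factor $n^{-\theta k}$ with $k \geq 2$ still wins because $\binom{k}{2}\log 2 \leq 2D\log 2 \ll \theta k \log n$ throughout this range. Above the split, $\binom{\binom{k}{2}}{\ell}$ is concentrated near $\ell = D$ so the much tighter bound $(ek^2/(2D))^D$ applies, and now the larger lower bound $k \geq 2\sqrt{D}$ provides enough savings in $n^{-\theta k}$ to defeat the combinatorial factor. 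Aside from picking the threshold, the rest is routine algebra.
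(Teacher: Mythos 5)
Your proposal is correct and takes essentially the same approach as the paper: both split at a threshold on $|V(H)|$ (you use $\Theta(\sqrt D)$, the paper uses $D^{2/3}$ — either works), bound the isomorphism-class count by $2^{\binom{k}{2}}$ below the threshold and by a multiple of $\binom{\binom{k}{2}}{D} \le (ek^2/(2D))^D$ above it, and in each regime show the exponential $n^{-\theta k}$ dominates. The paper phrases the conclusion uniformly as ``the count is $2^{o_n(v\log n)}$ in both cases'' and then sums a geometric series, whereas you do more explicit monotonicity/convexity bookkeeping; this is a cosmetic difference, not a different argument.
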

\begin{proof}
When $v = |V(H)|\le D^{2/3},$ the number of graphs on $v$ vertices is at most $2^{\binom{v}{2}}\le 2^{v^2}\le 2^{vD^{2/3}} = 2^{o_n(|V(H)|\log n)}.$ On the other hand, when $v = |V(H)|\ge D^{2/3},$ the number of graphs on $v$ vertices and at most $D$ edges is at most $\sum_{j = 1}^D\binom{\binom{v}{2}}{D}\le 
D\Big(\frac{2v^2}{D}\Big)^D\le 2^{4D\log v}.
$ As $D = (\log n)^{1.1},$
$v\ge D^{2/3},$ one has 
$4D\log v = 4v (D\log v)/v = o_n(v\log n) = o_n(|V(H)|\log n).$ In either case, the number of graphs on $v$ vertices and at most $D$ edges is $2^{o_n(|V(H)|\log n)},$ which for large enough $n$ is at most 
$e^{\frac{\theta}{2}|V(H)|\log n}.$
Hence, for large enough $n,$ 
$$
\sum_{H \in \graphs_D}
e^{-\theta|V(H)|\log n} \le 
\sum_{v = 1}^{2D}
e^{-\frac{\theta}{2}v\log n} = O(Dn^{-\theta/4}) = o_n(1),
$$
as desired. We used the fact that a graph induced by $D$ edges has at most $2D$ vertices.
\end{proof}

\subsection{Signed Expectations of Graphs with Leaves}
\label{section:graphswithleaves}
Graphs with leaves are special to the current work as they separate $\RGGgauss$ and\linebreak $\RGGsphere.$ In the spherical case, one can prove the following fact. 

\begin{proposition}
\label{prop:leavesinspherical}
Suppose that $H$ is graph with a leaf $u$ which has neighbour $v.$ Suppose further that $(f_{ji})_{(ji)\in E(H)}:[-1,1]\longrightarrow [0,1]$ are some measurable  functions.
Then,
\begin{align*}
&\expect_{V_1, V_2, \ldots, V_n\sim_{iid} \unif(\dsphere)}\Big[\prod_{(ji)\in E(H)}f_{ji}(\langle V_i,V_j\rangle)\Big]\\
& = 
\expect_{\{V_i\}_{i\neq u}\sim_{iid} \unif(\dsphere)}\Big[\prod_{(ji)\in E(H)\backslash \{(uv)\}}f_{ji}(\langle V_i,V_j\rangle)\Big]\times 
\expect_{V_u,V_v\sim_{iid} \unif(\dsphere)}\Big[f_{uv}(\langle V_u,V_v\rangle)\Big].
\end{align*}
In particular, if $H$ is a tree,
$$
\expect_{V_1, V_2, \ldots, V_n\sim_{iid} \unif(\dsphere)}\Big[\prod_{(ji)\in E(H)}f_{ji}(\langle V_i,V_j\rangle)\Big] = 
\prod_{(ji)\in E(H)}\expect_{V_i,V_j\sim_{iid} \unif(\dsphere)}\Big[f_{ji}(\langle V_i,V_j\rangle)\Big].
$$
\end{proposition}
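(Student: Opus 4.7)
The plan is a one-line application of rotational invariance followed by the tower property. Since $u$ is a leaf with unique neighbour $v$, the edge $(uv)$ is the only edge incident to $u$, so for every other edge $(ji)\in E(H)\setminus\{(uv)\}$ the factor $f_{ji}(\langle V_i,V_j\rangle)$ does not depend on $V_u$. Conditioning on $(V_i)_{i\ne u}$ and using independence of the $V_i$'s,
\[
\expect\Big[\prod_{(ji)\in E(H)}f_{ji}(\langle V_i,V_j\rangle)\Big]
=\expect_{(V_i)_{i\ne u}}\Big[\prod_{(ji)\in E(H)\setminus\{(uv)\}}f_{ji}(\langle V_i,V_j\rangle)\cdot\expect_{V_u}\!\big[f_{uv}(\langle V_u,V_v\rangle)\mid V_v\big]\Big].
\]

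The key step is to observe that the inner expectation is actually a constant. Indeed, by the rotational invariance of $\unif(\dsphere)$, for any fixed unit vector $w\in\dsphere$ the law of $\langle V_u,w\rangle$ is the same (it equals the law of the first coordinate of a uniform point on the sphere). Applied with $w=V_v$, which is $\unif(\dsphere)$-distributed and in particular a unit vector almost surely, this yields
\[
\expect_{V_u}\!\big[f_{uv}(\langle V_u,V_v\rangle)\mid V_v\big]=\expect_{V_u,V_v\sim_{iid}\unif(\dsphere)}\!\big[f_{uv}(\langle V_u,V_v\rangle)\big],
\]
which is deterministic. Pulling this constant out of the outer expectation gives the displayed factorisation.

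For the tree statement, I would simply iterate: a finite tree $H$ with at least two vertices has a leaf, and peeling it off via the identity just proved produces a tree on one fewer vertex with the edge-factor $\E[f_{uv}(\langle V_u,V_v\rangle)]$ multiplied out. Induction on $|V(H)|$ (with the trivial base case of a single edge) then yields the product formula over all edges. The only point that requires any care is verifying that the peeling preserves the hypothesis — but the residual graph $H\setminus\{u\}$ is again a tree, and the residual expectation is against $(V_i)_{i\ne u}\sim_{iid}\unif(\dsphere)$, so the induction goes through without change. There is no real obstacle here; the whole argument rests on rotational invariance and the fact that leaves do not couple to the rest of the graph.
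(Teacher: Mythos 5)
Your proof is correct and takes essentially the same approach as the paper: condition on $(V_i)_{i\ne u}$ via the tower property, then use rotational invariance of $\unif(\dsphere)$ (stated in the paper as the homogeneity property) to observe that the inner conditional expectation is a constant that factors out, with the tree case following by induction on leaf-peeling.
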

\begin{proof}
    This follows immediately from the following homogeneity property of $\RGGsphere:$
\begin{equation}
\label{eq:homogeneity}
    \text{For every fixed $v_j\in \dsphere,$ }\langle v_j, V_i\rangle \text{ has the same distribution}.
\end{equation}
Hence, 

\begin{equation*}
    \begin{split}
        & 
        \expect_{V_1,V_2, \ldots, V_n\sim_{iid} \unif(\dsphere)}\Big[\prod_{(ji)\in E(H)}f_{ji}(\langle V_i,V_j\rangle)\Big|
        \Big]\\
        & =
        \expect_{\{V_i\}_{i\neq u}\sim_{iid} \unif(\dsphere)}\Big[\expect_{V_u\sim\unif(\dsphere)}\Big[\prod_{(ji)\in E(H)}f_{ji}(\langle V_i,V_j\rangle)\Big|\{V_i\}_{i\neq u}\Big]\Big] \\
        & =
        \expect_{\{V_i\}_{i\neq u}\sim_{iid} \unif(\dsphere)}\Big[\prod_{(ji)\in E(H)\backslash \{(uv)\}}f_{ji}(\langle V_i,V_j\rangle)\expect_{V_u\sim\unif(\dsphere)}\Big[f_{uv}(\langle V_u,V_v\rangle)\Big|V_v\Big]\Big]\\
        & = 
        \expect_{\{V_i\}_{i\neq u}\sim_{iid} \unif(\dsphere)}\Big[\prod_{(ji)\in E(H)\backslash \{(uv)\}}f_{ji}(\langle V_i,V_j\rangle)\expect_{V_u, V'_v\sim\unif(\dsphere)}\Big[f_{uv}(\langle V_u,V'_v\rangle)\Big|V_v\Big]\Big]\\
        & = 
        \expect_{\{V_i\}_{i\neq u}\sim_{iid} \unif(\dsphere)}\Big[\prod_{(ji)\in E(H)\backslash \{(uv)\}}f_{ji}(\langle V_i,V_j\rangle)\Big]\times 
\expect_{V_u,V_v\sim_{iid} \unif(\dsphere)}\Big[f_{uv}(\langle V_u,V_v\rangle)\Big],
    \end{split}
\end{equation*}
where $V'$ is some independent copy of $V'_v.$
\end{proof}
For $f_{ji}(x) = \indicator[x\ge \tau^p_d]-p,$ this shows that signed expectations of graphs with leaves vanish with respect to $\RGGsphere.$ It turns out that this is not true for Gaussian geometric graphs. Namely, one can show the following statement.

\begin{lemma}[Lower Bound on Signed Expectation of Wedges]
\label{lem:wedgelowerbound}
Consider the wedge on 3 vertices $1,2,3$ with edges $(21), (31).$ Then, if $p\le 0.49,$ 
$$
\E_{\bfG\sim \RGGgauss}\Big[
(G_{21}- p)(G_{31}-p)
\Big]\ge \frac{p^2}{{d}}(\log d)^{-4}.
$$
\end{lemma}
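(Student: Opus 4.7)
The plan is to exploit the fact that, conditional on the shared latent vector $Z_1$, the edges $G_{21}$ and $G_{31}$ are independent and identically distributed, which turns the signed wedge expectation into the variance of a one-dimensional function of $\|Z_1\|$. This variance is then bounded below by a two-point coupling combined with chi-square concentration of $\|Z_1\|$.

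Conditional on $Z_1$, $\langle Z_1, Z_j\rangle \sim \mathcal{N}(0, \|Z_1\|^2/d)$ for $j \in \{2,3\}$, and these are independent. Writing $a := \rho^p_d\sqrt{d}$ and $g(s) := \bar{\Phi}(a/s)$ with $\bar{\Phi}$ the standard Gaussian tail, one obtains $\E[G_{21}\mid Z_1] = g(\|Z_1\|)$, and hence, since $p = \E[g(\|Z_1\|)]$ exactly,
\begin{equation*}
\E_{\bfG\sim\RGGgauss}\big[(G_{21}-p)(G_{31}-p)\big] \;=\; \E\big[(g(\|Z_1\|)-p)^2\big] \;=\; \var\!\big(g(\|Z_1\|)\big).
\end{equation*}

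To lower bound this variance, I will use the identity $\var(g(X)) = \tfrac{1}{2}\E[(g(X) - g(X'))^2]$ for $X,X'$ i.i.d.\ copies of $\|Z_1\|$ and restrict to the event $\mathcal{A} = \{X \in [1+c/\sqrt{d},\, 1+2c/\sqrt{d}]\} \cap \{X' \in [1-2c/\sqrt{d},\, 1-c/\sqrt{d}]\}$ for a small absolute constant $c>0$. Since $d\|Z_1\|^2 \sim \chi^2(d)$, the central limit theorem (or a direct density estimate) yields $\Pr[\mathcal{A}] = \Omega(1)$ uniformly in $d$. The function $g$ is increasing with derivative $g'(s) = (a/s^2)\phi(a/s)$. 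Under \eqref{eq:assumption} one has $a \lesssim \sqrt{\log(1/p)} \lesssim \sqrt{\log d}$, so $a^2/\sqrt{d} = o(1)$ and consequently $\phi(a/s)/\phi(a) = 1+o(1)$ uniformly for $|s-1| \le c/\sqrt{d}$; hence on $\mathcal{A}$
\begin{equation*}
g(X) - g(X') \;\ge\; \frac{2c}{\sqrt{d}}\,\min_{|s-1|\le c/\sqrt{d}} g'(s) \;\gtrsim\; \frac{a\phi(a)}{\sqrt{d}},
\end{equation*}
which gives $\var(g(\|Z_1\|)) \gtrsim a^2\phi(a)^2/d$.

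It remains to show $a\phi(a) \gtrsim p$. A second-order Taylor expansion of $\bar{\Phi}(a/s)$ around $s=1$, together with the standard chi-square estimates $\E[1/\|Z_1\| - 1] = O(1/d)$ and $\E[(1/\|Z_1\| - 1)^2] = O(1/d)$, gives $p = \bar{\Phi}(a) + O((a+a^3)\phi(a)/d)$. Under \eqref{eq:assumption} this error is $o(p)$, so $\bar{\Phi}(a) = p(1+o(1))$. I split on $a$. If $a \ge 1$, Mills' ratio $\bar{\Phi}(a) \le \phi(a)/a$ gives $\phi(a) \ge a\bar{\Phi}(a) \gtrsim ap$, so $a\phi(a) \gtrsim a^2 p \ge p$. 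If $a < 1$, the hypothesis $p \le 0.49$ together with $\bar{\Phi}(a) = p(1+o(1))$ forces $a$ to be bounded below by a positive absolute constant, while $\phi(a) \ge \phi(1)$, so $a\phi(a) = \Omega(1) \ge \Omega(p)$. In either case $\var(g(\|Z_1\|)) \gtrsim p^2/d$, which is far stronger than the claimed $p^2/(d(\log d)^4)$.

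The main obstacle is turning the local analysis of $g$ around $\|Z_1\|=1$ into a rigorous lower bound: verifying $a^2/\sqrt{d} = o(1)$ so that $g'$ stays essentially $a\phi(a)$ on the coupling window, and controlling $|p - \bar{\Phi}(a)|/p = o(1)$ so that the Mills-ratio case split on $a$ is justified. Both reduce to standard Gaussian tail and chi-square concentration estimates, and the generous $(\log d)^{-4}$ slack in the statement absorbs all of these lower-order corrections.
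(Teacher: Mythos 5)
Your proof is correct, and the overall decomposition matches the paper's: both condition on $Z_1$ (so the signed wedge expectation becomes $\var$ of the conditional edge probability as a function of $\|Z_1\|$) and then lower-bound that variance by spreading $\|Z_1\|$ over a $\Theta(1/\sqrt d)$ window via $\chi^2$ concentration. Where you diverge is in how the increment of that function over the window is quantified. The paper keeps the conditional probability abstract as an increasing function $h$, finds the median of $\|Z_1\|$ within $O(1/\sqrt d)$ of $1$, and compares $h$ at the median with $h$ at median plus $1/\sqrt d$ using the interval-probability estimate in \cref{cor:innerproductsinintervals}; this estimate carries $\sqrt{\log d}$ factors which ultimately produce the $(\log d)^{-4}$ loss in the statement. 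You instead use the closed form $g(s)=\bar\Phi(a/s)$, differentiate it, show $g'$ is essentially constant and equal to $a\phi(a)$ on the window (using $a^2/\sqrt d=o(1)$, which holds under \eqref{eq:assumption}), and then invoke the Mills ratio and the matching $|p-\bar\Phi(a)|=o(p)$ to convert $a\phi(a)$ into $\Omega(p)$; this removes all logarithmic slack and yields $\var\gtrsim p^2/d$, strictly stronger than what is claimed. Both routes rely on the same two ingredients ($\chi^2$ concentration of $\|Z_1\|$ and the Gaussian tail around the threshold), so I'd characterize yours as a sharper, more explicit execution of the paper's strategy rather than a new approach; the one small thing worth tightening is that your minimum of $g'$ should be taken over $|s-1|\le 2c/\sqrt d$ (the full span between $X'$ and $X$), though this is immaterial since $a^2/\sqrt d=o(1)$ makes $g'$ uniform at either scale.
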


The proof of this fact is simple. It follows by conditioning on the norms $\|Z_i\|_2$ of Gaussian vectors. This reduces the problem to a spherical random geometric graph in which, however, the thresholds are dependent on the values $\|Z_i\|_2$ and, in particular, not equal to $\tau^p_d$ (or $\rho^p_d$). Nevertheless, we can still use the factorization property of the spherical random geometric graphs \cref{prop:leavesinspherical}. We carry out the calculation in \cref{sec:LowerBoundsGaussianFourier}.

The following decomposition will also be useful when studying Gaussian random geometric graphs and spherical against the planted coloring (in the planted coloring setting, note that signed weights are not orthonormal for the null distribution $\PCol(n,q),$ hence we need to consider a different basis and, thus, also graphs with leaves might have non-trivial expectations in this basis).

\begin{definition}[Leaf Decomposition]
\label{def:leafdecomposition}
For a graph $H,$ decompose its edges into two parts -- a tree part $\TP(H)$ and no-leaves part $\NLP(H)$ as follows. Initially, let $H_0 = H.$ While $H_0$ has a leaf $u,$ find its neighbor $v,$ add the edge $(uv)$ to $\TP(H),$ and remove the edge $(uv)$ from $H_0.$ At the end, $\NLP(H)= E(H)\backslash \TP(H).$ 
\end{definition}
\begin{observation}
\label{obs:leafdecomposition}
The following claims hold for the leaf decomposition.
\begin{enumerate}
    \item $\TP(H)$ is a forest and $\NLP(H)$ has minimal degree 2 in each connected component.
    \item If $H$ has no acyclic connected  component,
    $|V(H)|= |V(\NLP(H))|+ |E(\TP(H))|.$
\end{enumerate}
\end{observation}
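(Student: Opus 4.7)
The plan is to analyze the iterative leaf-removal process underlying \cref{def:leafdecomposition} and track how the structure of $H_0$ evolves. Throughout, I think of each iteration as removing a single edge $(uv)$ where $u$ is the current leaf, and I orient this edge $u \to v$ so that the orientation records which endpoint was the leaf at the time of removal.

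For claim (1), the ``$\NLP(H)$ has minimum degree $\ge 2$'' part is immediate from the termination condition: the loop stops precisely when $H_0$ has no degree-$1$ vertex, so every non-isolated vertex of the final $H_0 = \NLP(H)$ has degree at least $2$ in its component. For the ``$\TP(H)$ is a forest'' part, I would argue by contradiction. Suppose some cycle $C \subseteq \TP(H)$ exists, and consider the \emph{first} edge $(uv)$ of $C$ that is removed during the process. At that moment, all other edges of $C$ are still in $H_0$, so both $u$ and $v$ have degree at least $2$ in $H_0$ (contributed by their two neighbors along $C$). This contradicts the fact that one of them is a leaf at removal time.

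For claim (2), the key idea is to count the non-isolated vertices of $H_0$ across iterations. In each iteration, the removed leaf $u$ becomes isolated (its unique remaining edge is deleted), so the non-isolated count drops by at least $1$. The process runs for exactly $|E(\TP(H))|$ iterations, and at termination the non-isolated vertices form $V(\NLP(H))$. So to conclude $|V(H)| - |V(\NLP(H))| = |E(\TP(H))|$ it suffices to show that the non-isolated count drops by \emph{exactly} $1$ per iteration, i.e.\ that the other endpoint $v$ is \emph{not} also made isolated. This failure mode occurs only if $v$ had degree $1$ just before removal, i.e.\ $(uv)$ was an isolated-edge component of $H_0$.

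The main obstacle is ruling out this isolated-edge pathology using the hypothesis that $H$ has no acyclic component. I would prove the stronger invariant, by induction on the number of iterations: \emph{every component of $H_0$ with at least one edge contains a cycle}. The base case is the assumption on $H$. For the inductive step, when we remove a leaf edge $(uv)$, the affected component $C$ splits into the isolated vertex $\{u\}$ and $C \setminus \{u\}$; since $u$ is a leaf, it lies on no cycle of $C$, so every cycle of $C$ survives intact in $C \setminus \{u\}$, which therefore still contains a cycle and in particular cannot be a single edge. This invariant forbids the pathological case, so each iteration reduces the non-isolated count by exactly $1$, giving the desired identity $|V(H)| = |V(\NLP(H))| + |E(\TP(H))|$.
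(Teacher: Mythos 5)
Your proof is correct. The paper states this as an unproved observation, so there is no authors' argument to compare against; your write-up is a natural and careful completion of the gap. In particular, the key nontrivial point in claim (2) — that the non-isolated vertex count drops by exactly one per iteration, not two — is genuinely the part that needs the hypothesis ``$H$ has no acyclic component,'' and your invariant (``every component of $H_0$ with an edge contains a cycle'') captures it cleanly: the only failure mode is an isolated-edge component, which the invariant forbids, and the invariant is preserved because a leaf lies on no cycle so removing its edge never destroys any cycle. The claim (1) cycle argument (take the first edge of a putative cycle in $\TP(H)$ to be removed; both its endpoints still have degree $\ge 2$ at that moment, contradicting that one is a leaf) is also correct. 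One cosmetic remark: in claim (1) you might explicitly note that $\NLP(H)$ is viewed as the graph induced by the edge set $E(H)\setminus \TP(H)$, per the paper's convention in \cref{section:graphnotation}, so it has no isolated vertices by definition, and then the termination condition (no degree-$1$ vertex) gives minimum degree $2$ on each component.
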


We obtain a mild and easy improvement of \cref{thm:oeiboundonfourier} which, however, will be useful when discussing the edge-query model for Gaussian random geometric graphs. We prove it in \cref{sec:fourierwithleaveslowerGaussian}.

\begin{proposition}
\label{prop:graphswithleaves}
Suppose that \eqref{eq:assumption} holds and $H$ is connected. Then, there exists some absolute constant $C$ depending only on $\epsilon,\gamma$ in \eqref{eq:assumption} such that for $\bfG\sim \RGGgauss$ and $\bfG\sim\RGGsphere,$
\begin{equation}
    \Big|\E\Big[\prod_{(ji)\in E(H)}(G_{ji} - p)\Big]\Big|\le 
(8p)^{|E(H)|}\times \Big(\frac{C\times |V(H)|\times|E(H)|\times (\log d)^{2}}{\sqrt{d}}\Big)^{|E(\TP(H))| +\OEI(\NLP(H))}.
\end{equation}
\end{proposition}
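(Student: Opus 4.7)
The plan is to reduce \cref{prop:graphswithleaves} to \cref{thm:oeiboundonfourier} by peeling off the leaves of $H$ one at a time using the factorization in \cref{prop:leavesinspherical}, so that each tree edge in $\TP(H)$ contributes an extra factor of $\tilde O(p/\sqrt d)$ on top of the bound one obtains by applying \cref{thm:oeiboundonfourier} to the leaf-less core $\NLP(H)$.

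The \emph{spherical case} is immediate: iterating \cref{prop:leavesinspherical} leaf by leaf yields
\[\E\Big[\prod_{(ji)\in E(H)}(G_{ji}-p)\Big] = \E\Big[\prod_{(ji)\in \NLP(H)}(G_{ji}-p)\Big]\prod_{(uv)\in \TP(H)}\E[G_{uv}-p],\]
and each trailing factor vanishes by the definition of $\tau^p_d$, so the signed expectation is identically zero whenever $|E(\TP(H))|\ge 1$ and the bound is trivial; when $|E(\TP(H))|=0$ the inequality is exactly \cref{thm:oeiboundonfourier}. For the \emph{Gaussian case}, I would write $r_i:=\|Z_i\|_2$ and $V_i:=Z_i/r_i$, so that conditional on $\{r_i\}$ the $V_i$ are i.i.d.\ $\unif(\dsphere)$ and $G_{ji}=\indicator[\la V_i,V_j\ra\ge \rho^p_d/(r_ir_j)]$. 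Thus, conditional on the radii, $\bfG$ is a spherical random geometric graph with the edge-dependent but deterministic thresholds $\tau^{\mathrm{mod}}_{ji}:=\rho^p_d/(r_ir_j)$. Because \cref{prop:leavesinspherical} is proved via the rotational symmetry of $\unif(\dsphere)$ and is therefore insensitive to the choice of threshold function, iterating it conditionally gives
\[\E\Big[\prod_{(ji)\in E(H)}(G_{ji}-p)\,\Big|\,\{r_i\}\Big] = \E\Big[\prod_{(ji)\in \NLP(H)}(G_{ji}-p)\,\Big|\,\{r_i\}\Big]\prod_{(uv)\in \TP(H)} h(r_u,r_v),\]
where $h(r_u,r_v):=\Pr[\la V_u,V_v\ra\ge \rho^p_d/(r_ur_v)]-p$.

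The remaining work is to estimate the two factors on the high-probability event $\mathcal A:=\{|r_i^2-1|\le C\sqrt{\log d/d}\text{ for all }i\in V(H)\}$, which by $\chi^2$-concentration has $\Pr[\mathcal A^c]\le n^{-\omega(1)}$. On $\mathcal A$ I would argue that (i) $|\rho^p_d/(r_ur_v)-\tau^p_d|=O(\log d/d)$, so combined with the fact that the density of $\la V_u,V_v\ra$ near $\tau^p_d$ is of order $p\sqrt{d\log(1/p)}$ (the same Gaussian-approximation estimate used in \cref{sec:FourierByOEI} for $Z_{ji}$ near $\rho^p_d$), we obtain $|h(r_u,r_v)|\le C'p(\log d)^{3/2}/\sqrt d$; and (ii) the modified thresholds lie in $(1\pm o(1))\rho^p_d$, so the proof of \cref{thm:oeiboundonfourier} carries over conditionally on $\{r_i\}$ applied to $\NLP(H)$, producing the conditional core bound $(8p)^{|E(\NLP(H))|}(C|V(H)||E(H)|(\log d)^{3/2}/\sqrt d)^{\OEI(\NLP(H))}$. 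Multiplying the two, integrating over $\{r_i\}$ on $\mathcal A$, and absorbing the off-$\mathcal A$ contribution (using $|\prod(G_{ji}-p)|\le 1$ almost surely) into the main term yields the claim, with the difference between $(\log d)^{3/2}$ and the stated $(\log d)^2$ providing slack for the constants.

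The hard part will be the conditional version of \cref{thm:oeiboundonfourier} used in (ii). Conditioning on $\{r_i\}$ forces row $i$ of the Bartlett decomposition $(Z_{i1},\ldots,Z_{ii})$ to be uniform on a sphere of radius $r_i$ rather than jointly Gaussian, which breaks independence \emph{within} a row; however, independence \emph{across} rows is preserved under the conditioning, and this is exactly what the fragile-edge analysis of \cref{sec:FourierByOEI} uses. On $\mathcal A$ each $Z_{ji}$ still has density $O(\sqrt d)$ near the (modified) threshold, so the control of $Q$ from \cref{eq:boundonQij}, the fragile-edge probability \cref{eq:probabilityoffragile}, and the non-fragile marginal estimate \cref{eq:marginalofnonfragile} all survive with only minor numerical adjustments. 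Verifying these regularity statements for the sphere-uniform row marginals is where the bulk of the remaining technical work lies.
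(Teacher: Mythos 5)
Your high-level plan — condition on the radii $r_i=\|Z_i\|_2$, observe that conditionally the directions $V_i=Z_i/r_i$ are i.i.d.\ uniform so the graph becomes a spherical $\RGG$ with edge-dependent thresholds $\rho^p_d/(r_ir_j)$, factor off the tree edges via \cref{prop:leavesinspherical}, and bound the core via a conditional version of \cref{thm:oeiboundonfourier} — matches the paper's strategy (the paper factors into whole trees via \cref{lemma:treecounts} rather than single leaves, but these are equivalent). Your density estimate $|h(r_u,r_v)|=O(p(\log d)^{3/2}/\sqrt d)$ on the good event is also in line with what \cref{lemma:treecounts} gives.

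The genuine gap is in your handling of the core, and you have partially diagnosed it but reached the wrong conclusion. You propose to apply the fragile-edge argument to the Bartlett decomposition of the \emph{original} Gaussians $Z_i$ conditioned on $\|Z_i\|_2=r_i$, and you assert that while conditioning breaks within-row independence, ``independence across rows is preserved under the conditioning, and this is exactly what the fragile-edge analysis of \cref{sec:FourierByOEI} uses.'' This last claim is false. The analysis relies on full independence of the Bartlett entries $\{Z_{ji}\}$, including within a row: \cref{eq:fragiletowerproperty} factorizes $\Pr[\mathcal F=F]$ as a product over individual edges (which needs fragility to be independent across edges $(ji),(ji')$ sharing the lex-larger vertex $j$), and the noise operator of \cref{eq:signedconditionedonF} rerandomizes \emph{several} edges $(uv)\in E(H)\setminus(F\cup\partial^\pi_H F)$ simultaneously and independently, which is only valid if the corresponding $Z_{uv}$'s are jointly independent. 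Two such edges $(ji),(ji')$ sharing their lex-larger vertex $j$ have $Z_{ji},Z_{ji'}$ in the same row $j$, and under your conditioning these are sphere-uniform, hence dependent. So the plan to ``verify regularity for sphere-uniform row marginals'' would not suffice; the basic factorization structure would need to be rebuilt.

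The cleaner route, and the one the paper uses, sidesteps this entirely. Once you have conditioned on $\{r_i\}$ and observed that the $V_i$ are still i.i.d.\ uniform on $\dsphere$ (independent of the $r_i$), you should forget the original $Z_i$ and \emph{re-represent} $V_i=Z'_i/\|Z'_i\|_2$ for fresh i.i.d.\ $Z'_i\sim\standardgaussd$, then run Gram--Schmidt on the $Z'_i$. The Bartlett entries of the $Z'_i$ are fully independent because the $Z'_i$ are not conditioned on anything. The only change from the spherical case of \cref{thm:oeiboundonfourier} is that the threshold for edge $(ji)$ is now the deterministic $\rho^p_d/(r_ir_j)=\tau^p_d+O((\log d)^2/d)$ rather than $\tau^p_d$ itself, so one defines a slightly enlarged, edge-dependent fragile interval; otherwise Steps 2--6 of \cref{sec:FourierByOEI} go through verbatim. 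This is the content of the parenthetical ``argue the same way as in \cref{sec:mainclaim}, by potentially enlarging the fragile interval by a benign $(\log d)$ factor'' at the end of the paper's proof. Replacing your conditional-Bartlett step with this fresh-Gaussian re-representation closes the gap.
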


\begin{remark}
This inequality would follow directly from an improvement of 
\cref{thm:oeiboundonfourier} to $\SOEI(H)$ as one can show that $\SOEI(H)\ge |E(\TP(H))| + \SOEI(\NLP(H)).$ Indeed, this follows from the following observation. Let $\pi'$ be in $\arg \max_{\sigma}\SOEI_\sigma(H)$ over permutations of $[|V(\NLP(H))|].$ Extend $\pi'$ to $\pi$ by assigning the numbers $\{|V(\NLP(H))|+1, |V(\NLP(H))|+2, \ldots,|V(H)|\}$ to the rest of the vertices in decreasing order as in the leaf-processing step of the definition of $\TP(H).$
 \end{remark}
\subsection{Probability}
\label{section:probability}
The bounds in this section are all elementary and aim to rigorously quantify the following heuristic.
Consider $Z_1, Z_2\sim\standardgaussd.$ Then, $\langle Z_1, Z_2\rangle$ is approximately distributed as $\mathcal{N}(0,\frac{1}{d}).$ The rapid decay of Gaussian tails suggests that the density of $\langle Z_1, Z_2\rangle$ around $\rho^p_d$ (recall that $\rho^p_d$ is defined by 
$p=\prob[\langle Z_1, Z_2\rangle\ge \rho^p_d]$) 
is roughly $p\sqrt{d}.$ Hence, for any ``short'' interval $[\rho^p_d + a, \rho^p_d+a+\Delta]$ of length $\Delta$ sufficiently close to $\rho^p_d,$ it must be 
the case that $\prob[\langle Z_1, Z_2\rangle\in [\rho^p_d + a, \rho^p_d+a+\Delta]]\approx p\sqrt{d}\Delta.$ Bounds of this 
form (where $\langle Z_1, Z_2\rangle$ is replaced by $\mathcal{N}(0,\frac{1}{d})$ or $\langle V_1, V_2\rangle$ for $V_1, V_2\sim\unif(\dsphere)$ will be frequently used throughout.

An impatient reader is welcome to read the statements of \cref{fact:GaussianFatcs,cor:innerproductsinintervals,cor:gaussianinintervals} and continue to the much more conceptually interesting \cref{sec:mainclaim}.

\paragraph{1. Gaussian Random Variables.} We use normalized Gaussian random variables $\mathcal{N}(0,\frac{1}{d}).$ The density of $X\sim \mathcal{N}(0,\frac{1}{d})$ is given by $\phi_d(x):=\frac{\sqrt{d}}{\sqrt{2\pi}}\exp(-dx^2/2).$ We record several simple facts about Gaussians, all of which follow immediately from the following elementary proposition.

\begin{proposition}[Folklore]
\label{prop:gaussiantalesfolklore}
If $x>0$ and $Y\sim\standardgauss,$
$
\Big(\frac{1}{x}-\frac{1}{x^3}\Big)\frac{e^{-x^2/2}}{\sqrt{2\pi}}\le \prob[Y\ge x]\le 
\frac{1}{x}\frac{e^{-x^2/2}}{\sqrt{2\pi}}
.
$
    
\end{proposition}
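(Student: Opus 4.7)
The plan is to execute the standard two-step integration-by-parts argument that converts the tail integral into an asymptotic expansion of Mills' ratio. Write
\[
\prob[Y\ge x] = \int_x^\infty \frac{e^{-t^2/2}}{\sqrt{2\pi}}\,dt
\]
and exploit the identity $\frac{d}{dt}\bigl(-e^{-t^2/2}\bigr) = t\,e^{-t^2/2}$ by rewriting the integrand as $\frac{1}{t}\cdot t\,e^{-t^2/2}/\sqrt{2\pi}$. Integration by parts with $u = 1/t$ and $dv = t\,e^{-t^2/2}\,dt/\sqrt{2\pi}$ (so $v = -e^{-t^2/2}/\sqrt{2\pi}$ and $du = -dt/t^2$) gives
\[
\prob[Y\ge x] \;=\; \frac{e^{-x^2/2}}{x\sqrt{2\pi}} \;-\; \int_x^\infty \frac{e^{-t^2/2}}{t^2\sqrt{2\pi}}\,dt.
\]
Since the remaining integral is nonnegative, discarding it immediately yields the upper bound.

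For the lower bound I would apply the same device once more to the leftover integral, rewriting its integrand as $\frac{1}{t^3}\cdot t\,e^{-t^2/2}/\sqrt{2\pi}$. Integration by parts with $u = 1/t^3$ produces
\[
\int_x^\infty \frac{e^{-t^2/2}}{t^2\sqrt{2\pi}}\,dt \;=\; \frac{e^{-x^2/2}}{x^3\sqrt{2\pi}} \;-\; 3\int_x^\infty \frac{e^{-t^2/2}}{t^4\sqrt{2\pi}}\,dt.
\]
Substituting this identity into the previous one and dropping the nonnegative tail $3\int_x^\infty e^{-t^2/2}/(t^4\sqrt{2\pi})\,dt$ yields the stated lower bound $\bigl(\tfrac{1}{x}-\tfrac{1}{x^3}\bigr)\tfrac{e^{-x^2/2}}{\sqrt{2\pi}}$ for all $x>0$. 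For $0 < x \le 1$ this bound is nonpositive and hence vacuous, so no separate case analysis is required.

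There is no substantive obstacle here; the only minor checks are that the boundary terms at $t=\infty$ vanish (immediate from the super-exponential decay of $e^{-t^2/2}$) and that the sign of $du$ in each integration-by-parts step is tracked correctly. One could alternatively give a differentiation proof by showing that $f(x) := \tfrac{e^{-x^2/2}}{x\sqrt{2\pi}} - \prob[Y\ge x]$ and $g(x) := \prob[Y\ge x] - \bigl(\tfrac{1}{x}-\tfrac{1}{x^3}\bigr)\tfrac{e^{-x^2/2}}{\sqrt{2\pi}}$ both have negative derivatives on $(0,\infty)$ and vanish at infinity, but the integration-by-parts route is cleaner and also exposes the next correction term, which can be useful if stronger bounds are ever needed elsewhere.
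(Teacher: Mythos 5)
The paper states this proposition as folklore and provides no proof, so there is nothing to compare against. Your argument is the standard one and is correct: the first integration by parts, writing the integrand as $\frac{1}{t}\cdot t\,e^{-t^2/2}/\sqrt{2\pi}$, gives
\[
\prob[Y\ge x] \;=\; \frac{e^{-x^2/2}}{x\sqrt{2\pi}} \;-\; \int_x^\infty \frac{e^{-t^2/2}}{t^2\sqrt{2\pi}}\,dt,
\]
and dropping the nonnegative remainder yields the upper bound; a second integration by parts with $u=1/t^3$ on the remainder, followed by dropping the resulting nonnegative tail, yields the lower bound. The boundary terms at infinity vanish by Gaussian decay, and your remark that the lower bound is vacuous for $0<x\le 1$ is correct and means no case split is needed. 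No gaps.
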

 
\begin{fact}
\label{fact:GaussianFatcs}
For $X\sim \mathcal{N}(0,\frac{1}{d}),$ the following hold:
\begin{enumerate}
    \item $\prob[X\ge t]\le\exp(-t^2d/2)$ for $t\ge\sqrt{1/d}.$
    \item If $\xi^p_d$ is such that $\prob[X\ge \xi_d^p]= p$ for some $p,d$ under \eqref{eq:assumption}, then\linebreak  
    $\frac{0.1\sqrt{\log(1/p)}-10}{\sqrt{d}}\le\xi_d^p\le 
    \frac{\sqrt{2\log(1/p)}}{\sqrt{d}}.$
    \item Under \eqref{eq:assumption},
    $\phi_d(\xi^p_d)\le C'pd\xi^p_d\le C'p\sqrt{\log d}\sqrt{d}$ for some constant $C'.$
    \item Suppose that \eqref{eq:assumption} holds $a\in\mathbb{R}, \Delta\in \mathbb{R}_{\ge 0}$ are such that $|a|,|\Delta|\le (\log d)^{-1}/\sqrt{d}.$ Then, for some absolute constant $C''>0,$
    $$
    {\Delta p}{\sqrt{d}}/(C''\sqrt{\log d})\le
    \prob\Big[X\in [\xi^p_d+ a, \xi^p_d + a + \Delta]\Big]\le 
    {\Delta p}{\sqrt{d}}\times(C''\sqrt{\log d}).
    $$
\end{enumerate}
\end{fact}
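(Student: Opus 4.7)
My plan is to reduce all four parts to the Mills-ratio-type \cref{prop:gaussiantalesfolklore} via the standard rescaling $Y = \sqrt{d}\, X \sim \standardgauss$, and then to carefully track how the resulting bounds behave under assumption \eqref{eq:assumption}. The single arithmetic input I will use repeatedly is that \eqref{eq:assumption} forces $\log(1/p) \le (1-\epsilon)\log n \le \tfrac{1-\epsilon}{\gamma}\log d$, so in particular $\sqrt{\log(1/p)}/\log d = o(1)$ and polynomial factors such as $1/(x\sqrt{2\pi})$ evaluated at $x = \sqrt{d}\,\xi_d^p$ are bounded below by absolute constants up to logarithmic losses.

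For Part 1, I rewrite $\prob[X\ge t] = \prob[Y\ge t\sqrt{d}]$ and apply the upper half of \cref{prop:gaussiantalesfolklore} at $x = t\sqrt{d} \ge 1$, absorbing $1/(x\sqrt{2\pi}) \le 1$ into the exponential. For Part 2, the upper bound $\xi_d^p \le \sqrt{2\log(1/p)/d}$ is immediate by plugging $t^* = \sqrt{2\log(1/p)/d}$ into Part 1 and using monotonicity of $\prob[X\ge \cdot]$. For the lower bound, I apply the lower half of \cref{prop:gaussiantalesfolklore} at $x = \sqrt{d}\,\xi_d^p$: combining $1/x - 1/x^3 \ge 1/(2x)$ (valid when $x \ge \sqrt{2}$; smaller $x$ makes the claim vacuous since $0.1\sqrt{\log(1/p)} - 10 \le 0$) yields $p \ge \tfrac{1}{2x\sqrt{2\pi}} e^{-x^2/2}$, and solving for $x$ by elementary manipulation gives $x \ge 0.1\sqrt{\log(1/p)} - 10$.

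Part 3 is an immediate corollary of the lower bound computation in Part 2: rearranging $p \ge \tfrac{1}{2x\sqrt{2\pi}}e^{-x^2/2}$ at $x = \sqrt{d}\,\xi_d^p$ gives $\phi_d(\xi_d^p) = \tfrac{\sqrt{d}}{\sqrt{2\pi}} e^{-x^2/2} \le 2pd\,\xi_d^p$, and inserting the upper bound on $\xi_d^p$ from Part 2 converts this into the claimed $\phi_d(\xi_d^p) \le C' p \sqrt{\log d}\sqrt{d}$.

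The main technical step is Part 4. I write the interval probability as $\int_{\xi_d^p + a}^{\xi_d^p + a + \Delta}\phi_d(x)\, dx$ and show that $\phi_d$ is constant to within a multiplicative $(1+o(1))$ factor on this window. The key identity is
\[
\frac{\phi_d(x)}{\phi_d(\xi_d^p)} \;=\; \exp\!\Bigl(-\tfrac{d}{2}(x - \xi_d^p)(x + \xi_d^p)\Bigr),
\]
and for $x$ in the window I have $|x-\xi_d^p|\le |a|+\Delta \le 2/(\sqrt{d}\log d)$ and $x + \xi_d^p \le 3\xi_d^p = O(\sqrt{\log(1/p)/d})$ from Part 2. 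The exponent is therefore of magnitude $O(\sqrt{\log(1/p)}/\log d) = o(1)$, so $\phi_d(x)$ and $\phi_d(\xi_d^p)$ agree up to a constant factor across the whole window. The upper bound in Part 4 then follows by integrating and invoking Part 3, while the matching lower bound requires a companion estimate $\phi_d(\xi_d^p) \ge c\, p\sqrt{d}/\sqrt{\log(1/p)}$, which drops out of applying the \emph{upper} half of \cref{prop:gaussiantalesfolklore} at $x = \sqrt{d}\,\xi_d^p$ together with the \emph{lower} bound on $\xi_d^p$ from Part 2. The only real obstacle I anticipate is the bookkeeping needed to ensure all ranges of $\log(1/p)$ relative to $\log d$ are handled so that the absolute constants in the final form $\Delta p \sqrt{d}\cdot(C''\sqrt{\log d})^{\pm 1}$ can be extracted cleanly; assumption \eqref{eq:assumption} was introduced precisely to make this bookkeeping uniform.
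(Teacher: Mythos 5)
Parts 1–3 and the density-ratio step in Part 4 follow the same route as the paper (rescaling to a standard Gaussian and applying \cref{prop:gaussiantalesfolklore}), and those pieces go through, though you should watch the minor slip $x+\xi_d^p \le 3\xi_d^p$: when $\xi_d^p$ is close to $0$ this is false, but the weaker and still sufficient bound $|x+\xi_d^p| = O(\sqrt{\log(1/p)/d})$ does hold.

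The genuine gap is in the lower bound of Part 4. You want $\phi_d(\xi_d^p) \ge c\,p\sqrt{d}/\sqrt{\log(1/p)}$ and claim it ``drops out'' of the upper half of \cref{prop:gaussiantalesfolklore} together with the lower bound on $\xi_d^p$ from Part 2. The Mills-ratio step does give $\phi_d(\xi_d^p) \ge p\,d\,\xi_d^p$, but Part 2's lower bound is $\xi_d^p \ge \bigl(0.1\sqrt{\log(1/p)}-10\bigr)/\sqrt{d}$, which is \emph{nonpositive} whenever $\log(1/p)\le 10^4$; assumption \eqref{eq:assumption} permits $p$ anywhere up to $1/2$, so this regime is not excluded. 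In that regime your chain yields only $\phi_d(\xi_d^p)\ge 0$, and you would need a separate argument (e.g.\ observing that for $\sqrt{d}\,\xi_d^p = O(1)$ one has $\phi_d(\xi_d^p) = \Theta(\sqrt{d})$ directly, which dominates $p\sqrt{d}/\sqrt{\log(1/p)}$). The paper avoids this entirely: from Part 1 and the nonnegativity of $\xi_d^p$, $\prob[X\ge \xi_d^p + \eta\sqrt{\log d}/\sqrt{d}]\le p/2$ for a suitable constant $\eta$, hence $\int_{\xi_d^p}^{\xi_d^p + \eta\sqrt{\log d}/\sqrt{d}}\phi_d \ge p/2$, and since $\phi_d$ is decreasing to the right of $\xi_d^p \ge 0$, this forces $\phi_d(\xi_d^p)\ge \tfrac{p\sqrt{d}}{2\eta\sqrt{\log d}}$ with no lower bound on $\xi_d^p$ required. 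Either patch the small-$\xi_d^p$ case explicitly or replace your companion estimate with the mass-in-a-window argument; as written, the lower bound of Part 4 does not close.
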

\begin{proof}
\textbf{1.} Follows from setting $x = t\sqrt{d}$ in \cref{prop:gaussiantalesfolklore}. \textbf{2.} Again, this holds 
from \cref{prop:gaussiantalesfolklore}. For the upper bound, it is enough to show that 
$\frac{1}{\sqrt{2\log(1/p)}}\frac{e^{-\log(1/p)}}{\sqrt{2\pi}}<p,$ which is trivial. For the lower bound, there is nothing to prove if $0.1\sqrt{\log(1/p)}\le 10.$ Otherwise, we can prove the stronger inequality $\sqrt{\log(1/p)}/\sqrt{d}\le \xi^p_d.$ Indeed,
\begin{align*}
&\Big(\frac{1}{\sqrt{\log(1/p)}}-\frac{1}{\sqrt{\log(1/p)}^3}\Big)e^{-\sqrt{\log(1/p)}^2/2}\ge 
\frac{1}{2\sqrt{\log(1/p)}}
\sqrt{p}\ge p.
\end{align*}
\textbf{3.} This follows from 2 for the following reason. Consider the density at $\xi^p_d+ \frac{1}{\sqrt{d}\sqrt{(\log d)}}.$ Then,
\begin{align*}
&p\ge \int^{\xi^p_d+ \frac{1}{\sqrt{d}\sqrt{(\log d)}}}_{\xi^p_d}\phi_d(t)d t \ge 
\frac{1}{\sqrt{d}\sqrt{(\log d)}}\phi_d(\xi^p_d+ \frac{1}{\sqrt{d}\sqrt{(\log d)}})\\
& \ge \frac{1}{\sqrt{d}\sqrt{(\log d)}}\phi_d(\xi^p_d)\times 
\exp\Big(-d\big(\xi^p_d\frac{1}{\sqrt{d}\sqrt{(\log d)}} + \frac{1}{2d(\log d)}\big)\Big),
\end{align*}
which immediately gives the first inequality. Using the bounds on $\xi^p_d$ from 2., we bound the last expression above and obtain 
$p\ge \frac{1}{\sqrt{d}\sqrt{(\log d)}}\phi_d(\xi^p_d)C''$ which gives the second inequality.
\textbf{4.} Part 1 implies that for some constant $\eta$ depending only on $\gamma,\epsilon$ in \eqref{eq:assumption},\linebreak $\prob[X\in [\xi^p_d, \xi^p_d + \frac{\eta \sqrt{\log d}}{\sqrt{d}}]]\ge p/2.$ On the other hand, whenever $|\xi^p_d - u|\le \frac{(\log d)^{-1}}{\sqrt{d}},$ then\linebreak $\phi_d(\xi^p_d)/\phi_d(u) = \exp(-d((\xi^p_d-u)\xi^p_d - (\xi^p_d-u)^2/2))\in [1/C''',C''']$ for some absolute constant $C'''$ under \eqref{eq:assumption} as  $|\xi^p_d|\times|\xi^p_d - u|= o(1/d),
(\xi^p_d - u)^2 = o(1/d)
$ by 2. Combining these statements, at least a $\Omega(\Delta\sqrt{d}/\sqrt{(\log d)})$ fraction of the $p/2$ mass is in $[\xi^p_d+ a, \xi^p_d + a + \Delta]$ which immiediately gives the lower bound as the density in this interval is $\Omega(\phi_d(\xi^p_d)).$ Similarly, at most a $O(\Delta\sqrt{d}\sqrt{(\log d)})$ fraction of the $p/2$ mass is in $[\xi^p_d+ a, \xi^p_d + a + \Delta]$ which gives the upper bound.
\end{proof}

\paragraph{2. $\chi^2$ Random Variables.} The $\chi^2(d)$ distribution is the distribution of $\sum_{i=1}^d X_i^2,$ where\linebreak $X_1,\ldots, X_i\sim_{iid}\standardgauss.$ We will need the following proposition.

\begin{proposition}[\cite{laurent2000chisquared}]
\label{prop:chisquaredconcnetrations}
If  $Z\sim \chi^2(d),$ then 
$|\prob[|Z/d-1|\ge t]|\le 2\exp( - dt^2/16)$ for all $t \le 1.$
\end{proposition}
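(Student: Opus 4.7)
The plan is to prove this classical bound via the moment generating function (MGF) method and Chernoff's inequality. Writing $Z = \sum_{i=1}^d X_i^2$ with $X_i \iidsim \mathcal{N}(0,1)$, the first step is to compute the MGF of a single squared Gaussian: for $\lambda < 1/2$,
$$\E[e^{\lambda X_i^2}] = \frac{1}{\sqrt{2\pi}}\int_{\mathbb{R}} e^{-(1-2\lambda)x^2/2}\,dx = (1 - 2\lambda)^{-1/2},$$
so by independence $\E[e^{\lambda Z}] = (1 - 2\lambda)^{-d/2}$.

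For the upper tail, I would apply Markov to $e^{\lambda Z}$ with $\lambda \in (0, 1/2)$ to obtain
$$\prob[Z \ge d(1+t)] \le e^{-\lambda d(1+t)}(1 - 2\lambda)^{-d/2}.$$
Optimizing in $\lambda$ gives $\lambda^\star = t/(2(1+t))$, and substituting simplifies the logarithm of the right-hand side to $(d/2)(\log(1+t) - t)$. The Taylor estimate $\log(1+t) \le t - t^2/2 + t^3/3$ combined with $t \in (0, 1]$ yields $\log(1+t) - t \le -t^2/6$, so the upper tail is bounded by $\exp(-dt^2/12)$. The lower tail uses $\lambda < 0$ (the MGF is finite for all $\lambda < 1/2$, in particular for negative $\lambda$): a symmetric calculation, using $\log(1-t) \le -t - t^2/2$ for $t \in [0,1)$, gives the even stronger bound $\prob[Z \le d(1-t)] \le \exp(-dt^2/4)$.

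A union bound over the two tail events then produces $\prob[|Z/d - 1| \ge t] \le 2\exp(-dt^2/12)$, which is in fact tighter than the stated $2\exp(-dt^2/16)$ (the constant $16$ leaves room to spare and is the form most convenient to quote). The only technical obstacle is the careful bookkeeping of constants through the Chernoff optimization and the Taylor estimate on the restricted range $t \le 1$. Since this is precisely the Laurent--Massart inequality (\cite{laurent2000chisquared}, Lemma~1), in the paper I would simply invoke it, but the above provides a self-contained derivation.
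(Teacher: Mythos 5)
Your derivation is correct and self-contained: the MGF of $\chi^2(d)$ is computed correctly, the Chernoff optimization in $\lambda$ is carried out properly, the Taylor bounds $\log(1+t)-t\le -t^2/6$ (for $0<t\le1$) and $t+\log(1-t)\le -t^2/2$ (for $0\le t<1$) both hold, and the union bound then gives the stated $2\exp(-dt^2/16)$ with room to spare. One trivial edge case worth noting: at $t=1$ the lower-tail calculation formally requires $t<1$ (the optimizing $\lambda$ diverges), but there the event $\{Z\le 0\}$ has probability zero, so the bound holds anyway.

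The paper does not prove this proposition; it simply cites \cite{laurent2000chisquared}, whose Lemma~1 gives the asymmetric bounds $\prob[Z - d \ge 2\sqrt{dx} + 2x] \le e^{-x}$ and $\prob[d - Z \ge 2\sqrt{dx}] \le e^{-x}$, from which the symmetric two-sided form follows by choosing $x = dt^2/16$ and using $t \le 1$. Your route is the standard one-shot Chernoff argument and is the more elementary of the two; the Laurent--Massart form is sharper near the mean (its quadratic-plus-linear shape $2\sqrt{dx}+2x$ is optimal across all deviations), but for the fixed-$t$ regime used in this paper the two yield the same kind of bound, and yours is perfectly adequate. Since the paper invokes the lemma as a black box, either quoting Laurent--Massart or including your short derivation would be acceptable.
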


\paragraph{3. Thresholds.}  Finally, we will prove bounds on the values $\rho^p_d, \tau^p_d$ from \cref{def:spherical}. 

\begin{proposition}
\label{prop:thresholdsclosetogaussian}
Under \eqref{eq:assumption}, 
$|\tau^p_d - \xi^p_d|\le \frac{(\log d)^2}{d}$ and $|\rho^p_d - \xi^p_d|\le \frac{(\log d)^2}{d}.$     
\end{proposition}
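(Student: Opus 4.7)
Both bounds will follow from the Bartlett decomposition in \eqref{eq:Bartlettintro} applied with $k = 2$, which allows us to write
\[
\langle Z_1, Z_2\rangle = Z_{11} Z_{21}, \qquad \langle V_1, V_2\rangle = \frac{Z_{21}}{\|Z_2\|_2},
\]
where $Z_{21} \sim \mathcal{N}(0, 1/d)$ has $\xi^p_d$ as its $p$-quantile by definition, and the norms $Z_{11} = \|Z_1\|_2$ and $\|Z_2\|_2 = \sqrt{Z_{21}^2 + Z_{22}^2}$ satisfy $dZ_{11}^2, d\|Z_2\|_2^2 \sim \chi^2(d)$. It thus suffices to show that multiplying $Z_{21}$ by a norm concentrated near $1$ shifts its $p$-quantile by at most $(\log d)^2/d$.

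The main step is a contradiction argument. Set $r := (\log d)/\sqrt{d}$ and let $\mathcal{E}$ be the event that the relevant norms lie in $[1 - r, 1 + r]$. By \cref{prop:chisquaredconcnetrations}, $\Pr[\mathcal{E}^c] \le 4\exp(-(\log d)^2/16)$, which under \eqref{eq:assumption} is smaller than $p$ times any fixed polynomial in $1/d$. Suppose for contradiction that $\rho^p_d > \xi^p_d + (\log d)^2/d$. On $\mathcal{E}$ the event $Z_{11} Z_{21} \ge \rho^p_d$ forces (positive) $Z_{21} \ge \rho^p_d/(1 + r)$; by part~2 of \cref{fact:GaussianFatcs} combined with $\log(1/p) = O(\log d)$ from \eqref{eq:assumption}, we have $r \xi^p_d = O((\log d)^{3/2}/d)$, so for $d$ large the derived threshold satisfies $\rho^p_d/(1 + r) \ge \xi^p_d + (\log d)^2/(2d)$. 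Applying part~4 of \cref{fact:GaussianFatcs} to this gap yields
\[
\Pr\bigl[Z_{21} \ge \xi^p_d + (\log d)^2/(2d)\bigr] \le p \cdot \bigl(1 - \Omega((\log d)^{3/2}/\sqrt{d})\bigr),
\]
and since $\Pr[\mathcal{E}^c]$ is negligible compared with $p (\log d)^{3/2}/\sqrt{d}$, this forces $\Pr[Z_{11} Z_{21} \ge \rho^p_d] < p$, a contradiction. The symmetric argument yields the matching lower bound on $\rho^p_d$.

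For $\tau^p_d$, I will use the equivalent form of the spherical inner product: for $s \in (0, 1)$, squaring the relation $Z_{21}/\sqrt{Z_{21}^2 + Z_{22}^2} \ge s$ yields $Z_{21} \ge s Z_{22}/\sqrt{1 - s^2}$, where $Z_{22}$ is independent of $Z_{21}$ with $dZ_{22}^2 \sim \chi^2(d-1)$. Since $\tau^p_d = O(\sqrt{\log(1/p)/d})$ by the same quantile argument as for $\xi^p_d$, the factor $\sqrt{1 - (\tau^p_d)^2} = 1 + O(\log d/d)$ is a lower-order correction to the norm concentration, and the Gaussian-case argument transfers verbatim after replacing $Z_{11}$ by $Z_{22}/\sqrt{1 - (\tau^p_d)^2}$.

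The principal obstacle is the careful numerology. The exponent $2$ in $(\log d)^2/d$ is chosen so that two errors are simultaneously dominated at scale $\Delta_0 := (\log d)^2/d$: the norm-concentration multiplicative shift $r \cdot \xi^p_d = O((\log d)^{3/2}/d)$ in one direction, and the Gaussian-density lower bound $\Omega(p \Delta_0 \sqrt{d}/\sqrt{\log d}) = \Omega(p (\log d)^{3/2}/\sqrt{d})$ dominating the $\chi^2$ tail $\exp(-(\log d)^2/16)$ in the other. Both inequalities rely on assumption \eqref{eq:assumption}, which ensures $\log(1/p), \log n = O(\log d)$; without the polynomial relationship $d \ge n^\gamma$ the $\chi^2$ error could outweigh the quantile shift.
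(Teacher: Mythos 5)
Your proof is correct and follows essentially the same route as the paper: both use the Bartlett decomposition to reduce to the single Gaussian coordinate $Z_{21}$, bound the norm fluctuations via $\chi^2$-concentration (\cref{prop:chisquaredconcnetrations}), and compare quantiles using part~4 of \cref{fact:GaussianFatcs}, with assumption~\eqref{eq:assumption} ensuring the $\chi^2$ tail is negligible against $p \cdot \mathrm{poly}(\log d)/\sqrt{d}$. The only cosmetic differences are the contradiction framing and, for $\tau^p_d$, your rearrangement to $Z_{21}\ge \tau^p_d Z_{22}/\sqrt{1-(\tau^p_d)^2}$, which makes the independence of the two quantities explicit rather than leaving it implicit as the paper does when writing $Z_{21}/\|Z_2\|_2$.
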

\begin{proof} We begin with $\rho^p_d.$ Let $Z_1,Z_2\sim\standardgaussd.$ By \cref{eq:Bartlettintro}, $\langle Z_1, Z_2\rangle = Z_{11}Z_{21},$ where $dZ^2_{11}\sim\chi^2(d),Z_{21}\sim\mathcal{N}(0,1)$ are independent. Hence, 
\begin{align*}
&p= \prob[\langle Z_1, Z_2\rangle\ge \rho^p_d] = 
\prob[Z_{11}Z_{21}\ge \rho^p_d]\\
&\ge 
\prob\Big[Z_{21}\ge \frac{\rho^p_d}{1-(\log d)/\sqrt{d}}\Big]
-\prob[Z_{11}\le 1-(\log d)/\sqrt{d}]\\
&\ge 
\prob\Big[Z_{21}\ge \frac{\rho^p_d}{1-(\log d)/\sqrt{d}}\Big] - \exp(-\Omega((\log^2d))).
\end{align*}
Hence, $\rho^p_d/(1 - (\log d)/\sqrt{d})\le \xi^p_d +\frac{1}{d}$ by part 4 of \cref{fact:GaussianFatcs} applied to $\xi^p_d+a=\rho^p_d, \xi^p_d+a+\Delta = \rho^p_d/(1 - (\log d)/\sqrt{d}).$ Similarly, 
$\rho^p_d(1 +(\log d)/\sqrt{d})\ge \xi^p_d -\frac{1}{d}.$ The argument for $\tau^p_d$ is the same except that one argues about $\langle Z_1, Z_2\rangle/(\|Z_1\|_2\|Z_2\|_2= Z_{21}/\|Z_2\|_2.$
\end{proof}

Combining with the 4-th fact in \cref{fact:GaussianFatcs}, we obtain:  

\begin{corollary}
\label{cor:gaussianinintervals}
Let $X\sim\mathcal{N}(0,\frac{1}{d}).$ Suppose that \eqref{eq:assumption} holds $a\in\mathbb{R}, \Delta\in \mathbb{R}_{\ge 0}$ are such that $|a|,|\Delta|\le (\log d)^{-1}/\sqrt{d}.$ Then, for some absolute constant $C''>0,$
    $$
    {\Delta p}{\sqrt{d}}/(C''\sqrt{\log d})\le
    \prob\Big[X\in [\tau^p_d+ a, \tau^p_d + a + \Delta]\Big]\le 
    {\Delta p}{\sqrt{d}}\times(C''\sqrt{\log d}),
    $$
    $$
    {\Delta p}{\sqrt{d}}/(C''\sqrt{\log d})\le
    \prob\Big[X\in [\rho^p_d+ a, \rho^p_d + a + \Delta]\Big]\le 
    {\Delta p}{\sqrt{d}}\times(C''\sqrt{\log d}),
    $$    
\end{corollary}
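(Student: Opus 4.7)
The plan is to deduce Corollary~\ref{cor:gaussianinintervals} essentially immediately from Fact~\ref{fact:GaussianFatcs}(4) by using Proposition~\ref{prop:thresholdsclosetogaussian} to replace the thresholds $\tau^p_d$ and $\rho^p_d$ by the Gaussian quantile $\xi^p_d$, and then absorbing the discrepancy into the shift parameter. Concretely, by Proposition~\ref{prop:thresholdsclosetogaussian} we may write $\tau^p_d = \xi^p_d + e_\tau$ and $\rho^p_d = \xi^p_d + e_\rho$ with $|e_\tau|, |e_\rho| \le (\log d)^2/d$. Setting $a' = a + e_\tau$ (resp.\ $a + e_\rho$), the interval $[\tau^p_d + a, \tau^p_d + a + \Delta]$ (resp.\ with $\rho^p_d$) becomes exactly $[\xi^p_d + a', \xi^p_d + a' + \Delta]$, so that the probability we want to bound is $\Pr[X \in [\xi^p_d + a', \xi^p_d + a' + \Delta]]$.

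Next, I would verify that $(a', \Delta)$ lies in the range of parameters to which Fact~\ref{fact:GaussianFatcs}(4) applies. Under assumption~\eqref{eq:assumption}, $d \ge n^\gamma$, so $(\log d)^3 = o(\sqrt{d})$ and consequently $(\log d)^2/d = o((\log d)^{-1}/\sqrt d)$. In particular, $|e_\tau|, |e_\rho| \ll (\log d)^{-1}/\sqrt{d}$ for large $d$, so $|a'| \le |a| + |e_\tau| \le 2(\log d)^{-1}/\sqrt{d}$, which is only off from the hypothesis of Fact~\ref{fact:GaussianFatcs}(4) by the factor of~$2$.

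The main (and only) obstacle is this mild mismatch in the allowable range for the shift. Inspecting the proof of Fact~\ref{fact:GaussianFatcs}(4), the hypothesis $|a| \le (\log d)^{-1}/\sqrt d$ is used only through the density-ratio estimate $\phi_d(\xi^p_d)/\phi_d(u) \in [1/C''', C''']$ valid for all $u$ within $(\log d)^{-1}/\sqrt d$ of $\xi^p_d$. The derivation of that ratio rests on the two estimates $|\xi^p_d|\cdot|\xi^p_d - u| = o(1/d)$ and $(\xi^p_d - u)^2 = o(1/d)$, both of which continue to hold (with a slightly worse but still absolute constant) when $|\xi^p_d - u| \le 2(\log d)^{-1}/\sqrt d$, again using $|\xi^p_d| = O(\sqrt{\log d}/\sqrt d)$ from Fact~\ref{fact:GaussianFatcs}(2). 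Thus Fact~\ref{fact:GaussianFatcs}(4) applies with the same form and a possibly enlarged absolute constant; redefining $C''$ to absorb this factor yields both bounds of Corollary~\ref{cor:gaussianinintervals} simultaneously for $\tau^p_d$ and $\rho^p_d$.
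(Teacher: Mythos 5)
Your proof is correct and follows essentially the same route as the paper, which derives the corollary simply by ``combining'' Proposition~\ref{prop:thresholdsclosetogaussian} with Fact~\ref{fact:GaussianFatcs}(4) without further comment. You just make explicit the benign point that the shifted parameter $a'$ may exceed $(\log d)^{-1}/\sqrt d$ by a constant factor and that the density-ratio bound in the proof of Fact~\ref{fact:GaussianFatcs}(4) remains valid with an enlarged absolute constant, which is exactly the right justification.
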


\begin{corollary}
\label{cor:innerproductsinintervals}
Suppose that \eqref{eq:assumption} holds $a\in\mathbb{R}, b\in \mathbb{R}_{\ge 0}$ are such that $|a|,|\Delta|\le (\log d)^{-1}/\sqrt{d}.$ Then, for some absolute constant $C''>0,$
\begin{align*}
    &{\Delta p}{\sqrt{d}}/(C''\sqrt{\log d}) - e^{-C''(\log d)^2}\le 
    \prob_{V_1, V_2\sim_{iid}\unif(\dsphere)}\Big[\langle V_1, V_2\rangle\in [\tau^p_d+ a, \tau^p_d + a + \Delta]\Big]\le\\&\quad\quad\quad\quad\quad\quad\quad\quad\quad\quad\quad\quad\quad\quad\quad\quad\quad\le 
    {\Delta p}{\sqrt{d}}\times(C''\sqrt{\log d}) +e^{-C''(\log d)^2},\\ 
    &{\Delta p}{\sqrt{d}}/(C''\sqrt{\log d}) - e^{-C''(\log d)^2}\le 
    \prob_{Z_1, Z_2\sim_{iid}\standardgaussd}\Big[\langle Z_1, Z_2\rangle\in [\rho^p_d+ a, \rho^p_d + a + \Delta ]\Big]\le\\&\quad\quad\quad\quad\quad\quad\quad\quad\quad\quad\quad\quad\quad\quad\quad\quad\quad\le 
    {\Delta p}{\sqrt{d}}\times(C''\sqrt{\log d}) +e^{-C''(\log d)^2}.
\end{align*}
\end{corollary}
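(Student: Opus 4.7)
The plan is to reduce both bounds to the one-dimensional Gaussian bound in \cref{cor:gaussianinintervals} via the Bartlett decomposition \eqref{eq:Bartlettintro}, combined with $\chi^2$-concentration as in the proof of \cref{prop:thresholdsclosetogaussian}. The key point is that, under the high-probability event that the relevant Gaussian norms concentrate at $1\pm(\log d)/\sqrt{d}$, the inner products reduce to a $\mathcal{N}(0,1/d)$ variable whose interval of interest is only mildly perturbed from the target.

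\emph{Gaussian case.} By Bartlett, $\langle Z_1,Z_2\rangle = Z_{11}Z_{21}$ with $Z_{21}\sim\mathcal{N}(0,1/d)$ and $dZ_{11}^2\sim\chi^2(d)$ mutually independent. Let $\mathcal{E}=\{|Z_{11}-1|\le(\log d)/\sqrt{d}\}$, so that $\prob[\mathcal{E}^c]\le e^{-C(\log d)^2}$ by \cref{prop:chisquaredconcnetrations}. Conditioning on $Z_{11}$ and using independence,
$$\prob[\langle Z_1,Z_2\rangle\in[\rho^p_d+a,\rho^p_d+a+\Delta]\mid Z_{11}]=\prob[Z_{21}\in[(\rho^p_d+a)/Z_{11},(\rho^p_d+a+\Delta)/Z_{11}]].$$
On $\mathcal{E}$, this interval has length $\Delta/Z_{11}\in[\Delta/2,2\Delta]$, and its center differs from $\rho^p_d+a$ by at most $|\rho^p_d+a|\cdot|1/Z_{11}-1|=O((\log d)^{3/2}/d)$, using $|\rho^p_d|=O(\sqrt{\log d}/\sqrt{d})$ from \cref{fact:GaussianFatcs}. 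Hence the perturbed interval still satisfies the hypothesis of \cref{cor:gaussianinintervals} (after absorbing an absolute factor into $C''$). Applying that corollary, integrating over $Z_{11}$, and paying $\prob[\mathcal{E}^c]\le e^{-C(\log d)^2}$ to pass to the unconditional probability yields the Gaussian upper bound; the lower bound is analogous after discarding the negligible bad event.

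\emph{Spherical case.} Applying Bartlett and normalizing, $V_1=e_1$ and $V_2=(Z_{21},Z_{22},0,\dots,0)/\|Z_2\|_2$, so $\langle V_1,V_2\rangle=Z_{21}/\|Z_2\|_2$ with $d\|Z_2\|_2^2\sim\chi^2(d)$. By rotational invariance of the $2$-dimensional Gaussian $(Z_{21},Z_{22})$ in its plane, $Z_{21}/\|Z_2\|_2$ is independent of $\|Z_2\|_2$. Let $\mathcal{E}'=\{\|Z_2\|_2\in[1\pm(\log d)/\sqrt{d}]\}$ and $I=[\tau^p_d+a,\tau^p_d+a+\Delta]$. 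Independence then gives
$$\prob[\langle V_1,V_2\rangle\in I]\cdot\prob[\mathcal{E}']=\prob[\langle V_1,V_2\rangle\in I,\,\mathcal{E}']=\prob[Z_{21}\in\|Z_2\|_2\cdot I,\,\mathcal{E}'].$$
For the upper bound, the right-hand event is contained in $\{Z_{21}\in J\}$, where $J=\bigcup_{r\in[1\pm(\log d)/\sqrt{d}]} rI$ is a single interval of length $\Delta+O((\log d)^{3/2}/d)$ centered near $\tau^p_d+a$; applying \cref{cor:gaussianinintervals} to $Z_{21}\sim\mathcal{N}(0,1/d)$ on $J$ bounds this by $O(\Delta p\sqrt{d}\sqrt{\log d})$, and dividing by $\prob[\mathcal{E}']\ge 1/2$ finishes the upper bound. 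For the lower bound, one checks that for each fixed $r\in[1\pm(\log d)/\sqrt{d}]$ the interval $rI$ still meets the hypothesis of \cref{cor:gaussianinintervals}, yielding a conditional probability $\Omega(\Delta p\sqrt{d}/\sqrt{\log d})$, and integrates over $r$.

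\emph{Main obstacle.} The calculation is straightforward once the perturbations are tracked carefully. One must verify that the rescaling by a factor in $[1\pm(\log d)/\sqrt{d}]$, the $O((\log d)^{3/2}/d)$ shift in the interval center, and the threshold offsets $|\rho^p_d-\xi^p_d|,|\tau^p_d-\xi^p_d|\le(\log d)^2/d$ from \cref{prop:thresholdsclosetogaussian} are all dominated by $(\log d)^{-1}/\sqrt{d}$ under \eqref{eq:assumption}, so the perturbed interval remains valid for \cref{cor:gaussianinintervals} at the cost of an inflated absolute constant. In the spherical case, the slight overshoot $O((\log d)^{3/2}/d)$ in length produces an additive slack of order $p(\log d)^2/\sqrt{d}$, which suffices for all applications of the corollary in this paper; if a tighter error were ever needed, one could instead compare the density of $V_{2,1}$ directly to $\phi_d$ on $|x|\le O(\sqrt{\log d}/\sqrt{d})$, where Stirling shows the two densities agree up to a multiplicative $1+o(1)$ factor.
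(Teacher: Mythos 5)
Your Gaussian case is exactly the paper's argument and is correct. The spherical case is where the paper is terse (``essentially the same''), and your attempt to spell it out hits a real gap in the lower bound. The identity $\prob[\langle V_1,V_2\rangle\in I]\,\prob[\mathcal{E}']=\prob[Z_{21}\in\|Z_2\|_2\cdot I,\,\mathcal{E}']$ is correct, though the justification you give is off: $(Z_{21},Z_{22})$ is not a two-dimensional Gaussian ($dZ_{22}^2\sim\chi^2(d-1)$), and the independence of $\langle V_1,V_2\rangle$ and $\|Z_2\|_2$ comes from isotropy of the $d$-dimensional $Z_2$ (direction/norm independence). The real problem is the next step: you ``fix $r$,'' invoke \cref{cor:gaussianinintervals} for a ``conditional probability,'' and integrate over $r$. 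But $Z_{21}$ and $\|Z_2\|_2=\sqrt{Z_{21}^2+Z_{22}^2}$ are \emph{not} independent. Conditioned on $\|Z_2\|_2=r$, the law of $Z_{21}$ is $r\cdot V_{21}$ with $V_{21}$ a uniform sphere coordinate, not $\mathcal{N}(0,1/d)$, so \cref{cor:gaussianinintervals} does not apply to this conditional law; in fact $\prob[Z_{21}\in rI\mid\|Z_2\|_2=r]=\prob[V_{21}\in I]$, the very quantity you set out to bound, so the step is circular. (You also flag, correctly, that your upper-bound slack is $\Theta(p(\log d)^2/\sqrt{d})$ rather than $e^{-C''(\log d)^2}$; for small $\Delta$ this does not reproduce the stated bound.)

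The fix is to condition on the \emph{other} Bartlett coordinate $Z_{22}$, which \emph{is} independent of $Z_{21}$. Given $Z_{22}=w$, $\langle V_1,V_2\rangle=Z_{21}/\sqrt{Z_{21}^2+w^2}=g_w(Z_{21})$ with $g_w$ strictly increasing and $Z_{21}\sim\mathcal{N}(0,1/d)$ unchanged, so $\{\langle V_1,V_2\rangle\in I\}=\{Z_{21}\in g_w^{-1}(I)\}$. On $\{|w-1|\le(\log d)/\sqrt{d}\}$ (failure probability $e^{-\Omega((\log d)^2)}$ by \cref{prop:chisquaredconcnetrations}), $g_w^{-1}(t)=tw/\sqrt{1-t^2}$ shows $g_w^{-1}(I)$ has length $\Delta\bigl(1\pm O((\log d)/\sqrt{d})\bigr)$ and left endpoint within $(1+o(1))(\log d)^{-1}/\sqrt{d}$ of $\tau^p_d$, so \cref{cor:gaussianinintervals} yields both directions; taking expectation over $Z_{22}$ finishes, exactly parallel to the Gaussian case and with the stated error. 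This is almost certainly the conditioning the paper's ``essentially the same'' intends. Your alternative route (Stirling comparison of the density of $V_{21}$ against $\phi_d$ on $|x|=O(\sqrt{\log d}/\sqrt{d})$) is also sound and would give multiplicative $1+o(1)$ error, but you do not carry it out.
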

\begin{proof}
The proofs in the Gaussian and spherical settings are essentially the same. In the Gaussian case, we write $\langle Z_1,Z_2\rangle =Z_{11}Z_{21}.$ Then, we condition on $Z_{11}\in 1\pm (\log d )/\sqrt{d}$ as in \cref{prop:thresholdsclosetogaussian} which happens with probability $1-\exp(-\Omega((\log d)^2)).$ Then, we are in the setting of \cref{cor:gaussianinintervals} with a value of $\Delta$ changed by a multiplicative factor of $1\pm O(\log d/\sqrt{d}).$ We apply \cref{cor:gaussianinintervals}.  
\end{proof}

\section{Distinguishing Spherical RGG and \ER}
\label{section:sphericaltoER}
\begin{proof}[Proof of \cref{thm:nonadaptivespherical}]
Suppose that $d\ge (M^{1/2}p)^{3+c}$ for some constant $c>0.$
Let $D= (\log n)^{1.1}$ and $\bfG_0\sim \mathsf{G}(N,p), \bfG_1\sim\RGG(N,\dsphere,p).$ In light of \cref{claim:ldphardagainstER}, we simply need to show that
\begin{equation}
\label{eq:chisquaredlowdeg}
    \sum_{H \subseteq E(\mathcal{M})\; :\; 1\le |E(H)|\le D} \Big(\Phi^p_\RGGsphere(H) \Big)^2 = o_n(1).
\end{equation}
Note that each isomorphic copy of $H$ has the same Fourier coefficient under the $\RGGsphere$ distribution. Hence, if $N(H,\mathcal{M})$ denotes the number of subgraphs of $\mathcal{M},$ this is equivalent to 
\begin{equation}
\label{eq:chisquaresphericaltoER}
    \sum_{H\in\graphs_D} N(H,\mathcal{M})\Big(\Phi^p_\RGGsphere(H) \Big)^2 = o_n(1).
\end{equation}
A beautiful result in \cite{Alon1981OnTN} shows that 
$N(H,\mathcal{M})\le C^{|V(H)|^{1.1}}M^{(|V(H)| + \delta(H))/2}$ for all $H$ and $M$-vertex graphs $\mathcal{M},$ where $C$ is some absolute constant.\footnote{
\cite{Alon1981OnTN} explicitly only shows that 
$N(H,\mathcal{M})\le C_HM^{(|V(H)| + \delta(H))/2}$
where $C_H$ is some constant depending only on $H.$ However, one can easily track in the argument this constant to be $|V(H)|^{O(|V(H)|)}$ which is enough. In fact, Alon shows that this result is tight up to the choice of constant $C_H.$} 

By \cref{prop:leavesinspherical}, it is enough to consider the case when $H$ has no leaves. 

 Now, we will use \cref{thm:oeiboundonfourier,prop:boundsonOEI}. Observe that for $H$ connected, 
 $\OEI(H)\ge \frac{1}{3}(2\lceil\frac{|V(H)|-1}{2}\rceil + \delta(H) + 1)\ge \frac{|V(H)| + \delta(H)}{3}.$ However, as all of $\OEI(H),|V(H)|,\delta(H)$ are obviously additive over disjoint unions, the inequality $\OEI(H)\ge \frac{|V(H)| + \delta(H)}{3}$ holds for disconnected $H$ as well. We obtain:
\begin{equation}
\label{eq:beginningchisqauredspherical}
\begin{split}
    & \sum_{H\in \graphsnoleaves_D} N(H,\mathcal{M})\Big(\Phi^p_\RGGsphere(H) \Big)^2\\ & \le 
    \sum_{H\in \graphsnoleaves_D}
    C^{|V(H)|^{1.1}}M^{(|V(H)| + \delta(H))/2}\times 
    \Big(\Big(8\sqrt{\frac{p}{1-p}})^{|E(H)|}\Big(\frac{(\log n)^4}{\sqrt{d}}\Big)^{\OEI(H)}\Big)^2\\
    & \le
    \sum_{H\in \graphsnoleaves_D}
    C^{|V(H)|^{1.1}}
    128^{|E(H)|}p^{|E(H)|}
    M^{(|V(H)| + \delta(H))/2}
    \times \Big(\frac{(\log n)^8}{d}\Big)^{(|V(H)| + \delta(H))/3}.
\end{split}
\end{equation}

Now, we use the following proposition.

\begin{proposition}
\label{prop:smalldeltaifdegreeatleast2}
Suppose that $H$ is a graph without isolated vertices and leaves. Then,\linebreak 
$|E(H)|\ge |V(H)| + \delta(H).$
\end{proposition}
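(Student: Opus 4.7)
The plan is a direct counting argument, splitting the edges of $H$ according to a well-chosen independent set witnessing $\delta(H)$.

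First, invoke \cref{prop:independentmaximizerofdelta} to pick an independent set $S \subseteq V(H)$ with $\delta(H) = |S| - |N_H(S)|$. Let $T = N_H(S)$ and $W = V(H) \setminus (S \cup T)$, so that $V(H)$ is the disjoint union $S \sqcup T \sqcup W$, and $|V(H)| = |S| + |T| + |W|$. Since $S$ is independent and vertices of $S$ have no neighbours in $W$ by definition of $T$, every edge incident to $S$ goes between $S$ and $T$; call the set of such edges $E_1$. The remaining edges lie in the induced subgraph $H[T \cup W]$; call them $E_2$, so $|E(H)| = |E_1| + |E_2|$.

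Next, I would lower bound $|E_1|$ and $|E_2|$ separately by using the minimum-degree-2 hypothesis.
\begin{itemize}
\item Each $v \in S$ has all its $\geq 2$ neighbours inside $T$, so counting edges from the $S$-side gives $|E_1| = \sum_{v \in S} \deg_H(v) \geq 2|S|$.
\item Each $w \in W$ has all its neighbours in $T \cup W$ (none in $S$, by definition of $T$) and has degree $\geq 2$ in $H$, hence degree $\geq 2$ in $H[T \cup W]$. Summing degrees in $H[T \cup W]$ over $W$ alone and applying the handshake inequality gives $2|E_2| \geq 2|W|$, i.e.\ $|E_2| \geq |W|$.
\end{itemize}
Combining the two bounds,
\begin{equation*}
|E(H)| \;\geq\; 2|S| + |W| \;=\; |S| + \bigl(|S| + |W|\bigr) \;=\; |S| + \bigl(|V(H)| - |T|\bigr) \;=\; |V(H)| + \delta(H),
\end{equation*}
which is the claimed inequality.

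There is no real obstacle here: the only conceptual step is choosing $S$ independent, which is exactly what \cref{prop:independentmaximizerofdelta} guarantees. The minimum-degree-2 assumption is used twice, once on each side of the bipartition between $S$ and $V(H)\setminus S$, and it is precisely this ``double'' use that produces the extra $|S|$ needed beyond the trivial handshake bound $|E(H)| \geq |V(H)|$. The case $\delta(H) \leq 0$ is also handled automatically by the same computation.
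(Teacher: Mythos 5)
Your proof is correct and takes essentially the same approach as the paper's: pick an independent maximizer $S$ via \cref{prop:independentmaximizerofdelta}, use minimum degree $2$ to find $2|S|$ edges from $S$ into $N_H(S)$, and use minimum degree $2$ again to find $|V(H)| - |S| - |N_H(S)|$ further edges among the remaining vertices. The only (minor) difference is that you partition $E(H)$ explicitly into $E_1$ and $E_2$, which makes the disjointness of the two edge counts slightly more transparent than in the paper's phrasing.
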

\begin{proof}
Note that in a graph $H$ without isolated vertices and leaves, each vertex has degree at least $2.$ By \cref{prop:independentmaximizerofdelta}, there exists some independent set $S$ such that $|S| - |N_H(S)| = \delta(H)$ and, hence, $S\cap N_H(S) = \emptyset.$ In particular, this means that among the vertices
$A = S\cup N_{H}(S),$ there are at least $2|S|$ edges with both endpoints in $A$ (as each vertex in $S$ has at least two neighbours and both are in $N_H(S)$ by definition of $N_H(\cdot)$). On the other hand, each vertex in $V(H)\backslash A$ is also adjacent to at least two edges, so there are at least 
$\frac{1}{2}\times 2(|V(H)| - |A|)$ more edges. Altogether, this means that the number of edges in $H$ is at least 
$$
2|S| + (|V(H)| - |A|) = 
2|S| + |V(H)| - |S| - |N_H(S)| = 
|V(H)| + |S| - |N_H(S)| = 
|V(H)|  + \delta(H),
$$
as desired. 
 \end{proof}

Going back to \cref{eq:beginningchisqauredspherical}, we bound 

\begin{equation}
\label{eq:secondeqsphericalchisquared}
    \begin{split}
& \sum_{H\in \graphsnoleaves_D}
    C^{|V(H)|^{1.1}}
    128^{|E(H)|}p^{|E(H)|}
    M^{(|V(H)| + \delta(H))/2}
    \times \Big(\frac{(\log n)^8}{d}\Big)^{(|V(H)| + \delta(H))/3}\\
& \le 
\sum_{H\in \graphsnoleaves_D}
    C^{|V(H)|^{1.1}}
    128^{|E(H)|}p^{|V(H)| + \delta(H)}
    M^{(|V(H)| + \delta(H))/2}
    \times \Big(\frac{(\log n)^8}{d}\Big)^{(|V(H)| + \delta(H))/3}\\
& = 
\sum_{H\in \graphsnoleaves_D}
    C^{|V(H)|^{1.1}}
    128^{|E(H)|}
\Big(
\frac{M^{1/2}p(\log n)^{8}}{d^{1/3}}
\Big)^{(|V(H)| + \delta(H))}\\
    \end{split}
\end{equation}
As $d\ge (M^{1/2}p)^{3+c},$ under \eqref{eq:assumption}, 
$\Big(
\frac{M^{1/2}p(\log n)^{8}}{d^{1/3}}
\Big) \le n^{-\theta}$ for some constant $\theta>0$ (depending only on $\delta, \gamma$ from \eqref{eq:assumption}, and $c$ from \cref{thm:nonadaptivespherical}). Recall that $M = \Theta(n^2).$ Also, $\delta(H)\ge 0$ (by taking $S= V(H)$ in the definition).
Hence, we can bound \cref{eq:secondeqsphericalchisquared} by 
\begin{equation}
\label{eq:thirdeqsphericalchisquared}
    \begin{split}
& 
\sum_{H\in \graphsnoleaves_D}
    C^{|V(H)|^{1.1}}
    128^{|E(H)|}
\Big(
\frac{M^{1/2}p(\log n)^{8}}{d^{1/3}}
\Big)^{(|V(H)| + \delta(H))}\\
& \le 
\sum_{H \in \graphsnoleaves_D}
    C^{|V(H)|^{1.1}}
    128^{|E(H)|}
    n^{-|V(H)|\theta} = 
\sum_{H \in \graphsnoleaves_D}
    C^{|V(H)|^{1.1}}
    128^{|E(H)|}e^{-|V(H)|\theta\log n}.    
    \end{split}
\end{equation}
Note that $|V(H)|\le 2|E(H)|\le 2D = O((\log n)^{1.1}).$ Thus, $|V(H)|^{1.1} = o_n(|V(H)|\log n).$ Similarly, 
$E(H)\le \min(|V(H)|^2, D)\le |V(H)|D^{1/2} = o_n(|V(H)|\log n).$ Hence, for all large enough $n,$ the expression above is bounded by 
$$
\sum_{H \in \graphsnoleaves_D}
e^{-\frac{\theta}{2}|V(H)|\log n}.
$$
The last expression is bounded by $
\sum_{H \in \graphs_D}
e^{-\frac{\theta}{2}|V(H)|\log n}
$ which is enough by \cref{prop:whenchisquaredissmall}.
\end{proof}

\section{Distinguishing Gaussian RGG and \ER}
\label{section:gaussiantoER}

\subsection{Proof of \texorpdfstring{\cref{thm:unmaskedgaussian}}{Unmasked Gaussian}}
\subsubsection{Detection Lower Bound}
\label{sec:proofunmaskedgaussian}
Suppose that $d\ge \max(n^{3/2}p)^{1+c},(n^3p^3)^{1+c}).$ We will prove that no degree $D=(\log n)^{1.1}$ polynomial distinguishes the two graph models. We proceed as in \cref{section:sphericaltoER} with $\mathcal{M}=K_n.$ Using the obvious bound $N(K_n,H)\le n^{|V(H)|}$ (as one can choose each of the $|V(H)|$ labeled vertices of $H$ in at most $n$ ways), we need to show the following analogue of \cref{eq:chisquaresphericaltoER}.
$$
\sum_{H\in\graphs_D}
n^{|V(H)|}\Big(\Phi^p_\RGGgauss(H)\Big)^2 =o_n(1).
$$
Now, consider any graph $H$ on $v$ vertices and $e$ edges. Suppose that its connected components are $H_1,H_2,\ldots,H_r,$ where $H_i$ has $v_i$ vertices and $e_i$ edges. Clearly, $\sum_{i=1}^r e_i = e, \sum_{i =1}^r v_i= v$ and  $\Phi^p_\RGGgauss(H) = \prod_{i = 1}^r \Phi^p_\RGGgauss(H_i).$ Clearly, if any of the graphs $H_i$ is a single edge, $\Phi^p_\RGGgauss(H_i)= 0,$ so we can focus on the case $e_i\ge 2, v_i \ge 3$ $\forall i.$ Now, suppose that $H_1,H_2,\ldots, H_t$ are trees and 
$H_{t+1}, H_{t+2}, \ldots,H_{r}$ are not. Then, using \cref{prop:graphswithleaves},
\begin{enumerate}
    \item For $1\le u \le r,$ 
    \begin{align*}
    & \Big|\Phi^p_\RGGgauss(H_i)\Big|=
    \Big|\Big(\sqrt{\frac{1}{p(1-p)}}\Big)^{e_i}
    \expect[\prod_{(uv)\in E(H_i)}(G_{uv} - p)]\Big|\\
    & \le 
    \Big(\sqrt{\frac{1}{p(1-p)}}\Big)^{e_i}
    \Big(\frac{p(\log n)^4}{\sqrt{d}}\Big)^{e_i}\le
    \Big(
    \frac{2p(\log n)^8}{d}
    \Big)^{v_i/3}.
    \end{align*} We used $e_i =v_i-1\ge 2v_i/3$ twice. 
    \item For $r+1\le u \le t,$
    \begin{align*}
    &\Big|\Phi^p_\RGGgauss(H_i)\Big|=
    \Big|\Big(\sqrt{\frac{1}{p(1-p)}}\Big)^{e_i}
    \expect[\prod_{(uv)\in E(H_i)}(G_{uv} - p)]\Big|\\
    & \le \Big(\sqrt{\frac{1}{p(1-p)}}\Big)^{e_i}
    p^{e_i}\times
    \Big(\frac{(\log n)^4}{\sqrt{d}}\Big)^{\lceil (v_i -1)/2\rceil}\le
    (2\sqrt{p})^{e_i}
    \Big(\frac{(\log n)^4}{\sqrt{d}}\Big)^{v_i/3}\le
    \Big(\frac{8p^3(\log n)^4}{\sqrt{d}}\Big)^{v_i/3}.
    \end{align*}
    This time, we used $v_i \ge e_i$ (as $H_i$ is connected and not a tree) and $\lceil (v_i -1)/2\rceil\ge v_i/3.$
\end{enumerate}
Altogether, this means that 
$$
\Big|\Phi^p_\RGGgauss(H)\Big|\le
\Big(
    \frac{2p(\log n)^8}{d}
    \Big)^{\sum_{i=1}^rv_i/3}
    \times 
    \Big(\frac{8p^{3/2}(\log n)^4}{\sqrt{d}}\Big)^{\sum_{i= r+1}^tv_i/3}.
$$
Hence, as $|V(H)|=\sum_{i= 1}^t v_i,$ 
$$
n^{|V(H)|}\Phi^p_\RGGgauss(H)^2\le
\Big(
    \frac{2p(\log n)^8n^{3/2}}{d}
    \Big)^{\sum_{i=1}^r2v_i/3}
    \times 
    \Big(\frac{8p^3n^3(\log n)^4}{{d}}\Big)^{\sum_{i= r+1}^tv_i/3}.
$$
As $d\ge (n^{3/2}p)^{1+c},d\ge (n^3p^3)^{1+c}$ and \eqref{eq:assumption} holds, this last expression is clearly at most $n^{-\theta |V(H)|}$ for some absolute constant $\theta$ depending only on $\epsilon, \gamma$ from \eqref{eq:assumption} and $c$ from \cref{thm:nonadaptiveGaussian}. The claim follows from \cref{prop:whenchisquaredissmall}.
\subsubsection{Detection Algorithms}
 \paragraph{1. Detecting via signed wedges.} We want to show that if $d=o\Big( (n^{3/2}p)(\log n)^{-4}\Big),p\le0.49$ and \eqref{eq:assumption} holds, 
the signed wedge count statistic distinguishes $\RGGgauss$ and $\ergraph$ with high probability in the sense of \cref{def:successofLDP}. The condition $p<0.49$ is needed as $\RGGspherehalf$ and $\RGGgausshalf$ are the same distribution.

Recall the definition of signed counts \cref{eq:signedcounts}.
By \cref{lem:wedgelowerbound},
$$\E_{\bfG\sim \RGGgauss}[\CS^p_\wedge(\bfG)] = n\binom{n-1}{2}\E[\SW^p_{\wedge(1,2,3)}(\bfG)] \ge 
\frac{n^3p^2}{4d}\times (\log d)^{-4}.
$$
Clearly, $\E_{\bfH\sim \mathsf{\bfG}(n,p)}[\SW^p_\wedge(\bfH)] = 0.$ Now, we compute the variances. Note that, with respect to either distribution, 
$$
\var\Big[\CS^p_\wedge\Big] = 
\sum_{(i,j,k)}\var[\SW^p_{\wedge(i,j,k)}] + 
\sum_{\wedge(i,j,k)\neq \wedge(i',j',k')}
\cov[\SW^p_{\wedge(i,j,k)}, \SW^p_{\wedge(i',j',k')}].
$$
Importantly, observe that for any two labelled subgraphs $H_1,H_2,$ 
one has $\cov[\SW^p_{H_1}, \SW^p_{H_2}] = 
\expect[\SW^p_{H_1}\SW^p_{H_2}] -
\expect[\SW^p_{H_1}]\expect[\SW^p_{H_2}].
$ However,
\begin{equation}
\label{eq:covarianceexpansion}
\begin{split}
&\signedweight^p_{H_1}(G)\signedweight^p_{H_2}(G)\\
& = \prod_{e_1\in E(H_1)}(G_{e_1}-p)\times\prod_{e_2\in E(H_2)}(G_{e_2}-p)\\
& = \prod_{e\in E(H_1)\triangle E(H_2)}
(G_e-p)\times \prod_{e\in E(H_1)\cap E(H_2)}(G_e-p)^2\\
& = \prod_{e\in E(H_1)\triangle E(H_2)}
(G_e-p)\times \prod_{e\in E(H_1)\cap E(H_2)}((G_e-p)(1-2p)+(p-p^2))\\
& =\sum_{A\subseteq E(H_1)\cap E(H_2)}
\prod_{e\in (E(H_1)\triangle E(H_2))\cup A}(G_e-p)(1-2p)^{|A|}(p-p^2)^{|E(H_1)\cap E(H_2)| - |A|}.
\end{split}    
\end{equation}
In particular, the covariance is also easily expressed as a linear combination of signed weights (which should be no surprise as any polynomial can be expanded in the Fourier basis). The above expression immediately shows that unless $H_1 = H_2,$ the covariance with respect to $\ergraph$ is 0 as an edge in $E(H_1)\triangle E(H_2)$ has expectation $0.$ When $H_1 = H_2 = \wedge(i,j,k),$ the covariance is bounded by $p^{|E(H)|} = p^2.$ Hence, 
$\var_{\bfG\sim\ergraph}\Big[\CS^p_\wedge(\bfG)\Big] = O(n^3p^2)$
For the geometric distribution, we can use \cref{prop:graphswithleaves} to bound the covariance. We carry out this trivial calculation in \cref{appendix:varianceunmaskedwedge} and show that $\var_{\bfG\sim\RGG}\Big[\CS^p_\wedge(\bfG)\Big] = o\Big( (\log d)^8\times \Big(
n^3p^2 + n^5p^4/d^2 + n^4p^3/d + n^4p^4/d^2
\Big)\Big),$ which is enough to conclude by \cref{def:successofLDP}.

\paragraph{2. Detecting via signed triangles.} We want to show that if $d=o\Big( (n^3p^3)(\log n)^{-1}\Big)$ for some absolute constant $c$ and \eqref{eq:assumption} holds, 
the signed triangle count statistic distinguishes $\RGGgauss$ and $\ergraph.$ The proof is analogous to the ones given in \cite{Bubeck14RGG,Liu2022STOC}, but we give it for completeness. We will use the following claim, the proof of which is presented in \cref{sec:lowertriangles}. 

\begin{lemma} 
    \label{thm:signedtrianglegauss}
    $\displaystyle
\expect_{\bfG\sim\RGGgauss}\Big[\SW^{p}_\triangle(\bfG)\Big] \ge \frac{Cp^3(\log d)^{-1}}{\sqrt{d}}
    $ for some absolute constant $C.$
\end{lemma}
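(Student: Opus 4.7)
The plan is to isolate the dependence structure of the three edges via the Bartlett decomposition and then to extract the leading-order contribution to the signed triangle expectation by a first-order Taylor expansion. Let $Z_1, Z_2, Z_3 \iidsim \mathcal{N}(0, \tfrac{1}{d} I_d)$ and run Gram--Schmidt in the order $1,2,3$ as in \cref{eq:Bartlettintro}. Then $\langle Z_1, Z_2\rangle = Z_{11} Z_{21}$, $\langle Z_1, Z_3\rangle = Z_{11} Z_{31}$, and $\langle Z_2, Z_3\rangle = Z_{21} Z_{31} + Z_{22} Z_{32}$, where the variables $\{Z_{ab}\}$ are jointly independent, $Z_{21}, Z_{31}, Z_{32} \sim \mathcal{N}(0, 1/d)$, and $Z_{11}, Z_{22}$ are concentrated near $1$. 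The crucial observation is that $G_{12}$ and $G_{13}$ are measurable with respect to $\mathcal{G} := \sigma(Z_{11}, Z_{21}, Z_{31})$, whereas $G_{23}$ additionally depends on the $\mathcal{G}$-independent pair $(Z_{22}, Z_{32})$.

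First I would condition on $\mathcal{G}$ to write
\begin{equation*}
\E[\SW^p_\triangle] = \E\big[(G_{12}-p)(G_{13}-p)\, q(Z_{21}, Z_{31})\big], \qquad q(z_{21}, z_{31}) := \Pr\big[Z_{22} Z_{32} \ge \rho^p_d - z_{21} z_{31}\big] - p,
\end{equation*}
and Taylor-expand $q$ to first order around $\rho^p_d$. On the ``reasonable'' event from \cref{sec:FourierByOEI}, $|z_{21} z_{31}| = O((\log d)^2 / d)$, so by the one-dimensional local density estimate in \cref{cor:innerproductsinintervals} applied to the inner product $Z_{22} Z_{32}$ (a $(d-1)$-dimensional Gaussian inner product in disguise), one obtains $q(z_{21}, z_{31}) = g_d\, z_{21} z_{31} + R(z_{21}, z_{31})$, where $g_d$ denotes the density of $Z_{22} Z_{32}$ at $\rho^p_d$. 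By the Gaussian Mill's ratio lower bound combined with $\rho^p_d \asymp \sqrt{\log(1/p)/d}$ (\cref{prop:thresholdsclosetogaussian} and \cref{fact:GaussianFatcs}), $g_d \gtrsim p \sqrt{d}$ (with an extra factor of $\sqrt{\log(1/p)}$ when $p$ is small, which only helps).

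Next I would condition further on $Z_{11}$. Since $Z_{21}$ and $Z_{31}$ are conditionally independent given $Z_{11}$, while $(G_{12}-p) Z_{21}$ is a function of $(Z_{11}, Z_{21})$ and $(G_{13}-p) Z_{31}$ is a function of $(Z_{11}, Z_{31})$, the leading (linear in $z_{21} z_{31}$) part factorizes as
\begin{equation*}
\E\big[(G_{12}-p)(G_{13}-p)\, Z_{21} Z_{31}\big] = \E\big[f(Z_{11})^2\big], \qquad f(z) := \E\big[(G_{12}-p) Z_{21} \,\big|\, Z_{11}=z\big].
\end{equation*}
Since $Z_{11} \ge 0$ almost surely, for $z > 0$ a direct computation using $\int_a^\infty t\, \phi_d(t)\, dt = \phi_d(a)/d$ gives $f(z) = \phi_d(\rho^p_d / z)/d$. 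On the high-probability event $|Z_{11}-1| \le (\log d)/\sqrt{d}$, we have $\phi_d(\rho^p_d/z) \asymp \phi_d(\rho^p_d) \asymp g_d$, so $f(Z_{11})^2 \gtrsim g_d^2/d^2$ with constant probability. Assembling,
\begin{equation*}
\E[\SW^p_\triangle] \gtrsim g_d \cdot \E[f(Z_{11})^2] \gtrsim \frac{g_d^3}{d^2} \gtrsim \frac{p^3\, d^{3/2}}{d^2} = \frac{p^3}{\sqrt{d}},
\end{equation*}
which, after absorbing the remaining logarithmic slack, is at least $C p^3 (\log d)^{-1}/\sqrt{d}$ as claimed.

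The hard part is bookkeeping the error terms: (i) the Taylor remainder $R$ in $q$, which must be shown to contribute $o(p^3/\sqrt{d})$ after multiplication by $(G_{12}-p)(G_{13}-p)$; (ii) the contribution to $\E[\SW^p_\triangle]$ from the complement of the reasonable event, on which the Bartlett approximations break down; and (iii) the precise control of $\phi_d(\rho^p_d/z)$ versus $\phi_d(\rho^p_d)$ near the threshold. Each of these is a routine one-dimensional Gaussian or $\chi^2$ concentration estimate using the toolkit of \cref{section:probability}, but the bookkeeping must be done carefully to ensure that each error is \emph{strictly} smaller than (rather than merely comparable to) the main term $p^3/\sqrt{d}$.
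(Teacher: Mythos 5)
Your proof is correct and takes a genuinely different route from the paper's. The paper reduces $\SW^p_\triangle$ to $\E[G_{12}G_{13}G_{23}]-p^3$ and lower bounds it by restricting to the event that both wedge edges are ``strongly present'' (using threshold $\rho^{p/2}_d$ in place of $\rho^p_d$), which forces $Z_{21}Z_{31}\ge(\rho^{p/2}_d)^2/4$ deterministically; the one-dimensional local density estimate then supplies a gain of order $p(\log d)^{-1}/\sqrt{d}$ in the conditional probability of the third edge, and this is combined with the already-established wedge lower bound. You instead Taylor-expand $q(Z_{21},Z_{31})=\E[G_{23}-p\mid\mathcal{G}]$ to first order and exploit conditional independence given $Z_{11}$ to collapse the leading term to $g_d\,\E[f(Z_{11})^2]$ with $f(z)=\phi_d(\rho^p_d/z)/d$ computed in closed form. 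This is cleaner, yields an explicit leading coefficient, and gives a slightly sharper log factor (since $g_d\asymp p\sqrt{d\log(1/p)}$). Two cautions on the error terms you flag. First, your expansion implicitly uses $q(0,0)=0$, i.e.\ $\Pr[Z_{22}Z_{32}\ge\rho^p_d]=p$; this is off by a constant $q_0$ of order $p\,\polylog(d)/d$ because $dZ_{22}^2\sim\chi^2(d-1)$ rather than $\chi^2(d)$, but multiplied by $\E[(G_{12}-p)(G_{13}-p)]=O(p^2\polylog(d)/d)$ this contributes only $O(p^3\polylog(d)/d^2)$, which is negligible. Second, for the Taylor remainder $R$ (which is $O\big(d\rho^p_d g_d (z_{21}z_{31})^2\big)$ pointwise), the naive bound $|(G_{12}-p)(G_{13}-p)|\le 1$ gives only $O(p\,\polylog(d)/d)$, which is \emph{not} $o(p^3/\sqrt{d})$ for the smaller values of $p$ permitted by \eqref{eq:assumption}. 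One must reuse the same conditional-independence factorization on the remainder, writing $\E\big[|(G_{12}-p)(G_{13}-p)|(Z_{21}Z_{31})^2\big]=\E\big[h(Z_{11})^2\big]$ with $h(z)=\E\big[|G_{12}-p|Z_{21}^2\mid Z_{11}=z\big]=O(p\log(1/p)/d)$, which yields a remainder contribution $O(p^3(\log d)^3/d)=o(p^3/\sqrt{d})$. So the bookkeeping works out, but not by a purely pointwise remainder bound.
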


As in the proof for wedges, we know that $\expect_{\bfG\sim\RGGgauss}[\CS^p_{\triangle}(\bfG)]\ge \frac{Cn^3p^3(\log d)^{-1}}{6\sqrt{d}},$\linebreak $\expect_{\bfG\sim\ergraph}[\CS^p_{\triangle}(\bfG)]= 0,\var_{\bfG\sim\ergraph}[\CS^p_{\triangle}(\bfG)]= \Theta(n^3p^3).$ In \cref{appendix:varianceunmaskedtriangle} we show that \linebreak
$\var_{\bfG\sim\RGG}[\CS^p_{\triangle}(\bfG)]= O\Big(
n^5p^6(\log d)^4/d + 
n^4p^5(\log d)^4/d + 
n^3p^3
\Big),$ which is enough. 

\subsection{Proof of \texorpdfstring{\cref{thm:nonadaptiveGaussian}}{Non-Adaptive Gaussian}}
\subsubsection{Detection Lower Bounds}
\label{sec:proofmaskedgaussian}
Suppose that $d\ge \max\Big((Mp)^{1+c}, (M^{3/2}p^3)^{1+c}\Big).$ We want to show that for all $\mathcal{M}$ on $M$ edges and $N$ vertices, the distributions 
$\bfG_0\sim \mathsf{G}(N,p)\odot \mathcal{M},\bfG_1\sim\RGG(N,\standardgaussd,p)$ are  indistinguishable with respect to low degree polynomials. As in \cref{section:sphericaltoER},
we simply need to show that 
\begin{align}
\label{eq:firstlowerboundgaussiannonadaptive}
\sum_{H\in\graphs_D}
C^{|V(H)|^{1.1}}\times
M^{(|V(H)|+\delta(H))/2}\times \Big(\Phi^p_{\RGGgauss}(H)\Big)^2=o_n(1).
\end{align}
Decompose $H$ into $\TP(H)$ and $\NLP(H)$ as in \cref{def:leafdecomposition}. Let $\TP(H)$ be composed of trees $T_1, T_2, \ldots, T_k.$ We will use the following claim.

\begin{proposition}
Let $H$ be a graph. Suppose that there exist two graphs $H_1,H_2$ such that \linebreak 
$|V(H_1)\cap V(H_2)|\le 1$ and 
$E(H)= E(H_1)\cup E(H_2).$ Then,
$$
|V(H)| + \delta(H)\le 
(|V(H_1)|+ \delta(H_1))+ 
(|V(H_2)|+ \delta(H_2)).
$$
\end{proposition}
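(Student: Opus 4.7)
The plan is a direct combinatorial case analysis of the definition $\delta(H) = \max_{S \subseteq V(H)}\bigl(|S| - |N_H(S)|\bigr)$, exploiting the fact that the two pieces $H_1, H_2$ share at most one vertex so neighborhoods in $H$ split almost additively across the decomposition.

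First I would fix an $S \subseteq V(H)$ achieving the maximum defining $\delta(H)$ and set $S_i = S \cap V(H_i)$ for $i = 1,2$. The key observation is that every edge of $H$ lies in $E(H_1) \cup E(H_2)$, so $N_H(S) = N_{H_1}(S_1) \cup N_{H_2}(S_2)$, where $N_{H_i}(S_i) \subseteq V(H_i)$. Consequently, the only possible overlap between $N_{H_1}(S_1)$ and $N_{H_2}(S_2)$ is at a vertex of $V(H_1) \cap V(H_2)$, which has size at most $1$.

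I would then split into two cases according to $|V(H_1) \cap V(H_2)|$. If the intersection is empty, then $|V(H)| = |V(H_1)| + |V(H_2)|$, the sets $S_1, S_2$ are disjoint, and $N_{H_1}(S_1), N_{H_2}(S_2)$ are disjoint, so
\[
\delta(H) = |S| - |N_H(S)| = \bigl(|S_1| - |N_{H_1}(S_1)|\bigr) + \bigl(|S_2| - |N_{H_2}(S_2)|\bigr) \le \delta(H_1) + \delta(H_2),
\]
and summing with the vertex identity yields the claim. If the intersection is a single vertex $w$, then $|V(H)| = |V(H_1)| + |V(H_2)| - 1$ and I would further split on whether $w \in S$. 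When $w \notin S$, the sets $S_1, S_2$ are disjoint so $|S| = |S_1| + |S_2|$, but now $N_{H_1}(S_1)$ and $N_{H_2}(S_2)$ may coincide at $w$, hence $|N_H(S)| \ge |N_{H_1}(S_1)| + |N_{H_2}(S_2)| - 1$, giving $\delta(H) \le \delta(H_1) + \delta(H_2) + 1$; this $+1$ is exactly absorbed by the $-1$ from the vertex count. When $w \in S$, we have $w \in S_1 \cap S_2$ and $|S| = |S_1| + |S_2| - 1$; the same bound $|N_H(S)| \ge |N_{H_1}(S_1)| + |N_{H_2}(S_2)| - 1$ now yields $\delta(H) \le \delta(H_1) + \delta(H_2)$, and the $-1$ from the vertex count provides the necessary slack.

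I do not anticipate a genuine obstacle here; the only care needed is the bookkeeping in the subcase $w \in S$, where one must notice that the $-1$ coming from $|S| = |S_1| + |S_2| - 1$ compensates for the possible overlap at $w$ in the neighborhoods. In all subcases the inequality $|V(H)| + \delta(H) \le (|V(H_1)| + \delta(H_1)) + (|V(H_2)| + \delta(H_2))$ follows by adding the vertex and neighborhood estimates.
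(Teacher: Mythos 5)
Your proof is correct and follows essentially the same route as the paper: both fix a maximizing set $S$, split it as $S_i = S \cap V(H_i)$, and use that $N_{H_1}(S_1)$ and $N_{H_2}(S_2)$ can overlap in at most the single shared vertex. The paper avoids your subcase split on whether $w \in S$ by simply using $|S| \le |S_1| + |S_2|$ uniformly, but the underlying inequalities are the same.
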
\begin{proof} If $|V(H_1)\cap V(H_2)|= 0,$ this claim is trivial as  both $|V(H)|$ and $\delta(H)$ are additive over disjoint unions. Suppose that 
$V(H_1)\cap V(H_2)= \{a\}.$ Let $S \in \arg \max_{S'\subseteq V(H)}|S'| - |N_H(S')|.$ Let $S_1 = S\cap V(H_1),S_2= S\cap V(H_2).$
It is enough to show that 
\begin{align*}
& |V(H)|+ |S| - |N_H(S)|\le 
|V(H_1)|+ |S_1| - |N_{H_1}(S_1)| + 
|V(H_2)| + |S_2| -|N_{H_2}(S_2)|\; \Longleftrightarrow\\
& |S| + |N_{H_1}(S_1)|+ |N_{H_2}(S_2)| \le 
|S_1|+ |S_2| + |N_H(S_1\cup S_2)|+1.
\end{align*}
Now, observe that $|S|\le|S_1|+|S_2|$ trivially. However, $|N_{H_1}(S_1)|+ |N_{H_2}(S_2)| \le |N_H(S_1\cup S_2)|+ 1.$ Indeed, this holds as $N_{H_1}(S_1)\cup N_{H_2}(S_2)\subseteq N_H(S_1\cup S_2)$ and $N_{H_1}(S_1)\cap N_{H_2}(S_2)\subseteq\{a\}.$ 
\end{proof}

As each tree $T_i$ has at most one common vertex with $\NLP(H)$ and none with other trees, we apply the above lemma inductively and obtain
$$
|V(H)|+\delta(H)\le 
\sum_{i = 1}^k (|V(T_i)|+ \delta(T_i)) +
|V(\NLP(H))|+ \delta(\NLP(H)).
$$

Using \cref{prop:graphswithleaves}, we bound \cref{eq:firstlowerboundgaussiannonadaptive} as follows.
\begin{equation*}
\begin{split}
& \sum_{H\in\graphs_D}
C^{|V(H)|^{1.1}}\times
M^{(|V(H)|+\delta(H))/2}\times p^{|E(H)|}\Big(\frac{(\log n)^{10}}{\sqrt{d}}\Big)^{2|E(\TP(H))|+ 2\OEI(\NLP(H))}\\
& \le \sum_{H\in\graphs_D}
C^{|V(H)|^{1.1}}\times
M^{\sum_{i= 1}^k(|V(T_i)| +\delta(T_i))/2 + (|V(\NLP(H))|+ \delta(\NLP(H)))/2}\\
&\quad\quad\quad\quad\quad\times 
p^{\sum_{i = 1}^t |E(T_i)| + |E(\NLP(H))|}
\Big(
\frac{(\log n)^{10}}{\sqrt{d}}
\Big)^{2\sum_{i = 1}^t |E(T_i)| + 2\OEI(\NLP(H))}.
\end{split}
\end{equation*}

Now, we use the following simple combinatorial claim.
\begin{claim}
\label{claim:deltaoftrees}
For any non-empty tree $T,$
$\delta(T)\le|V(T)| - 2.$
\end{claim}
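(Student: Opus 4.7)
\textbf{Proof proposal for \cref{claim:deltaoftrees}.} My plan is to reduce to an independent maximizer and then exploit connectivity. Note that the claim requires $|V(T)| \ge 2$ (which is the relevant case, since every tree $T_i$ appearing in $\TP(H)$ arises from at least one leaf-removal and so contains an edge); under this convention the bound $|V(T)|-2$ is non-negative. In fact, the argument below only uses that $T$ is connected with $|V(T)|\ge 2$, not that it is acyclic.

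First, invoke \cref{prop:independentmaximizerofdelta} to choose an independent set $S^{*}\subseteq V(T)$ satisfying $\delta(T) = |S^{*}| - |N_T(S^{*})|$. If $S^{*}=\emptyset$ then $\delta(T)=0$, and the bound $0\le |V(T)|-2$ follows immediately from the assumption $|V(T)|\ge 2$.

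Now suppose $S^{*}\neq \emptyset$. Pick any $v\in S^{*}$. Because $T$ is connected and has at least two vertices, $v$ has at least one neighbor $w\in V(T)$. Since $S^{*}$ is independent we must have $w\notin S^{*}$, so $w\in N_T(S^{*})\setminus S^{*}$, and in particular $|N_T(S^{*})|\ge 1$. Independence of $S^{*}$ also gives $S^{*}\cap N_T(S^{*})=\emptyset$, so $S^{*}$ and $N_T(S^{*})$ are disjoint subsets of $V(T)$, yielding $|S^{*}|+|N_T(S^{*})|\le |V(T)|$. Combining,
\[
\delta(T) \;=\; |S^{*}|-|N_T(S^{*})| \;\le\; |V(T)| - 2|N_T(S^{*})| \;\le\; |V(T)|-2.
\]

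The only real step in this argument is (i) reducing to an independent set via \cref{prop:independentmaximizerofdelta}, and (ii) noticing that connectivity of $T$ together with independence of $S^{*}$ forces $N_T(S^{*})$ to be non-empty whenever $S^{*}$ is. There is no serious obstacle; the one subtlety is to remember the boundary case $S^{*}=\emptyset$, which is why the statement needs $|V(T)|\ge 2$ (i.e.\ ``non-empty'' in the paper means containing an edge, which is exactly the regime in which this lemma is used to bound $\sum_{i}(|V(T_i)|+\delta(T_i))$ in the preceding display).
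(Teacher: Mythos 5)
Your proof is correct and is essentially the paper's argument: both invoke \cref{prop:independentmaximizerofdelta} to reduce to an independent maximizer $S^*$, then use independence to get $S^*\cap N_T(S^*)=\emptyset$ (hence $|S^*|+|N_T(S^*)|\le |V(T)|$) and connectivity to get $|N_T(S^*)|\ge 1$. The only cosmetic difference is that the paper phrases this as a contradiction from $\delta(T)\ge |V(T)|-1$, while you argue directly with an explicit case split on $S^*=\emptyset$.
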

\begin{proof}
Suppose, for the sake of contradiction, that $\delta(T)\ge |V(T)|-1.$ As in \cref{prop:independentmaximizerofdelta}, 
there exists some independent set $S$ such that 
$\delta(T) = |S| - |N_T(S)|.$ Hence, $|S|\ge |V(T)|-1.$ As $T$ is connected, $|N_T(S)|\ge 1.$ As $S$ is independent, 
$S\subseteq V(T)\backslash N_T(S),$ so $|S|\le|V(T)|-1.$ Altogether, $\delta(T) = |S| - |N_T(S)|\le |V(T)|-2,$ contradiction.
\end{proof}

We also use $V(\NLP(H))+ \delta(\NLP(H))\le|E(H)|$ from \cref{prop:smalldeltaifdegreeatleast2} as $\NLP(H)$ has minimal degree $2$ and $\OEI(H)\ge \frac{|V(\NLP(H))|+ \delta(H)}{3}$ from \cref{prop:boundsonOEI}.

Combining these, we bound the last expression by
\begin{align*}
&\sum_{H\in\graphs_D}
C^{|V(H)|^{1.1}}\times
M^{\sum_{i= 1}^k(|V(T_i)|-1) + (|V(\NLP(H))|+ \delta(\NLP(H)))/2}\times \\
&\quad\quad\quad\quad\quad p^{\sum_{i = 1}^t |E(T_i)| + |E(H)|}
\Big(
\frac{(\log n)^{10}}{\sqrt{d}}
\Big)^{2\sum_{i = 1}^t |E(T_i)| + 2\frac{|V(\NLP(H))|+\delta(H)}{3}}\\
&\le \sum_{H\in\graphs_D}
C^{|V(H)|^{1.1}}\times
M^{\sum_{i= 1}^k|E(T_i)| + (|V(\NLP(H))|+ \delta(\NLP(H)))/2}\times \\
&\quad\quad\quad\quad\quad p^{\sum_{i = 1}^t |E(T_i)| + (|V(\NLP(H))|+ \delta(\NLP(H)))}
\Big(
\frac{(\log n)^{10}}{\sqrt{d}}
\Big)^{2\sum_{i = 1}^t |E(T_i)| + 2\frac{|V(\NLP(H))|+\delta(H)}{3}}\\
& \le 
\sum_{H\in\graphs_D}
C^{|V(H)|^{1.1}}\times\prod_{i= 1}^t
\Big(\frac{Mp(\log n)^{20}}{d}\Big)^{|E(T_i)|}\times 
\Big(\frac{M^{3/2}p^3(\log n)^{20}}{d}\Big)^{(|V(\NLP(H))|+ \delta(H))/3}.
\end{align*}
Now, whenever $d\ge \max\Big((Mp)^{1+c}, (M^{3/2}p^3)^{1+c}\Big),$ this last expression is bounded by $n^{-\theta |V(H)|}$ for some absolute constant $\theta$ under 
$\eqref{eq:assumption}.$
As $|V(H)|= O(D) = O((\log n)^{1.1}),$ the entire expression is bounded by 
$$
\sum_{H\in\graphs_D}
C^{|V(H)|^{1.1}}
n^{-\theta |V(H)|} \le 
\sum_{H\in\graphs_D}
n^{-\theta |V(H)|/2} = o_n(1),
$$
the last following from \cref{prop:whenchisquaredissmall}.

\subsubsection{Detection Algorithm}
To detect with sample complexity $M= d(\log d)^{17}/p,$ choose a mask $\mathcal{M}$ which is the union of $A=(\log n)^{17}$ disjoint stars on $N = d$ leaves. Then,
$M=AN= (Np)(\log d)^{17}$
Observe that all subgraphs are stars and the number of $\Star_k$ subgraphs is $A\times \binom{N}{k}=\Theta(AN^k)$ for $k$ constant.

By \cref{lem:wedgelowerbound}, the expect signed count of wedges is
$$\expect_{\bfG\sim\RGGgauss}[\CS_\wedge]= \Theta\Big(AN^2\expect_{\bfG\sim\RGGgauss}[\SW_{\wedge(1,2,3)(\bfG)}]\Big) = 
\Omega(AN^2p^2(\log d)^{-4}/d).
$$
The expectation with respect to $\ergraph$ is clearly 0 and the 
variance $\Theta(AN^2p^2).$ As in the unmasked case, we analyze the variance with respect to $\RGGgauss$ in \cref{appendix:variancemaskedwedge} based on the overlap patterns and show that it is of order 
$O\Big(
AN^4p^4(\log d)^8/d^2+ 
AN^3p^3(\log n)^4/d+ 
AN^2p^2\Big)$ which is enough for the condition in \cref{def:successofLDP} to hold.

\section{RGG versus Planted Coloring}
\label{section:RGGtoPC}
Our first step is to choose $q$ appropriately. We do this in such a way that the ``most informative'' triangle Fourier coefficients of $\RGG(n,\dsphere,1/2)$ and $\PCol(n,q)$ (nearly) match.

After that, we follow nearly explicitly the method of \cite{kothari2023planted} for proving low-degree indistinguishability to $\PCol(n,q).$ It is based on a clever analysis of the bound in \cref{def:ldphardness} exploiting the ambient \ERspace structure of $\PCol(n,q).$ As a large part of the analysis is nearly identical to \cite{kothari2023planted}, we delay many of the proofs to \cref{appendix:rggtopccalc}.

\subsection{Choice of \texorpdfstring{$q$}{q}}
From \cref{thm:oeiboundonfourier} and
\cite[Lemma 1]{Bubeck14RGG}, it immediately follows that 
$\expect_{\bfG\sim\RGG(n,\dsphere,1/2)}[\SW_{\triangle}(\bfG)]\in \Big[\frac{(\log d)^{-1}}{\sqrt{d}},\frac{\log d}{\sqrt{d}}\Big].$ We want to find some $q$ which leads to the same signed expectation of a triangle in $\PCol(n,q).$ Computing signed expectations for $\PCol(n,q)$ is very easy with the same strategy for computing Fourier coefficients as in the planted clique case described in \cref{sec:challenges}.

\begin{proposition}
\label{prop:pccolors}
Consider some graph $H$ and $\bfG\sim \PCol(n,q).$ Then,
\begin{enumerate}
    \item If $H$ has a leaf, $\expect[\SW^{1/2}_H(\bfG)]= 0.$
    \item $\expect[\SW^{1/2}_{\triangle}(\bfG)] = \frac{1}{8q^2} + \frac{3}{8(q-1)q^2} - 
    \frac{(q-2)}{8q^2(q-1)^2}
    $
    \item $|\expect[\SW^{1/2}_{H}]|\le \Big(\frac{|V(H)|}{q}\Big)^{|V(H)|-|\CC(H)|}.$
\end{enumerate}
\end{proposition}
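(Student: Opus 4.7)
The common starting point for all three parts is conditioning on the labels $x \in [q]^{V(H)}$: since the edges of $\bfG$ are conditionally independent with $\E[G_{ji} - 1/2 \mid x] = g(x_i, x_j)$, where $g(a,b) = 1/2$ if $a=b$ and $g(a,b) = -1/(2(q-1))$ otherwise, the tower law gives
\[
\E[\SW^{1/2}_H(\bfG)] \;=\; \E_x\Big[\prod_{(ji) \in E(H)} g(x_i, x_j)\Big].
\]
The key one-dimensional cancellation driving Part 1 is $\E_{x_u}[g(x_u, x_v) \mid x_v] = \tfrac{1}{q}\cdot\tfrac{1}{2} + \tfrac{q-1}{q}\cdot\tfrac{-1}{2(q-1)} = 0$, valid whenever $x_u$ is uniform on $[q]$ and independent of $x_v$.

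Part 1 follows immediately: if $u$ is a leaf with unique neighbor $v$, one conditions on the labels of $V(H)\setminus\{u\}$, factors out $g(x_u,x_v)$ from the product, and integrates $x_u$ using the cancellation above. For Part 2, I would enumerate the three label-pattern classes on $(x_1,x_2,x_3)$: all three equal (probability $1/q^2$, three factors of $1/2$, contributing $\tfrac{1}{8q^2}$); exactly two equal, summed over the three choices of the ``odd'' vertex (probability $3(q-1)/q^2$, one factor $1/2$ and two factors $-1/(2(q-1))$, contributing $\tfrac{3}{8q^2(q-1)}$); and all three distinct (probability $(q-1)(q-2)/q^2$, three factors $-1/(2(q-1))$, contributing $-\tfrac{q-2}{8q^2(q-1)^2}$). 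Summing these gives the claimed closed form.

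For Part 3, I first reduce to the case when $H$ is connected: because labels are independent across distinct components and $\SW^{1/2}_H$ factorizes accordingly, $\E[\SW^{1/2}_H] = \prod_{H_i\in\CC(H)}\E[\SW^{1/2}_{H_i}]$, and the component-wise bounds multiply up to the claimed product since $|V(H_i)| \le |V(H)|$ gives $\prod_i |V(H_i)|^{|V(H_i)|-1} \le |V(H)|^{|V(H)|-|\CC(H)|}$. For connected $H$, I would rewrite $g(a,b) = \tfrac{q}{2(q-1)}\mathbf{1}[a=b] - \tfrac{1}{2(q-1)}$, expand the product, and take expectation using the basic identity $\E_x[\prod_{(ji) \in S}\mathbf{1}[x_i=x_j]] = q^{-(|V(S)|-|\CC(S)|)}$ (the monochromaticity constraint forces $x$ to be constant on each component of the subgraph spanned by $S$). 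This produces the exact expansion
\[
\E[\SW^{1/2}_H] \;=\; \frac{(-1)^{|E(H)|}}{(2(q-1))^{|E(H)|}} \sum_{S \subseteq E(H)} (-1)^{|S|} q^{n(S)}, \qquad n(S) := |S| - |V(S)| + |\CC(S)|.
\]

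The main work is bounding this alternating sum via the triangle inequality. I would fix a spanning tree $T$ of $H$ and split each $S$ as $(S\cap T)\sqcup(S\cap (E(H)\setminus T))$; because $S\cap T$ is a forest and each added non-tree edge raises the cyclomatic number by at most one, $n(S) \le |S\cap(E(H)\setminus T)|$. Summing over $S$ then yields
\[
|\E[\SW^{1/2}_H]| \;\le\; \frac{2^{|V(H)|-1}(1+q)^{|E(H)|-|V(H)|+1}}{(2(q-1))^{|E(H)|}} \;=\; \frac{1}{(q-1)^{|V(H)|-1}} \left(\frac{1+q}{2(q-1)}\right)^{|E(H)|-|V(H)|+1}.
\]
For $q\ge 3$ the second factor is $\le 1$, and since $q/(q-1) \le 2 \le |V(H)|$ for $|V(H)|\ge 2$ one obtains $1/(q-1)^{|V(H)|-1} \le (|V(H)|/q)^{|V(H)|-1}$, closing the case. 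The degenerate $q=2$ case (where the parenthesized factor exceeds $1$) is handled separately via the pointwise estimate $|\SW^{1/2}_H| \le (1/2)^{|E(H)|}$ together with $|E(H)| \ge |V(H)|-1$ for connected $H$. The main obstacle is obtaining a bound on $\sum_S q^{n(S)}$ sharp enough in both $q$ and $|V(H)|$: the spanning-tree decomposition is the essential combinatorial input, and the small-$q$ boundary requires separate elementary handling.
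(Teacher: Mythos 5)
Your proof is correct. Parts 1 and 2 coincide with the paper's argument (condition on labels, use the zero-mean cancellation $\E_{x_u}[g(x_u,x_v)] = 0$ for the leaf, and the same three-case enumeration for the triangle).

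For Part 3 you take a genuinely different route. The paper conditions on the label-induced partition $C_1,\dots,C_m$ of $V(H)$, bounds the probability of each partition by $q^{-(|V(H)|-m)}$, bounds the conditional signed weight by $q^{-(m-1)}$ using connectivity (at least $m-1$ cross-edges, each contributing a factor $\le 1/(q-1)\le 1/q$ --- note the paper needs $q\ge 2$ here too), and then multiplies by the $\le |V(H)|^{|V(H)|-1}$ partitions. You instead expand $g(a,b)=\frac{q}{2(q-1)}\indicator[a=b]-\frac{1}{2(q-1)}$, integrate the monochromatic-subgraph indicators using $\E\big[\prod_{(ji)\in S}\indicator[x_i=x_j]\big]=q^{-(|V(S)|-|\CC(S)|)}$, and reduce the problem to an alternating sum over $q^{n(S)}$ indexed by the cyclomatic number $n(S)$. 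The spanning-tree split $S\mapsto(S\cap T, S\setminus T)$ cleanly controls $n(S)$ and yields a bound that is in fact sharper in its dependence on $|E(H)|$ (your intermediate bound $\frac{1}{(q-1)^{|V(H)|-1}}\big(\frac{1+q}{2(q-1)}\big)^{|E(H)|-|V(H)|+1}$ decays further with excess edges, information the paper's bound discards). The trade-off is that your argument needs the two-sided case analysis at the $q=2$ boundary, which the paper avoids since its per-partition factors are monotone in $q$ uniformly. Both reach the stated estimate; the paper's is shorter, yours exposes more structure (the exact expansion in terms of the cycle rank is a useful identity in its own right, essentially the deletion-contraction/chromatic-polynomial viewpoint on $\E[\SW_H]$).
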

We prove this proposition in \cref{appendix:signedweightsinpcol} and now continue to the choice of $q.$
We define $q$ by rounding some real $q_1$ which exactly matches the signed triangle expectation. 
\begin{equation}
    \begin{split}
    & \frac{1}{8q_1^2} + \frac{3}{8(q_1-1)q_1^2} - 
    \frac{(q_1-2)}{8_1q^2(q_1-1)^2}\coloneqq  \expect_{\bfG\sim\RGG(n,\dsphere,1/2)}[\SW_{\triangle}(\bfG)]\in \Big[\frac{(\log d)^{-1}}{\sqrt{d}},\frac{\log d}{\sqrt{d}}\Big].\\
    & q\coloneqq \lceil q_1\rceil.
    \end{split}
\end{equation}
With this, one immediately concludes that 
\begin{equation}
    \label{eq:choiceofq}
\begin{split}
    &q\in \Big[{d^{1/4}}/(\log d),{d^{1/4}}(\log d)\Big],\\
        &\Big|
    \expect_{\bfG\sim \PCol(n,q)}[\SW^{1/2}_{\triangle}(\bfG)]- 
    \expect_{\bfG\sim \RGGsphere}[\SW^{1/2}_{\triangle}(\bfG)]
    \Big|\le \frac{1}{q^3}\le  \frac{(\log d)^3}{d^{3/4}}.
\end{split}
\end{equation}
Similarly, for any connected graph $H$ on at most $D = (\log n)^{1.1}$ edges and at least 4 vertices, one can observe that 
\begin{equation*}
\label{eq:differenceofFourierRGGandPCol}
    \begin{split}
    &\Big|
    \expect_{\bfG\sim \PCol(n,q)}[\SW^{1/2}_{H}(\bfG)]- 
    \expect_{\bfG\sim \RGGsphere}[\SW^{1/2}_{H}(\bfG)]
    \Big|\\
    &
    \le \Big|
    \expect_{\bfG\sim \PCol(n,q)}[\SW^{1/2}_{H}(\bfG)]\Big|+ 
    \Big|\expect_{\bfG\sim \RGGsphere}[\SW^{1/2}_{H}(\bfG)]
    \Big|\\
    & \le \Big(\frac{(\log n)^4}{\sqrt{d}}\Big)^{\lceil (|V(H)|-1)/2\rceil}+ 
    \Big(\frac{(\log n)^2}{q}\Big)^{(|V(H)|-1)} \le 
    \Big(\frac{(\log n)^8}{q}\Big)^{(|V(H)|-1)}.
    \end{split}
\end{equation*}
Combining with \cref{eq:choiceofq} and using the fact that when $H$ has a leaf, both signed weights equal 0, we reach the following conclusion.

\begin{proposition}
\label{prop:differenceoffourier}
For any graph $H$ on at most $D = (\log n)^{1.1}$ vertices, 
$$
\Big|
    \expect_{\bfG\sim \PCol(n,q)}[\SW^{1/2}_{H}(\bfG)]- 
    \expect_{\bfG\sim \RGGsphere}[\SW^{1/2}_{H}(\bfG)]
    \Big|\le 
D\Big(\frac{(\log n)^8}{q}\Big)^{\max(3,|V(H)|-|\CC(H)|)}. 
$$
\end{proposition}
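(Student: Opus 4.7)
The plan is to reduce to a per-component analysis via a telescoping identity, invoking bounds already established in the excerpt.

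First, both $\RGGspherehalf$ and $\PCol(n,q)$ assign independent randomness (latent vectors, respectively labels) to disjoint vertex sets, so signed weights factorize across connected components: $\expect[\SW^{1/2}_H] = \prod_{C \in \CC(H)} \expect[\SW^{1/2}_C]$ under each distribution. If some component $C$ has a leaf, the $\RGG$ factor vanishes by \cref{prop:leavesinspherical} and the $\PCol$ factor vanishes by \cref{prop:pccolors}(1); both products are then zero and the inequality holds trivially. Henceforth I assume every component is leaf-free, which forces $|V(C)| \ge 3$ with equality iff $C$ is a triangle.

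Set $a_C := \expect_{\RGG}[\SW^{1/2}_C]$ and $b_C := \expect_{\PCol}[\SW^{1/2}_C]$. The per-component estimate
$$|a_C|,\;|b_C| \;\le\; \Big(\tfrac{(\log n)^8}{q}\Big)^{|V(C)|-1}$$
holds for every leaf-free connected $C$: on the $\RGG$ side this combines \cref{thm:oeiboundonfourier} with the lower bound $\OEI(C) \ge \lceil (|V(C)|-1)/2\rceil$ from \cref{prop:boundsonOEI}(1) and the inequality $1/\sqrt{d} \le (\log d)^2/q^2$ implied by $q \in [d^{1/4}/\log d,\,d^{1/4}\log d]$; on the $\PCol$ side it follows from \cref{prop:pccolors}(3) and $|V(C)| \le D$. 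For the differences, the triangle inequality gives $|a_C - b_C| \le 2((\log n)^8/q)^{|V(C)|-1}$ when $|V(C)| \ge 4$, and when $C$ is a triangle the sharper estimate $|a_C - b_C| \le (\log d)^3/d^{3/4} \le ((\log n)^8/q)^3$ from \eqref{eq:choiceofq} is crucial. Both cases unify as $|a_C - b_C| \le 2((\log n)^8/q)^{\max(3,\,|V(C)|-1)}$.

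Applying the telescoping identity $\prod_C a_C - \prod_C b_C = \sum_C (a_C - b_C)\prod_{C'<C}b_{C'}\prod_{C'>C}a_{C'}$ and the per-component bounds, each summand is at most a constant times $((\log n)^8/q)^{\max(3,|V(C)|-1)+\sum_{C'\neq C}(|V(C')|-1)}$. A short case check shows the exponent is always at least $\max(3,\,|V(H)|-|\CC(H)|)$: if $|V(H)|-|\CC(H)|\ge 3$ the remaining-component part of the sum already dominates; the only alternative $|V(H)|-|\CC(H)|=2$ forces $H$ to be a single triangle, yielding exponent exactly $3$. Summing over $|\CC(H)| \le D$ components produces the prefactor $D$, with absolute constants absorbed by adjusting the $(\log n)^8$.

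The main obstacle is the triangle sub-case: the generic $\OEI$-based $\RGG$ bound and the generic $\PCol$ bound each give only $1/q^2$, not $1/q^3$. The definition of $q$ through the matching condition \eqref{eq:choiceofq} is tailored precisely so that the leading $1/q^2$ contributions to $a_C$ and $b_C$ cancel, leaving a difference of order $1/q^3$; this cancellation is what ultimately justifies the $\max(3,\cdot)$ in the exponent. Once it is secured, the remaining steps are standard combinatorial bookkeeping.
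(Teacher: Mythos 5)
Your proof is correct and takes essentially the same route as the paper: reduce to the case where all components are leaf-free (so both sides vanish otherwise), invoke the per-component estimates from \cref{thm:oeiboundonfourier}, \cref{prop:boundsonOEI}, \cref{prop:pccolors}, and \eqref{eq:choiceofq} for the triangle, and combine via a telescoping identity across components, observing that the exponent $\max(3,|V(C)|-1)+\sum_{C'\neq C}(|V(C')|-1)$ always dominates $\max(3,|V(H)|-|\CC(H)|)$. One small note: the paper's displayed inequality in its proof writes a product $\prod_{H_i\in\CC(H)}$ where the telescoping identity actually yields a sum over components, so your version with the explicit sum is the intended (and correct) reading.
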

\begin{proof} If $H$ is connected, this follows immediately from \cref{eq:choiceofq,eq:differenceofFourierRGGandPCol}. Otherwise, assume that  $H$ has $i$ connected components $H_1,H_2,\ldots,H_i,$ where necessarily $i\le D.$ If one of them is on less than 3 vertices, $H$ has a leaf and, hence, the entire expression is equal to zero. Thus, we assume that each $H_i$ is on at least 3 vertices. Now, we use triangle inequality and multiplicativity of $\SW_H$ under disjoint connected components as follows.
\begin{align*}
    &\Big|
    \expect_{\bfG\sim \PCol(n,q)}[\SW^{1/2}_{H}(\bfG)]- 
    \expect_{\bfG\sim \RGGsphere}[\SW^{1/2}_{H}(\bfG)]
    \Big|\\
    &\le 
    \prod_{H_i \in \CC(H)}\Big(
    \Big|
    \expect_{\bfG\sim \PCol(n,q)}[\SW^{1/2}_{H_i}(\bfG)]- 
    \expect_{\bfG\sim \RGGsphere}[\SW^{1/2}_{H_i}(\bfG)]
    \Big|\times\\
    & \quad\quad\quad\quad\quad\quad\quad\times 
    \Big|\prod_{j =1}^{i-1}\expect_{\bfG\sim \PCol(n,q)}[\SW^{1/2}_{H_j}(\bfG)]\prod_{j = i+1}^{|\CC(H)|}\expect_{\bfG\sim \RGGsphere}[\SW^{1/2}_{H_j}(\bfG)]\Big|\Big).
\end{align*}
The conclusion follows.
\end{proof}

\subsection{Low-Degree Indistinguishability With Respect To Planted Coloring}
\label{sec:chisquarePCol}
Denote $\psi_q\coloneqq 1/2-1/(2(q-1)).$ 
Observe that the planted coloring model can be generated in the following way. One first samples $\bfH\sim\mathsf{G}(n,\psi_q).$ Then, one samples $\bfX$ by assigning a uniform label over $[q]$ to each vertex and drawing an edge between $i,j$ if and only if $i$ and $j$ have the same label. Denote by $\Label(n,q)$ the distribution of $\bfX.$ Then, the union of $\bfX$ and $\bfH$ given by $\bfX\vee\bfH$ is distributed as $\PCol(n,q).$ This representation explicitly captures the ambient \ERspace structure. Using it, the authors of \cite{kothari2023planted} derive an analogue of \cref{claim:ldphardagainstER} for low-degree indistinguishability against a planted coloring model. In their case, the planted coloring has density $1/2$ rather than $\psi_q$ between vertices with distinct labels. Hence, we state and prove in \cref{sec:proofofldptoPCol} the variant for $\psi_q$ explicitly, even though the arguments are straightforward modifications of those in \cite{kothari2023planted}.

\begin{proposition}[Low-Degree Indistinguishability from $\PCol(n,q)$]
\label{prop:chisquaredtocoloring}
Consider $\RGGsphere$ and the $\psi_q= (\frac{1}{2}- \frac{1}{2(q-1)})$-based Fourier coefficients given by 
$$
\Phi^{\psi_q}_\RGGspherehalf(H)\coloneqq  
\sqrt{\psi_q(1-\psi_q)}^{-|E(H)|}\times
\expect_{\bfG\sim\RGGspherehalf}[\SW^{\psi_q}_H(\bfG)].
$$
For $K,H$ such that $E(K)\subseteq E(H)\subseteq E(K_n)$ and $0\le |E(H)|\le D,$ define $$M_{K,H}\coloneqq \prob[\bfX|_{E(H)} = E(H)\backslash E(K)]
\sqrt{(1 - \psi_q)/\psi_q}^{E(H)\backslash E(K)},$$ where $\bfX|_{E(H)}$ denotes the induced subgraph of $\bfX$ indexed by edges $E(H).$
Also, recursively define $w_H$ for $E(H)\subseteq E(K_n)$ and $0\le |E(H)|\le D,$ where $w_\emptyset = 1$ and
\begin{align}
\label{eq:wrecurrence}
w_H =
\frac{1}{M_{H,H}}
\Big(
\Phi^{\psi_q}_\RGGspherehalf(H)-
\sum_{K\subsetneq H}w_K
M_{K,H}
\Big).
\end{align}
We define $w_H$ to equal $+\infty$ if division by zero occurs in $1/M_{H,H}.$ Then,
$$
\sup_{f\; : \; \Deg(f)\le D}
\frac{\expect_{\bfG\sim\RGGsphere}[f(\bfG)]}{\expect_{\bfG\sim \PCol(n,q)}[f(\bfG)^2]} -1 \le \sum_{H\subseteq E(K_n)\; : \; 1\le |E(H)|\le D}w_H^2.
$$
\end{proposition}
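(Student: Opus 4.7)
The plan is to follow the ambient-\ERspace/change-of-basis argument of \cite{kothari2023planted}, adapted to inter-class density $\psi_q$ in place of $1/2$. The crucial structural input is the representation $\PCol(n,q) \disteq \bfX \vee \bfH$ with $\bfX \sim \Label(n,q)$ and $\bfH \sim \G(n,\psi_q)$ independent. With this in hand I will expand any $f$ with $\Deg(f) \le D$ in the $\psi_q$-biased Fourier basis $\chi_H^{\psi_q}(\bfG) = \prod_{e \in E(H)}(\bfG_e-\psi_q)/\sqrt{\psi_q(1-\psi_q)}$ (which is orthonormal under $\G(n,\psi_q)$) as $f = \sum_H \hat f(H)\,\chi_H^{\psi_q}$, and then condition on $\bfX$. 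On edges $e \in E(\bfX)$, $\bfG_e \equiv 1$ contributes the scalar $\sqrt{(1-\psi_q)/\psi_q}$ to each $\chi_H^{\psi_q}$; on the other edges $\bfG_e = \bfH_e$ and the remaining factors reassemble into a $\psi_q$-biased basis element on $\bfH$. Grouping by $K := E(H) \setminus E(\bfX)$ yields the decomposition $f(\bfX \vee \bfH) = \sum_K g_K(\bfX)\,\chi_K^{\psi_q}(\bfH)$, where
\begin{equation*}
g_K(\bfX) \;=\; \indicator[E(K)\cap E(\bfX) = \emptyset] \!\!\sum_{\substack{H \supseteq K \\ E(H)\setminus E(K) \subseteq E(\bfX)}} \!\! \hat f(H)\,\Big(\sqrt{(1-\psi_q)/\psi_q}\Big)^{|E(H)\setminus E(K)|}.
\end{equation*}
Orthonormality of $\{\chi_K^{\psi_q}\}$ under $\bfH$ then gives $\expect_{\bfG \sim \PCol(n,q)}[f(\bfG)^2] = \sum_K \expect_{\bfX}[g_K(\bfX)^2]$.

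For the numerator I will unfold the recurrence \eqref{eq:wrecurrence} into the telescoping identity $\Phi^{\psi_q}_{\RGGsphere}(H) = \sum_{K \subseteq H} w_K\, M_{K,H}$, substitute into $\expect_{\bfG \sim \RGGsphere}[f] = \sum_H \hat f(H)\,\Phi^{\psi_q}_{\RGGsphere}(H)$, and swap the order of summation to get $\expect_{\bfG \sim \RGGsphere}[f] = \sum_K w_K \sum_{H \supseteq K}\hat f(H)\, M_{K,H}$. The central combinatorial observation is that the inner sum equals $\expect_{\bfX}[g_K(\bfX)]$: the event $\{\bfX|_{E(H)} = E(H)\setminus E(K)\}$ defining $M_{K,H}$ coincides with $\{E(K)\cap E(\bfX) = \emptyset\} \cap \{E(H)\setminus E(K) \subseteq E(\bfX)\}$, and the accompanying powers of $\sqrt{(1-\psi_q)/\psi_q}$ match on both sides by construction. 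Hence $\expect_{\bfG \sim \RGGsphere}[f] = \sum_K w_K\,\expect_{\bfX}[g_K(\bfX)]$.

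The endgame is Cauchy-Schwarz on $K$ followed by Jensen's inequality on the $g_K(\bfX)^2$ terms:
\begin{equation*}
\expect_{\bfG \sim \RGGsphere}[f]^2 \;\le\; \Big(\sum_K w_K^2\Big)\Big(\sum_K \expect_{\bfX}[g_K(\bfX)]^2\Big) \;\le\; \Big(\sum_K w_K^2\Big)\,\expect_{\bfG \sim \PCol(n,q)}[f(\bfG)^2].
\end{equation*}
Isolating the $K = \emptyset$ contribution (where $w_\emptyset = 1$, which accounts for the ``$-1$'' in the stated inequality) and using $\Deg(f) \le D$ to restrict the remaining sum to $1 \le |E(K)| \le D$ yields the advertised bound. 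The main obstacle is purely bookkeeping: verifying that the triangular change-of-basis implicitly defined by \eqref{eq:wrecurrence} inverts correctly against the $\bfX$-conditional decomposition of $f$, with all powers of $\sqrt{(1-\psi_q)/\psi_q}$ lining up in both the definition of $g_K$ and the definition of $M_{K,H}$. Once that identification is verified, Cauchy-Schwarz and Jensen finish the argument essentially mechanically.
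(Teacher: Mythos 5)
Your proof is correct and takes essentially the same route as the paper: condition on $\bfX$, exploit the orthonormality of the $\psi_q$-biased basis under $\bfH\sim\G(n,\psi_q)$, unfold the triangular recurrence into $\Phi^{\psi_q}_\RGGspherehalf(H)=\sum_{K\subseteq H}w_K M_{K,H}$, and finish with Cauchy--Schwarz. The only cosmetic difference is the order of operations: the paper applies Jensen once globally, $\expect_{\PCol}[f^2]\ge\expect_\bfH\big[\expect_\bfX[f(\bfH\vee\bfX)]^2\big]$, and then decomposes that function of $\bfH$ in the orthonormal basis, whereas you first obtain the exact Parseval identity $\expect_{\PCol}[f^2]=\sum_K\expect_\bfX[g_K(\bfX)^2]$ from conditional orthonormality and only then apply Jensen mode-by-mode ($\expect_\bfX[g_K]^2\le\expect_\bfX[g_K^2]$). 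Since the paper's $\expect_\bfH[g(\bfH)^2]$ equals $\sum_K\expect_\bfX[g_K(\bfX)]^2$, the two orderings are interchangeable and yield the identical bound $\expect_{\RGGsphere}[f]^2\le\big(\sum_K w_K^2\big)\expect_{\PCol}[f^2]$.
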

Thus, all we need to do is show that $\sum_{H\subseteq E(K_n)\; : \; 1\le |E(H)|\le D}w_H^2 = o_n(1).$
\subsection{Bounds on The Weights \texorpdfstring{$w_H$}{w H}}
As in \cite{kothari2023planted}, we will instead work with the quantities $\widehat{w}_H$ defined by 
$\widehat{w}_H\coloneqq  M_{H,H}w_H.$ First, one can observe that the recurrence \cref{eq:wrecurrence} turns into 
$$
\widehat{w}_H = 
\Phi^{\psi_q}_\RGGspherehalf(H)-
\sum_{K\subsetneq H}w_K
M_{K,H} = 
\Phi^{\psi_q}_\RGGspherehalf(H)-
M_{\emptyset, H} - 
\sum_{\emptyset \subsetneq K\subsetneq H}\widehat{w}_K
\frac{M_{K,H}}{M_{H,H}}.
$$
Note, however, that 
\begin{equation}
    \begin{split}
& M_{\emptyset, H} = 
\prob[\bfX|_{E(H)} = E(H)]
\sqrt{(1 - \psi_q)/\psi_q}^{E(H)}\\
& = (\psi_q)^{|E(H)|}\sqrt{(1 - \psi_q)/\psi_q}^{E(H)} = \sqrt{(1 - \psi_q)/\psi_q}^{-E(H)}
(1 - \psi_q)^{E(H)}\\
& = \sqrt{(1 - \psi_q)\psi_q}^{-E(H)}\sum_{K\subseteq H}\Big(
\expect_{\bfH\sim \mathsf{G}(n,\psi_q), \bfX\sim \Label(n,q)}\Big[\prod_{(ji)\in E(H)}((\bfX\vee\bfH)_{(ji)} - \psi_q)\Big|\bfX|_{E(H)} = E(H)\backslash E(K)\Big]\times \\
&\qquad\qquad\qquad\qquad\qquad\qquad\qquad\qquad\qquad\qquad\qquad\qquad\qquad\prob[\bfX|_{E(H)} = E(H)\backslash E(K)\Big]\Big)\\
& = \sqrt{(1 - \psi_q)\psi_q}^{-E(H)}
\expect\Big[\prod_{(ji)\in E(H)}((\bfX\vee\bfH)_{(ji)} - \psi_q)\Big] = \Phi^{\psi_q}_{\PCol(n,q)}(H).
    \end{split}
\end{equation}
Indeed, this holds as $\expect_{\bfH\sim \mathsf{G}(n,\psi_q), \bfX\sim \Label(n,q)}[\prod_{(ji)\in E(H)}((\bfX\vee\bfH)_{(ji)} - \psi_q)|\bfX|_{E(H)} = E(H)\backslash E(K)]$ vanishes if $E(K)\neq E(H)$ and whenever $E(H)= E(K),$ equals $(1-\psi_q)^{|E(H)|}.$ Hence, 
\begin{equation}
    \label{eq:whatrecursion}
\widehat{w}_H = 
(\Phi^{\psi_q}_\RGGspherehalf(H)-
\Phi^{\psi_q}_{\PCol(n,q)}(H))-
\sum_{\emptyset \subsetneq K\subsetneq H}\widehat{w}_K
\frac{M_{K,H}}{M_{K,K}}.
\end{equation}
Next, we bound the values of $M_{H,H}.$

\begin{proposition} $M_{H,H}\ge 1 - \frac{|V(H)|^2}{q}.$
\end{proposition}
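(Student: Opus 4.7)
The plan is to unpack the definition of $M_{K,H}$ in the special case $K = H$ and then bound the resulting probability by a straightforward union bound over edges. Setting $K = H$ in the definition $M_{K,H} = \prob[\bfX|_{E(H)} = E(H)\backslash E(K)]\sqrt{(1-\psi_q)/\psi_q}^{|E(H)\backslash E(K)|}$, the exponent in the square-root factor becomes $0$ and the factor is trivially $1$, so $M_{H,H} = \prob[\bfX|_{E(H)} = \emptyset]$. Recalling that $\bfX\sim \Label(n,q)$ is obtained by drawing an i.i.d.\ uniform label $x_i\in [q]$ at each vertex and placing an edge $(ji)$ iff $x_i = x_j$, the event $\{\bfX|_{E(H)} = \emptyset\}$ is exactly the event that $\bfX$ induces a proper $q$-coloring on the subgraph $H$, i.e., that no edge of $H$ is monochromatic.

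Then I would apply the union bound over the edges of $H$. For each $(ji)\in E(H)$, since $x_i$ and $x_j$ are independent and uniform on $[q]$, the probability that $x_i = x_j$ is exactly $1/q$. Hence
\[
\prob\bigl[\bfX|_{E(H)}\neq \emptyset\bigr]
\;=\;
\prob\Bigl[\exists (ji)\in E(H):\, x_i = x_j\Bigr]
\;\le\; \sum_{(ji)\in E(H)}\prob[x_i = x_j]
\;=\; \frac{|E(H)|}{q}.
\]
Bounding $|E(H)|\le \binom{|V(H)|}{2}\le |V(H)|^2$ (or, tighter, by $|V(H)|^2/2$) and subtracting from one gives $M_{H,H}\ge 1 - |V(H)|^2/q$, as desired.

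There is no real obstacle here: the entire statement reduces, once the definitions are unfolded, to a one-line union-bound estimate for the probability that a uniformly random $q$-coloring of $V(H)$ is proper on $H$. The only thing worth being careful about is matching the convention in the definition of $M_{K,H}$ (i.e.\ that $E(H)\backslash E(K)$, rather than $E(K)\backslash E(H)$, is the set of edges of $H$ that must be \emph{present} in $\bfX$), which is what ensures that the relevant event for $K = H$ is the absence of any monochromatic edge of $H$ rather than the presence of all of them.
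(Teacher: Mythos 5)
Your proof is correct. It takes a slightly different route from the paper: the paper lower-bounds $\prob[\bfX|_{E(H)}=\emptyset]$ by the sufficient event that all $|V(H)|$ vertices receive distinct labels, computes that probability as a product $\prod_i(q-i)/q$, and then bounds that product by $(1-|V(H)|/q)^{|V(H)|}\ge 1-|V(H)|^2/q$; you instead apply a union bound directly over the edges of $H$, obtaining $\prob[\bfX|_{E(H)}\neq\emptyset]\le |E(H)|/q$. Your route is if anything a little cleaner and yields the slightly stronger bound $1-|E(H)|/q\ge 1-|V(H)|^2/(2q)$, though both are one-liners and either is more than adequate for where the estimate is used (namely to conclude $M_{H,H}\ge 1/2$ for $|V(H)|=\polylog(d)$ and $q=\tilde\Theta(d^{1/4})$). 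You were also right to flag the convention in the definition of $M_{K,H}$: the relevant event at $K=H$ is indeed that $\bfX$ places \emph{no} edge of $H$, not all of them.
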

\begin{proof}
Observe that 
$
M_{H,H} = \prob[\bfX|_{E(H)} = \emptyset].
$
by definition. This happens, in particular, if all vertices of $H$ receive different labels from $[q]$ when $\bfX\sim\Label(n,q).$ This clearly happens with probability 
\begin{equation*}
\frac{q-1}{q}
\frac{q-2}{q}\cdots
\frac{q-|V(H)|}{q}\ge 
(1 - |V(H)|/q)^{|V(H)|}\ge 
1- |V(H)|^2/q.\qedhere
\end{equation*}
\end{proof}
In light of \cref{eq:whatrecursion}, it is now useful to bound the differences of Fourier coefficients. Using $\psi_q = 1/2 + o(1)$ and triangle inequality, we obtain the following bound.

\begin{proposition}
\label{prop:differenceinshifteedfourier}
    For any connected graph $H$ on at most $D$ vertices, 
    $$
    \Big|
\Phi^{\psi_q}_\RGGspherehalf(H)-
\Phi^{\psi_q}_{\PCol(n,q)}(H)
        \Big|\le 
        8^{|E(H)|}
\Big(\frac{(\log n)^{9}}{q}\Big)^{\max(3,|V(H)|-1)}
       8^{|E(H)|}
\Big(\frac{(\log n)^{9}}{q}\Big)^{3|V(H)|/4}.
    $$
\end{proposition}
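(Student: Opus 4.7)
The plan is to reduce to \cref{prop:differenceoffourier} by re-expanding the $\psi_q$-biased Fourier coefficients in the standard $1/2$-biased basis. Writing $G_{ji} - \psi_q = (G_{ji} - 1/2) + (1/2 - \psi_q)$ in each factor of $\SW^{\psi_q}_H(\bfG)$ and expanding, linearity of expectation followed by subtraction of the two models $R = \RGGspherehalf$ and $R' = \PCol(n,q)$ gives
\begin{equation*}
\Phi^{\psi_q}_R(H) - \Phi^{\psi_q}_{R'}(H) = (\psi_q(1-\psi_q))^{-|E(H)|/2}\sum_{K\subseteq E(H)}(1/2-\psi_q)^{|E(H)|-|K|}(1/2)^{|K|}\bigl(\Phi^{1/2}_R(K) - \Phi^{1/2}_{R'}(K)\bigr),
\end{equation*}
after using the identity $\expect[\SW^{1/2}_K(\bfG)] = (1/2)^{|K|}\Phi^{1/2}_R(K)$.

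Next, I would invoke \cref{prop:differenceoffourier} on each $K$. Setting $r_K \coloneqq |V(K)| - |\CC(K)|$ and using $|\Phi^{1/2}_R(K) - \Phi^{1/2}_{R'}(K)| = 2^{|K|}\bigl|\expect_R[\SW^{1/2}_K] - \expect_{R'}[\SW^{1/2}_K]\bigr|$, together with the bounds $|1/2-\psi_q| = 1/(2(q-1))\le 1/q$ and $(\psi_q(1-\psi_q))^{-1/2} \le 4$, each summand reduces (after the $2^{|K|}$ and $(1/2)^{|K|}$ factors cancel) to at most $4^{|E(H)|}\cdot D\cdot (\log n)^{8\max(3,r_K)}\cdot q^{-(|E(H)|-|K|+\max(3,r_K))}$.

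The main obstacle is the combinatorial inequality $|E(H)|-|K|+\max(3,r_K)\ge \max(3,|V(H)|-1)$ for every $K\subseteq E(H)$; let $\alpha$ denote the right-hand side. I plan to derive it from monotonicity of the cyclomatic number $\mathrm{cyc}(K) \coloneqq |K|-|V(K)|+|\CC(K)|$ under edge addition. A short casework on whether a new edge joins two existing components of $K$, lies within one, or is incident to a new vertex confirms that $\mathrm{cyc}$ is nondecreasing as edges are added, so $\mathrm{cyc}(K)\le \mathrm{cyc}(H) = |E(H)|-|V(H)|+1$, equivalently $|E(H)|-|K|+r_K\ge |V(H)|-1$. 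A split on whether $r_K\ge 3$ or $r_K\le 2$ then upgrades this to the $\max(3,\cdot)$ version.

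Given the inequality, each summand is bounded by $4^{|E(H)|}D(\log n)^{8\alpha}q^{-\alpha}$. Summing over the $\le 2^{|E(H)|}$ subsets $K\subseteq E(H)$ yields a further factor of $2^{|E(H)|}$, producing the bound $8^{|E(H)|}D((\log n)^8/q)^\alpha$. Since $\alpha\ge 3$ and $D = (\log n)^{1.1}$, the $D$ is absorbed into an extra $(\log n)^\alpha$, giving the stated single-factor bound $8^{|E(H)|}((\log n)^9/q)^\alpha$; the two-factor product form in the proposition statement then follows trivially, as does the weaker $3|V(H)|/4$ factor, since $\max(3,|V(H)|-1) \ge 3|V(H)|/4$ for all $|V(H)|\ge 1$.
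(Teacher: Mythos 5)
Your proposal is correct, and it arrives at the bound by the same expansion over edge subsets as the paper, but via a genuinely different combinatorial route. Both proofs begin by expanding the $\psi_q$-biased signed weight in the $1/2$-biased basis, extracting a $4^{|E(H)|}$ factor from $(\psi_q(1-\psi_q))^{-|E(H)|/2}$, and invoking \cref{prop:differenceoffourier} on each subgraph indexed by the sum; the paper's and your sums are indexed by complementary subsets, which is cosmetic. The divergence is in the key combinatorial inequality $|E(H)|-|K|+\max(3,|V(K)|-|\CC(K)|)\ge \max(3,|V(H)|-1)$. The paper proves this by restricting to $K$ for which $H\setminus K$ has no leaves (the other terms vanish by \cref{prop:leavesinspherical} and \cref{prop:pccolors}), and then invoking the $\TP/\NLP$ leaf decomposition of \cref{def:leafdecomposition} together with \cref{obs:leafdecomposition}, which requires tracking $|V(\NLP(H))|$ and its components. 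You instead observe that the cyclomatic number $|K|-|V(K)|+|\CC(K)|$ is monotone under edge addition and compare $K$ directly with $E(H)$, which proves the inequality \emph{uniformly over all} $K\subseteq E(H)$ in a few lines without any appeal to the leaf-vanishing observations or the leaf-decomposition machinery. Your argument is shorter and more self-contained; the paper's argument is more closely tied to the $\TP/\NLP$ structure it has already built up for \cref{prop:graphswithleaves}. The final bound is the same.

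One small remark: the displayed bound in the proposition statement as written has no relation symbol between the two factors; as you note, the intended reading is two successive inequalities, the second following from $\max(3,|V(H)|-1)\ge 3|V(H)|/4$ and $(\log n)^9/q<1$. Your observation handles this correctly.
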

\begin{proof} We start by writing out the quantity to be bounded,
\begin{equation}
    \begin{split}
        &\Big|
\Phi^{\psi_q}_\RGGspherehalf(H)-
\Phi^{\psi_q}_{\PCol(n,q)}(H)
        \Big|\\
        & = \sqrt{(1-\psi_q)\psi_q}^{-|E(H)|}
\Big|
\expect_{\bfG\sim\RGGspherehalf}[\SW^{\psi_q}_H(\bfG)]-
\expect_{\bfG\sim\PCol(n,q)}[\SW^{\psi_q}_H(\bfG)]
\Big|\\
& 
\le 4^{|E(H)|}
\Big|
\expect_{\bfG\sim\RGGspherehalf}\Big[\prod_{(ji)\in E(H)}\Big(\bfG_{ji} - \frac{1}{2}- \frac{1}{2(q-1)}\Big)\Big]\\
&\quad\quad\quad\quad\quad\quad\quad-
\expect_{\bfG\sim\PCol(n,q)}\Big[\prod_{(ji)\in E(H)}\Big(\bfG_{ji} - \frac{1}{2}- \frac{1}{2(q-1)}\Big)\Big]
\Big|\\
& 
\le 4^{|E(H)|}
\sum_{K\subseteq E(H)}
\frac{1}{(2(q-1))^{|E(K)|}}
\Big|
\expect_{\bfG\sim\RGGspherehalf}\Big[\prod_{(ji)\in E(H)\backslash E(K)}\Big(\bfG_{ji} - \frac{1}{2}\Big)\Big]\\
&\quad\quad\quad\quad\quad\quad\quad-
\expect_{\bfG\sim\PCol(n,q)}\Big[\prod_{(ji)\in E(H)\backslash E(K)}\Big(\bfG_{ji} - \frac{1}{2}\Big)\Big]
\Big|\\
& = 
4^{|E(H)|}
\sum_{K\subseteq E(H)}
\frac{1}{(2(q-1))^{|E(K)|}}
\Big|
\expect_{\bfG\sim\RGGspherehalf}\Big[\SW^{1/2}_{H\backslash E(K)}(\bfG)\Big]-
\expect_{\bfG\sim\PCol(n,q)}\Big[\SW^{1/2}_{H\backslash E(K)}(\bfG)\Big]
\Big|\\
&\le 
8^{|E(H)|}
\max_{K\subseteq E(H)}
\frac{1}{(2(q-1))^{|E(K)|}}
\Big|
\expect_{\bfG\sim\RGGspherehalf}\Big[\SW^{1/2}_{H\backslash E(K)}(\bfG)\Big]-
\expect_{\bfG\sim\PCol(n,q)}\Big[\SW^{1/2}_{H\backslash E(K)}(\bfG)\Big]
\Big|.\\
    \end{split}
\end{equation}
Now, we analyze the maximum above. Suppose that $H\backslash E(K)$ has a leaf. Then, both signed expectations equal 0. Hence, we only consider $K$ such that $H\backslash E(K)$ has no leaves. Now, recall the leaf decomposition \cref{def:leafdecomposition}. Clearly, it must be the case that all edges in $\TP(H)$ should be in $K$ so that $H\backslash E(K)$ has no leaves. Note, however, that if $H = \TP(H),$ meaning that $H$ is a tree, $\expect_{\bfG\sim\RGGspherehalf}\Big[\SW^{1/2}_{H\backslash E(K)}(\bfG)\Big]-
\expect_{\bfG\sim\PCol(n,q)}\Big[\SW^{1/2}_{H\backslash E(K)}(\bfG)\Big] = 0$ holds for all $K$ and, thus, the expression equals zero. Hence, we can assume that $H$ is not a tree. As $H$ is connected, $\NLP(H)\neq \emptyset.$ By \cref{obs:leafdecomposition},
 $|E(\TP(H))|= |V(H)| - |V(\NLP(H))|.$
Denote $A = \NLP(H)\cap K.$ 
\begin{align*}
    & 8^{|E(H)|}
\max_{K\subseteq E(H)}
\frac{1}{(2(q-1))^{|E(K)|}}
\Big|
\expect_{\bfG\sim\RGGspherehalf}\Big[\SW^{1/2}_{H\backslash E(K)}(\bfG)\Big]-
\expect_{\bfG\sim\PCol(n,q)}\Big[\SW^{1/2}_{H\backslash E(K)}(\bfG)\Big]\Big|\\
    & \le 8^{|E(H)|}
\max_{K\subseteq E(H)}
\frac{1}{q^{|E(A)| + |E(\TP(H))|}}
\Big|
\expect_{\bfG\sim\RGGspherehalf}\Big[\SW^{1/2}_{\NLP(H)\backslash A}(\bfG)\Big]-
\expect_{\bfG\sim\PCol(n,q)}\Big[\SW^{1/2}_{\NLP(H)\backslash A}(\bfG)\Big]
\Big|\\
& \le 8^{|E(H)|}
\max_{A\subseteq \NLP(H)}
\frac{1}{q^{|E(A)| + |V(H)|-|V(\NLP(H))|}}\times D\times\Big(\frac{(\log n)^8}{q}\Big)^{\max(3,|V(\NLP(H)\backslash A)| - |\CC(\NLP(H)\backslash A)|)}
\end{align*}
by \cref{prop:differenceoffourier}.

Thus, it is enough to show that
$$
|E(A)| + |V(H)|-|V(\NLP(H))| + 
\max(3,|V(\NLP(H)\backslash A)| - |\CC(\NLP(H)\backslash A)|)\ge 
\max(3,|V(H)|-1).
$$
If $\max(3,|V(H)|-1) = 3,$ this means that $H = \triangle$ as otherwise $\NLP(H) = \emptyset.$ In that case, $\NLP(H) =\triangle.$ Thus, the left-hand side is 
$$
|E(A)|  + 
\max(3,|V(\NLP(H)\backslash A)| - |\CC(\NLP(H)\backslash A)|)\ge 3,
$$
as desired.
Otherwise, it is enough to show that
\begin{align*}
& |E(A)| + |V(H)|-|V(\NLP(H))| + 
|V(\NLP(H)\backslash A)| - |\CC(\NLP(H)\backslash A)|) \ge 
|V(H)|-1\\
& \Longleftrightarrow
|E(A)|+1\ge 
|\CC(\NLP(H)\backslash A)| + 
\Big(
|V(\NLP(H))| -
|V(\NLP(H)\backslash A)|
\Big).
\end{align*}
This, however, is trivial as $\NLP(H)$ is connected whenever $H$ is connected. Furthermore, the graph induced by $V(\NLP(H))$ on edges 
$\NLP(H)\backslash A$ has connected components \linebreak
$\CC(\NLP(H)\backslash A)$ and
$V(\NLP(H))\backslash\Big(V(\NLP(H)\backslash A)\Big).$ Hence, we need to add at least \linebreak  
$|\CC(\NLP(H)\backslash A)| + |V(\NLP(H))\backslash\Big(V(\NLP(H)\backslash A)\Big)| - 1$ edges to make $\NLP(H)$ on vertex set\linebreak $V(\NLP(H))$ connected.
\end{proof}

Now, we go back to \cref{eq:whatrecursion}. As we know the difference of Fourier coefficients, we also need to understand the coefficients $M_{K,H}/M_{H,H}.$ 
We will prove the following fact.

\begin{proposition}[\cite{kothari2023planted}]
\label{prop:mkratio}
For any graph $H$ on at most $D$ vertices and any $K\subseteq H,$
$$
\frac{M_{K,H}}{M_{K,K}}\le 
4^{|E(H)| -|E(K)|}\times 
(1/q)^{|V(H)| - |V(K)|}.
$$
\end{proposition}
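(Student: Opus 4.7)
The plan is to unpack both $M_{K,K}$ and $M_{K,H}$ as probabilities under the random coloring $\bfX \sim \Label(n,q)$, in which each vertex of $[n]$ receives an independent uniform color in $[q]$ and $(ji) \in \bfX$ iff the endpoints share a color. Then $M_{K,K} = \Pr[\bfX|_{E(K)} = \emptyset]$ is the probability that every edge of $K$ is bichromatic, while
\[
M_{K,H} = \sqrt{(1-\psi_q)/\psi_q}^{\,|L|} \cdot \Pr\bigl[\text{every edge of } L \text{ is monochromatic and every edge of } K \text{ is bichromatic}\bigr],
\]
with $L := E(H) \setminus E(K)$. Since $\psi_q = \tfrac{1}{2} - \tfrac{1}{2(q-1)}$ yields $(1-\psi_q)/\psi_q = q/(q-2) \le 4$ for $q \ge 4$, this prefactor is at most $2^{|L|}$, which covers half of the claimed $4^{|E(H)|-|E(K)|}$.

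The main step is to establish, in the case where $H$ is connected and $K \neq \emptyset$ (the general case then follows by multiplicativity, since both $M_{K,H}$ and $M_{K,K}$ factor across connected components of $H$ whose vertex sets are disjoint, as does the claimed upper bound), the inequality
\[
\Pr\bigl[\text{all } L\text{-edges monochromatic} \,\big|\, \bfX|_{E(K)} = \emptyset\bigr] \le q^{-(|V(H)|-|V(K)|)}.
\]
To prove this I would condition on the coloring $\chi:V(K)\to [q]$; the colors of $V(H)\setminus V(K)$ are then independent and uniform. The key combinatorial observation is that every connected component of the graph $(V(H), L)$ contains at least one vertex of $V(K)$. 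Indeed, a component lying entirely in $V(H)\setminus V(K)$ would consist of vertices having no incident $K$-edges, hence that component would be disconnected from $V(K)$ already in $H$, contradicting the connectivity of $H$. Consequently each $v \in V(H) \setminus V(K)$ belongs to some $L$-component anchored at a vertex of $V(K)$, and the monochromatic constraints force $v$'s color to equal a specific value determined by $\chi$ and the component structure; this happens with probability exactly $1/q$ per vertex and independently across the $|V(H)|-|V(K)|$ free vertices.

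It remains to handle constants: a simple union bound gives $M_{K,K} \ge 1 - |E(K)|/q \ge 1/2$, valid since $|E(K)| \le D = (\log n)^{1.1}$ and $q = \widetilde{\Theta}(d^{1/4}) \gg D^{2}$, so dividing by $M_{K,K}$ costs only a factor of $2$ which is absorbed into $4^{|L|}$. The main obstacle is the combinatorial lemma on components of $(V(H), L)$: without connectedness of $H$ and non-emptiness of $K$, a "free" $L$-component lying entirely in $V(H)\setminus V(K)$ would contribute an extra factor of $q$ on the numerator side and defeat the claimed exponent of $1/q$. The multiplicativity reduction across connected components is precisely what lets one restrict to this well-behaved setting, after which the proof is routine bookkeeping.
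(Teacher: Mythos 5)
Your proof is correct and follows essentially the same route as the paper's: condition on the coloring of $V(K)$, observe that the monochromatic constraints on the edges of $L=E(H)\setminus E(K)$ determine the colors of the free vertices $V(H)\setminus V(K)$, and reduce the $H$ disconnected case to the $H$ connected case by multiplicativity. Your articulation of the key combinatorial step is in fact somewhat cleaner than what is printed: the paper asserts that ``each edge in $E(H)\setminus E(K)$ has at least one vertex in $V(H)\setminus V(K)$,'' which is not literally true (take $H$ a triangle and $K$ a two-edge path: the third edge of $L$ has both endpoints in $V(K)$), whereas your observation --- that every connected component of $(V(H),L)$ must contain a vertex of $V(K)$, because a component avoiding $V(K)$ would have no incident $K$-edges and hence would be an entire $H$-component, contradicting connectedness of $H$ and nonemptiness of $K$ --- is the correct and precise justification for why each free vertex is pinned to a single color.

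One small inefficiency: your detour through the bound $M_{K,K}\ge 1-|E(K)|/q\ge 1/2$ is unnecessary and introduces a spurious dependence on $q$ being large. Since the joint event $\{\bfX|_{E(H)}=L\}$ implies $\{\bfX|_{E(K)}=\emptyset\}$, the ratio $M_{K,H}/M_{K,K}$ is \emph{exactly} $\sqrt{(1-\psi_q)/\psi_q}^{\,|L|}$ times the conditional probability you bound, so you may conclude directly:
\[
\frac{M_{K,H}}{M_{K,K}}=\sqrt{(1-\psi_q)/\psi_q}^{\,|L|}\cdot\Pr\!\left[\bfX|_{E(H)}=L\ \middle|\ \bfX|_{E(K)}=\emptyset\right]\le 2^{|L|}\,q^{-(|V(H)|-|V(K)|)}\le 4^{|L|}\,q^{-(|V(H)|-|V(K)|)},
\]
with no auxiliary lower bound on $M_{K,K}$ needed. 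This is also the route the paper takes.
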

\begin{proof} We give the proof in \cite{kothari2023planted} for completeness.
Clearly, it is enough to prove the fact for $H$ connected as the bound factorizes over connected components (and the connected components of $K$ form a more refined partitioning of vertices than the components of $H$). Now, suppose that $H$ is connected. Then
\begin{align*}
&\frac{M_{K,H}}{M_{K,K}} = 
\frac{\prob[\bfX|_{E(H)} = E(H)\backslash E(K)]
\sqrt{(1 - \psi_q)/\psi_q}^{E(H)\backslash E(K)}}{\prob[\bfX|_{E(K)} = \emptyset]}\\
& \le 4^{|E(H)|\backslash E(K)}
\prob[\bfX|_{E(H)} = E(H)\backslash E(K)|\bfX\cap E(K) = \emptyset]\le 4^{|E(H)| - |E(K)|}(1/{q})^{|V(H)|-|V(K)|}.
\end{align*}
Each edge in $E(H)\backslash E(K)$ has at least one vertex in $V(H)\backslash V(K).$ As edges in $\bfX$ are determined by the colors assigned to vertices and $H$ is connected, the event $\bfX|_{E(H)} = E(H)\backslash E(K)$ (conditioned on the colors of vertices in $V(K)$) completely determines the colors of the vertices in $V(H)\backslash V(K).$ Thus, each vertex  in $V(H)\backslash V(K)$ can be assigned at most one color.
\end{proof}

With all of this, we can readily bound the values of $\widehat{w}_H.$ The following proposition from \cite{kothari2023planted} shows that it is enough to do so for connected graphs. Note that both $\RGGspherehalf$ and $\PCol(n,q)$ factorize over their connected components. One can easily obtain the following fact:
\begin{proposition} The values $\widehat{w}_H$ factorize over their connected components. That is, 
if the connected components of $H$ are $H_1, H_2, \ldots, H_t,$
$
\widehat{w}_H = \prod_{i = 1}^t\widehat{w}_{H_i}.
$
\end{proposition}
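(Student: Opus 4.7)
The plan is a straightforward strong induction on $|E(H)|$. The base case $|E(H)|=0$ is immediate: $\widehat{w}_\emptyset = M_{\emptyset,\emptyset}\cdot w_\emptyset = 1$, which agrees with the empty product. For the inductive step, let $H_1,\ldots,H_t$ be the connected components of $H$ with $t\ge 2$ (nothing to prove if $t=1$). For any $K\subseteq H$, writing graphs as their edge sets, set $K_i := K\cap E(H_i)$, so $K = \sqcup_i K_i$ with $K_i\subseteq H_i$.

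The argument will rest on three multiplicativity facts, each verified by a short computation. First, $\Phi^{\psi_q}_\RGGspherehalf(H) = \prod_i \Phi^{\psi_q}_\RGGspherehalf(H_i)$: the latent vectors of $\RGGspherehalf$ indexed by the disjoint vertex sets $V(H_i)$ are independent, so $\expect[\SW^{\psi_q}_H]$ factorizes across components and $|E(H)|=\sum_i|E(H_i)|$ handles the normalizing constants. Second, $M_{K,H} = \prod_i M_{K_i,H_i}$ and $M_{K,K} = \prod_i M_{K_i,K_i}$: this holds because labels in $\bfX\sim\Label(n,q)$ on the disjoint sets $V(H_i)$ are independent, so the event $\{\bfX|_{E(H)} = E(H)\backslash E(K)\}$ factorizes, and $|E(H)\backslash E(K)| = \sum_i|E(H_i)\backslash E(K_i)|$. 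Third, the inductive hypothesis applied to any $K\subsetneq H$ yields $\widehat{w}_K = \prod_{C\in \CC(K)}\widehat{w}_C$; and since $\CC(K) = \sqcup_i \CC(K_i)$ and $|E(K_i)|\le|E(K)|<|E(H)|$, a second application of the IH to each $K_i$ regroups this into $\widehat{w}_K = \prod_i \widehat{w}_{K_i}$.

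Plugging these into the recursion \cref{eq:whatrecursion} in the equivalent form $\widehat{w}_H = \Phi^{\psi_q}_\RGGspherehalf(H) - \sum_{K\subsetneq H}\widehat{w}_K\frac{M_{K,H}}{M_{K,K}}$, and expanding the sum over $K\subsetneq H$ as the sum over all tuples $(K_1,\ldots,K_t)$ with $K_i\subseteq H_i$ minus the single excluded tuple $(H_1,\ldots,H_t)$, one obtains
\[
\widehat{w}_H \;=\; \prod_i \Phi^{\psi_q}_\RGGspherehalf(H_i) \;-\; \prod_i \Bigl(\sum_{K_i\subseteq H_i} \widehat{w}_{K_i}\frac{M_{K_i,H_i}}{M_{K_i,K_i}}\Bigr) \;+\; \prod_i \widehat{w}_{H_i}.
\]
Applying the one-component version of the recursion to each $H_i$ shows the inner sum simplifies: the summand at $K_i=H_i$ contributes $\widehat{w}_{H_i}$ and the remaining sum over $K_i\subsetneq H_i$ equals $\Phi^{\psi_q}_\RGGspherehalf(H_i)-\widehat{w}_{H_i}$, so the bracket evaluates to $\Phi^{\psi_q}_\RGGspherehalf(H_i)$. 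The first two products then cancel, leaving $\widehat{w}_H = \prod_i \widehat{w}_{H_i}$, which closes the induction.

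I do not anticipate any real obstacle: the entire proof is a bookkeeping exercise once the three multiplicativity facts are granted. The only point requiring some care is step three of the induction, where the case that some $K_i=H_i$ while other $K_j\subsetneq H_j$ must be handled by noting that $K\subsetneq H$ still forces $|E(K_i)|<|E(H)|$ for every $i$, so the IH applies at both levels.
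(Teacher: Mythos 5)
The paper itself offers no proof of this proposition — it merely asserts it with "One can easily obtain the following fact" — so there is nothing to compare your argument against directly. Your induction is correct and complete. You correctly isolate the three required multiplicativity facts: (i) $\Phi^{\psi_q}_\RGGspherehalf$ factorizes because the latent vectors on disjoint vertex sets are independent (the paper even flags this in the preceding sentence); (ii) $M_{K,H}$ and $M_{K,K}$ factorize because the vertex labels in $\Label(n,q)$ on disjoint $V(H_i)$ are independent and $|E(H)\setminus E(K)|$ is additive; (iii) $\widehat{w}_K$ factorizes for proper subgraphs by the inductive hypothesis. The telescoping — expanding $\sum_{K\subsetneq H}$ over tuples $(K_1,\ldots,K_t)$ minus the tuple $(H_1,\ldots,H_t)$, then using the one-component recursion to collapse each inner sum to $\Phi^{\psi_q}_\RGGspherehalf(H_i)$ — is clean and the cancellation is exactly right. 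You also correctly flag the delicate point: when $K\subsetneq H$ but $K_i=H_i$ for some $i$, the bound $|E(K_i)|\le|E(K)|<|E(H)|$ still licenses the IH at $K_i$, which requires $t\ge 2$ (and hence no component equals $H$). This is the argument the authors evidently had in mind; no gap.
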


\begin{proposition}
    For any $H$ on at most $D$ vertices, 
    $$
    |\widehat{w}_H|
    \le
    (12|E(H)|)^{|E(H)|}
    \Big(\frac{(\log n)^{9}}{q}\Big)^{3|V(H)|/4}.
    $$
\end{proposition}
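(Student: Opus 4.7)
The plan is to prove the bound by strong induction on $|E(H)|$, combining the recurrence \cref{eq:whatrecursion} with \cref{prop:differenceinshifteedfourier} and \cref{prop:mkratio}. Since the preceding proposition gives $\widehat{w}_H = \prod_i \widehat{w}_{H_i}$ over the connected components $H_1,\dots,H_t$ of $H$, and since $\prod_i (12|E(H_i)|)^{|E(H_i)|} \le (12|E(H)|)^{|E(H)|}$ while $\prod_i ((\log n)^9/q)^{3|V(H_i)|/4} = ((\log n)^9/q)^{3|V(H)|/4}$, it suffices to treat connected $H$.

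For the base case $|E(H)|=1$, the graph $H$ is a single edge and hence a tree with a leaf, so by \cref{prop:leavesinspherical} (and by vertex-conditioning on labels, cf.\ \cref{prop:pccolors}) both Fourier coefficients vanish and $\widehat{w}_H = 0$. For the inductive step with connected $H$ on $e:=|E(H)| \ge 2$ edges and $v:=|V(H)|$ vertices, triangle inequality applied to \cref{eq:whatrecursion} gives
$$|\widehat{w}_H| \le \bigl|\Phi^{\psi_q}_{\RGGspherehalf}(H) - \Phi^{\psi_q}_{\PCol(n,q)}(H)\bigr| + \sum_{\emptyset \subsetneq K \subsetneq H}|\widehat{w}_K|\,\frac{M_{K,H}}{M_{K,K}}.$$
The first term is bounded by $8^e ((\log n)^9/q)^{3v/4}$ via \cref{prop:differenceinshifteedfourier}. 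For each summand, combine the inductive hypothesis $|\widehat{w}_K| \le (12|E(K)|)^{|E(K)|}((\log n)^9/q)^{3|V(K)|/4}$ with the bound $M_{K,H}/M_{K,K} \le 4^{e-|E(K)|} q^{-(v-|V(K)|)}$ of \cref{prop:mkratio}; the crucial absorption
$$\Bigl(\tfrac{(\log n)^9}{q}\Bigr)^{3|V(K)|/4}\cdot q^{-(v-|V(K)|)} \le \Bigl(\tfrac{(\log n)^9}{q}\Bigr)^{3v/4}$$
holds because the ratio of the two sides equals $(\log n)^{-27(v-|V(K)|)/4} q^{-(v-|V(K)|)/4} \le 1$. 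Grouping edge-subsets by cardinality ($\binom{e}{k}$ choices for $|E(K)|=k$), invoking the binomial theorem, and separating out the excluded $k=0$ and $k=e$ terms yields
$$\sum_{\emptyset \subsetneq K \subsetneq H}|\widehat{w}_K|\tfrac{M_{K,H}}{M_{K,K}} \le \bigl[(12e+4)^e - 4^e - (12e)^e\bigr]\Bigl(\tfrac{(\log n)^9}{q}\Bigr)^{3v/4}.$$
The claim thus reduces to the elementary inequality $8^e + (12e+4)^e - 4^e \le 2(12e)^e$ for $e \ge 1$, which follows from $(1+\tfrac{1}{3e})^e \le e^{1/3} < 3/2$ together with a direct check at $e=1$ (where $20 \le 24$).

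The main obstacle is simply the bookkeeping: one must arrange the constants ($8$ from \cref{prop:differenceinshifteedfourier}, $4$ from \cref{prop:mkratio}, and the target $12$) so that they close the induction, exploiting in particular the omission of the $k=e$ term---whose contribution $(12e)^e$ cancels on each side---and the smallness of the $k=0$ contribution, which is absorbed into the $8^e$ error from \cref{prop:differenceinshifteedfourier}. A secondary subtlety is that the strong induction is applied to arbitrary edge subsets $K \subsetneq H$ including disconnected ones; this is managed once and for all by the factorization of $\widehat{w}_K$ together with superadditivity of $(12|E(\cdot)|)^{|E(\cdot)|}$ under disjoint unions.
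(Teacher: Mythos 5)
Your proof takes essentially the same route as the paper's: induction on $|E(H)|$, reduction to connected $H$ via multiplicativity of $\widehat{w}_H$ over connected components and super-multiplicativity of the right-hand side, the recurrence \cref{eq:whatrecursion}, the difference bound from \cref{prop:differenceinshifteedfourier}, the ratio bound from \cref{prop:mkratio}, the absorption $((\log n)^9/q)^{3|V(K)|/4} q^{-(v-|V(K)|)} \le ((\log n)^9/q)^{3v/4}$, and then a binomial-theorem sum. The only divergence is purely in bookkeeping: you bound $(12k)^k \le (12e)^k$ and subtract both the $k=0$ and $k=e$ terms to get $(12e+4)^e - 4^e - (12e)^e$, whereas the paper bounds $(12t)^t \le (12(e-1))^t$ and sums $t = 0, \dots, e-1$ to get $(12e-8)^e$. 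Both close the induction; your elementary inequality $8^e + (12e+4)^e - 4^e \le 2(12e)^e$ is correct (via $(1+\tfrac{1}{3e})^e \le e^{1/3} < 3/2$ and $8^e - 4^e \le \tfrac12 (12e)^e$). So the two arguments are interchangeable.

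One small slip in the base case: you claim that for a single edge $H$, ``both Fourier coefficients vanish'' by \cref{prop:leavesinspherical} and \cref{prop:pccolors}. But these propositions concern the $1/2$-biased signed weights, whereas \cref{eq:whatrecursion} involves the $\psi_q$-biased coefficients $\Phi^{\psi_q}$, which for a single edge both equal $\sqrt{\psi_q(1-\psi_q)}^{-1}(1/2 - \psi_q) \neq 0$. They do not vanish individually; rather, they are \emph{equal} (both models have marginal edge probability $1/2$), so their difference vanishes, and since the sum over $\emptyset \subsetneq K \subsetneq H$ is empty, $\widehat{w}_H = 0$. Your conclusion is right but the stated justification is off. (The paper sidesteps this by taking $H = \emptyset$ as the only base case and letting the inductive step, via the fact that \cref{prop:differenceinshifteedfourier} vanishes for trees, handle single edges automatically.)
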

\begin{proof}
We prove this is by induction on $|E(H)|.$ It clearly holds for $H = \emptyset.$
The left-hand side is multiplicative over disjoint connected components and the right-hand side is super-multiplicative, so we only need to prove the statement for $H$ connected.
 Now, we use the recurrence relation \cref{eq:whatrecursion} as well as the bounds in \cref{prop:differenceinshifteedfourier,prop:mkratio} to obtain
\begin{equation*}
    \begin{split}
        &|\widehat{w}_H| = 
        \Big|
        (\Phi^{\psi_q}_\RGGspherehalf(H)-
\Phi^{\psi_q}_{\PCol(n,q)}(H))-
\sum_{\emptyset \subsetneq K\subsetneq H}\widehat{w}_K
\frac{M_{K,H}}{M_{K,K}}
        \Big|\\
        & \le 
        |\Phi^{\psi_q}_\RGGspherehalf(H)-
\Phi^{\psi_q}_{\PCol(n,q)}(H))| + 
\sum_{\emptyset \subsetneq K\subsetneq H}|\widehat{w}_K|
\frac{M_{K,H}}{M_{K,K}}\\
& \le 
8^{|E(H)|}
\Big(\frac{(\log n)^{9}}{q}\Big)^{3|V(H)|/4}\\
&\quad\quad\quad\quad\quad\quad+ 
\sum_{\emptyset \subsetneq K\subsetneq H}
(12|E(K)|)^{|E(K)|}
    \Big(\frac{(\log n)^{9}}{q}\Big)^{3|V(K)|/4}
    4^{|E(H)| - |E(K)|}\Big(\frac{1}{q}\Big)^{|V(H)| - |V(K)|}\\
& \le 
\Big(\frac{(\log n)^{9}}{q}\Big)^{3|V(H)|/4}\times 
\Big(
8^{|E(H)|} + 
\sum_{t= 0}^{|E(H)|-1}
(12 t)^t\binom{|E(H)|}{t}
4^{|E(H)| - t}
\Big)\\
& \le 
\Big(\frac{(\log n)^{9}}{q}\Big)^{3|V(H)|/4}\times 
\Big(
8^{|E(H)|} + 
\sum_{t= 0}^{|E(H)|-1}
(12 (|E(H)|-1))^t\binom{|E(H)|}{t}
4^{|E(H)| - t}
\Big)\\
& \le
\Big(\frac{(\log n)^{9}}{q}\Big)^{3|V(H)|/4}\times 
\Big(
8^{|E(H)|} + (12|E(H)| - 8)^{|E(H)|}\Big)\\
&\le(12|E(H)|)^{|E(H)|}\Big(\frac{(\log n)^{9}}{q}\Big)^{3|V(H)|/4},
    \end{split}
\end{equation*}
as desired.
\end{proof}
These bounds, together with \cref{prop:chisquaredtocoloring} are sufficient via an elementary calculation which we carry out in \cref{appendix:chisqcalcpcol}.

\section{The Second Eigenvalue of Half-Density RGG}
\label{section:spectral}
\subsection{The Trace Method}
For a real symmetric matrix $A\in \mathbb(R)^{n\times n},$ denote by $\lambda_1(A), \lambda_2(A), \ldots, \lambda_{n}(A)$ its eigenvalues in decreasing order. It is a well known fact that for any symmetric real matrix $A\in \mathbb{R}^{n\times n},$
$\lambda_2(A)= \min_{v}\lambda_1(A - vv^T).$ In particular, if $\bfA$ is the adjacency matrix of 
$\bfG\sim\RGG(n,\dsphere,1/2),$
$\lambda_2(\bfA)\le \lambda_1\Big(\bfA - \frac{1}{2}\one\one^T\Big).$

Now, we can bound $\lambda_1(\bfA- \frac{1}{2}\one\one^T)$ by the trace method. Namely, let $D$ be any even number.  
$$
\sum_{i=1}^n\lambda_i\Big(\bfA- \frac{1}{2}\one\one^T\Big)^D =
\trace\Big(\Big(\bfA - \frac{1}{2}\one\one^T\Big)^D\Big).
$$
In particular, $|\lambda_2(\bfA)|\le |\lambda_1\Big(\bfA- \frac{1}{2}\one\one^T\Big)|\le
\trace\Big(\Big(\bfA - \frac{1}{2}\one\one^T\Big)^D\Big)^{1/D}.
$
Hence, by Markov's inequality,
$$
\frac{1}{n}\ge \prob\Big[
\trace\Big(\Big(\bfA - \frac{1}{2}\one\one^T\Big)^D\Big)\ge
n \times \expect\Big[\trace\Big(\Big(\bfA - \frac{1}{2}\one\one^T\Big)^D\Big)\Big]
\Big]\ge 
\prob\Big[
|\lambda_2(\bfA)|^D\ge
n \times \expect\Big[\trace\Big((\bfA - \frac{1}{2}\one\one^T)^D\Big)\Big]
\Big].
$$
Thus, with high probability, 
$|\lambda_2(\bfA)|\le {n}^{1/D}\times \expect\Big[\trace\Big(\Big(\bfA - \frac{1}{2}\one\one^T\Big)^D\Big)\Big]^{1/D}.$
Taking $D = \omega(\log n),$ with high probability, 
\begin{equation}
\label{eq:lambda2bound}
    |\lambda_2(\bfA)|\le (1 +o_n(1))\expect\Big[\trace\Big(\Big(\bfA - \frac{1}{2}\one\one^T\Big)^D\Big)\Big]^{1/D}.
\end{equation}
In the rest of the section, we will be bounding the trace above. We choose $D = 2\Big\lceil(\log n)^{1.1}\rceil.$
\subsection{Fourier Coefficients in The Trace}
$\expect\Big[\trace\Big(\Big(\bfA - \frac{1}{2}\one\one^T\Big)^D\Big)\Big]$ has a very convenient expression as a weighted sum of Fourier coefficients. 
\begin{equation}
\label{eq:firstraceexpansion}
    \begin{split}
        & \expect\Big[\trace\Big(\Big(\bfA - \frac{1}{2}\one\one^T\Big)^D\Big)\Big] =\\
        & = \sum_{1\le i_1, i_2,\ldots,i_D\le n}
        \expect\Big[\prod_{s=1}^{D}(A_{i_si_{s+1}} - 1/2)\Big]\\
        & = 
        \sum_{1\le i_1, i_2,\ldots,i_m\le n}
        \expect\Big[\prod_{s=1}^{D}(G_{i_si_{s+1}} - 1/2)\Big]\Longrightarrow\\
        & \Big| \expect\Big[\trace\Big(\Big(\bfA - \frac{1}{2}\one\one^T\Big)^D\Big)\Big]\Big|\le 
        \sum_{1\le i_1, i_2,\ldots,i_D\le n}
        \Big|\expect\Big[\prod_{s=1}^{D}(G_{i_si_{s+1}} - 1/2)\Big]\Big|,
    \end{split}
\end{equation}
where $i_{D+1}:=i_1.$
Now, observe that for any indicator $I,$ one has $(I-1/2)^2 = 1/4.$ Thus, relevant are only terms that appear an odd number of times. Explicitly, let $H(i_1, i_2, \ldots,i_D)$ be the graph induced by the edges appearing an odd number of times in the multiset $\{(i_1,i_2),(i_2, i_3), \ldots, (i_D,i_1)\}.$ Then, \cref{eq:firstraceexpansion} is upper bounded by  
\begin{equation}
\label{eq:subgraphsumintrace}
    \begin{split}
        & \sum_{1\le i_1, i_2,\ldots,i_D\le n} 
        \expect[\SW_{H(i_1, i_2, \ldots,i_D)}(\bfG)]| = \sum_{H \in\graphsnoleaves_D}
        |\expect[\SW_{H}(\bfG)]|\times f_{D,n}(H),
    \end{split}
\end{equation}
where $f_{D,n}(H)$ is the number of $D$-tuples in $[n]^D$ such that $H(i_1, i_2, \ldots,i_D)$ is isomorphic to
$H.$ Again, we only sum over graphs with no leaves.

\subsection{Bounding The Trace}

We first bound $f_{D,n}(H).$ 

\begin{proposition} Suppose that $H$ is of minimal degree $2.$ Then,
$$
f_{D,n}(H)\le 2D^{3D}n^{
|V(H)|- |\CC(H)| + 
(D-|E(H)|)/2+1}
$$
\end{proposition}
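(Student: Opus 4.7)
My plan is to parametrize each tuple $(i_1,\ldots,i_D)\in[n]^D$ contributing to $f_{D,n}(H)$ by the underlying graph $U$ of distinct edges traversed by the closed walk $i_1\to i_2\to\cdots\to i_D\to i_1$, bound the number of distinct vertices of $U$, and then count tuples with that many distinct vertices in a standard way. Set $k\coloneqq|V(H)|$, $e\coloneqq|E(H)|$, $c\coloneqq|\CC(H)|$, and for a contributing tuple write $v\coloneqq|V(U)|$ and $f\coloneqq|E(U)|$. The walk is connected, so $U$ is connected, and $H$ sits inside $U$ as the set of $e$ odd-multiplicity edges.

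The heart of the proof will be the vertex bound $v\le k-c+(D-e)/2+1\eqqcolon v_*$, which I plan to deduce from two estimates on $f$. On one hand, the $e$ odd edges are each traversed at least once and the remaining $f-e$ even edges at least twice, so $D\ge e+2(f-e)=2f-e$, giving $f\le (D+e)/2$. On the other hand, contracting each connected component of $H$ to a single vertex in $U$ leaves a graph on $c+(v-k)$ vertices whose (non-loop) edges all lie in $E(U)\setminus E(H)$ and which must be connected since $U$ is; this gives $f-e\ge c+(v-k)-1$. Subtracting yields the desired bound on $v$.

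Once $v\le v_*$ is in hand, the counting step is routine. Any tuple $(i_1,\ldots,i_D)$ with exactly $v$ distinct coordinates is described by an injection $\sigma\colon[v]\to[n]$ recording labels in first-appearance order (with $\sigma(1)=i_1$) and a ``pattern'' sequence $(a_1,\ldots,a_D)\in[v]^D$ with $a_1=1$ specifying which distinct vertex appears at each position. This yields at most $n^v\cdot v^{D-1}$ tuples. Summing over $v\le\min(v_*,D)$ and crudely bounding $v\le D$ gives $f_{D,n}(H)\le D^D n^{v_*}$, comfortably within the claimed $2D^{3D}n^{v_*}$.

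The only non-routine step I foresee is the inequality $f-e\ge v-k+c-1$, because $H$ is a subgraph of $U$ but not necessarily induced, and one must account simultaneously for the $c-1$ extra edges needed to merge the components of $H$ and for the $v-k$ extra edges attaching auxiliary vertices outside $V(H)$. The contraction argument sketched above handles both contributions in one stroke and appears to be the cleanest route.
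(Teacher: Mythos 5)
Your proof is correct, and the crucial step — bounding $|V(U)|$ by contracting the connected components of $H$ inside the walk's underlying graph and combining the connectivity count with the ``each even edge is traversed at least twice'' parity bound — is precisely the paper's argument. The only difference is in the final mechanical counting: the paper chooses a vertex set, then an edge set, then a walk (giving the loose $2D^{3D}$ factor), whereas your first-appearance injection plus pattern-sequence count is cleaner and in fact gives the sharper $D^{D}n^{v_*}$, which of course still sits under the claimed bound.
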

\begin{proof} For $i_1, i_2, \ldots, i_D,$ denote by $K(i_1,i_2, \ldots, i_D)$ the multigraph on vertices $i_1, i_2, \ldots,i_D$ with edges $\{(i_1,i_2),(i_2, i_3), \ldots, (i_D,i_1)\}$ (counted with multiplicities). 

First, we claim that $K(i_1,i_2, \ldots, i_D)$ has at most $D - |E(H)|$ vertices if 
$H(i_1,i_2, \ldots, i_D) = H.$ Indeed, consider  the graph $K^c(i_1,i_2, \ldots, i_D)$ in which one modifies $K(i_1,i_2, \ldots, i_D)$ by contracting each connected component of $H(i_1,i_2, \ldots, i_D)$ to a single vertex. Then, the resulting multigraph has exactly $D - |E(H)|$ edges, is connected as $K$ clearly is, and each edge is of even multiplicity. In particular, this means that $|V(K^c(i_1,i_2,\ldots, i_D))|\le (D-|E(H)|)/2+1.$ However, 
$$
|V(K(i_1,i_2,\ldots, i_D))|= 
|V(K(i_1,i_2,\ldots, i_D))| + 
|V(H)|- |\CC(H)|\le
|V(H)|- |\CC(H)| + 
(D-|E(H)|)/2+1.
$$
Hence, one can choose the vertices $i_1,i_2, \ldots,i_D$ in at most 
$$
\sum_{t = |V(H)|}^{
|V(H)|- |\CC(H)| + 
(D-|E(H)|)/2+1}
n^t\le 
2n^{
|V(H)|- |\CC(H)| + 
(D-|E(H)|)/2+1}
$$
ways. On the other hand, for each fixed choice of $V(K(i_1,i_2,\ldots, i_D)),$ one needs to add an extra $(D-|E(H)|)/2$ double edges besides the ones determined by 
$H(i_1,i_2,\ldots, i_D).$\linebreak As $|V(K(i_1,i_2,\ldots, i_D))|\le D,$ this can be done in at most 
$$
\sum_{t = 0}^D
\binom{\binom{t}{2}}{D}\le
\binom{D^2}{D}\le
D^{2D}
$$
ways. Finally, note that for each fixed $K= K(i_1,\ldots, i_D),$ there are at most $D^D$ walks on the vertices $i_1, i_2, \ldots, i_D.$ 
\end{proof}

We now bound $|\SW_H(\bfG)|.$ 
First, from \cref{thm:oeiboundonfourier,prop:boundsonOEI},
$$
|\expect[\SW_{H}(\bfG)]|\le
\Big((\log n)^4/\sqrt{d}\Big)^{\sum_{H_i\in \CC(H)}\lceil (|V(H_i)|-1)/2\rceil}.
$$
Second, by \cref{eq:SOEI,prop:sparseSOEI},
$$
|\expect[\SW_{H}(\bfG)]|\le
\Big((\log n)^4/\sqrt{d}\Big)^{\sum_{H_i\in \CC(H)}\Big( 2|V(H_i)|-|E(H_i)|-2\Big)}.
$$
Hence, if we define 
$\mu(A):=\max\Big(
        \lceil (|V(A)|-1)/2\rceil, 
        2|V(A)|- |E(A)|- 2
        \Big),$ we obtain 
        
\begin{equation}
    \label{eq:compoundfourierboundfortrace}
    \begin{split}
        & |\expect[\SW_{H}(\bfG)]|\le 
        \Big((\log n)^4/\sqrt{d}\Big)^{\sum_{H_i\in \CC(H)}\mu(H_i)}.\\
    \end{split}
\end{equation}
Now, we go back to \cref{eq:firstraceexpansion}. 

\begin{equation}
\label{eq:tracefinal}
    \begin{split}
        & \sum_{H \in\graphsnoleaves_D}
        |\expect[\SW_{H}(\bfG)]|\times f_{D,n}(H)\\
        & \le 
        \sum_{H \in\graphsnoleaves_D}
        2D^{3D}n^{
|V(H)|- |\CC(H)| + 
(D-|E(H)|)/2+1}
\prod_{H_i \in \CC(H)}
\Big((\log n)^4/\sqrt{d}\Big)^{\mu(H_i)}\\
& = 
2D^{3D}n^{D/2+ 1}
\sum_{H \in\graphsnoleaves_D}
\prod_{H_i \in \CC(H)}
n^{|V(H_i)|- |E(H_i)|/2-1}
\Big((\log n)^4/\sqrt{d}\Big)^{\mu(H_i)}\\
& = 
2D^{3D}n^{D/2+ 1}
\sum_{H \in\graphsnoleaves_D}
\prod_{H_i \in \CC(H)}
n^{(2|V(H_i)|- |E(H_i)|-2)/2}\times
n^{|V(H_i)|/2}
\Big((\log n)^4/\sqrt{d}\Big)^{\mu(H_i)}\\
&\le
2D^{3D}n^{D/2+ 2}
\sum_{H \in\graphsnoleaves_D}
\prod_{H_i \in \CC(H)}
\Big(\sqrt{n}(\log n)^4/\sqrt{d}\Big)^{\mu(H_i)}\\
& \le 
2D^{5D}n^{D/2+ 2}
\max_{H \in\graphsnoleaves_D}
\prod_{H_i \in \CC(H)}
\Big(\sqrt{n}(\log n)^4/\sqrt{d}\Big)^{\mu(H_i)}.
    \end{split}
\end{equation}
For the last line, we simply counted the number of graphs $H$ on at most $D$ edges and $D$ vertices. Summing first over vertices and then edges, one can bound
\begin{align*}
& \sum_{v = 1}^D
\sum_{1\le e \le \max(\binom{v}{2}, D)}
\binom{\binom{v}{2}}{e} \le
\sum_{v = 1}^D
D\min\Big(2^{\binom{v}{2}}, \binom{\binom{v}{2}}{D}\Big)\le 
D^2\binom{\binom{D}{2}}{D}\le
D^{2D}.
\end{align*}

The last line of \cref{eq:tracefinal} clearly suggests two cases.

\paragraph{Case 1)} If $d\le n(\log n)^8.$ Then, the expression is maximized when $\sum_{H_i\in \CC(H)}\mu(H_i)$ is maximized. As $\mu(H_i)\le |V(H_i)|$ holds,
$\sum_{H_i\in \CC(H)}\mu(H_i)\le |V(H)|\le|V(H)|=D.$ Going back to \cref{eq:lambda2bound},
\begin{align*}
    &|\lambda_2(\bfA)|\le
    \Big(2D^{5D}n^{D/2+ 2}
\Big(\sqrt{n}(\log n)^4/\sqrt{d}\Big)^D
    \Big)^{1/D}
    \le
    (\log n)^{10}
    n/\sqrt{d}.
\end{align*}

\paragraph{Case 2)} If $d\ge n(\log n)^8,$ then the expression is maximized when $\sum_{H_i\in \CC(H)}\mu(H_i)$ is minimized. As $\mu(H_i)\ge 0,$ 
\begin{align*}
    &|\lambda_2(\bfA)|\le
    \Big(2D^{5D}n^{D/2+ 2}
    \Big)^{1/D}
    \le
    (\log n)^{10}
    \sqrt{n}.
\end{align*}

\section{Discussion}
We introduced a novel strategy for bounding the Fourier coefficients of graph distributions with high-dimensional latent geometry. It is based on localizing dependence to few edges and applying a noise operator to (some of) the remaining edges. Not only is this method useful for our concrete goal of bounding Fourier coefficients, but it also explains how and where dependence among edges is created. In the setting of $\RGGgauss, \RGGsphere,$ dependence is localized to fragile edges. For them, the signal $Z_{ji}$ is too low to overwhelm the bias $\sum_{\ell<i}Z_{i\ell}Z_{j\ell}$ introduced by other edges. We also showed that fragile edges give rise to an energy-entropy trade-off phenomenon in $\RGG$ (see \cref{rmk:energyentropytradeoff}).
We anticipate future applications of the fragile edges approach.

One future direction is tightening our bounds on the Fourier coefficients. In particular, proving a bound based on $\SOEI$ as in \cref{eq:SOEI} for all densities is appealing as it will extend \cref{thm:secondeigenvalue} to all densities. Can one improve further or is 
\cref{eq:SOEI} tight (up to lower-order terms)?

We used our bounds in several contexts related to low-degree hardness. The information-theoretic counterparts of many of these questions remain open. Is it possible to prove such information theoretic convergence using $\chi^2$-like arguments based on squares of Fourier coefficients? A simple calculation shows that bounds scaling as $p^{|E(H)|}d^{-\theta |V(H)|}$ for constant $\theta$ (of which form \cref{thm:oeiboundonfourier,eq:SOEI} both are) are insufficient to show $\chi^2(\RGGsphere\|\ergraph)= o_n(1)$ as there are $\exp(\omega(|V(H)|(\log d)))$ copies of $H$ in $K_n$ when $|V(H)| = \omega(\log^2n).$
Nevertheless, one could potentially use a tensorization argument first to reduce the computation to distributions over a smaller number of edges, for example as in \cite{Liu2021APV,Liu2022STOC}.

Finally, we separated sparse Gaussian and spherical random geometric graphs. It seems worthwhile to analyse this discrepancy further. First, it suggests that different methods are needed for studying their information-theoretic convergence to \ERspace in the regime $1/n\ll p\ll 1/2.$
Second, to what extent do sparse Gaussian random geometric graphs enjoy properties of their spherical counterparts such as high-dimensional expansion \cite{Liu22Expander}?

\section*{Acknowledgements}
We thank Chenghao Guo for insightful discussions at the initial stages of this project and Dheeraj Nagaraj for many conversations on random geometric graphs over the years. We are also grateful to three anonymous reviewers for the feedback and suggestions on the exposition.

\bibliography{ref}
\bibliographystyle{alpha}
\appendix

\section{Omitted Details from \texorpdfstring{\cref{section:gaussiantoER}}{Gaussian Theorems}}
\label{appendix:gaussiantoERcalc}
\subsection{Variance of The Signed Wedge Count in The Unmasked Case}
\label{appendix:varianceunmaskedwedge}
Going through the same steps as in the \ERspace case, one can show that 
\begin{equation}
    \var_{\bfG\sim \RGG}[\signedweight_{\wedge(1,2,3)}(\bfG)] = (p-p^2)^2 + (1-2p)^2\E[(G_{12} - p)(G_{13} - p)]\le 
    p^2 + \frac{p^2(\log d)^4}{d}\le 2p^2
\end{equation}
for large enough $d$ by \cref{prop:graphswithleaves}. So far the contribution to the variance is $O(n^{3}p^2).$ Next, we analyze the covariances based on the overlap pattern.

\paragraph{Case 2.1) Overlap of zero vertices.} That is, $\wedge(1,2,3)$ and $\wedge(4,5,6).$ Those are clearly independent as they are determined by a disjoint set of latent vectors, so the covariance is $0.$

\paragraph{Case 2.2) Overlap of one vertex.} There are several possible patterns: $\wedge(1,2,3)$ and $\wedge(1,4,5);$ $\wedge(1,2,3)$ and $\wedge(4,2,5);$ $\wedge(1,2,3)$ and $\wedge(4,1,5).$ Importantly, in all three cases, when we take the union of the two wedges we get a tree $T$ with $4$ edges. Thus, 
\begin{align*}
&\cov[\SW^p_{\wedge(i,j,k)}, \SW^p_{\wedge(i',j',k')}]\\
& = 
\E_{G\sim \RGG}[\SW^p_T(\bfG)] - 
\E_{G\sim\RGG}[\SW^p_{\wedge(1,2,3)}(\bfG)]^2\\
& \le
\E_{G\sim \RGG}[\SW^p_T(\bfG)] \le 
\frac{p^4}{d^2}\times (\log d)^8,
\end{align*}
where we used \cref{prop:graphswithleaves} for a tree on 4 edges. As there are $O(n^5)$ ways to choose the tree $T$ as it has 5 vertices, the contribution to the variance is of order $\tilde{O}({n^5p^4}/{d^2}).$

\paragraph{Case 2.3) Overlap of two vertices.} We analyze several different patterns separately:

\paragraph{Case 2.3.1)} $\wedge(1,2,3)$ and $\wedge(1,2,4).$ Then, 
\begin{equation}
\begin{split}
    &\E[\SW^p_{\wedge(1,2,3)}(\bfG)\SW^p_{\wedge(1,2,4)}(\bfG)]\\
    & = \E[(G_{12}-p)^2(G_{13}-p)(G_{14-p})]\\
    & = (1-2p)\E[(G_{12}-p)(G_{13}-p)(G_{14}-p)] + 
    (p-p^2)\E[(G_{13}-p)(G_{14}-p)]\\
    & = \tilde{O}(p^3/d^{3/2} + p^3/d) = {O}((\log d)^6p^3/d),
    \end{split}
\end{equation}
where we used \cref{prop:graphswithleaves} for the trees defined by edges $(12),(13)$ and $(12),(13),(14).$ Thus, the contribution is $O((\log d)^6 n^4p^3/d).$

\paragraph{Case 2.3.2)} $\wedge(1,2,3)$ and $\wedge(2,1,4).$ Then, 
\begin{equation}
\begin{split}
    &\E[\SW^p_{\wedge(1,2,3)}(\bfG)\SW^p_{\wedge(2,1,4)}(\bfG)]\\
    & = \E[(G_{12}-p)^2(G_{13}-p)(G_{24}-p)]\\
    & = (1-2p)\E[(G_{12}-p)(G_{13}-p)(G_{24}-p)] + 
    (p-p^2)\E[(G_{13}-p)(G_{24-p})]\\
    & = O((\log d)^6(p^3/d^{3/2} + p^3/d))= O((\log d)^6p^3/d),
\end{split}
\end{equation}
where we used \cref{prop:graphswithleaves} for the trees defined by edges $(12),(13)$ and $(12),(13),(24).$ Thus, the contribution is $O((\log d)^6n^4p^3/d).$

\paragraph{Case 2.3.3)} $\wedge(1,2,3)$ and $\wedge(4,2,3).$ Then, 
\begin{equation}
\begin{split}
    &\E[\SW^p_{\wedge(1,2,3)}(\bfG)\SW^p_{\wedge(4,2,3)}(\bfG)]\\
    & = \E[(G_{12}-p)(G_{13}-p)(G_{24}-p)(G_{34} - p)]\\
    & = O((\log d)^6(p^3/d^{3/2} + p^4/d^2))= O((\log d)^6p^4/d^2),
\end{split}
\end{equation}
where we used \cref{thm:oeiboundonfourier} and \cref{prop:boundsonOEI}  for the 4-cycle graph. The contribution is $\tilde{O}(n^4p^4/d^2).$

\paragraph{Case 2.4) Overlap of 3 vertices.} There are two patterns - $\wedge(1,2,3)$ and $\wedge(1,2,3),$ which is the variance case and $\wedge(1,2,3)$ and $\wedge(2,1,3).$ 
 In the latter case, 
 \begin{equation}
\begin{split}
    &\E[\SW^p_{\wedge(1,2,3)}(\bfG)\SW^p_{\wedge(2,1,3)}(\bfG)]\\
    & = \E[(G_{12}-p)^2(G_{13}-p)(G_{23} - p)]\\
    & =
    (1-2p)\E[(G_{12}-p)(G_{13}-p)(G_{23} - p)]+
    (p-p^2)
    \E[(G_{13}-p)(G_{23} - p)]\\
    & = O((\log d)^4(p^3/d^{1/2} + p^3/d))= O((\log d)^4p^3/d^{1/2}),
\end{split}
\end{equation}
by \cref{prop:graphswithleaves} for the triangle $(12),(23),(13)$ and the wedge $(13),(23).$ Thus, the contribution is $O((\log d)^4n^3p^3/d^{1/2}).$

Since $\E_{G\sim \RGG}[\CS^p_\wedge(\bfG)] -
\E_{H\sim \ergraph}[\CS^p_\wedge(H)] = \tilde{\Omega}(n^3p^2/d),$ for the signed wedge count test to distinguish the two models with high probability, it is sufficient that
$$
n^6p^4/((\log d)^8d^2)=\omega\Big( (\log d)^8\times \Big(
n^3p^2 + n^5p^4/d^2 + n^4p^3/d + n^4p^4/d^2
\Big)\Big).
$$
We analyze the inequalities separately:
\begin{enumerate}
    \item $n^6p^4/d^2= \omega\Big( (\log d)^{16}n^3p^2\Big).$ This holds if and only if $d=o(n^{3/2}p(\log d)^{-8}).$
    \item $n^6p^4/d^2=\omega\Big((\log d)^{16}n^5p^4/d^2\Big).$ 
    This holds for all $d,1/p$ under \eqref{eq:assumption}.
    \item $n^6p^4/d^2=\omega\Big((\log d)^{16}Cn^4p^3/d\Big).$ This holds if and only if $d=o(n^{2}p(\log d)^{-16}).$
    \item $n^6p^4/d^2=\omega\Big((\log d)^{16}n^4p^4/d^2\Big).$ This holds for all $d,1/p$ under \eqref{eq:assumption}.
\end{enumerate}
Altogether, whenever $d=o(n^{3/2}p(\log d)^{-8}),$ the signed-wedge test distinguishes the two graph models with high probability.\hfill\qedhere

\subsection{Variance of The Signed Triangle Count in The Unmasked Case}
\label{appendix:varianceunmaskedtriangle}

\paragraph{Case 2.1) Overlap of zero vertices.} That is $\triangle(1,2,3)$ and $\triangle(2,3,4)$ are clearly independent.

\paragraph{Case 2.2)  Overlap of one vertex.} $\triangle(1,2,3)$ and $\triangle(1,4,5).$ Then, 
\begin{equation}
\begin{split}
    &\E[\SW_{\triangle(1,2,3)}(\bfG)\SW_{\triangle(1,4,5)}(\bfG)]=
    \E[\SW_H(\bfG)]=O(p^6(\log d)^4/d),
    \end{split}
\end{equation}
where $H$ is the graph on vertex set $[5]$ with edges $(12),(13),(23), (14),(15),(45).$ We used \cref{thm:oeiboundonfourier} and \cref{prop:boundsonOEI}. The contribution is $O(n^5p^6(\log d)^4/d).$ Indeed, $\OEI(H)\ge 2$ as no edge is adjacent to all other edges.

\paragraph{Case 2.2)  Overlap of two vertices.} 
$\triangle(1,2,3)$ and $\triangle(1,2,4).$ Then, 

\begin{equation}
\begin{split}
    &\E[\SW_{\triangle(1,2,3)}(\bfG)\SW_{\triangle(1,2,4)}(\bfG)]\\
    &=
    \E[(G_{12}-p)^2(G_{13}-p)(G_{23}-p)(G_{14}-p)(G_{24}-p)]\\
    & =
    (1-2p)\E[(G_{12}-p)(G_{13}-p)(G_{23}-p)(G_{14}-p)(G_{24}-p)]+\\
    & +\quad\quad\quad(p-p^2)
    \E[(G_{13}-p)(G_{23}-p)(G_{14}-p)(G_{24}-p)]\\
    & = o(p^5(\log d)^4/d^2),
    \end{split}
\end{equation}
where we used \cref{thm:oeiboundonfourier} and \cref{prop:boundsonOEI} on the 4-cycle and the graph with $4$ vertices and 5 edges. The contribution is $O(n^4p^5(\log d)^4/d).$

\paragraph{Case 2.3)  Overlap of three vertices.} 
$\triangle(1,2,3)$ and $\triangle(1,2,3).$ Then, 

\begin{equation}
\begin{split}
    &\E[\SW_{\triangle(1,2,3)}(\bfG)\SW_{\triangle(1,2,3)}(\bfG)]\\
    &=
    \E[(G_{12}-p)^2(G_{13}-p)^2(G_{23}-p)^2]\\
    & =
    (1-2p)^3\E[(G_{12}-p)(G_{13}-p)(G_{23}-p)]+3(1-2p)(p-p^2)\E[(G_{12}-p)(G_{13}-p)]\\
    &\quad\quad\quad+
    3(1-2p)(p-p^2)\E[(G_{12}-p)]+ 
    (p-p^2)^3
    \\
    & = O(p^3),
    \end{split}
\end{equation}
where we used \cref{thm:oeiboundonfourier} and \cref{prop:boundsonOEI} on the triangle and \cref{prop:graphswithleaves} for the wedge. The total contribution is $O(n^3p^3).$

Finally, in light of \cref{def:successofLDP}, we have to show 
$$
n^6p^6(\log d)^{-2}/d=\omega\Big(
n^5p^6(\log d)^4/d + 
n^4p^5(\log d)^4/d + 
n^3p^3
\Big)
$$
This clearly holds whenever $d\le (n^3p^3)(\log d)^{-2}.$

\subsection{Variance of The Signed Wedge Count in the Masked Case}
\label{appendix:variancemaskedwedge}

\paragraph{Case1) Overlap of 0 vertices.} As usual, the correlation is 0.

\paragraph{Case 2) Overlap of 1 vertex.} $\wedge(1,2,3)$ and $\wedge(1,4,5).$ Let $H$ be the star with 4 leaves. Then, 
$$
\expect[\signedweight_{\wedge(1,2,3)}(\bfG)\signedweight_{\wedge(1,4,5)}(\bfG)]= 
\E[\signedweight_{H}(\bfG)]= O\Big( p^4(\log d)^8/d^2\Big)
$$
by\cref{prop:graphswithleaves}.
Thus, the total contribution is $O(AN^4p^4(\log d)^8/d^2).$

\paragraph{Case 3) Overlap of 2 vertices.} $\wedge(1,2,3)$ and $\wedge(1,2,4).$ Then, 
\begin{align*}
& \expect[\signedweight_{\wedge(1,2,3)}(\bfG)\signedweight_{\wedge(1,2,4)}(\bfG)]\\
& = \expect[(G_{12}-p)^2(G_{13}-p)(G_{14}-p)]\\
& = (1-2p)\expect[(G_{12}-p)(G_{13}-p)(G_{14}-p)]+
(p-p^2)\expect[(G_{13}-p)(G_{14}-p)]
= O\Big( p^3(\log d)^4/d\Big),
\end{align*}
using \cref{prop:graphswithleaves} on the wedge and 3-star.
Thus, the total contribution is $O(AN^3p^3(\log d)^4/d).$

\paragraph{Case 4) Overlap of 3 vertices.} $\wedge(1,2,3)$ and $\wedge(1,2,3).$ Then, 
\begin{align*}
& \expect[\signedweight_{\wedge(1,2,3)}(\bfG)\signedweight_{\wedge(1,2,3)}(\bfG)]\\
& = \expect[(G_{12}-p)^2(G_{13}-p)^2]\\
& = (1-2p)^2\expect[(G_{12}-p)(G_{13}-p)]+
2(1-2p)(p-p^2)\expect[(G_{13}-p)]+p^2
= O( p^2),
\end{align*}
using \cref{prop:graphswithleaves} on the wedge.
Thus, the total contribution is $O(AN^2p^2).$

Thus, in order to distinguish between $\RGGgauss$ and $\ergraph$ with mask $\mathcal{M},$ we need the condition 
$$
A^2N^4p^4(\log d)^{-8}/d^2= 
\omega\Big(
AN^4p^4(\log d)^8/d^2+ 
AN^3p^3(\log n)^4/d+ 
AN^2p^2\Big).
$$
One can check that this holds when $A = (\log d)^{17},d\le Np.$
\subsection{Lower Bound on Expected Signed Wedges in Gaussian RGG}
\label{sec:LowerBoundsGaussianFourier}
\begin{proof}[Proof of \cref{lem:wedgelowerbound}] To prove this statement, we decompose each Gaussian vector $Z_i$ as $V_i\times\|Z_i\|_2,$ where $V_i\sim \unif(\dsphere)$ and $d\|Z_i\|^2\sim \chi^2(d).$ Importantly, 
$V_i$ and $\|Z_i\|_2$ are independent. In this decomposition, $\langle V_1, V_3\rangle$ and $\langle V_1,V_2\rangle$ are independent, unlike $\langle Z_1,Z_2\rangle.$ We use the independence in the same way as in \cref{prop:leavesinspherical}.

\begin{equation}
\label{eq:wedgeexpectation}
    \begin{split}
& \E\Big[
(G_{21}- p)(G_{31}-p)
\Big]\\
& = 
\E\Big[\Big(
\ind\Big[ \langle V_1, V_2\rangle \ge \frac{\rho^p_d}{\|Z_1\|_2\times \|Z_2\|_2}\Big]-p\Big)\times 
\Big(
\ind\Big[ \langle V_1, V_3\rangle \ge \frac{\rho^p_d}{\|Z_1\|_2\times \|Z_3\|_2}\Big]-p\Big)
\Big]\\
& = 
\E\Big[\E\Big[
\Big(
\ind\Big[ \langle V_1, V_2\rangle \ge \frac{\rho^p_d}{\|Z_1\|_2\times \|Z_2\|_2}\Big]-p\Big)\times 
\Big(
\ind\Big[ \langle V_1, V_3\rangle \ge \frac{\rho^p_d}{\|Z_1\|_2\times \|Z_3\|_2}\Big]-p\Big)\; \Big|\; \|Z_1\|_2
\Big]\Big]
\\
& = 
\E\Big[\E\Big[\Big(\ind\Big[ \langle V_1, V_2\rangle \ge \frac{\rho^p_d}{\|Z_1\|_2\times \|Z_2\|_2}\Big]-p\Big)\Big|\|Z_1\|\Big]^2\Big].
    \end{split}
\end{equation}
For brevity denote $h(x):=\E\Big[\Big(\ind\Big[ \langle V_1, V_2\rangle \ge \frac{\rho^p_d}{x\times \|Z_2\|_2}\Big]-p\Big)\Big].$
We know that $\E[h(\|Z_1\|)] = 0$ by definition of $\rho^p_d.$ Also, we can rewrite \cref{eq:wedgeexpectation} as
$
\E[h(\|Z_1\|)^2].
$

Now, by $\chi$-square concentration, the median of $\|Z_1\|$ is $1 +\frac{m}{\sqrt{d}}$ where $|m|\le 20.$ Indeed, this holds by setting $y =8/\sqrt{d}$ in \cref{prop:chisquaredconcnetrations}. 

Now, using the chi-squared density, one can easily prove that $\Pr[\|Z_1\|\ge 1 + \frac{(m+1)}{\sqrt{d}}]\ge C$ for some absolute constant $C.$ On the one hand,  $\|Z_1\|_2\in [1-10/\sqrt{d},1+\sqrt{10}/d]$ with constant probability by \cref{prop:chisquaredconcnetrations}. On the other hand, the density of $\|Z_1\|_2$ on  $[1 +\frac{21}{\sqrt{d}}, \frac{22}{\sqrt{d}}]$ is not that much smaller than the density on $[(1-10/\sqrt{d}),(1+\sqrt{10}/d)],$ hence also a constant fraction of the mass of $\|Z_1\|_2$ is in $[1 +\frac{21}{\sqrt{d}}, \frac{22}{\sqrt{d}}].$ Formally, the density $f_d(v)$ of $\|Z_1\|^2$ is given by 
$$
f_d(v) = \frac{1}{2^{d/2}\Gamma(d/2)}(dv)^{d/2 -1} e^{-(dv)/2}.
$$
However, whenever $\|Z_1\|_2\in [1-10/\sqrt{d},1+\sqrt{10}/d]$ holds, $\|Z_1\|^2_2\in [1-21/\sqrt{d},1+\sqrt{21}/d]$ holds and whenever 
$\|Z_1\|_2\in[1 +\frac{21}{\sqrt{d}}, \frac{22}{\sqrt{d}}]$ holds, 
$\|Z_1\|_2\in[1 +\frac{40}{\sqrt{d}}, \frac{44}{\sqrt{d}}]$ holds. However, for any $a\in [-21,21], b\in [40, 44],$ one has
\begin{equation}
    \begin{split}
        & \frac{f_d(1 + \frac{b}{d})}{f_d(1 - \frac{a}{d})}
        = 
        \Big(\frac{1 +\frac{b}{\sqrt{d}}}{1 - \frac{a}{\sqrt{d}}}\Big)^{d/2-1}
        e^{- (b-a)\sqrt{d}/2}\\
        & \ge 
        \frac{1}{2}
        \Big(\Big(\frac{1 +\frac{b}{\sqrt{d}}}{1 - \frac{a}{\sqrt{d}}}\Big)^{\sqrt{d}}e^{-(b-a)}\Big)^{\sqrt{d}/2}\ge 
        \frac{1}{2}
        \Big(\Big(\frac{1 +\frac{b}{\sqrt{d}}}{1 - \frac{a}{\sqrt{d}}}\Big)^{\sqrt{d}}\Big(1 - \frac{(b-a)}{\sqrt{d}-(b-a)}\Big)^{\sqrt{d}}\Big)^{\sqrt{d}/2}\\
        & =
        \frac{1}{2}
        \Big(\Big(\frac{d +(2a-b)\sqrt{d} - 2b(b-a)}{d  +(2a-b)\sqrt{d} + a(b-a)}\Big)\Big)^{d/2}\ge  
        \exp(- 10(2b-a)(b-a))= \Omega(1),
    \end{split}
\end{equation}
where in the last line we just used $(1-x/n)^n\ge \exp(-10x)$ for all large enough $x.$ This is enough as $a,b$ are both at most $44$ in absolute value.

Going back, this means that 
\begin{equation}
\label{eq:isolatingonlyhigherhalf}
    \begin{split}
        & \E\Big[
(G_{21}- p)(G_{31}-p)
\Big]  = 
\E[h(\|Z_1\|)^2]\\
& \ge \E\Big[h(\|Z_1\|)^2\Big|\|Z_1\|_2\ge 1 + \frac{(m+1)}{\sqrt{d}}\Big]\times \Pr\Big[\|Z_1\|_2\ge 1 + \frac{(m+1)}{\sqrt{d}}\Big]\\
& \ge C\times \E\Big[h(\|Z_1\|)\Big|\|Z_1\|_2\ge 1 + \frac{(m+1)}{\sqrt{d}}\Big]^2\\
    \end{split}
\end{equation}
Now, observe that since $h$ is clearly increasing, one has
\begin{equation}
\begin{split}
& \E\Big[h(\|Z_1\|)\Big|\|Z_1\|_2\ge 1 + \frac{(m+1)}{\sqrt{d}}\Big] \ge 
\E\Big[h(\|Z_1\|)\Big|\|Z_1\|_2\ge 1 + \frac{m}{\sqrt{d}}\Big],\\
& \E\Big[h(\|Z_1\|)\Big|\|Z_1\|_2\ge 1 + \frac{(m+1)}{\sqrt{d}}\Big] \ge 
\E\Big[h(\|Z_1\| + \frac{1}{\sqrt{d}})\Big|\|Z_1\|_2< 1 + \frac{m}{\sqrt{d}}\Big].
\end{split}
\end{equation}

Using the concentration of $\|Z_2\|$ and $\|Z_1\|,$ we can show that 
\begin{equation}
\label{eq:comparisontolowerhalf}
\E\Big[h(\|Z_1\| + \frac{1}{\sqrt{d}})\Big|\|Z_1\|_2< 1 + \frac{m}{\sqrt{d}}\Big]\ge 
\E\Big[h(\|Z_1\|)\Big|\|Z_1\|_2< 1 + \frac{m}{\sqrt{d}}\Big] + \frac{p \times (\log d)^{-2})}{\sqrt{d}}.
\end{equation}

Indeed, this is true for the following reason. With probability $1 - \exp(- (\log d)^2),$ both of $\|Z\|_1$ and $\|Z_2\|_2$ are of order $1 + O(\log d/\sqrt{d}),$ even conditioned on $\|Z\|_1<1 + m/\sqrt{d}$ since this latter event has probability $1/2.$
However, whenever this occurs, one has 
\begin{align*}
& \frac{\rho^p_d}{\|Z_1\|\times \|Z_2\|} - 
\frac{\rho^p_d}{(\|Z_1\| + \frac{1}{\sqrt{d}})\times \|Z_2\|}
 =\frac{\rho^p_d}{\|Z_1\|\times \|Z_2\|\times ((\|Z_1\| + \frac{1}{\sqrt{d}}))} \frac{1}{\sqrt{d}} \ge 
\frac{K\sqrt{\log d}}{{d}}
\end{align*}
for some absolute constant $K.$ We used $p\le 1/2-\varepsilon$ so that $\rho^p_d\gtrsim 1/\sqrt{d}.$
This is enough to show that  
$$
\Pr\Big[\langle V_1,V_2\rangle \ge  \frac{\rho^p_d}{(\|Z_1\| + \frac{1}{\sqrt{d}})\times \|Z_2\|}\Big] - 
\Pr\Big[\langle V_1,V_2\rangle \ge  \frac{\rho^p_d}{\|Z_1\|\times \|Z_2\|}\Big]\ge 
\frac{p(\log d)^{-2}}{\sqrt{d}}
$$
by \cref{cor:innerproductsinintervals}. Finally, \cref{eq:comparisontolowerhalf} implies the following bound.
\begin{equation}
\begin{split}
& \E\Big[h(\|Z_1\|)\Big|\|Z_1\|_2\ge 1 + \frac{(m+1)}{\sqrt{d}}\Big] \\
& \ge \frac{1}{2}\Big(
\E\Big[h(\|Z_1\|)\Big|\|Z_1\|_2\ge 1 + \frac{m}{\sqrt{d}}\Big] + 
\E\Big[h(\|Z_1\|)\Big|\|Z_1\|_2< 1 + \frac{m}{\sqrt{d}}\Big] + \frac{p \times (\log d)^{-2}}{\sqrt{d}}
\Big)\\
& = 
\E\Big[h(\|Z_1\|)\Big] + \frac{p \times (\log d)^{-2}}{\sqrt{d}}= 
\frac{p \times (\log d)^{-2}}{\sqrt{d}}.\\
\end{split}
\end{equation}
Going back to \cref{eq:isolatingonlyhigherhalf}, this gives the desired result.
\end{proof}
\subsection{Lower Bound on Expected Signed Triangles in Gaussian RGG}
\label{sec:lowertriangles}
\begin{proof}[Proof of \cref{thm:signedtrianglegauss}] First, note that 
\begin{equation*}
\begin{split}
    &\expect_{\bfG\sim\RGGgauss}\Big[(G_{21}-p)(G_{32}-p)(G_{31}-p)\Big]\\
    &\expect[G_{21}G_{32}G_{31}]-p^3\\
    &\quad\quad\quad-
    \Big(
    p\expect[(G_{21}- p)(G_{31} - p)] + 
    p\expect[(G_{21}- p)(G_{32} - p)] + 
    p\expect[(G_{31}- p)(G_{31} - p)]
    \Big)\\
    & \quad\quad\quad
    -\Big(
        p^2\expect[(G_{21}- p)]+
        p^2\expect[(G_{31}- p)]+
        p^2\expect[(G_{32}- p)]
    \Big).
\end{split}
\end{equation*}     
By \cref{lemma:treecounts}, the expression in the first brackets is on the order of $\tilde{O}(p^3/d).$ the expression in the second brackets it is trivially zero. Thus, we simply need to show that
$$
\expect[G_{21}G_{32}G_{31}]-p^3 =\Omega(p^3(\log d)^{-1}/\sqrt{d}).
$$
Now, observe that
\begin{align*}
    &\expect[G_{21}G_{32}G_{31}]\\
    &= 
    \prob[\langle Z_{3},Z_2\rangle \ge \rho^p_d,\langle Z_1,Z_2\rangle\ge \rho^p_d, \langle Z_1, Z_3\rangle\ge \rho^p_d]\\
    & \ge 
    \prob[\langle Z_{3},Z_2\rangle \ge \rho^p_d,\langle Z_1,Z_2\rangle\ge \rho^{p/2}_d, \langle Z_1, Z_3\rangle\ge \rho^{p/2}_d]\\
    & = 
    \prob[\langle Z_{3},Z_2\rangle \ge \rho^p_d|\langle Z_1,Z_2\rangle\ge \rho^{p/2}_d, \langle Z_1, Z_3\rangle\ge \rho^{p/2}_d]\times \prob[\langle Z_1,Z_2\rangle\ge \rho^{p/2}_d, \langle Z_1, Z_3\rangle\ge \rho^{p/2}_d].
\end{align*}
\cref{lem:wedgelowerbound} shows that $G_{21},G_{31}$ are positively correlated, hence
$\prob[\langle Z_1,Z_2\rangle\ge \rho^{p/2}_d, \langle Z_1, Z_3\rangle\ge \rho^{p/2}_d]\ge p^2/4.$ Using the Bartlett decomposition, we are left to bound
$$
\prob[Z_{22}Z_{32}+Z_{21}Z_{31}\ge \rho^{p}_{d}|Z_{11}Z_{21}\ge \rho^{p/2}_d, Z_{11}Z_{31}\ge \rho^{p/2}_d].
$$
Now, we use that with probability at least $1 - \exp(-(\log d)^2),$ one has $Z_{11}\le 1+O(\frac{\log d}{d})\le 2.$ Thus, we have that
\begin{align*}
    &\prob[Z_{22}Z_{32}+Z_{21}Z_{31}\ge \rho^{p}_{d}|Z_{11}Z_{21}\ge \rho^{p/2}_d, Z_{11}Z_{31}\ge \rho^{p/2}_d]\\
    &\ge
    \prob[Z_{22}Z_{32}+Z_{21}Z_{31}\ge \rho^{p}_{d}|Z_{11}Z_{21}\ge \rho^{p/2}_d, Z_{11}Z_{31}\ge \rho^{p/2}_d,Z_{11}\le 2]- \exp(-(\log d)^2)\\
    & \ge 
    \prob[Z_{22}Z_{32}\ge\rho^{p}_d - \frac{(\rho^{p/2}_d)^2}{4}]- \exp(-(\log d)^2)\\
    & =
    \prob\Big[Z_{22}Z_{32}\ge\rho^{p}_d\Big]+ 
    \prob\Big[Z_{22}Z_{32}\in\Big[\rho^{p}_d - \frac{(\rho^{p/2}_d)^2}{4},\rho^{p}_d\Big]\Big]- \exp(-(\log d)^2).
\end{align*}
Finally, note that $\prob\Big[Z_{22}Z_{32}\ge\rho^{p}_d\Big]= 
\prob\Big[Z_{11}Z_{31}\ge\rho^{p}_d\Big] = p.
$ However, $\frac{(\rho^{p/2}_d)^2}{4}\ge \frac{(\log d)^{-1}}{d}$ (this is the reason we used $\rho^{p/2}_d.$ When $p= 1/2,$ one has $\rho^{1/2}_d=0$). The result follows from \cref{cor:innerproductsinintervals}.\end{proof}

\subsection{Upper Bound on Expected Signed Graphs with Leaves in Gaussian RGG}
\label{sec:fourierwithleaveslowerGaussian}
We begin by analyzing the simplest case of graphs with leaves - that of trees. 
\begin{lemma}[Signed Expectations of Trees]
\label{lemma:treecounts}
Suppose that $T$ is a tree with $t$ vertices $[t]$ such that $v = |V(T)|\le d(\log d)^{-4}.$ Let $\mathcal{E}$ be the event that $\Big|\|Z_i\|_2-1\Big|= \psi_i,$ where each $\psi_i$ is some constant satisfying $|\psi_i|\le \frac{A\log d }{\sqrt{d}}$ holds for all $i\in [t]$ for some $A$ such that $\sqrt{d}(\log d)^{-2}\ge A\ge C\sqrt{|V(T)|}$ for some absolute constant $C.$ Then,
\begin{align}
    & \Big|\E_{\bfG\sim\RGGgauss}\Big[\prod_{(ji)\in E(T)}(G_{ji}-p)\Big|\mathcal{E}\Big]\Big|\le \Big(\frac{p A(\log d)^2}{\sqrt{d}}\Big)^{|E(T)|},\\ 
    & \Big|\E_{\bfG\sim\RGGgauss}\Big[\prod_{(ji)\in E(T)}(G_{ji}-p)\Big]\Big|\le \Big(\frac{Cp\sqrt{|V(T)|} (\log d)^2}{\sqrt{d}}\Big)^{|E(T)|}.
\end{align}
\end{lemma}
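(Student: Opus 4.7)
My approach exploits two ingredients: the tree-factorization of spherical signed expectations (\cref{prop:leavesinspherical}) and the reduction of the Gaussian model to a spherical one via conditioning on the norms $\|Z_i\|_2$. The conditional bound follows from this reduction combined with the sharp interval estimate in \cref{cor:innerproductsinintervals}, and the unconditional bound comes from integrating the conditional bound against the chi-squared law of the norms using \cref{prop:chisquaredconcnetrations}.

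Conditionally on $\mathcal{E}$, I write $Z_i = (1+\psi_i) V_i$ with $V_i \iidsim \unif(\dsphere)$ independent of $\|Z_i\|_2$. Each edge indicator becomes $G_{ji} = \indicator[\langle V_i,V_j\rangle \ge \tau_{ji}]$ where $\tau_{ji} \coloneqq \rho^p_d/((1+\psi_i)(1+\psi_j))$. Since $T$ is a tree, I iteratively apply \cref{prop:leavesinspherical} with $f_{ji}(x) = \indicator[x\ge\tau_{ji}]-p$ (stripping off leaves one at a time) to get
\begin{equation*}
\E\Big[\prod_{(ji)\in E(T)}(G_{ji}-p)\,\Big|\,\mathcal{E}\Big] \;=\; \prod_{(ji)\in E(T)} \E_{V_i,V_j\iidsim\unif(\dsphere)}\bigl[\indicator[\langle V_i,V_j\rangle\ge \tau_{ji}] - p\bigr].
\end{equation*}
Each per-edge factor equals $\pm\Pr[\langle V_i,V_j\rangle \in I_{ji}]$, where $I_{ji}$ is the short interval between $\rho^p_d$ and $\tau_{ji}$. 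Using $\rho^p_d = O(\sqrt{\log d/d})$ from \cref{fact:GaussianFatcs}, the length of $I_{ji}$ is $O\bigl(\rho^p_d(|\psi_i|+|\psi_j|)\bigr) = O(A(\log d)^{3/2}/d)$, which is at most $(\log d)^{-1}/\sqrt{d}$ by the hypothesis $A \le \sqrt{d}(\log d)^{-2}$. Thus \cref{cor:innerproductsinintervals} applies and bounds each factor by $O\bigl(pA(\log d)^2/\sqrt{d}\bigr)$; taking the product over $|E(T)|$ edges yields the first inequality.

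For the unconditional bound, set $M \coloneqq \max_i|\psi_i|$ and integrate the conditional bound against the joint law of the $\psi_i$'s. On the ``typical'' event $\{M \le (\log d)^{-1}\}$, the first part applies with the effective parameter $A = \max\bigl(M\sqrt{d}/\log d,\, C\sqrt{|V(T)|}\bigr)$, yielding a conditional bound of the form $\bigl(C'p\max(M,\sqrt{|V(T)|/d})\,\sqrt{d}\log d / \sqrt{d}\bigr)^{|E(T)|}$ up to a $\log d$ factor. The chi-squared tail $\Pr[M>t] \le 2|V(T)|e^{-dt^2/16}$ from \cref{prop:chisquaredconcnetrations}, combined with a standard moment computation, gives $\E\bigl[\max(M,\sqrt{|V(T)|/d})^{|E(T)|}\bigr] = O\bigl(\sqrt{|V(T)|/d}\bigr)^{|E(T)|}$, producing the target bound. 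On the complementary event $\{M > (\log d)^{-1}\}$, the conditional expectation is trivially at most $1$, but its probability is at most $2|V(T)|e^{-d/(16(\log d)^2)}$, which under \eqref{eq:assumption} and $|V(T)| \le d(\log d)^{-4}$ is negligible against the target.

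\textbf{Main obstacle.} The delicate step is verifying that the bad-event contribution does not swamp the main estimate, particularly when $p$ approaches $n^{-1+\epsilon}$ and the target bound $\bigl(Cp\sqrt{|V(T)|}(\log d)^2/\sqrt{d}\bigr)^{|E(T)|}$ can be quite small. This is exactly where the constraint $|V(T)| \le d(\log d)^{-4}$ is invoked: the chi-squared tail delivers exponential decay in $d/(\log d)^2$, overpowering any polynomial-in-$n$ loss coming from small $p$. A minor technical point is that the effective parameter $A$ used in the integration must be truncated from below at $C\sqrt{|V(T)|}$ so that the hypothesis of the first part is satisfied; this merely absorbs into the absolute constant in the final estimate.
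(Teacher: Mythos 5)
Your proof is correct and follows essentially the same route as the paper: reduce to a spherical model by conditioning on the norms, factorize over the tree edges via \cref{prop:leavesinspherical}, and bound each per-edge factor via \cref{cor:innerproductsinintervals}. For the unconditional bound the paper simply conditions on the fixed good event $\{\big|\|Z_i\|_2-1\big|\le C\sqrt{|V(T)|}\,\log d/\sqrt d \text{ for all }i\}$ (i.e.\ taking $A$ at its lower limit), adds the exponentially small bad-event probability, whereas you carry out a finer moment integral over the random maximum $M$; the two treatments are equivalent in effect.
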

\begin{proof}
Suppose that $V(T) = \{1,2,\ldots, t\}.$
Let $Z_1, Z_2, \ldots, Z_{t}$ be the latent Gaussian vectors. Let $V_i = Z_i/\|Z_i\|_2.$ Note that $V_i$ and $\|Z_i\|_2$  are independent as in \cref{lem:wedgelowerbound}. Again, we write,

\begin{equation}
    \begin{split}
    \label{eq:splittingnormanddirection}
        & \E\Big[\prod_{(ji)\in E(T)}(G_{ji}-p)\Big|{\mathcal{E}}\Big]\\
        & = \E\Big[\prod_{(ji)\in E(T)}\Big(\ind\Big[ \langle V_i, V_j\rangle \ge \frac{\rho^p_d}{\|Z_i\|_2\times \|Z_j\|_2}\Big]-p\Big)\Big|{\|Z_i\|_2=\psi_i\; \forall i}\Big]\\
        & =
        \E\Big[\E\Big[\prod_{(ji)\in E(T)}\Big(\ind\Big[ \langle V_i, V_j\rangle \ge \frac{\rho^p_d}{\|Z_i\|_2\times \|Z_j\|_2}\Big]-p\Big)\Big|{\|Z_i\|_2=\psi_i\; \forall i}\Big]\Big].\\
    \end{split}
\end{equation}
Conditioned on $\{\|Z_\ell\|_2\}_{\ell = 1}^t,$ the values of $\ind\Big[ \langle V_i, V_j\rangle \ge \frac{\rho^p_d}{\|Z_i\|_2\times \|Z_j\|_2}\Big]$ are independent since $T$ is a tree by \cref{prop:leavesinspherical}. Thus, we can rewrite \cref{eq:splittingnormanddirection} as
\begin{equation}
    \begin{split}
        \E\Big[\prod_{(ji)\in E(T)}\E\Big[\Big(\ind\Big[ \langle V_i, V_j\rangle \ge \frac{\rho^p_d}{\|Z_i\|_2\times \|Z_j\|_2}\Big]-p\Big)\Big|\{\|Z_\ell\|_2\}_{\ell = 1}^t,\mathcal{E}\Big]\Big]
    \end{split}
\end{equation}
Note that under $\mathcal{E},$ we have that for each $(ji)\in E(T),$ one has
\begin{align*}
& \frac{\rho^p_d}{\|Z_i\|_2\times \|Z_j\|_2} = 
\rho^p_d + O\Big(\frac{(\log d)^{3/2}A}{d}\Big).
\end{align*}
It immediately follows from \cref{cor:innerproductsinintervals} that for each $(ji)$ one has
$$
\Big|
\E\Big[\Big(\ind\Big[ \langle V_i, V_j\rangle \ge \frac{\rho^p_d}{\|Z_i\|_2\times \|Z_j\|_2}\Big]-p\Big)\; \Big|\; \mathcal{E}\Big]\Big| = O\Big(\frac{p\times (\log d)^{3/2}A}{\sqrt{d}}\Big).
$$
The first inequality follows.

Now, by \cref{prop:chisquaredconcnetrations} and union bound, with probability $1 - |V(T)|\times \exp\Big(-(\log d)^{2}A^2\Big) \ge 1-\exp(-(\log d)^2|V(T)|/2)=1 - o\Big(\Big(p/\sqrt{d}\Big)^{|V(T)|}\Big)$ each $\|Z_i\|_2$ satisfies $\Big|\|Z_i\|_2 -1\Big| \le \frac{C\log d A^2}{\sqrt{d}}$ for some absolute constant $C.$ Let $\mathcal{F}$ be the event that each $\Big|\|Z_i\|_2 -1\Big| \le \frac{C\log d A^2}{\sqrt{d}}$ holds for all $i.$
Since $$\Big|\prod_{(ji)\in E(T)}\E\Big[\Big(\ind\Big[ \langle V_i, V_j\rangle \ge \frac{\rho^p_d}{\|Z_i\|_2\times \|Z_j\|_2}\Big]-p\Big)\Big|\{\|Z_\ell\|_2\}_{\ell = 1}^t\Big]\Big|\le 1$$ almost surely, we conclude that 
\begin{equation}
    \begin{split}
        & \Big|\E\Big[\prod_{(ji)\in E(T)}(G_{ji}-p)\Big]\Big|\\
        & 
        \le \Big|\E\Big[\prod_{(ji)\in E(T)}\E\Big[\Big(\ind\Big[ \langle V_i, V_j\rangle \ge \frac{\rho^p_d}{\|Z_i\|_2\times \|Z_j\|_2}\Big]-p\Big)\Big|\{\|Z_\ell\|_2\}_{\ell = 1}^t\Big]\; \Big|\; \mathcal{F}\Big]\Big| + 
        (1 - \Pr[\mathcal{F}])\\
        & 
        \le \Big|\E\Big[\prod_{(ji)\in E(T)}\E\Big[\Big(\ind\Big[ \langle V_i, V_j\rangle \ge \frac{\rho^p_d}{\|Z_i\|_2\times \|Z_j\|_2}\Big]-p\Big)\Big|\{\|Z_\ell\|_2\}_{\ell = 1}^t\Big]\; \Big|\; \mathcal{F}\Big]\Big|+o\Big(\Big(\frac{p}{\sqrt{d}}\Big)^{|V(T)|}\Big).
    \end{split}
\end{equation}
The second inequality follows.
\end{proof}

Now, we can easily combine the statement for trees \cref{lemma:treecounts} and \cref{thm:oeiboundonfourier} as follows. 

\begin{proof}[Proof of \cref{prop:graphswithleaves}] Let $A  = |V(H)|\log d.$ Let $\mathcal{F}$ be the event that each $\Big|\|Z_i\|_2- 1\Big|\le \frac{|V(H)|\log d}{\sqrt{d}}.$ Clearly, 
$\prob[\mathcal{F}]\ge 1 - \exp(-4|V(H)|^2\log d)= 1 - o((p/\sqrt{d})^{|E(H)|}).$ Let also $\TP(H)$ be composed of $k$ trees $T_1,T_2,\ldots, T_H.$ Then,
     \begin{align*}
         &\Big|
            \expect\Big[\prod_{(ji)\in E(H)}
            (G_{ji}-p)
            \Big]
         \Big|\\
         &= \Big|
            \expect\Big[\prod_{i=1}^k \SW_{T_i}(\bfG)\times\SW_{\NLP(H)}(\bfG)
            \Big]
         \Big|\\
         & \le
         \Big|
         \Big[
            \prod_{i=1}^k \SW_{T_i}(\bfG)\times\SW_{\NLP(H)}(\bfG)
            \Big|\mathcal{F}\Big]
         \Big|+ (1- \prob[\mathcal{F}])\\
         & \le
         \Big|\expect\Big[\expect\Big[\
            \prod_{i=1}^k \SW_{T_i}(\bfG)\times\SW_{\NLP(H)}(\bfG)
            \Big|\{\|Z_i\|_2\}_{i= 1}^{|V(H)|}\Big]\Big|\mathcal{F}\Big]
         \Big| + o((p/\sqrt{d})^{|E(H)|}).
     \end{align*}
     
Exactly as in the proof of \cref{lemma:treecounts}, conditioned on the latent vectors, the expectation of each tree part is independent of the rest conditioned on the norms. Thus,  
\begin{align*}
    & \Big|\expect\Big[\expect\Big[\
            \prod_{i=1}^k \SW_{T_i}(\bfG)\times\SW_{\NLP(H)}(\bfG)
            \Big|\{\|Z_i\|_2\}_{i= 1}^{|V(H)|}\Big]\Big|\mathcal{F}\Big]
         \Big|\\
    & = \Big|\expect\Big[\prod_{i=1}^k\expect\Big[\
             \SW_{T_i}(\bfG)\Big|\{\|Z_i\|_2\}_{i= 1}^{|V(H)|}\Big]\times\expect\Big[\SW_{\NLP(H)}(\bfG)
            \Big|\{\|Z_i\|_2\}_{i= 1}^{|V(H)|}\Big]\Big|\mathcal{F}\Big]
         \Big|\\
    & \le 
    \expect\Big[\prod_{i=1}^k\Big|\expect\Big[\
             \SW_{T_i}(\bfG)\Big|\{\|Z_i\|_2\}_{i= 1}^{|V(H)|}\Big]\Big|\times\Big|\expect\Big[\SW_{\NLP(H)}(\bfG)
            \Big|\{\|Z_i\|_2\}_{i= 1}^{|V(H)|}\Big|\Big]\Big|\mathcal{F}\Big]
         \Big|.
\end{align*}
Now, we are essentially done. We use \cref{lemma:treecounts} on each tree to get the
$\le (8p)^{|E(\TP(H))|}\times 
\Big(\frac{C(km)^3\log d}{\sqrt{d}}\Big)^{|E(\TP(H))| }$ factor. On the other hand, 
$\SW_{\NLP(H)}(\bfG)
\Big|_{\{\|Z_i\|_2\}_{i= 1}^{|V(H)|}}$is a spherical random geometric graph with inhomogeneous connections 
$\indicator\Big[\langle V_i,V_j\rangle \ge \frac{\rho^p_d}{\|Z_i\|_2\times \|Z_j\|_2}\Big]$ at each edge $(ji).$ However, under $\mathcal{F},$ one has 
$\frac{\rho^p_d}{\|Z_i\|_2\times \|Z_j\|_2} = \tau^p_d + O(\frac{k(\log d)^2}{d}).$ As 
$\frac{k(\log d)^2}{d}\le \frac{C{km(\log d)^2}}{d}$
, one can argue the same way as in \cref{sec:mainclaim} (by potentially enlarging the fragile interval by a benign $(\log d)$ factor on each side) that the signed expectation of this inhomogeneous spherical random geometric graphs is bounded by $(8p)^{|E(\NLP(H))|}\Big(\frac{C(km)(\log d)^2}{\sqrt{d}}\Big)^{\OEI(\NLP(H))}.$ The conclusion follows.
\end{proof}

\section{Omitted Details from \texorpdfstring{\cref{section:RGGtoPC}}{PCol Theorem}}
\label{appendix:rggtopccalc}
\subsection{Proof of \texorpdfstring{\cref{prop:pccolors}}{Chi squared for PCol}}
\label{appendix:signedweightsinpcol}
\begin{proof} Let $V(H)=[v]$ and let the random labels of vertices are $x_1,x_2, \ldots,x_v.$

\paragraph{1)} Suppose that, WLOG, $1$ is a leaf and is adjacent to 2. Then,
\begin{align*}
& \expect[\SW^{1/2}_H(\bfG)] = 
\expect\Big[
\expect\Big[
\prod_{(ji)\in E(H)}(G_{ji} - 1/2)
\Big|
x_2,x_3,\ldots,x_v
\Big]
\Big]\\
& = 
\expect\Big[
\prod_{(ji)\in E(H)\backslash\{(21)\}}(G_{ji} - 1/2)
\expect\Big[
(G_{21} - 1/2)
\Big|
x_2,x_3,\ldots,x_v
\Big]
\Big].
\end{align*}
The last expectation equals zero. Indeed, $$\expect\Big[
(G_{21} - 1/2)
\Big|
x_2,x_3,\ldots,x_v
\Big] = \frac{1}{2}\prob[x_1= x_2] - \frac{1}{2(q-1)}\prob[x_1\neq x_2] = \frac{1}{2q}- \frac{1}{2(q-1)}\times\frac{q-1}{q}=0.$$

\paragraph{2)} We compute $\expect[(G_{21}-1/2)(G_{31} - 1/2)(G_{32}-1/2)]$ based on the number of equalities among labels.
\begin{enumerate}
    \item All three labels are equal with probability $\frac{1}{q^2}.$ In that case, $$\expect[(G_{21}-1/2)(G_{31} - 1/2)(G_{32}-1/2)|x_1= x_2 = x_3]= \frac{1}{8}.$$
    \item Two of the labels are equal and the third is different with probability $\frac{3(q-1)}{q^2}.$
    In that case, $$\expect[(G_{21}-1/2)(G_{31} - 1/2)(G_{32}-1/2)|x_1= x_2 \neq x_3]= \frac{1}{8(q-1)^2}.$$
    \item All three labels are distinct with probability 
    $\frac{(q-1)(q-2)}{q^2}.$ In that case, $$\expect[(G_{21}-1/2)(G_{31} - 1/2)(G_{32}-1/2)|x_1\neq x_2 \neq x_3]= -\frac{1}{8(q-1)^3}.$$
\end{enumerate}
\paragraph{3)} Clearly, it is enough to prove the claim for $H$ connected. Suppose that the labels partition $V(H)$ into sets $C_1,C_2,\ldots,C_m,$ determined by label equalities, i.e. $a,b$ are in the same $C_i$ if and only if $x_a=x_b.$ Clearly, there are at most $|V(H)|^{|V(H)|-1}$ such partitions.

Furthermore, each partition $C_1,C_2, \ldots,C_m$ occurs with probability at most $\prod_{i= 1}^m(1/q)^{|C_i|- 1} = (1/q)^{|V(H)|-m}$ as in each $C_i,$ the probability that all vertices get the same label as the lexicographically first one is $(1/q)^{|C_i|- 1}.$ Finally, suppose that there are $k$ edges with two endpoints in different sets $C_i,C_j.$ Then,
$$
\expect\Big[\prod_{(ji)\in E(H)}(G_{ji}- 1/2)|C_1,C_2,\ldots,C_m\Big] = 
\Big(\frac{1}{2}\Big)^{|E(H)| - k}\times 
\Big(-\frac{1}{2(q-1)}\Big)^{k}.
$$
However, as $H$ is connected, $k \ge m-1.$ Hence, the above expression is at most $(1/q)^{m-1}$ in absolute value. Summing over the $|V(H)|^{|V(H)|-1}$ partitions gives the result.
\end{proof}

\subsection{The Low-Degree \texorpdfstring{$\chi^2$}{chi squared} calculation}
\label{appendix:chisqcalcpcol}
Finally, note that we defined $\widehat{w}_H\coloneqq  M_{H,H}w_H.$ However, $M_{H,H}\ge 1 - |V(H)|^2/q\ge 1/2$ whenever $q = \tilde{\Theta}(d^{-1/4}), |V(H)|\le (\log n)^{1.1} = \polylog(d)$ under \eqref{eq:assumption}. Thus, $|w_H|\le 2|\widehat{w}_H|.$ Therefore, going back to
\cref{prop:chisquaredtocoloring}, we can bound

\begin{equation*}
    \begin{split}
& \sum_{H\;:\;  1\le|E(H)|\le D}w_H^2\\
& \le 
4\sum_{H\;:\;  1\le|E(H)|\le D}\widehat{w}_H^2\\
& \le \sum_{H\;:\;  1\le|E(H)|\le D} (12|E(H)|)^{2|E(H)|}
    \Big(\frac{(\log n)^{9}}{q}\Big)^{3|V(H)|/2}\\
& \le \sum_{v = 1}^{2D}\sum_{H\;:\;  1\le|E(H)|\le D, |V(H)| = v}
(12|E(H)|)^{2|E(H)|}
    \Big(\frac{(\log n)^{9}}{q}\Big)^{3v/2}\\
& \le \sum_{v = 1}^{2D}
\sum_{e = 1}^{\max(D, \binom{v}{2})}
\binom{n}{v}\binom{\binom{v}{2}}{e}
(12 e)^{2e}
\Big(\frac{(\log n)^{9}}{q}\Big)^{3v/2}\\
& \le \sum_{v = 1}^{2D}
\sum_{e = 1}^{\max(D, \binom{v}{2})}
v^{2e}
\Big(\frac{n(\log n)^{20}}{q^{3/2}}\Big)^{v}.
    \end{split}
\end{equation*}
Observe that whenever $q\ge n^{2/3+c},$ equivalently $d\ge n^{8/3 + c}$ by \cref{eq:choiceofq}, the last expression is of the form 
$$
\sum_{v = 1}^{2D}
\sum_{e = 1}^{\max(D, \binom{v}{2})}
v^{2e}n^{-\theta v}
$$
for some positive constant $v.$ However, $e\log v\le v\times (e/v\log v)\le v\sqrt{e}\log v\le v \sqrt{D}\log v\le v(\log n)^{.6} = o(v\log n).$ Thus, the sum is bounded by  
$$
\sum_{v = 1}^{2D}
\sum_{e = 1}^{\max(D, \binom{v}{2})}
n^{-\theta v/2} = o_n(1),
$$
using \cref{prop:whenchisquaredissmall}.
\subsection{Proof of \texorpdfstring{\cref{prop:chisquaredtocoloring}}{chi squared for planted coloring}}
\label{sec:proofofldptoPCol}
\begin{proof}
Consider any polynomial $f$ of degree at most $D.$ By 
\cref{eq:pbasedFourierexpansion}, we can write
\begin{equation}
    \label{eq:fdefinition}
f(G)= \sum_{H\subseteq E(K_n)\;:\; 0\le |E(H)|\le D}
\widehat{f}(H)
\sqrt{\psi_q(1-\psi_q)}^{-|E(H)|}\SW^{\psi_q}_H(G).
\end{equation}
In particular, this means that 
\begin{equation}
    \label{eq:fexpectationRGG}
    \expect_{\bfG\sim\RGGspherehalf}[
    f(\bfG)
    ]= 
    \sum_{H\subseteq E(K_n)\; 1\le |E(H)|\le D}
\widehat{f}(H)\Phi^{\psi_q}_\RGGspherehalf(H) = 
\langle \widehat{f}_{\le D}, (\Phi^{\psi_q}_\RGGspherehalf)(\le D)\rangle,
\end{equation}
where $\widehat{f}_{\le D}, (\Phi^{\psi_q}_\RGGspherehalf)(\le D)$ denotes the vectors of the corresponding variables indexed by subgraphs of $K_n$ on at most $D$ edges in lexicographically increasing order. On the other hand, using convexity of $x\longrightarrow x^2,$
\begin{equation*}
    \begin{split}
        &\expect_{\bfG\sim \PCol(n,q)}[f(\bfG)^2]= 
        \expect_{\bfH\sim\mathsf{G}(n,\psi_q), \bfX\sim\Label(n,q)}
        [f(\bfH\vee\bfX)^2]\\
        & \ge 
        \expect_{\bfH\sim\mathsf{G}(n,\psi_q)}\Big[
        \expect_{\bfX\sim\Label(n,q)}[f(\bfH\vee\bfX)]^2
        \Big]\\
        & = 
        \expect_{\bfH\sim\mathsf{G}(n,\psi_q)}\Big[
        g(\bfH)^2
        \Big],
    \end{split}
\end{equation*}
where $g(\bfH)\coloneqq \expect_{\bfX\sim\Label(n,q)}[f(\bfH\vee\bfX)].$ We now analyze the function $g$ by performing a tower property on $\bfX$ restricted to edges in $H.$

\begin{equation}
    \begin{split}
    &g(\bfH) = 
    \expect_{\bfX\sim\Label(n,q)}[f(\bfH\vee\bfX)]\\
    & = 
    \sum_{H\subseteq E(K_n)\; :\;0\le |E(H)|\le D}
    \widehat{f}(H)
    \sqrt{\psi_q(1-\psi_q)}^{-|E(H)|}
    \expect_{\bfX}\Big[
    \prod_{(ji)\in H}
    ((\bfX\vee\bfH)_{ji}- \psi_q)
    \Big]\\
    & = 
    \sum_{H\subseteq E(K_n)\; :\; 0\le |E(H)|\le D}
    \widehat{f}(H)\sqrt{\psi_q(1-\psi_q)}^{-|E(H)|}\times\\
    & \quad\quad\quad\quad\times
    \Big(\sum_{K \subseteq H}
    \prob[\bfX|_{E(H)} = E(H)\backslash E(K)]
    (1 - \psi_q)^{E(H)\backslash E(K)}\prod_{(ji)\in  E(K)}(\bfH_{ji}- \psi_q)\Big)\\
    & =
    \sum_{K\subseteq E(K_n)\; :\;0\le |E(K)|\le D}
    \Big(
    \sqrt{\psi_q(1-\psi_q)}^{-|E(K)|}
    \prod_{(ji)\in  E(K)}(\bfH_{ji}- \psi_q)\times\\
    & \quad\quad\quad\quad\quad\sum_{H\subseteq K_n\; :\; 0\le |E(H)|\le D, E(K)\subseteq E(H)}
    \widehat{f}(H)
    \prob[\bfX|_{E(H)} = E(H)\backslash E(K)]
    \sqrt{(1 - \psi_q)/\psi_q}^{E(H)\backslash E(K)}\Big)\\
    & = 
    \sum_{K\subseteq E(K_n)\; :\;0\le |E(K)|\le D}
    \sqrt{\psi_q(1-\psi_q)}^{-|E(K)|}
    \prod_{(ji)\in  E(K)}(\bfH_{ji}- \psi_q)\times
    \sum_{H\subseteq K_n\; :\; 0\le |E(H)|\le D, E(K)\subseteq E(H)} 
    M_{K,H}\widehat{f}(H),
    \end{split}
\end{equation}
where $M$ is the upper-triangular matrix indexed by subgraphs of cardinality at most $D$ and defined by the above equation. 

Now, since $\bfH\sim\mathsf{G}(n,\psi_q),$ the polynomials
$$
\Big\{\sqrt{\psi_q(1-\psi_q)}^{-|E(K)|}
    \prod_{(ji)\in  E(K)}(\bfH_{ji}- \psi_q)\Big\}_{K\subseteq E(K_n)\; :\;1\le |E(K)|\le D}
$$
are orthonormal with respect to the measure of $\bfH$ (see \cref{eq:orthonormalityofstandardbasis}). Hence, 
\begin{align*}
& \expect_{\bfH\sim\mathsf{G}(n,\psi_q)}\Big[
        g(\bfH)^2
        \Big]\\
& = 
\sum_{K\subseteq E(K_n)\; :\;0\le |E(K)|\le D}
    \Big(
    \sum_{H\subseteq K_n\; :\; 0\le |E(H)|\le D, E(K)\subseteq E(H)} 
    M_{K,H}\widehat{f}(H)\Big)^2\\
& = \|M\widehat{f}_{\le D}\|^2_2.
\end{align*}

Thus, 
\begin{align*}
    \begin{split}
    &\frac{\expect_{\bfG\sim\RGGspherehalf}[
    f(\bfG)
    ]}{\expect_{\bfG\sim \PCol(n,q)}[f(\bfG)^2]^{1/2}} = 
    \frac{\langle \widehat{f}_{\le D}, (\Phi^{\psi_q}_\RGGspherehalf)(\le D)\rangle}{\|M\widehat{f}_{\le D}\|_2}.
    \end{split}
\end{align*}
A simple Cauchy-Schwartz argument as in \cite{kothari2023planted} shows that the above expression is at most $\|w\|_2,$ where $w\coloneqq (M^{-1})^T(\Phi^{\psi_q}_\RGGspherehalf)(\le D).$ Using the upper-triangular structure of $M,$ one can find
$$
w_H\coloneqq 
\frac{1}{M_{H,H}}
\Big(
\Phi^{\psi_q}_\RGGspherehalf(H)-
\sum_{K\subsetneq H}w_K
M_{K,H}
\Big),
$$
as long as $M_{H,H}\ge 0 $ holds for all $H.$
\end{proof}

\newpage
\input

\appendix

\end{document}